\definecolor{darkred}{rgb}{1,0,0} 
\definecolor{darkgreen}{rgb}{0,0.8,0}
\definecolor{darkblue}{rgb}{0,0,1}
\newcommand\scalemath[2]{\scalebox{#1}{\mbox{\ensuremath{\displaystyle #2}}}}
\numberwithin{equation}{section}
\newtheorem {Theorem}{Theorem}
\numberwithin{Theorem}{section}
\newtheorem {Lemma}[Theorem]    {Lemma}
\newtheorem {Claim}[Theorem]    {Claim}
\newtheorem {Question}[Theorem]    {Question}
\newtheorem {Proposition}[Theorem]{Proposition}
\newtheorem {Corollary}[Theorem]{Corollary}
\theoremstyle{definition}
\newtheorem{Definition}[Theorem]{Definition}
\newtheorem{Remark}[Theorem]{Remark}
\newtheorem{Example}[Theorem]{Example}
\newtheoremstyle{TheoremForIntro} 
        {.6em}{.6em}              
        {\itshape}                      
        {}                              
        {\bfseries}                     
        {. }                             
        { }                             
        {\thmname{#1}\thmnote{ \bfseries #3}}
    \theoremstyle{TheoremForIntro}
    \newtheorem{TheoremIntro}[Theorem]{Theorem}
    \newtheorem{CorollaryIntro}[Theorem]{Corollary}
\newtheorem{PropositionIntro}[Theorem]{Proposition}
\chardef\csname pre amssym.def at\endcsname=\the\catcode`\@
\def\undefine#1{\let#1\undefined}
\def\newsymbol#1#2#3#4#5{\let\next@\relax
 \ifnum#2=\@ne\let\next@\msafam@\else
 \ifnum#2=\tw@\let\next@\msbfam@\fi\fi
 \mathchardef#1="#3\next@#4#5}
\def\mathhexbox@#1#2#3{\relax
 \ifmmode\mathpalette{}{\m@th\mathchar"#1#2#3}%
 \else\leavevmode\hbox{$\m@th\mathchar"#1#2#3$}\fi}
\def\hexnumber@#1{\ifcase#1 0\or 1\or 2\or 3\or 4\or 5\or 6\or 7\or 8\or
 9\or A\or B\or C\or D\or E\or F\fi}
\font\teneufm=eufm10
\font\seveneufm=eufm7
\font\fiveeufm=eufm5
\def\frak#1{{\fam\eufmfam\relax#1}}
\newcommand{\fsl}{{\frak {sl}}}
\newcommand{\fgl}{{\frak {gl}}}
\newcommand{\hook}{\hookrightarrow}
\newcommand{\Ff}{{\mathcal F}}
\newcommand{\Gg}{{\mathcal G}}
\newcommand{\Mm}{{\mathcal M}}
\newcommand{\Rr}{{\mathcal R}}
\newcommand{\Uu}{{\mathcal U}}
\newcommand{\up}[1]{^{^{#1}}}
\newcommand{\dwn}[1]{_{_{#1}}}
\newcommand{\mtrx}[1]{\left (\begin{matrix}#1\end{matrix}\right)}
\newcommand{\eqtns}[2]{\begin{equation}
\begin{dcases}#1\end{dcases}\label{#2}\end{equation}}
\def    \A      {{\mathbb A}}
\def    \C      {{\mathbb C}}
\def    \R      {{\mathbb R}}
\def    \Z      {{\mathbb Z}}
\def    \P    {{\mathbb P}}
\def    \ra     {{\rightarrow}}
\def    \lra     {{\longrightarrow}}
\def    \haf    {{\frac{1}{2}}}
\def    \p      {\partial}
\def    \bz     {\bar{z}}
\author{Brian Collier}
\author{Qiongling Li}
\title{Asymptotics of certain families of Higgs bundles in the Hitchin component}
\begin{document}

\setlength{\smallskipamount}{6pt}
\setlength{\medskipamount}{10pt}
\setlength{\bigskipamount}{16pt}

\begin{abstract}
Using Hitchin's parameterization of the Hitchin-Teichm\"uller component of the $SL(n,\R)$ representation variety, we study the asymptotics of certain families of representations.
In fact, for certain Higgs bundles in the $SL(n,\R)$-Hitchin component, we study the asymptotics of 
the Hermitian metric solving the Hitchin equations. This analysis is used to estimate the asymptotics of the corresponding family of flat connections as we scale the differentials by a real parameter. We consider Higgs fields that have only one holomorphic differential $q_n$ of degree $n$ or $q_{n-1}$ of degree $(n-1).$ The asymptotics of the corresponding parallel transport operator is calculated, and used to prove a special case of a conjecture of Katzarkov, Knoll, Pandit and Simpson \cite{harmonicbuildingWKB} on the Hitchin WKB problem.
\end{abstract}
\maketitle  
\tableofcontents
\section{Introduction}
For a closed, connected, oriented surface $S$ of genus $g\geq2$, consider the space of group homomorphisms $\rho:\pi_1(S)\ra G$ from the fundamental group $\pi_1(S)$ to a reductive Lie group $G.$
Through the nonabelian Hodge correspondence \cite{selfduality,localsystems}, the representation variety
\[\Rr(\pi_1,G):=Hom(\pi_1(S),G)//G\ \cong\ \Mm_{Higgs}(G)\]
is diffeomorphic to the moduli space of semistable $G$-Higgs bundles.

Some representations are of particular geometric interest; for $G=PSL(2,\R),$ there are two isomorphic connected components of $\Rr(\pi_1,PSL(2,\R))$ that are open cells of complex dimension $3(g-1).$ 
The representations in these components are called Fuchsian, and both components can be identified with Teichm\"uller space \cite{TopologicalComponents}. 
The unique irreducible representation $PSL(2,\R)\hookrightarrow PSL(n,\R),$ singles out a component of $\Rr(\pi_1,PSL(n,\R)),$ namely the component containing representations which factor through Fuchsian representations. 
Such a construction suggests that these representations are geometrically interesting; however, it gives no information on the structure of this component.

In \cite{liegroupsteichmuller}, Hitchin used Higgs bundles to show that this component of the representation variety
$\Rr(\pi_1,PSL(n,\R))$
is an open cell of complex dimension $(n^2-1)(g-1).$ This component has since been called the Hitchin component or Hitchin-Teichm\"uller component and will be denoted $Hit_n(S).$
Furthermore, if we fix a Riemann surface structure $\Sigma$ on $S,$ with canonical bundle $K,$ then $Hit_n(S)$ is parameterized by the space
\[Hit_n(S)\cong H^0(\Sigma,K^2)\oplus H^0(\Sigma,K^3)\oplus\cdots\oplus H^0(\Sigma,K^n)\]
 of holomorphic differentials. In this parameterization, the embedded copy of Teichm\"uller space, i.e. the representations factoring through $PSL(2,\R),$ is realized by setting all but the quadratic differential to zero. 
 A similar construction holds for any split real form of a complex semisimple Lie group. 
 For the split real forms $Sp(2n,\R)$ and $SO(n,n+1),$ the corresponding Hitchin components can be realized as the zero locus of the differentials of odd degree in $Hit_{2n}(S)$ and $Hit_{2n+1}(S)$ respectively \cite{GuichardZarclosureHitchinRep}.

To introduce the main objects, we now briefly recall the nonabelian Hodge correspondence for $SL(n,\C)$.
An $SL(n,\C)$-Higgs bundle is a pair $(E,\phi),$ where $E\ra\Sigma$ is a rank $n$ holomorphic vector bundle with trivial determinant bundle and $\phi$ is a traceless holomorphic bundle map $\phi:E\ra E\otimes K.$
For a stable Higgs bundle $(E,\phi),$ there is a unique Hermitian metric $h$ on $E$, with Chern connection $A_h$, solving the Hitchin equations
\[F_{A_h}+[\phi,\phi^{*_h}]=0,\]
where $F_{A_h}$ is the curvature of $A_h$ and $\phi^{*_h}$ denotes the the hermitian adjoint. In his foundational paper \cite{selfduality}, Hitchin proved this for $n=2$ and later, Simpson \cite{localsystems} proved it for general $n.$

Such a solution $(A_h,\phi)$ gives rise to a flat connection $A+\phi+\phi^{*_h},$ and hence defines a map
\[ \Mm_{Higgs}(SL(n,\C))\longrightarrow \Rr(\pi_1,SL(n,\C))\]
from the $SL(n,\C)$-Higgs bundle moduli space to the $SL(n,\C)$ representation variety. The fact that this map has an inverse was proven for $n=2$ by Donaldson \cite{harmoicmetric} and by Corlette \cite{canonicalmetrics} in general. Corlette's Theorem states that given an irreducible flat $SL(n,\C)$ connection, there exists a unique \textit{harmonic} metric on the corresponding flat $SL(n,\C)$-bundle. The equations for a harmonic metric and a flat connection can be viewed as a `real' version of the Hitchin equations.

We will restrict our study to Higgs bundles in $Hit_n(S);$ as noted above, such Higgs bundles are parameterized by $\bigoplus\limits_{j=2}^n H^0(\Sigma,K^j).$ To describe the Hitchin component we first fix a square root $K^\haf$ of $K.$ The Higgs bundle associated to $(q_2,\dots,q_n)\in \bigoplus\limits_{j=2}^n H^0(\Sigma,K^j)$ is
\[E=K^\frac{n-1}{2}\oplus K^{\frac{n-3}{2}}\oplus\cdots\oplus K^{-\frac{n-3}{2}}\oplus K^{-\frac{n-1}{2}} \] and
\[\phi=\mtrx{0&\frac{n-1}{2}q_2&q_3&\dots&q_{n-1}&q_n\\
1&0&\frac{n-3}{2}q_2&\dots&q_{n-2}&q_{n-1}\\
&\ddots&&\ddots&&\\
&&&&\frac{n-3}{2}q_2&q_3\\
&&&1&0&\frac{n-1}{2}q_2\\
&&&&1&0}:E\longrightarrow E\otimes K.
\]
Such a $\phi$ will be denoted by $\tilde e_1+q_2e_1+q_3e_2+\dots+q_ne_{n-1}.$
It can be shown that the flat connections corresponding to such Higgs bundles have holonomy in $SL(n,\R).$

Given a stable Higgs bundle $(E,\phi)$, consider the family of stable Higgs bundles $(E,t\phi),$ where $t \in \C^*.$ Solving the Hitchin equations yields a family of harmonic metrics $h_t$ on $E$ and thus a family of flat connections $\nabla_t$ with corresponding representations $\rho_t.$ 
If $\widetilde\Sigma\ra\Sigma$ is the universal cover, then for any $P,P'\in\widetilde\Sigma,$ let $T_{P,P'}(t)$ be the parallel transport matrices of the family of flat connections. 
The family of metrics $h_t$ also gives a corresponding family of $\rho_t$ equivariant harmonic maps 
\[f_t:\widetilde\Sigma\ra SL(n,\R)/SO(n,\R).\]
In a recent preprint \cite{harmonicbuildingWKB}, Katzarkov, Noll, Pandit, and Simpson asked the following question:
\begin{Question}\label{Question}
	What is the asymptotic behavior of $T_{P,P'}(t),$ $\rho_t,$ and $f_t$ as $t\ra\infty$?
\end{Question}

They call this the Hitchin WKB problem. In fact, in \cite{harmonicbuildingWKB} they study the asymptotics of the following related, but different, family of flat connections. 
Let $E$ be a fixed holomorphic vector bundle and $\nabla_0$ be a flat holomorphic $SL(n,\C)$ connection. 
Choose a Higgs field $\phi\in H^0(\Sigma,End(E)\otimes K)$ so that the Higgs bundle $(E,\phi)$ is polystable and not in the nilpotent cone. 
Consider the family of flat holomorphic connections 
\begin{equation}\label{CWKB}\nabla_t=\nabla_0+t\phi.\end{equation}
The asymptotics of this family of flat connections and the corresponding representations $\rho_t$ is called the complex or Riemann-Hilbert WKB problem. 
If we fix a hermitian metric on $E,$ the family of holomorphic flat connections $\nabla_t$ gives rise to a family of $\rho_t$-equivariant (not harmonic) maps  $f_t:\widetilde\Sigma\ra SL(n,\C)/SU(n).$
Katzarkov et al answer the analog of Question \ref{Question} for the complex WKB problem (discussed further at the end of this introduction and in section \ref{HarmonicSection}) and conjecture that the same is true for the Hitchin WKB problem. 

Note that these two asymptotic families are fundamentally different. For instance, for the complex WKB problem, the Higgs field is holomorphic with respect to the \em{flat connection}, \em{} not the (non-flat) Chern connection $\nabla_A$ as in the Hitchin WKB problem. Furthermore, there is no PDE to solve in the complex WKB problem.
Since the Hitchin WKB problem involves asymptotically solving the Hitchin equations, it seems to be a difficult problem. 

In this paper we restrict to the following situation
\begin{itemize}
	\item $(E,\phi)$ is in the Hitchin component
	\item  $t\in\R^+$
	\item $\phi=\tilde e_1+q_ne_{n-1}$ and $\phi=\tilde e_1+q_{n-1}e_{n-2}.$
\end{itemize}
Instead of $t\phi$, we use $tq_n$ and $tq_{n-1},$ which are equivalent to $t^\frac{1}{n}\phi$ and $t^\frac{1}{n-1}\phi$ after a gauge transformation.

For the Hitchin component, such asymptotics are related to the compactification of $Hit_n(S)$ due to Parreau \cite{CompactificationHit} for general $n$ and Kim \cite{conmpactifyRP2stuct} for $n=3.$
When $n=2,$ the Hitchin component is the classical Teichm\"uller space. In \cite{RaysInTeich}, Wolf considered rays of the form $(\Sigma,tq_2)$ inside Teichm\"uller space, and studied the asymptotics of the associated family of harmonic maps $f_t:\widetilde\Sigma\ra\mathbb{H}^2.$ 
He realized the appropriate limit of the family $f_t$ as a harmonic map into the $\R$-tree dual to the horizontal measured foliation determined by the holomorphic quadratic differential $q_2$ \cite{HarmRtree}. This compactification was generalized to the $SL(2,\C)$ character variety in the work of Daskalopoulos, Dostoglou, and Wentworth \cite{DDW}.

For $n=3,$ the Hitchin component was identified with the space of convex real projective structures on $S$ by Choi and Goldman \cite{ChoiGoldman}.
Using this identification, Labourie \cite{Labourie} and Loftin \cite{Lof01} independently showed that the space of convex real projective structures on $S$ is in bijection with the space of pairs $(\Sigma,q_3)$ where $\Sigma$ is a Riemann surface structure on $S$ and $q_3$ is a holomorphic cubic differential on $\Sigma.$
Away from zeros of $q_3,$ Loftin \cite{flatmetriccubicdiff} considered the asymptotic holonomy of the convex real projective structures corresponding to the ray $(\Sigma,tq_3)$ as $t\ra \infty.$
Recently, Dumas and Wolf \cite{DM} gave a correspondence between cubic polynomial differentials and polygons in $\R\P^2.$ As $t\ra\infty,$ such polygons serve as local models for neighborhoods of the zeros of the cubic differentials. 

In \cite{TaubesAsymptotics3manifolds}, Taubes considers sequences of connections on principal $PSL(2,\C)$-bundles over compact two and three dimensional manifolds. He studies, in detail, the limiting behavior of such sequences as an extension of Uhlenbeck's compactness theorem. For $PSL(2,\C)$-Higgs bundles $(E,\phi)$ such that $det(\phi)$ has simple zeros, Mazzeo, Swoboda, Weiss, and Witt \cite{EndsHiggs} have recently given a constructive proof of the existence and uniqueness of solutions to Hitchin's equation in a neighborhood of infinity. In particular, they are able to describe the limiting metric around the zeros of the $det(\phi).$ 

We now describe our main results. 
\begin{TheoremIntro}[\ref{MetricSplitting}]
	Let $(E,\phi)$ be in the Hitchin component with $\phi=\tilde e_1+\sum\limits_{j=0\ mod\ k}q_je_{j-1},$ for some $k\leq n.$ Then the harmonic metric solving the Hitchin equation splits as a direct sum metric on $E=E_1\oplus\cdots\oplus E_k$ where
	\[E_j=K^\frac{n+1-2j}{2}\oplus K^{\frac{n+1-2j}{2}-k}\oplus K^{\frac{n+1-2j}{2}-2k}\oplus\cdots  \]
\end{TheoremIntro}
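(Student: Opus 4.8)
The plan is to exploit the symmetry of the Higgs field $\phi=\tilde e_1+\sum_{j\equiv 0\ (k)}q_je_{j-1}$ under a $\Z/k$-action and invoke the uniqueness part of the Hitchin/Simpson theorem. First I would introduce the diagonal gauge transformation $g_\zeta\in\mathrm{Aut}(E)$ that acts by multiplication by $\zeta^{\ell}$ on the line bundle summand $K^{\frac{n+1-2\ell}{2}}$ (the $\ell$-th summand of $E$), where $\zeta=e^{2\pi i/k}$ is a primitive $k$-th root of unity. The sub-diagonal entries $\tilde e_1$ (the $1$'s) shift the index by one, so they get multiplied by $\zeta^{-1}$; the entry $q_je_{j-1}$ shifts the index by $j-1$, hence is multiplied by $\zeta^{-(j-1)}=\zeta$ precisely when $j\equiv 0\pmod k$. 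Thus after conjugating $\phi$ by $g_\zeta$ and rescaling (absorbing the overall factor $\zeta^{-1}$, which is harmless since it is central up to the $\C^\ast$-scaling that does not change the harmonic metric — or more cleanly, noting that $g_\zeta^{-1}\phi g_\zeta=\zeta^{-1}\phi$ and $\zeta^{-1}\phi$ is gauge-equivalent to $\phi$ by a constant), I get that $(E,\phi)$ is isomorphic to itself via a finite-order holomorphic automorphism.

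The second step is the uniqueness argument. Since $(E,\phi)$ is stable (it lies in the Hitchin component), Simpson's theorem gives a \emph{unique} Hermitian metric $h$ solving $F_{A_h}+[\phi,\phi^{*_h}]=0$. If $\Phi:(E,\phi)\to(E,\phi)$ is a holomorphic bundle isomorphism intertwining the Higgs fields, then the pullback metric $\Phi^\ast h$ also solves the Hitchin equation for $(E,\phi)$, so by uniqueness $\Phi^\ast h=h$; that is, $h$ is invariant under $\Phi=g_\zeta$ (after the constant rescaling is accounted for). Concretely, $g_\zeta$ acts on the $\ell$-th line summand by the scalar $\zeta^\ell$, so the statement $g_\zeta^\ast h = h$ forces the off-diagonal block of $h$ between summands $\ell$ and $\ell'$ to vanish unless $\zeta^{\ell}\overline{\zeta^{\ell'}}=1$, i.e. unless $\ell\equiv \ell'\pmod k$. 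Therefore $h$ is block-diagonal with respect to the decomposition of the line-bundle summands of $E$ into residue classes mod $k$, which is exactly the decomposition $E=E_1\oplus\cdots\oplus E_k$ with $E_j=K^{\frac{n+1-2j}{2}}\oplus K^{\frac{n+1-2j}{2}-k}\oplus\cdots$. One then checks that $\phi$ preserves this decomposition up to the cyclic shift $E_j\to E_{j-1}\otimes K$ sending $E_k\to E_n$ wait — more precisely $\phi$ maps $E_j$ into $E_{j+1}\otimes K$ cyclically (the $\tilde e_1$ part) and the $q_j$ part also respects residues since $j\equiv 0$; combined with block-diagonality of $h$, this shows $A_h$ and $[\phi,\phi^{*_h}]$ are block-diagonal, i.e. the solution genuinely splits as a direct sum metric.

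The main obstacle, and the point demanding care, is the bookkeeping for the constant scalar $\zeta^{-1}$: $g_\zeta$ does not literally preserve $\phi$ but sends it to $\zeta^{-1}\phi$, so I need the elementary observation that for a fixed Riemann surface the harmonic metrics for $(E,\phi)$ and $(E,c\phi)$ with $|c|=1$ coincide (since $\phi^{*_h}$ for $c\phi$ is $\bar c\,\phi^{*_h}$ and $[c\phi,\bar c\phi^{*_h}]=[\phi,\phi^{*_h}]$). With that in hand the uniqueness argument applies to the genuine automorphism $g_\zeta$ of $(E,\phi)$ (not merely a projective one), and the residue-class decomposition of $h$ follows. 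A secondary check is that each $E_j$ as written is a genuine sub-bundle compatible with the weights appearing in $E=K^{\frac{n-1}{2}}\oplus\cdots\oplus K^{-\frac{n-1}{2}}$, i.e. that the arithmetic progression $\frac{n+1-2j}{2}, \frac{n+1-2j}{2}-k,\dots$ hits exactly the exponents congruent to $\frac{n+1-2j}{2}$ mod $k$ within the range $[-\frac{n-1}{2},\frac{n-1}{2}]$; this is immediate once one fixes the indexing conventions.
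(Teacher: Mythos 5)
Your proposal is correct and follows essentially the same route as the paper: a diagonal $k$-th-root-of-unity gauge transformation scales $\phi$ by a unit constant, the $U(1)$-invariance of the Hitchin equations together with uniqueness of the harmonic metric forces $h$ to be invariant under (i.e.\ that gauge transformation to be unitary for) $h$, and invariance kills all off-diagonal blocks between summands in different residue classes mod $k$. (There is only a harmless bookkeeping slip: the $q_je_{j-1}$ entry sits on the $(j-1)$-st superdiagonal and so picks up $\zeta^{j-1}=\zeta^{-1}$, matching the subdiagonal, not $\zeta^{-(j-1)}$ as written; your final identity $g_\zeta^{-1}\phi g_\zeta=\zeta^{-1}\phi$ is the correct one.)
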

Note that the dots for the holomorphic bundle $E_j$ go on as long as it makes sense. For instance, when $k=n,$ there is only one summand for each $j.$ In the restricted setting $\phi=\tilde e_1+q_ne_{n-1}$ and $\phi=\tilde e_1+q_{n-1}e_{n-2},$ we obtain:
\begin{CorollaryIntro}[\ref{CorMetricSplit}](Metric Splitting)
	For $k=n$ and $k=n-1$ the Higgs fields are $\phi=\tilde e_1+q_ne_{n-1}$ and $\phi=\tilde e_1+q_{n-1}e_{n-2}$, and the harmonic metric splits as
	\[h_1\oplus h_2\oplus\dots\oplus h_2^{-1}\oplus h_1^{-1}\]
	on the line bundles
	\[K^{\frac{n-1}{2}}\oplus K^{\frac{n-3}{2}}\oplus\dots\oplus K^{-\frac{n-3}{2}}\oplus K^{-\frac{n-1}{2}}.
	\]
\end{CorollaryIntro}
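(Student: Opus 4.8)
The plan is to obtain this as a direct specialization of Theorem \ref{MetricSplitting}. Recall that in the restricted setting the Higgs field is $\phi = \tilde e_1 + q_n e_{n-1}$ (the case $k=n$) or $\phi = \tilde e_1 + q_{n-1} e_{n-2}$ (the case $k=n-1$). In each case the only nonzero $q_j$ occurring in the off-diagonal part is the one with $j \equiv 0 \bmod k$, so these Higgs fields literally have the form $\tilde e_1 + \sum_{j \equiv 0 \bmod k} q_j e_{j-1}$ to which Theorem \ref{MetricSplitting} applies. First I would invoke the theorem to get the splitting $E = E_1 \oplus \cdots \oplus E_k$ with $E_j = K^{\frac{n+1-2j}{2}} \oplus K^{\frac{n+1-2j}{2} - k} \oplus \cdots$, and the harmonic metric a direct sum with respect to this decomposition.

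Next I would carry out the bookkeeping in the two cases. For $k = n$: the partition $E = E_1 \oplus \cdots \oplus E_n$ has each $E_j$ consisting of the single line bundle $K^{\frac{n+1-2j}{2}}$, since subtracting $k = n$ from the exponent $\frac{n+1-2j}{2}$ (which lies in $[-\frac{n-1}{2}, \frac{n-1}{2}]$) immediately leaves the range of summands of $E$. So the splitting is simply the splitting of $E$ into its $n$ line-bundle summands $K^{\frac{n-1}{2}} \oplus \cdots \oplus K^{-\frac{n-1}{2}}$, with a metric $h_j$ on each. For $k = n-1$: here $E_j = K^{\frac{n+1-2j}{2}} \oplus K^{\frac{n+1-2j}{2} - (n-1)}$ when both exponents lie in the admissible range, and a single line bundle otherwise; one checks that $E_1 = K^{\frac{n-1}{2}} \oplus K^{-\frac{n-1}{2}}$ pairs the two extreme line bundles while $E_2, \dots, E_{n-1}$ are the single remaining line bundles $K^{\frac{n-3}{2}}, \dots, K^{-\frac{n-3}{2}}$. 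In either case the harmonic metric is a direct sum metric on the $n$ line bundles $K^{\frac{n-1}{2}} \oplus K^{\frac{n-3}{2}} \oplus \cdots \oplus K^{-\frac{n-1}{2}}$; write $h_i$ for the induced metric on $K^{\frac{n+1-2i}{2}}$, so the metric reads $h_1 \oplus h_2 \oplus \cdots \oplus h_{n-1} \oplus h_n$.

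Finally I would upgrade the symmetry of the metric: it remains to show $h_{n+1-i} = h_i^{-1}$, i.e. that the metric on $K^{-\frac{n+1-2i}{2}}$ is dual to that on $K^{\frac{n+1-2i}{2}}$, which gives the claimed form $h_1 \oplus h_2 \oplus \cdots \oplus h_2^{-1} \oplus h_1^{-1}$. This is where the real content beyond Theorem \ref{MetricSplitting} lies. The argument is that the Higgs bundle in the Hitchin component carries a compatible symmetric pairing: $E$ is isomorphic to $E^*$ via the pairing sending $K^{\frac{n+1-2i}{2}}$ to its dual $K^{-\frac{n+1-2i}{2}}$, and $\phi$ is symmetric with respect to this pairing (this is exactly the reduction of structure group to $SL(n,\R)$, equivalently $SO(n,\C)$-invariance up to sign, that makes the flat connection real). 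By uniqueness of the harmonic metric solving the Hitchin equations, the metric $h$ must be compatible with this pairing, forcing the metric on the $i$-th summand and the $(n+1-i)$-th summand to be mutually inverse. I expect this compatibility-and-uniqueness step to be the main obstacle: one must verify carefully that the anti-diagonal symmetric isomorphism $E \cong E^*$ intertwines $(E,\phi)$ with $(E^*, \phi^T)$, that the dual/transpose metric $(h^*)^{-1}$ on $E^*$ pulled back along this isomorphism again solves the Hitchin equations for $(E,\phi)$, and then conclude $h = h^{*-1}$ by uniqueness — everything else is the indexing bookkeeping above.
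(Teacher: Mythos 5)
Your overall strategy is the paper's: specialize Theorem \ref{MetricSplitting} and then use the compatibility of the harmonic metric with the orthogonal structure $Q$ (which the paper records as $h^TQh=Q$, and which your dualize-and-apply-uniqueness argument correctly establishes). The $k=n$ case and the duality relation $h_{n+1-i}=h_i^{-1}$ are handled exactly as in the paper.

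There is, however, one genuine jump in the $k=n-1$ case. You correctly compute that the eigenbundle decomposition there is $E_1\oplus E_2\oplus\cdots\oplus E_{n-1}$ with $E_1=K^{\frac{n-1}{2}}\oplus K^{-\frac{n-1}{2}}$ a \emph{rank-two} summand, but you then assert that ``in either case the harmonic metric is a direct sum metric on the $n$ line bundles.'' Theorem \ref{MetricSplitting} does not give that: it only gives a block-diagonal metric whose $E_1$-block is an a priori non-diagonal $2\times 2$ Hermitian form mixing $K^{\frac{n-1}{2}}$ and $K^{-\frac{n-1}{2}}$. Your final step, as written, only extracts the relation $h_{n+1-i}=h_i^{-1}$ from the pairing and so does not close this gap. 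The missing step is to apply the constraint $h^TQh=Q$ to the $E_1$-block itself: writing that block as $\left(\begin{smallmatrix}a&b\\ \bar b&c\end{smallmatrix}\right)$ with $a,c>0$, the condition $h_1^T\left(\begin{smallmatrix}0&1\\1&0\end{smallmatrix}\right)h_1=\left(\begin{smallmatrix}0&1\\1&0\end{smallmatrix}\right)$ forces $2a\bar b=0$ and $2bc=0$, hence $b=0$ and $ac=1$, so the block is diagonal and the two factors are mutually inverse. This is precisely the computation the paper carries out; it is a small addition, and it uses only the pairing you already introduced, but without it the claimed line-bundle splitting for $\phi=\tilde e_1+q_{n-1}e_{n-2}$ is not proved.
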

For $k=n,$ this was proven by Baraglia \cite{cyclichiggsaffinetoda, g2geometry}, and used to study, amongst other things, the relation between the Hitchin equations and the affine Toda equations. Higgs fields of the form $\phi=\tilde e_1+\sum\limits_{j=0\ mod\ k}q_je_{j-1}$  are fixed points of an action by the $k^{th}$ roots of unity, this will be crucial in the proof of Theorem \ref{MetricSplitting}.

Corollary \ref{CorMetricSplit} significantly simplifies the Hitchin equations from $n^2$ equations to $\lfloor \frac{n}{2}\rfloor$ equations. We first obtain estimates for the solution metric $h_t$ of the Hitchin equations as $t\ra\infty$ by repeatedly using the maximum principle and a standard ``telescope" trick.

\begin{TheoremIntro}[\ref{metrictheorem}] (Metric Asymptotics)
For every point $p\in\Sigma$ away from the zeros of $q_n$ or $q_{n-1},$ as $t\ra\infty$
\begin{enumerate}[1.]
		\item For $(\Sigma,\ \tilde e_1+tq_ne_{n-1})\in Hit_n(S),$ the metric $h_j(t)$ on $K^\frac{n+1-2j}{2}$ admits the expansion
		\begin{equation*}
		h_j(t)=(t|q_n|)^{-\frac{n+1-2j}{n}}\left(1+O\left(t^{-\frac{2}{n}}\right)\right) \ \ \ \  \text{for\ all\  } j
		\end{equation*}
		\item For $(\Sigma,\ \tilde e_1 +tq_{n-1}e_{n-2})\in Hit_n(S),$ the metric $h_j(t)$ on $K^\frac{n+1-2j}{2}$ admits the expansion
		\begin{equation*}
			h_j(t)=\begin{dcases}
				(t|q_{n-1}|)^{-\frac{n+1-2j}{n-1}}\left(1+O\left(t^{-\frac{2}{n-1}}\right)\right) & \text{for\ } j=1\ \ \text{and}\ \ j=n\\
				(2t|q_{n-1}|)^{-\frac{n+1-2j}{n-1}}\left(1+
				O\left(t^{-\frac{2}{n-1}}\right)\right)
				& \text{for\ } 1< j <n
			\end{dcases}
		\end{equation*}
		\end{enumerate}
\end{TheoremIntro}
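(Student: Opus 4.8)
The plan is to use Corollary~\ref{CorMetricSplit} to reduce the Hitchin equations to a small coupled system of semilinear elliptic PDEs, to recognize the leading terms in the expansion as the \emph{decoupled} (algebraic) solution of that system, and to control the error by a maximum-principle bootstrap. By Corollary~\ref{CorMetricSplit} the harmonic metric is diagonal, $h=h_1\oplus h_2\oplus\cdots\oplus h_2^{-1}\oplus h_1^{-1}$ on $K^{\frac{n-1}{2}}\oplus\cdots\oplus K^{-\frac{n-1}{2}}$, so $F_{A_h}+[\phi,\phi^{*_h}]=0$ is equivalent to a system of $\lfloor n/2\rfloor$ scalar equations. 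In a local holomorphic coordinate, trivializing $K$ by $dz$ and writing $u_j=\log h_j$, this system has the schematic form
\begin{equation*}
\Delta u_j \;=\; e^{u_j-u_{j-1}}-e^{u_{j+1}-u_j}+(\text{a }q\text{-term at the relevant indices}),
\end{equation*}
where for $\phi=\tilde e_1+tq_ne_{n-1}$ the $q$-term is $\pm\,t^{2}|q_n|^{2}e^{u_1-u_n}$ and appears only at $j=1$ and $j=n$, while for $\phi=\tilde e_1+tq_{n-1}e_{n-2}$ the entries $q_{n-1}$ occupy the matrix positions $(1,n-1)$ and $(2,n)$, so the $q$-term enters the equations for the line bundles in positions $1,2,n-1,n$. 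For $k=n$ this is a real rescaling of the affine Toda system analyzed by Baraglia \cite{cyclichiggsaffinetoda}.

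The functions appearing in the theorem are exactly the \emph{decoupled} solution $\hat h_j$: the positive functions for which every term on the right-hand side of the system cancels, so that $\Delta\log\hat h_j=0$ away from the zeros of the relevant differential. Solving the resulting algebraic relations in the first case gives $\hat h_j=(t|q_n|)^{-\frac{n+1-2j}{n}}$. In the second case the line bundle in position $2$ (hence, by the symmetry $h_{n+1-j}=h_j^{-1}$, also position $n-1$) receives a contribution from the subdiagonal entry of $\phi$ \emph{and} from a $q_{n-1}$-entry; balancing at position $2$ pins down the common ratio $\hat h_j/\hat h_{j+1}$ for $1<j<n$ as a power of $2t|q_{n-1}|$, whereas the position-$1$ and position-$n$ balances are separate and only see $t|q_{n-1}|$. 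This is the case split of part~(2). Thus $\hat h_j$ solves the Hitchin system exactly on $\Sigma$ minus the zero divisor, but it degenerates at the zeros --- blowing up or vanishing according to the sign of $n+1-2j$ --- and so is not the globally smooth solution.

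It remains to compare the true solution with $\hat h_j$. One first traps each $h_j$ between explicit sub- and supersolutions: $\hat h_j$ (valid away from the zeros) provides one-sided control after telescoping the coupled inequalities so that the relevant scalar combination has the favorable sign, and an opposite barrier is obtained by perturbing $\hat h_j$ to a globally smooth function --- e.g.\ replacing $|q_n|^{2}$ by $|q_n|^{2}$ plus a suitable $t$-dependent constant --- which is still a sub/supersolution. Writing $\eta_j=\log h_j-\log\hat h_j$ and subtracting the model equations from the true ones, $\eta_j$ satisfies a coupled system whose nonlinear part carries the coefficient $(t|q_n|)^{2/n}$ (resp.\ $(t|q_{n-1}|)^{2/(n-1)}$), which is $\geq c\,t^{2/n}$ (resp.\ $\geq c\,t^{2/(n-1)}$) on any compact set avoiding the zeros. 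Iterating the comparison --- each pass feeding the previous bound back into the barriers and using the telescope trick to decouple the system --- sharpens the trap, and since the damping has size $t^{2/n}$ the iteration converges to $\eta_j=O(t^{-2/n})$ (resp.\ $O(t^{-2/(n-1)})$) locally uniformly away from the zeros. Exponentiating gives the expansions in the statement.

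The main obstacle is the interaction of two features: the coupling of the system and the degeneration of all the estimates near the zeros of $q_n$ (resp.\ $q_{n-1}$), where $\hat h_j$ is singular. The telescoping has to be arranged so that each decoupled inequality retains the full $t^{2/n}$-sized damping, and the competing barriers must be globally defined and smooth across the zeros yet close enough to $\hat h_j$ away from them to force the claimed power of $t$; making this bootstrap converge is the crux, and it is the reason the expansions hold only on the complement of the zero set, with implied constants depending on the distance to it.
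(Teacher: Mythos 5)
Your identification of the leading-order profile is right: the functions in the statement are exactly the solutions of the algebraic (decoupled) balance, including the factor of $2$ at positions $1<j<n$ in the $(n-1)$-cyclic case, and the reduction via Corollary \ref{CorMetricSplit} followed by a maximum-principle/telescope argument is indeed the paper's strategy. But the part of your argument that is supposed to deliver the actual content of the theorem --- the relative error $O\left(t^{-2/n}\right)$ --- is asserted rather than proved, and it is not how the paper proceeds. The paper does not linearize around $\hat h_j$ and run a convergent iteration. It fixes a globally smooth background metric $g_n$ with $g_n=|q_n|^{2/n}$ on a compact set $\Omega_n$ avoiding the zeros and $|q_n|^2/g_n^n\le 1$ everywhere, sets $u^j=\lambda^j-\tfrac{n+1-2j}{2}\ln g_n$, and then works exclusively with the \emph{global} maxima $A_j=\max_\Sigma e^{u^j}$ and $B_j=\max_\Sigma e^{u^j-u^{j+1}}$ on the closed surface, so no barriers and no boundary data are ever needed. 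The chain is finite: Lemma \ref{technicallemma} bounds $t^2|q_n|^2/(g_n^ne^{2u^1})$ by $B_1+C$ via the maximum principle applied to that single auxiliary scalar; two telescoping sums (Lemma \ref{upperboundlemma}) give $A_{n/2}^2\le t^{2/n}+C$; maximum-principle inequalities for the consecutive differences $u^j-u^{j+1}$ (Lemma \ref{Ajbounds}) give $B_j\le A_{n/2}^2+C$ and hence $A_j\le (A_{n/2}^2+C)^{(n+1-2j)/2}$; combining these with the lower bound from Lemma \ref{technicallemma} forces $t^{2/n}-C\le A_{n/2}^2\le t^{2/n}+C$, and the relative error $O\left(t^{-2/n}\right)$ is precisely this additive constant $C$ (controlled by the curvature of $g_n$, i.e.\ by what happens near the zeros of $q_n$) divided by $t^{2/n}$.

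In your version the two steps that would have to replace this are missing. First, you never verify that the perturbed profile (with $|q_n|^2$ replaced by $|q_n|^2$ plus a $t$-dependent constant) is actually a global sub- or supersolution of the \emph{coupled} system, nor that the comparison principle applies to the vector system (it does, since the off-diagonal couplings $e^{u^{j-1}-u^j}$ and $e^{u^j-u^{j+1}}$ make it cooperative, but this must be checked); and the size of the admissible perturbation is exactly what determines the error rate, so ``a suitable $t$-dependent constant'' concedes the point at issue. Second, the unperturbed $\hat h_j$ is singular at the zeros of $q_n$, so it cannot serve as a global barrier, and on a compact set avoiding the zeros the boundary values of $\eta_j=\log h_j-\log\hat h_j$ are not controlled by anything you have established --- which is why ``the iteration converges to $\eta_j=O(t^{-2/n})$'' is a non sequitur as written. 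To repair the proposal you would either have to carry out the barrier construction quantitatively, or adopt the paper's device of a global background metric and global maxima, which removes the boundary issue entirely and replaces the infinite bootstrap by a finite chain of inequalities.
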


Using the asymptotic estimates of the solution metric and error estimates, we integrate the ODE defined by the flat connection.
This yields an estimate of the parallel transport matrices $T_{P,P'}(t)$ as $t\ra\infty.$
For $(\Sigma,(0,\dots,0,tq_n))\in Hit_n(S)$, let $P\in\widetilde{\Sigma}$ be a point at which $q_n$ does not vanish. Choose a neighborhood $\Uu\dwn{P}$ centered at $P$, with coordinate $z,$ so that $q_n=dz^n.$ Any $P'\in\mathcal{U}\dwn{P}$ may be written in polar coordinate as $P'=Le^{i\theta}$. Suppose $\gamma(s)$ is a $|q_n|^{\frac{2}{n}}-$geodesic from $P$ to $P'$ parametrized by arc length $s$. 

In section \ref{ODESetup}, we translate the problem of computing the parallel transport operator along the path $\gamma$ into computing $\Phi(L),$ where $\Phi$ solves the ODE
\begin{equation}\label{PhieqIntro}
\Phi(0)=I, \ \ \ \ \ \ \ \ \
\dfrac{d\Phi}{ds}=\left[t^{\frac{1}{n}} M_n(\theta)
+R\right]\Phi
\end{equation}
where $M_n(\theta)=\mtrx{\mu_1&&\\&\ddots&\\&&\mu_n}$ and $R$ is the error matrix.

With an extra condition on the path, we obtain the entire set of eigenvalues of the parallel transport operator along the path asymptotically. 
\begin{TheoremIntro}[\ref{TransportAsymp}](Parallel Transport Asymptotics) Suppose $P,\ P'$ and the path $\gamma(s)$ are as above. If $P'$ has the property that for every $s$, 
\[ s<d(\gamma(s)):=min\{d(\gamma(s), z_0)| \  \text{for all zeros $z_0$ of $q_n$}\},\]
then there exists a constant unitary matrix $S$, not depending on the pair $(P, P')$, so that as $t\ra\infty$,
    \[T_{P,P'}(t)=\left(Id+O\left(t^{-\frac{1}{2n}}\right)\right)S\mtrx{
        e^{-Lt^{\frac{1}{n}}\mu_1}&&\\
        &\ddots&\\
        &&e^{-Lt^{\frac{1}{n}}\mu_n}}S^{-1}\]
where $\mu_j=2cos\left(\theta+\frac{2\pi {(j-1)}}{n}\right)$. 
\end{TheoremIntro}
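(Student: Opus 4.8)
The plan is to reduce the parallel-transport asymptotics to the ODE \eqref{PhieqIntro} and then solve that ODE asymptotically by a diagonalization-plus-Duhamel argument. First I would recall from Section \ref{ODESetup} that, after the gauge transformation implicit in the metric splitting of Corollary \ref{CorMetricSplit} and a choice of unitary frame adapted to the splitting $h_1\oplus\cdots\oplus h_1^{-1}$, the flat connection $\nabla_t=\nabla_{A}+t^{1/n}(\wt e_1+q_ne_{n-1})+(\cdot)^{*}$ written along the $|q_n|^{2/n}$-geodesic $\gamma(s)$ takes the form $\frac{d}{ds}=t^{1/n}M_n(\theta)+R$, where $M_n(\theta)$ is the constant diagonal matrix with entries $\mu_j=2\cos(\theta+2\pi(j-1)/n)$ coming from the symbol of $\phi+\phi^{*}$ evaluated in the flat $|q_n|^{2/n}$-coordinate, and $R=R(s,t)$ is an error term. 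The key input is the metric asymptotics of Theorem \ref{metrictheorem}(1): since $h_j(t)=(t|q_n|)^{-(n+1-2j)/n}(1+O(t^{-2/n}))$ away from the zeros of $q_n$, the connection-form corrections built from $\partial\log h_j$, from the off-diagonal error of the harmonic metric relative to the "flat" model metric, and from the difference between $\gamma$ being a $|q_n|^{2/n}$-geodesic rather than a Euclidean segment, are all controlled; I would show $\|R(s,t)\|=O(t^{(n-1)/(2n)})$, i.e. strictly smaller order than the principal term $t^{1/n}$ only after rescaling — more precisely $\|R\|=o(t^{1/n})$ with the precise gain $t^{1/n}\cdot O(t^{-1/(2n)})=O(t^{(2-n)/(2n)}\cdot\ldots)$ that ultimately produces the stated $O(t^{-1/(2n)})$ remainder. (The honest bookkeeping of the exponent is where care is needed; see below.)

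Next, I would diagonalize: write $\Phi(s)=\exp(s\,t^{1/n}M_n(\theta))\,\Psi(s)$, so that $\Psi(0)=I$ and
\[
\frac{d\Psi}{ds}=\exp(-s\,t^{1/n}M_n(\theta))\,R\,\exp(s\,t^{1/n}M_n(\theta))\,\Psi=:\tilde R(s,t)\,\Psi .
\]
Here the hypothesis $s<d(\gamma(s))$ is essential: it guarantees that along the whole path we stay uniformly away from the zeros of $q_n$ at scale comparable to $s$, so that the bound on $R$ is uniform in $s\in[0,L]$ and, crucially, the conjugating exponentials $\exp(\pm s t^{1/n}M_n)$ do not amplify $R$ beyond a controlled factor — the diagonal entries $\mu_j$ are ordered and the "bad" direction (largest $\mu_j$) multiplies the small-norm $R$ in a way that the geodesic condition keeps sub-dominant. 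Grönwall then gives $\|\Psi(L)-I\|\le \exp\!\big(\int_0^L\|\tilde R\|\,ds\big)-1=O(t^{-1/(2n)})$, hence $\Phi(L)=\exp(L t^{1/n}M_n(\theta))\,(I+O(t^{-1/(2n)}))$. Finally, translating back through the fixed unitary change of frame $S$ that passes from the $\gamma$-adapted frame to the original holomorphic frame of $E$ (this $S$ depends only on the structural normalizations — the cyclic Higgs field and the splitting — and not on $(P,P')$), and using that $T_{P,P'}(t)$ is conjugate to $\Phi(L)$ by $S$ together with a factor that is absorbed into the $(I+O(t^{-1/(2n)}))$ prefactor on the left, one obtains exactly the claimed formula with $e^{-L t^{1/n}\mu_j}$ on the diagonal (the sign being fixed by the orientation of parallel transport from $P'$ to $P$ versus $P$ to $P'$).

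The main obstacle is the error analysis in the first step: one must show that \emph{every} contribution to $R$ — the deviation of the true harmonic metric $h_j(t)$ from the flat model, the connection terms $\partial\bar\partial\log|q_n|$ and $\partial\log h_j$ that are bounded but nonzero, and the curvature of the $|q_n|^{2/n}$-metric along which $\gamma$ is a geodesic — is of order at most $t^{(n-1)/(2n)}$ uniformly on the geodesic, so that after the diagonalizing conjugation the integrated error is $O(t^{-1/(2n)})$. This is where Theorem \ref{metrictheorem} is used in an essential way: the $O(t^{-2/n})$ relative error in $h_j$ feeds, through $\phi^{*_h}$, into an $O(t^{1/n}\cdot t^{-2/n})=O(t^{-1/n})$ absolute correction to the off-diagonal part of the connection, while the purely "metric-geometry" terms ($\partial\log|q_n|$ etc.) are $O(1)$; combined with the length constraint $L<d(\gamma)$ and the ordering of the $\mu_j$, Grönwall converts this into the stated remainder. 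A secondary, more bookkeeping-type difficulty is verifying that the unitary matrix $S$ genuinely does not depend on $(P,P')$ — this follows because the asymptotic frame-change is dictated entirely by the constant structure of $M_n$ and the splitting, with the point-dependence confined to the scalar diagonal factors $e^{-Lt^{1/n}\mu_j}$ and to the prefactor error, but it should be stated carefully.
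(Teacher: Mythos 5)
Your reduction to the ODE $\frac{d\Phi}{ds}=[t^{1/n}M_n(\theta)+R]\Phi$ and the substitution $\Phi=\Phi_0\cdot(\Phi_0^{-1}\Phi)$ followed by a Gr\"onwall/contraction step is exactly the paper's framework. But there is a genuine gap at the heart of the error analysis. The conjugation $\Phi_0^{-1}R\,\Phi_0$ multiplies the $(k,l)$-entry of $R$ by $e^{(\mu_k-\mu_l)st^{1/n}}$, which grows \emph{exponentially} in $t^{1/n}$ (up to $e^{4st^{1/n}}$). No polynomial bound on $\|R\|$ — and the metric asymptotics of Theorem \ref{metrictheorem} only give polynomial relative errors $O(t^{-2/n})$ — can absorb such a factor; your appeal to "the ordering of the $\mu_j$" and the geodesic condition does not produce any mechanism for cancelling it, and your own exponent bookkeeping is already inconsistent (you assert $\|R\|=O(t^{(n-1)/(2n)})$ is "strictly smaller order than $t^{1/n}$," which is false for $n\ge4$; and an $O(1)$ contribution from the "metric-geometry terms," if it were really present, would by itself destroy the $I+o(1)$ conclusion).

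The missing ingredient is the entrywise \emph{exponential} decay of the error matrix, Theorem \ref{error}: $R_{kl}(p)=O\bigl(t^{-\frac{1}{2n}}e^{-2|1-\zeta\dwn{n}^{k-l}|t^{1/n}d}\bigr)$ for $d<d(p)$. This is not a consequence of Theorem \ref{metrictheorem}; the paper explicitly notes that those bounds "are not strong enough," and proves the sharper estimate by identifying the error functions $\tilde u^{(j)}$ with a cyclic Toda lattice, estimating the eigensolutions $w_k$ (Theorem \ref{ErrorEstimateTheorem}), and expressing $R_{kl}$ through $(w_{k-l})_z$ and $\Delta w_{k-l}$. The hypothesis $s<d(\gamma(s))$ then enters through the elementary inequality $|\mu_k-\mu_l|\le 2|1-\zeta\dwn{n}^{k-l}|$: the decay rate of $R_{kl}$ exactly dominates the growth rate of the conjugating exponential, entry by entry, giving $\Phi_0^{-1}R\,\Phi_0=O(t^{-1/(2n)})$ uniformly, after which Lemma \ref{ODE} closes the argument. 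Without this entrywise estimate and the matching of the two exponential rates, your Step 2 does not go through.
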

\begin{Remark}
	The extra condition on the path is necessary for our method of proof, as the distance from the zeros of the holomorphic differential $q_n$ controls the decay rate of error terms. However, for sufficiently short paths, the extra condition is automatically satisfied. Thus, for each point $P$ away from the zeros of $q_n$, there is a neighborhood $\Uu_P$ for which all $|q_n|^\frac{2}{n}$-geodesics in $\Uu_P$ satisfy the extra condition. 
\end{Remark}

\begin{Remark}
 For $(\Sigma,(0,\cdots,0,tq_{n-1},0))\in Hit_n(S)$, we have similar results in Theorem \ref{subTransportAsymp}. In particular, in this case, $\mu_1=0$ and for $j>1$, $\mu_j=2cos\left(\theta+\frac{2\pi {(j-2)}}{n-1}\right)$.
When $P$ and $P'$ both project to the same point in $\Sigma,$ the projected path is a loop. In this case, the above asymptotics correspond to the values of the associated family of representations on the homotopy class of the loop. \end{Remark}

For $(\Sigma,(0,\cdots,tq_{2n}))\in Hit_{2n}(S),$ the eigenvalues in the parallel transport asymptotics Theorem \ref{TransportAsymp} come in pairs $\lambda,\lambda^{-1};$ thus, the corresponding representation is valued in $Sp(2n,\R).$ Similarly, for $(\Sigma,(0,\cdots,tq_{2n},0))\in Hit_{2n+1}(S)$ the eigenvalues in Theorem \ref{subTransportAsymp} come in pairs $\lambda,\lambda^{-1}$ with $1$ occurring as an eigenvalue; thus, the representation is valued in $SO(n,n+1).$
This is consistent with the description of the Hitchin components for $Sp(2n,\R)$ and $SO(n,n+1)$ as the zero locus of the differentials of odd degree in $Hit_{2n}(S)$ and $Hit_{2n+1}(S)$ mentioned above.

Also, the above eigenvalues are real and positive; this is a general phenomenon for a broader class of representations called Anosov representations, see \cite{AnosovFlows,Anosov}.
Although this is not new, such properties of Hitchin representations have not previously been observed using Higgs bundle techniques. \\

We now discuss the key ideas in the proof of the parallel transport asymptotics Theorem \ref{TransportAsymp}. \\

By the metric asymptotics Theorem \ref{metrictheorem}, the error estimates for the solution metric is $O\left(t^{-\frac{2}{n}}\right),$ these bounds are not strong enough to prove the parallel transport asymptotics Theorem \ref{TransportAsymp} directly. 
To overcome this, we prove very precise bounds for the matrix elements of the error term $R$ in the above ODE (\ref{PhieqIntro}), on a disk of radius $r$. More precisely, the $(k,l)$-entry is shown to have exponential decay 
\[R_{kl}\sim O\left(t^{-\frac{3}{2n}}e^{-2|1-\zeta\dwn{n}^{k-l}|t^\frac{1}{n}(r-|z|)}\right).\] 

To obtain such estimates, the entries of the error matrix are shown to be ``eigensolutions" for a cyclic Toda lattice, and these eigensolutions are estimated. 
The cyclic $SL(n,\C)$ Toda lattice is the following equation system
\eqtns{
\Delta d\up{1}=e^{d\up{1}-d\up{2}}-e^{d\up{n}-d\up{1}}\\
\Delta d\up{2}=e^{d\up{2}-d\up{3}}-e^{d\up{1}-d\up{2}}\\
\ \ \vdots\\
\Delta d\up{n}=e^{d\up{n}-d\up{1}}-e^{d\up{n-1}-d\up{n}}
}{}
Denote $\zeta\dwn{n}=e^{\frac{2\pi i}{n}},$ for $0\leq k\leq n-1,$ define the eigensolutions
\[w_k=\frac{1}{\sqrt{n}}\sum\limits_{i\in\Z_n}\zeta\dwn{n}^{ik}(d\up{i}-d\up{i+1}).\]
Using notation from the proof of the metric asymptotic Theorem \ref{metrictheorem}, set
$\tilde u\up{j}=u\up{j}-ln\left|tq_n\right|^\frac{n+1-2j}{n}.$ In terms of the $\tilde u\up{j}$'s, the Hitchin equations are
\eqtns{\Delta \tilde u\up{1}=4t^{\frac{2}{n}}(e^{\tilde u\up{1}-\tilde u\up{2}}-e^{-2\tilde u\up{1}})\\
\Delta \tilde u\up{2}=4t^{\frac{2}{n}}(e^{\tilde u\up{2}-\tilde u\up{3}}-e^{\tilde u\up{1}-\tilde u\up{2}})\\
\ \ \vdots\\
\Delta (-\tilde u\up{1})=4t^{\frac{2}{n}}(e^{-2\tilde u\up{1}}-e^{\tilde u\up{1}-\tilde u\up{2}})
}{ErrHitn}
As observed by Baraglia \cite{cyclichiggsaffinetoda}, this system is a cyclic Toda lattice given by $(d^1,\cdots,d^n)$ satisfying the extra symmetry $d^{n+1-i}=-d^{i}.$ Our analysis does not depend on this additional symmetry. In section \ref{ErrorToda}, the $(k,l)$-entry of $R$ is related with the eigensolution $w_{k-l}$ of the Toda lattice with $(\tilde u\up{1},\cdots,-\tilde u\up{1})=(d^1,\cdots,d^n)$. We show that the $(k,l)$-entry of the error term $R$ satisfies
\[R_{kl}=C(w_{k-l})_z+Ct^{-\frac{1}{n}}\Delta w_{k-l}\]
(equation (\ref{errortermcyclic_w_k}) below).  

In section \ref{ErrorEstimate}, we make use of maximum principle and an induction scheme repeatedly to prove estimates for the eigensolution $w_k,$ and hence, also obtain estimates of $\Delta w_k$ and $\p_z(w_k)$.   
The induction process relies on the following recursive formula on Toda lattice; since the recursive formula holds for a general cyclic Toda lattice, we believe it is of its own interest. 
\begin{PropositionIntro}[\ref{w_kProposition}](Recursive formula on cyclic Toda lattice)
The $w_k$'s satisfy a recursive formula
	\begin{equation*}
		\Delta w_k= \sum\limits_{\substack{r\equiv k\\ \text{mod}\ n}}\  \sum\limits_{r_1+\cdots+r_s=r} \frac{1}{s!n^{\frac{s-1}{2}}}\binom{r}{r_1,\cdots,r_s}|1-\zeta\dwn{n}^k|^2 w_{r_1}w_{r_2}\cdots w_{r_s}
	\end{equation*}
\end{PropositionIntro}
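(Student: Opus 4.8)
The plan is to pass to the difference variables $a_i := d^{i}-d^{i+1}$ (indices in $\Z_n$, read cyclically), for which the right-hand side of the Toda system becomes a nearest-neighbour difference operator, and then to diagonalize that operator by the discrete Fourier transform over $\Z_n$. First I would subtract consecutive equations of the cyclic $SL(n,\C)$ Toda lattice to get
\[
\Delta a_i=\bigl(e^{d^{i}-d^{i+1}}-e^{d^{i-1}-d^{i}}\bigr)-\bigl(e^{d^{i+1}-d^{i+2}}-e^{d^{i}-d^{i+1}}\bigr)=2e^{a_i}-e^{a_{i-1}}-e^{a_{i+1}}.
\]
Applying $\frac{1}{\sqrt n}\sum_{i\in\Z_n}\zeta\dwn{n}^{ik}(\,\cdot\,)$ and reindexing the two shifted sums then yields
\[
\Delta w_k=\frac{1}{\sqrt n}\bigl(2-\zeta\dwn{n}^{k}-\zeta\dwn{n}^{-k}\bigr)\sum_{i\in\Z_n}\zeta\dwn{n}^{ik}e^{a_i}=\frac{|1-\zeta\dwn{n}^{k}|^2}{\sqrt n}\sum_{i\in\Z_n}\zeta\dwn{n}^{ik}e^{a_i},
\]
because $(1-\zeta\dwn{n}^{k})(1-\zeta\dwn{n}^{-k})=2-\zeta\dwn{n}^{k}-\zeta\dwn{n}^{-k}$. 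This already exhibits the prefactor $|1-\zeta\dwn{n}^k|^2$, so the task reduces to rewriting the $k$-th discrete Fourier coefficient of $(e^{a_i})_{i\in\Z_n}$ through the $w_m$'s.

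The second step is to invert the transform. Since $\sum_{i\in\Z_n}a_i=0$ we have $w_0=0$, and $a_i=\frac{1}{\sqrt n}\sum_{m\in\Z_n}\zeta\dwn{n}^{-im}w_m$. Plugging this into $e^{a_i}=\sum_{s\ge 0}\frac{1}{s!}a_i^{s}$ and expanding each power produces, for every $s$, a sum over index tuples $(r_1,\dots,r_s)$ of monomials $w_{r_1}\cdots w_{r_s}$ weighted by $\frac{1}{s!\,n^{s/2}}$ and carrying a phase $\zeta\dwn{n}^{-i(r_1+\cdots+r_s)}$. Summing against $\zeta\dwn{n}^{ik}$ over $i\in\Z_n$ and using orthogonality of characters — $\sum_{i\in\Z_n}\zeta\dwn{n}^{i(k-r_1-\cdots-r_s)}$ equals $n$ if $r_1+\cdots+r_s\equiv k \pmod n$ and $0$ otherwise — retains precisely the tuples whose indices sum to $k$ modulo $n$. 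Pushing the constant $\frac{|1-\zeta\dwn{n}^k|^2}{\sqrt n}\cdot n$ through, and then sorting the surviving monomials by the integer value $r=r_1+\cdots+r_s$ of their index sum and by the multiplicities with which each residue occurs, should assemble the combinatorial weight into $\frac{1}{s!\,n^{(s-1)/2}}\binom{r}{r_1,\cdots,r_s}|1-\zeta\dwn{n}^k|^2$ and give the claimed recursion.

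The step I expect to be the real work is this last combinatorial reorganization: keeping straight which tuples contribute at all (only $r_j\in\{0,\dots,n-1\}$, with every factor $w_0=0$ discarded) and checking exactly how the $1/s!$ from the exponential series, the $\sqrt n$ from each inverse-transform factor, and the multinomial counts from expanding $a_i^{s}$ combine into the stated coefficient. The analytic points are routine: interchanging the exponential series with the finite sum over $\Z_n$ is justified because the $d^{i}$, and hence the $a_i$, are smooth — in the present setting they are moreover uniformly bounded on compact subsets away from the zeros of the differential — so all series converge absolutely and uniformly there. Finally, I would note, as the statement already anticipates, that this derivation nowhere uses the reflection symmetry $d^{n+1-i}=-d^{i}$ of Hitchin-component solutions, so the recursion holds for an arbitrary solution of the cyclic $SL(n,\C)$ Toda lattice.
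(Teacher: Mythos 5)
Your proposal is correct and is essentially the paper's own argument: the paper carries out the identical computation in matrix form, writing the difference system with the unitary DFT matrix $S$ and using Hadamard products of its columns (the identities $\bar S_i^T * \bar S_j^T = n^{-1/2}\bar S_{i+j\,(\mathrm{mod}\,n)}^T$ and $S_k\bar S_j^T=\delta_{kj}$ play exactly the role of your character orthogonality), but the content --- diagonalizing the cyclic second-difference operator to extract $2-\zeta_n^k-\zeta_n^{-k}=|1-\zeta_n^k|^2$ and then expanding the exponentials of the inverse transform --- is the same, and your scalar presentation is if anything cleaner. On the one step you flag as ``the real work'': carried out carefully, your expansion lands directly on $\Delta w_k=|1-\zeta_n^k|^2\sum_{s}\frac{1}{s!\,n^{(s-1)/2}}\sum w_{r_1}\cdots w_{r_s}$, summed over \emph{ordered} tuples $(r_1,\dots,r_s)\in(\Z_n)^s$ with $r_1+\cdots+r_s\equiv k \pmod n$ and with no multinomial factor; the coefficient $\binom{r}{r_1,\dots,r_s}$ in the stated formula is inserted in the paper's reindexing step without further justification and is best read as notation for that ordered-tuple sum --- the discrepancy is harmless for the rest of the paper, which only uses the linear term $|1-\zeta_n^k|^2 w_k$ and the fact that the remainder is quadratic in the $w_j$'s with bounded coefficients.
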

Finally, the parallel transport asymptotic Theorem \ref{TransportAsymp} is obtained by showing the $w_k$'s have the following asymptotics. 
\begin{TheoremIntro}[\ref{ErrorEstimateTheorem}]
 Let $d\up{i}$ be the error functions $\tilde u\up{i}$ in the Hitchin equations for the $n$-cyclic case, and define $w_k=\frac{1}{\sqrt{n}}\sum\limits_{i\in\Z_n}\zeta\dwn{n}^{ik}(d\up{i}-d\up{i+1})$ on the disk $D$ of radius $r$, then
	\[w_k(z)= O\left(t^{-\frac{3}{2n}}e^{-2|1-\zeta\dwn{n}^k|t^\frac{1}{n}(r-|z|)}\right).\] 
\end{TheoremIntro}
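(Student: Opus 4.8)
The plan is to run a maximum-principle argument on each eigensolution $w_k$, bootstrapped through the recursive formula of Proposition \ref{w_kProposition}. First I would record the qualitative input coming from the metric asymptotics Theorem \ref{metrictheorem}: the functions $\tilde u\up{i}$, and hence each $w_k$, tend to $0$ uniformly on compact subsets away from the zeros of $q_n$, with the crude bound $w_k=O(t^{-2/n})$. This is the seed of the induction. The target is a much sharper pointwise bound on the disk $D=D(r)$, exponential in the distance $r-|z|$ to the boundary, with rate $2|1-\zeta\dwn{n}^k|t^{1/n}$ and amplitude $t^{-3/(2n)}$. The exponential rate is exactly the ``mass'' appearing when one linearizes the Toda system: near the limiting metric the equation for $w_k$ looks like $\Delta w_k\approx 4t^{2/n}|1-\zeta\dwn{n}^k|^2 w_k$ plus higher-order terms, and $4t^{2/n}|1-\zeta\dwn{n}^k|^2=(2|1-\zeta\dwn{n}^k|t^{1/n})^2$ is the square of the claimed decay rate.

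The core step is to construct, for each $k$ with $\zeta\dwn{n}^k\neq 1$, a radial supersolution of the form $\psi_k(z)=A\,t^{-3/(2n)}\,\phi\!\left(2|1-\zeta\dwn{n}^k|t^{1/n}(r-|z|)\right)$ for a suitable fixed profile $\phi$ (a hyperbolic-cosine-type function, or a Bessel-type comparison function adapted to the flat Laplacian in the $|q_n|^{2/n}$-metric), chosen so that $\Delta\psi_k\le 4t^{2/n}|1-\zeta\dwn{n}^k|^2\psi_k$ on $D$ while $\psi_k$ dominates $|w_k|$ on $\partial D$. Feeding Proposition \ref{w_kProposition}: the leading term on the right-hand side is $|1-\zeta\dwn{n}^k|^2 w_k$ (the $s=1$, $r=k$ summand, with coefficient $1$), and every remaining term is a product $w_{r_1}\cdots w_{r_s}$ with $s\ge 2$ or with a shift by a nonzero multiple of $n$; using the inductive hypothesis (which we may take to be a slightly weaker exponential bound, or the same bound with a larger constant, on all $w_{r_j}$ already controlled, and the crude seed bound otherwise) each such product is of strictly higher order in $t^{-1/n}$ and still exponentially decaying, hence absorbable into the supersolution after enlarging the constant $A$. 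One must be careful that the products of exponentials $e^{-2|1-\zeta\dwn{n}^{k}|t^{1/n}(r-|z|)}$ combine with rates that are at least as large as the target rate; this is where the subadditivity/triangle-type inequality $|1-\zeta\dwn{n}^{r_1}|+\cdots+|1-\zeta\dwn{n}^{r_s}|\ge |1-\zeta\dwn{n}^{r}|$ for $r\equiv r_1+\cdots+r_s$ is used. Then the maximum principle (the operator $\Delta - 4t^{2/n}|1-\zeta\dwn{n}^k|^2$ satisfies the comparison principle) gives $|w_k|\le\psi_k$ on $D$, which is the assertion; the case $\zeta\dwn{n}^k=1$ (i.e.\ $k=0$) is excluded as there $w_0$ satisfies an identity, not an estimate of this shape, consistent with the statement being vacuous or trivial for $k\equiv 0$.

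To make the induction well-founded I would order the indices $k\in\{1,\dots,n-1\}$ by the size of $|1-\zeta\dwn{n}^k|$ (equivalently by $\min(k,n-k)$), proving the bound first for the slowest-decaying mode and then downward; in the recursion for a given $k$, the dangerous terms are those involving $w_j$ with $|1-\zeta\dwn{n}^j|$ \emph{smaller} than $|1-\zeta\dwn{n}^k|$, but such a term necessarily has $s\ge 2$ factors or a compensating $n$-shift, so the \emph{total} rate is still $\ge$ the target and the extra factor of $t^{-1/(2n)}$ (or better) from having more factors gives room. Equivalently one can prove all the bounds simultaneously by a continuity/bootstrap argument in $t$: assume the bound with constant $2A$ holds on a maximal $t$-interval, plug into the recursion, and show it self-improves to constant $A$. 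The main obstacle I anticipate is precisely this bookkeeping of exponential rates under the multi-index sum in Proposition \ref{w_kProposition} — verifying uniformly (in $t$, in $z$, and over all partitions $r_1+\cdots+r_s=r$ with $r\equiv k \bmod n$) that each nonleading contribution is genuinely of lower order than the supersolution, so that a single choice of profile $\phi$ and constant $A$ works. A secondary technical point is handling the flat Laplacian of $|q_n|^{2/n}$ rather than the Euclidean one, and the mild distortion of the disk $D$ in that metric, but this only affects the comparison function $\phi$ and constants, not the structure of the argument.
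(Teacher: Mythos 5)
Your overall strategy --- Bessel-type radial supersolutions for $\Delta - 4t^{2/n}|1-\zeta\dwn{n}^k|^2$, the maximum principle, induction on the modes ordered by $|1-\zeta\dwn{n}^k|$, and the triangle inequality $|1-\zeta\dwn{n}^{r_1}|+\cdots+|1-\zeta\dwn{n}^{r_s}|\geq|1-\zeta\dwn{n}^{r}|$ to control the rates of the nonlinear terms --- is the same as the paper's. But there is a genuine gap in how you treat the mixed terms of the recursion, and it is exactly the point where the paper has to work hardest.

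When you estimate $w_{k+1}$, the sum in Proposition \ref{Perturbedw_kProposition} contains products $w_{r_1}\cdots w_{r_s}$ with $r_1+\cdots+r_s\equiv k+1 \bmod n$ in which \emph{one} factor lies in the not-yet-controlled range $A=\{k+1,\dots,n-k-1\}$ (e.g.\ $w_{n-1}w_{k+2}$, coming from $r=k+1+n$). For that factor your proposal falls back on ``the crude seed bound'' $w_j=O(t^{-2/n})$, which carries \emph{no} exponential decay. The product then decays only at the rate contributed by the controlled factors, which can be as small as $2|1-\zeta\dwn{n}|t^{1/n}(r-|z|)$ --- strictly slower than the target rate $2|1-\zeta\dwn{n}^{k+1}|t^{1/n}(r-|z|)$ once $k\geq1$. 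A supersolution built with the target rate cannot absorb a forcing term that decays more slowly than itself: at the center of the disk the forcing eventually dominates the margin $4t^{2/n}|1-\zeta\dwn{n}^{k+1}|^2\psi_{k+1}-\Delta\psi_{k+1}$, and the comparison argument breaks. ``Strictly higher order in $t^{-1/n}$'' is not sufficient here, because the exponential rates, not the $t$-power amplitudes, decide whether the maximum principle closes. (You also misidentify the dangerous terms as those with $|1-\zeta\dwn{n}^j|$ \emph{smaller} than $|1-\zeta\dwn{n}^k|$; those are the inductively controlled ones, and the triangle inequality handles them. The danger comes from the faster modes that are not yet known to decay fast.)

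The paper resolves this with two devices you would need to supply. First, a preliminary coarse estimate (Claim \ref{claimwk+ibounds}) obtained by applying the maximum principle to the \emph{sum} $\sum_i w_{k+i}^2$ over the uncontrolled modes, which yields exponential decay at the intermediate rate $\sqrt{2}\,|1-\zeta\dwn{n}^{k+1}|$ for every $w_j$ with $j\in A$; this is the analogue, one level up, of Lemma \ref{TodaTechnicalLemma2} used for the base case. Second, an iterative case analysis: if the combined rate $2|1-\zeta\dwn{n}|+\sqrt{2}|1-\zeta\dwn{n}^{k+1}|$ already exceeds the target one concludes; otherwise one feeds the improved bound back in to upgrade the coarse rate ($\sqrt 2 \to 2\sqrt2 \to \tfrac32\sqrt2\cdots$), and this bootstrap terminates after finitely many steps. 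Without some version of this two-tier bootstrap, the single supersolution comparison you describe does not close the induction.
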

With the above theorem, it is not hard to show that, as $t\ra\infty,$ the Hitchin equation decouples. This is consistent with the asymptotic studies of \cite{EndsHiggs,TaubesAsymptotics3manifolds}.
\begin{CorollaryIntro} [\ref{decoupleCor}] For $\phi=\tilde e_1+tq_ne_{n-1}$, away from the zeros of $q_n,$ the Hitchin equation $F_{A_t}+[\phi,\phi^{*_{h_t}}]=0$ decouples as $t\ra\infty$
\[\begin{dcases}
	F_{A_t}=0\\
	[\phi,\phi^{*_{h_t}}]=0
\end{dcases}\] 
\end{CorollaryIntro}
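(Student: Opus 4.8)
The plan is to read off the decoupling directly from the error estimate Theorem \ref{ErrorEstimateTheorem}, by expressing both $F_{A_t}$ and $[\phi,\phi^{*_{h_t}}]$ in terms of the shifted error functions $\tilde u^{(j)}$ and their combinations $w_k$. First I would recall that after the metric splitting Corollary \ref{CorMetricSplit} the harmonic metric is diagonal, $h_t = \mathrm{diag}(h_1,\dots,h_1^{-1})$ with $h_j(t) = (t|q_n|)^{-\frac{n+1-2j}{n}} e^{u^{(j)}}$, and $\tilde u^{(j)} = u^{(j)} - \ln|tq_n|^{\frac{n+1-2j}{n}}$ is exactly the error function appearing in the system (\ref{ErrHitn}). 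Working away from the zeros of $q_n$ on a fixed disk $D$ of radius $r$, both curvature and the commutator term are diagonal, and their entries are built from the quantities $\Delta \tilde u^{(j)}$ and from the exponentials $e^{\tilde u^{(j)} - \tilde u^{(j+1)}} - 1$ and $e^{-2\tilde u^{(1)}}-1$ that measure the failure of the decoupled equations.

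The key step is then to estimate these building blocks using Theorem \ref{ErrorEstimateTheorem}. Since $w_k(z) = O\!\left(t^{-\frac{3}{2n}} e^{-2|1-\zeta_n^k| t^{1/n}(r-|z|)}\right)$, on any compact subset $K \subset D$ bounded away from $\partial D$ the factor $e^{-2|1-\zeta_n^k| t^{1/n}(r-|z|)}$ is bounded by $e^{-c t^{1/n}}$ for a constant $c = c(K,r) > 0$ and all $k \not\equiv 0 \bmod n$; hence $w_k \to 0$ exponentially fast in $t$, uniformly on $K$. Inverting the discrete Fourier transform $w_k = \frac{1}{\sqrt n}\sum_{i\in\Z_n} \zeta_n^{ik}(d^{(i)}-d^{(i+1)})$ recovers each difference $\tilde u^{(j)} - \tilde u^{(j+1)}$ as a linear combination of the $w_k$ with $k\neq 0$, so these differences — and therefore $e^{\tilde u^{(j)}-\tilde u^{(j+1)}} - 1$ and $e^{-2\tilde u^{(1)}}-1$ — are also $O(e^{-c t^{1/n}})$ on $K$. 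Multiplying by the prefactor $4t^{2/n}$ appearing in (\ref{ErrHitn}), the right-hand sides still decay like $t^{2/n} e^{-c t^{1/n}} \to 0$, which forces $\Delta \tilde u^{(j)} \to 0$ and hence, feeding back into the formulas for the entries, $F_{A_t} \to 0$ and $[\phi,\phi^{*_{h_t}}] \to 0$ separately as $t\to\infty$, uniformly on $K$. To make this precise I would also invoke the recursive formula of Proposition \ref{w_kProposition} (or simply the estimates on $\Delta w_k$ and $\partial_z w_k$ proved alongside Theorem \ref{ErrorEstimateTheorem}) to control $\Delta \tilde u^{(j)}$ directly rather than through the equation.

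I expect the main obstacle to be purely bookkeeping rather than conceptual: one must carefully write the diagonal entries of $F_{A_t}$ (a sum of $\partial\bar\partial \log h_j$ terms, i.e. $\Delta u^{(j)}$ up to the contribution of the fixed background $|tq_n|$ powers, which is itself curvature-like and must be tracked) and of $[\phi,\phi^{*_{h_t}}]$ (quadratic in the metric ratios) and verify that the leading $|tq_n|$-power contributions are exactly what cancel in the coupled Hitchin equation while the residual pieces are the exponentially small error terms above. The one genuine subtlety is that the decoupling is only \emph{asymptotic} and only away from the zeros of $q_n$: near a zero the disk $D$ cannot be taken with $r$ uniformly large, the exponential decay degenerates, and indeed the limiting configuration is singular there — so the statement must be, and is, phrased as a limit on compact subsets of $\Sigma \setminus q_n^{-1}(0)$. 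I would close by remarking that the $q_{n-1}$ case is identical after replacing $n$ by $n-1$ in the exponents and using the analogue of Theorem \ref{ErrorEstimateTheorem} for that family.
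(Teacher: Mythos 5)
Your proposal is correct and follows essentially the same route as the paper: the paper also notes that $F_{A_t}$ is diagonal with entries $\Delta u^{(j)}=\Delta\tilde u^{(j)}$, expresses each difference $\tilde u^{(j)}-\tilde u^{(j+1)}$ as a linear combination of the $w_k$ via the inverse discrete Fourier transform, applies Theorem \ref{ErrorEstimateTheorem}, and observes that the resulting bound $O\left(t^{\frac{1}{2n}}e^{-2|1-\zeta\dwn{n}|t^{\frac{1}{n}}(R-|z|)}\right)$ still tends to zero after multiplication by $4t^{\frac{2}{n}}$. The bookkeeping point you flag about the background $|tq_n|$-powers is handled exactly as you anticipate, since $\ln|q_n|$ is harmonic away from the zeros of $q_n$.
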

\noindent The analogous corollary for $\phi=\tilde e_1+tq_{n-1}e_{n-2}$ is also valid. 

In section \ref{WKBsection}, generalizing Loftin's techniques in \cite{flatmetriccubicdiff,LoftinNotes}, we obtain an elementary proof of the special case of parallel transport asymptotic Theorem \ref{TransportAsymp} which only involves the highest eigenvalue, or WKB exponent, of $T_{P,P'}.$ 	

The family of metrics $h_t$ solving the Hitchin equations for $\phi=\tilde e_1+tq_n$, gives a family of $\rho_t$-equivariant harmonic maps 
\[f_t:\widetilde{\Sigma}\rightarrow SL(n,\R)/SO(n,\R)\subset (SL(n,\C)/SU(n),d),\]
where $d$ is the metric on the symmetric space.
Studying the asymptotics of the above maps provides the bridge between the above results, and the results of \cite{harmonicbuildingWKB} on the complex WKB problem (i.e. asymptotics of the family (\ref{CWKB}) of holomorphic flat connection $\nabla_t=\nabla_0+t\phi$).

Given two points $P,P'$ in the symmetric space $SL(n,\R)/SO(n,\R)$, the vector distance between them is defined by $\overset{\ra}{d}(P,P')=P-P',$ where the difference is taken in a \em{flat}\em{} (isometric to $\A^{n-1}$) containing both points. One can show $\overset{\ra}{d}(P,P')$ is independent of the choice of flat. In section \ref{HarmonicSection}, we show an asymptotic formula for the family of maps $f_t$ (Equation (\ref{asympft})) which implies the following theorem.

\begin{TheoremIntro}[\ref{harmMapTheorem}] With the same assumptions as the parallel transport asymptotitcs Theorem \ref{TransportAsymp}, for a path $\gamma$ satisfying
\[ s<d(\gamma(s)):=min\{d(\gamma(s), z_0)| \  \text{for all zeros $z_0$ of $q_n$}\},\]
we have
\[\lim\limits_{t\ra\infty}\frac{1}{t^\frac{1}{n}}\overset{\ra}{d}(f_t(\gamma(0)),f_t(\gamma(1)))=\left(-2L\cos\left(\theta\right), -2L\cos\left(\theta+\frac{2\pi }{n}\right),\dots,-2L\cos\left(\theta+\frac{2\pi {(n-1)}}{n}\right)\right).\]
\end{TheoremIntro}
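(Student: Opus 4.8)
The plan is to extract the vector distance between $f_t(\gamma(0))$ and $f_t(\gamma(1))$ directly from the parallel transport asymptotics of Theorem \ref{TransportAsymp}. Recall that for a representation $\rho_t$ into $SL(n,\R)$, the equivariant harmonic map $f_t\colon\widetilde\Sigma\to SL(n,\R)/SO(n,\R)$ can be described using the harmonic metric $h_t$: in a frame the value $f_t(P)$ is represented by the positive symmetric matrix encoding $h_t$ at $P$ relative to a fixed background, and for two points $P,P'$ the vector distance $\overset{\ra}{d}(f_t(P),f_t(P'))$ is computed by conjugating one metric into the frame at the other via the parallel transport operator $T_{P,P'}(t)$ of the flat connection $\nabla_t$, then reading off the logarithms of the singular values (equivalently, diagonalizing in a common flat). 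Concretely, $\overset{\ra}{d}(f_t(\gamma(0)),f_t(\gamma(1)))$ is, up to ordering, the vector of logarithms of the eigenvalues of $T_{P,P'}(t)^{*}\,h_t(P')\,T_{P,P'}(t)\,h_t(P)^{-1}$, or a suitable analogue; the point is that it is governed to leading order by the moduli of the eigenvalues of $T_{P,P'}(t)$.

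First I would set up the asymptotic formula for $f_t$ referred to in the statement as Equation (\ref{asympft}): using the metric splitting Corollary \ref{CorMetricSplit} and the metric asymptotics Theorem \ref{metrictheorem}, the harmonic metric $h_t$ is diagonal in the holomorphic splitting $K^{\frac{n-1}{2}}\oplus\cdots\oplus K^{-\frac{n-1}{2}}$ with entries $(t|q_n|)^{-\frac{n+1-2j}{n}}(1+O(t^{-2/n}))$, so $f_t(P)$ itself, after the natural gauge normalization along $\gamma$, is exponentially close to a diagonal element whose logarithm grows like $-\frac{1}{n}\log(t|q_n|)$ times the weight vector. Then I would invoke Theorem \ref{TransportAsymp}: $T_{P,P'}(t)=(Id+O(t^{-1/(2n)}))S\,\mathrm{diag}(e^{-Lt^{1/n}\mu_j})S^{-1}$ with $S$ unitary and $\mu_j=2\cos(\theta+\tfrac{2\pi(j-1)}{n})$. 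Combining these — the diagonal metric data at the two endpoints, which contribute only lower-order ($\log t$, hence $o(t^{1/n})$) terms after dividing by $t^{1/n}$, and the parallel transport, whose singular values are exactly $e^{-Lt^{1/n}\mu_j}(1+O(t^{-1/(2n)}))$ because $S$ is unitary — the vector distance is $\big(-2L\cos\theta,\dots,-2L\cos(\theta+\tfrac{2\pi(n-1)}{n})\big)\,t^{1/n}+o(t^{1/n})$. Dividing by $t^{1/n}$ and letting $t\to\infty$ yields the claim.

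The main obstacle, and the step requiring the most care, is making precise the relation between $\overset{\ra}{d}(f_t(\gamma(0)),f_t(\gamma(1)))$ and the singular values of the parallel transport operator, including tracking that the metric contributions $h_t(P),h_t(P')$ and the conjugating factors $(Id+O(t^{-1/(2n)}))$ do not corrupt the leading order. The key point making this work is that $S$ is \emph{unitary}: conjugating a diagonal matrix by a unitary matrix and then forming the associated positive symmetric operator preserves the multiset of eigenvalue moduli, so the flat in the symmetric space through $f_t(\gamma(0))$ and $f_t(\gamma(1))$ is, asymptotically, the one diagonalizing $S\,\mathrm{diag}(e^{-Lt^{1/n}\mu_j})S^{-1}$, and the vector distance reads off the exponents $-Lt^{1/n}\mu_j$ directly. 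The $O(t^{-1/(2n)})$ multiplicative error perturbs the eigenvalues by a relative factor $1+O(t^{-1/(2n)})$, hence the logarithms by $O(t^{1/n}\cdot t^{-1/(2n)})=o(t^{1/n})$, which is absorbed after dividing by $t^{1/n}$; similarly the $\log t$ contributions from $h_t(P),h_t(P')$ are $o(t^{1/n})$. A secondary technical point is choosing the gauge/trivialization along $\gamma$ consistently with the one used to derive the ODE (\ref{PhieqIntro}) in section \ref{ODESetup}, so that the same $\mu_j$ appear; this is bookkeeping but must be done explicitly. The $q_{n-1}$ case follows identically from Theorem \ref{subTransportAsymp}, with $\mu_1=0$ and $\mu_j=2\cos(\theta+\tfrac{2\pi(j-2)}{n-1})$.
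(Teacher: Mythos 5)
Your proposal is correct and follows essentially the same route as the paper: the paper likewise writes $f_t(P')=\Psi_t^{-1}(P')^T\,h_t^F\,\overline{\Psi_t^{-1}}(P')\,(1+O(t^{-2/b}))$ in the rescaled holomorphic frame, uses Theorem \ref{metrictheorem} (via Remark \ref{FmetricRemark}) to replace $h_t^F$ by $Id(1+O(t^{-2/b}))$, and then substitutes the Theorem \ref{TransportAsymp} asymptotics with $S$ unitary to obtain (\ref{asympft}) and read off the vector distance. The only cosmetic difference is that the paper's frame rescaling makes the endpoint metric contributions $O(t^{-2/n})$ multiplicative errors rather than the $\log t$ terms you track, but both are absorbed after dividing by $t^{1/n}$.
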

A similar result holds for the $(n-1)$-cyclic case. In \cite{harmonicbuildingWKB}, a similar result is proven for the asymptotics of the complex WKB problem, i.e. the family of holomorphic flat connections $\nabla_t=\nabla_0+t\phi$ (\ref{CWKB}). Thus, Theorem \ref{harmMapTheorem} answers the conjecture in \cite{harmonicbuildingWKB} on the `Hitchin WKB problem' in the special cases where the Higgs bundle is in the Hitchin component and either $n$-cyclic or $(n-1)$-cyclic. 

To obtain better information about the behavior of the maps $f_t$ as $t\ra \infty,$ we rescale the metric on the symmetric space and consider the family of maps
\[f_t:\widetilde\Sigma\ra (SL(n,\R)/SO(n,\R),\frac{1}{t^\frac{1}{n}}d). \]

 By the work of Parreau \cite{CompactificationHit}, one can obtain a version of the limit map
\[f_{\omega}:\widetilde{\Sigma}\rightarrow Cone_{\omega},\] 
which is equivariant with respect to a limiting action $\rho_{\omega}$ of $\pi_1(S)$ on $Cone_{\omega}.$  The metric space $Cone_\omega$ is called the asymptotic cone of the symmetric space; it an affine building \cite{KeinerLeeb,Parreau2}.

In this language, the asymptotic expression (\ref{asympft}) of $f_t$ implies that for the families of rays 
\[(\Sigma,0,\cdots,0,tq_n),(\Sigma,0,\cdots,tq_{n-1},0)\in Hit_n(S)\] 
and for any $P$ away from the zeros of $q_n$ and $q_{n-1},$ there exists a neighborhood $\mathcal{U}_P$ so that the $\rho_{\omega}$-equivariant map
\[f_{\omega}:\widetilde{\Sigma}\rightarrow Cone_{\omega},\] 
sends $\mathcal{U}_P$ into a single apartment of the building $Cone_{\omega}$.


\smallskip

\textbf{Acknowledgements:}\ \ The authors would like to thank their advisors Steve Bradlow and Mike Wolf for encouraging this collaboration and for their instructive comments. We also thank John Loftin and Jakob Blaavand for their many helpful comments, and Andy Sanders and Daniele Alessandrini for numerous enlightening conversations.
Both authors acknowledge the support from U.S. National Science Foundation grants DMS 1107452, 1107263, 1107367 ``RNMS: GEometric structures And Representation varieties'' (the GEAR Network). Our collaboration would not have been possible without this support.

\section{Higgs bundle background and metric splitting}\label{HiggsSection}
The theory of Higgs bundles has been developed from many different perspectives; we will only give a brief review of the objects necessary for our results. 
Fix a Riemann surface structure $\Sigma$ on a closed surface $S$ of genus $g\geq2,$ denote the canonical bundle of $\Sigma$ by $K\ra\Sigma$ and fix a square root $K^\haf$ of the canonical bundle.

\begin{Definition} An $SL(n,\C)$-Higgs bundle over $\Sigma$ is a pair $(E,\phi),$ where $E\ra\Sigma$ is a rank $n$ holomorphic vector bundle with trivial determinant and $\phi\in H^0(\Sigma,End_0(E)\otimes K)$ is a holomorphic traceless $K$-twisted endomorphism.
\end{Definition}
To form the moduli space $\Mm_{Higgs}(SL(n,\C))$ of Higgs bundles, we need a notion of stability.
\begin{Definition} $(E,\phi)$ is \textbf{stable} if for any holomorphic subbundle $F\subset E$ with $\phi|_F:F\ra F\otimes K,$ we have $deg(F)<0;$ it is called \textbf{polystable} if it is a direct sum of stable Higgs bundles $\oplus(E_i,\phi_i).$ 
\end{Definition}
 A smooth $SL(n,\C)$ bundle isomorphism $g:E\ra E,$ also known as an $SL(n,\C)$-gauge transformation, acts on a Higgs bundle by pulling back both the holomorphic structure and the Higgs field. 
 The moduli space $\Mm_{Higgs}(SL(n,\C))$ consists of isomorphism classes of polystable $SL(n,\C)$-Higgs bundles.
\begin{Remark}
There is a $\C^*$ action on $\Mm_{Higgs}(SL(n,\C))$ given by $\lambda\cdot[(E,\phi)]=[(E,\lambda\phi)].$
A point $[(E,\phi)]$ is a fixed point of this action if for all $\lambda\in\C^*,$ there exists an automorphism $g_\lambda:E\ra E$ so that $Ad_{g_\lambda}\phi=\lambda\phi.$ 
We will be interested in the restriction of this action to two subgroups of $\C^*,$ these are $U(1)$ and the $k^{th}$ roots of unity $\langle \zeta\dwn{k}\rangle\subset U(1).$
\end{Remark}

One key ingredient in the nonabelian Hodge correspondence is the following theorem, proven by Hitchin \cite{selfduality} in the rank 2 case and Simpson in the general case \cite{localsystems}.

\begin{Theorem} Let $(E,\phi)$ be a stable $SL(n,\C)$-Higgs bundle, then there exists a unique hermitian metric $h,$ with Chern connection $A_h,$ solving the Hitchin equations
\begin{equation}\label{Hit}F_{A_h}+[\phi,\phi\up{*_h}]=0\end{equation}
where $F_{A_h}$ is the curvature of $A_h$ and $\phi\up{*_h}$ is the hermitian adjoint. Conversely,  if $(A_h,\phi)$ is a solution then the corresponding Higgs bundle is polystable Higgs bundle. 
\end{Theorem}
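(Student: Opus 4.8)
The plan is to establish this as a Hitchin--Kobayashi correspondence for Higgs bundles, following Hitchin's rank-two argument and Simpson's treatment in general rank. The converse is the elementary half, so I would dispose of it first; the existence of the harmonic (Hermitian--Yang--Mills--Higgs) metric is the analytic heart of the statement.

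\textbf{The converse.} Suppose $(A_h,\phi)$ solves \eqref{Hit} and let $F\subset E$ be a $\phi$-invariant holomorphic subbundle. Let $\pi\in\Omega^0(\mathrm{End}\,E)$ be the $h$-orthogonal projection onto $F$, so that $\pi=\pi^2=\pi^{*_h}$, $(\id-\pi)\bar\partial\pi=0$, and $\phi$-invariance gives $(\id-\pi)\phi\,\pi=0$. A Chern--Weil computation gives $\deg F=\tfrac{i}{2\pi}\int_\Sigma\tr(\pi F_{A_h})-\norm{\bar\partial\pi}_{L^2}^2$. Substituting $F_{A_h}=-[\phi,\phi^{*_h}]$ and using $(\id-\pi)\phi\,\pi=0$, one checks $\tfrac{i}{2\pi}\int_\Sigma\tr(\pi[\phi,\phi^{*_h}])=\norm{(\id-\pi)\phi\,\pi}_{L^2}^2\ge 0$, whence $\deg F=-\norm{\bar\partial\pi}_{L^2}^2-\norm{(\id-\pi)\phi\,\pi}_{L^2}^2\le 0$, with equality only if $\bar\partial\pi=0$ and $[\phi,\pi]=0$, i.e. $E=F\oplus F^{\perp_h}$ as Higgs bundles. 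Iterating this dichotomy on subbundles of minimal rank shows $(E,\phi)$ is a direct sum of stable Higgs bundles, i.e. polystable.

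\textbf{Uniqueness.} Given two solutions $h_1,h_2$, write $h_2=h_1e^{s}$ with $s$ trace-free and $h_1$-self-adjoint (the trace-free normalization just fixes the induced metric on the trivial bundle $\det E$). The Donaldson functional is convex along the geodesic $t\mapsto h_1e^{ts}$, and subtracting the two instances of \eqref{Hit} produces a differential inequality of the form $\Delta\,\tr(e^{s})\ge 0$; the maximum principle then forces $\nabla s=0$ and $[\phi,s]=0$, so $s$ is a holomorphic $\phi$-invariant endomorphism. Stability of $(E,\phi)$ makes the Higgs endomorphism ring equal to $\C$, and tracelessness gives $s=0$.

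\textbf{Existence.} For existence I would run Donaldson's nonlinear heat flow $h_t^{-1}\dot h_t=-\bigl(F_{A_{h_t}}+[\phi,\phi^{*_{h_t}}]\bigr)$ from a fixed background metric $h_0$ (Simpson's $\epsilon$-regularization, solving $F_{A_h}+[\phi,\phi^{*_h}]=\epsilon\log(h_0^{-1}h)$ and then letting $\epsilon\to 0$, is an equivalent route). Short-time existence is standard parabolic theory, and higher-order estimates follow by elliptic/parabolic bootstrapping \emph{once a uniform $C^0$ bound on $\log(h_0^{-1}h_t)$ is available}. That $C^0$ estimate is where stability is used and is the step I expect to be the main obstacle: if it failed, a rescaling of $\log(h_0^{-1}h_t)$ would converge to a nonzero weakly holomorphic subbundle that is $\phi$-invariant (the $\phi$-term survives the limit because $[\phi,\phi^{*_h}]$ enters the flow on the same footing as $F_{A_h}$) and of non-negative degree; by the Uhlenbeck--Yau regularity theorem this weakly holomorphic subbundle is an honest coherent subsheaf, and its saturation destabilizes $(E,\phi)$, a contradiction. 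With the $C^0$ estimate in hand, parabolic Schauder estimates give long-time existence and exponential convergence $h_t\to h_\infty$ solving \eqref{Hit}, and the previous step pins down $h_\infty$ uniquely.
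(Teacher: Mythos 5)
This theorem is quoted in the paper as a foundational background result and attributed to Hitchin (for $n=2$) and Simpson (for general $n$); the paper supplies no proof of its own. Your outline --- Chern--Weil for the converse, the maximum principle on $\tr(e^{s})$ plus simplicity of stable Higgs bundles for uniqueness, and the heat flow with the stability-dependent $C^0$ estimate obtained by contradiction via a destabilizing weakly holomorphic $\phi$-invariant subsheaf and Uhlenbeck--Yau regularity for existence --- is a faithful sketch of exactly that standard Hitchin--Simpson argument, with the genuinely hard analytic step correctly identified though, as you acknowledge, not carried out in detail.
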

Such a solution gives rise to the flat $SL(n,\C)$-connection $A+\phi+\phi\up{*_h}.$ This gives a map to the representation variety
\[\Mm_{Higgs}(SL(n,\C))\lra \Rr(\pi_1(S),SL(n,\C)).\]

On a complex vector bundle $E,$ a Hermitian metric $h$ defines the unitary gauge group $\Gg_h,$ consisting of all bundle isomorphisms which preserve the metric. If we denote the Hermitian adjoint of a bundle isomorphism $g$ by $g^{*_h},$ then  
\[\Gg_h=\{g:E\ra E\ |\  g^{*_h}g=Id\}.\] 
It is well know, for instance see chapter 6 of \cite{DiffGeomCompVectBun}, that any two metrics $h$ and $h'$ on $E$ are related by $h'=hv$, where $v\in\Omega^0(\Sigma,End(E))$ is positive and self adjoint with respect to $h.$ Furthermore, the bundle endomorphism $v$ can be decomposed as $v=g^{*_h}g$, where $g$ is a $SL(n,\C)$-gauge transformation. This decomposition is unique up to a unitary gauge transformation.  
\begin{Remark}\label{MetricRemark}
	If we denote the holomorphic structure on $E$ by $\bar \p_E,$ then given a stable Higgs bundle $(\bar \p_E,\phi),$ the above theorem says there is a unique metric $h$ solving the Hitchin equations (\ref{Hit}). For any $SL(n,\C)$-gauge transformation $g,$ the pair $(g^{-1}\bar \p_E g,g^{-1}\phi g)$ also has a unique metric $h'$ solving (\ref{Hit}). The metrics $h$ and $h'$ are related by $h'=hg^{*_h}g.$ This follows from general gauge theoretic arguments, for example see section 3 of \cite{VorticesInLineBundles}; it will be crucial in the proof of Theorem \ref{MetricSplitting}.
\end{Remark}
\begin{Remark}\label{U1Remark}
	Note that if $h$ is a solution metric for a Higgs bundle $(E,\phi),$ then for all $\lambda\in U(1),$ $h$ is also the solution metric for $(E,\lambda\phi).$ This gives a $U(1)$ action on the moduli space of solutions to (\ref{Hit}). This action and its restriction to the $k^{th}$ roots of unity $\langle \zeta\dwn{k}\rangle$ play a key role in the proof of Theorem \ref{MetricSplitting}.
\end{Remark}

A generalization of the following example will be our main object of study. It was studied in detail in Hitchin's original paper \cite{selfduality}.
\begin{Example}\label{teichex}Consider the following family of stable $SL(2,\C)$-Higgs bundles
\[E=K^\haf\oplus K^{-\haf}\ \ \ \ \ \phi=\mtrx{0&q_2\\ 1&0}\]
where $q_2\in H^0(\Sigma,K^2)$ is a holomorphic quadratic differential. When $q_2=0,$ a solution to (\ref{Hit}) is equivalent to finding a hyperbolic metric on $\Sigma$ in the conformal class of the complex structure. Furthermore, using $q_2,$ all hyperbolic metrics can be found this way; this gives a parametrization of Teichm\"uller space by holomorphic quadratic differentials using Higgs bundles.
\end{Example}
\begin{Remark}
For any real reductive Lie group $G,$ there are corresponding definitions and theorems for $G$-Higgs bundles. We will only need to consider $SL(n,\R)$-Higgs bundles, for which we give a definition below. For the general set up see \cite{UpqHiggs,HiggsbundlesSP2nR}. The more complicated set up for real $G$ ensures that the flat connection, which arises from solving the Hitchin equations, has holonomy in the real group $G.$
\end{Remark}
 
\begin{Definition} An $SL(n,\R)$-Higgs bundle over $\Sigma$ is a triple $(E, Q,\phi),$ where $(E,\phi)$ is an $SL(n,\C)$-Higgs bundle and $Q$ is an orthogonal structure on $E$ with the property that $\phi$ is $Q$-symmetric, i.e., $\phi^TQ=Q\phi.$
\end{Definition}

Example \ref{teichex} is actually an $SL(2,\R)$-Higgs bundle. To see this, consider the orthogonal structure  
\[Q=\mtrx{0&1\\1&0}:\xymatrix@=1.5em{K^\haf\oplus K^{-\haf}\ar[r]& K^{-\haf}\oplus K^{\haf}}\] 
thought of as a symmetric isomorphism $Q:E\ra E^*.$

For $SL(n,\R)$-Higgs bundle, the data of $Q$ gives the bundle $E$ an $SO(n,\C)$-structure. By an isomorphism of a $SL(n,\R)$-Higgs bundle, we will mean an $SO(n,\C)$-gauge transformation, i.e., a bundle isomorphism of $E$ which preserves this additional structure.

Let $p_2,\dots,p_{n}$ be a homogeneous basis for the $SL(n,\C)$-invariant polynomials $\C[\fsl(n,\C)]^{SL(n,\C)},$ with $deg(p_j)=j.$ Such a choice of basis defines a map (called the Hitchin fibration)
\[h:\Mm_{Higgs}(SL(n,\C)) \longrightarrow\bigoplus\limits_{j=2}^nH^0(\Sigma, K^j)\]
given by		
\[h([E,\phi])=(p_2(\phi),\dots,p_n(\phi)).\]

In \cite{liegroupsteichmuller}, Hitchin defines a section $s_h$ of this fibration whose image consists of stable Higgs bundles with corresponding flat connections having holonomy in $SL(n,\R).$
Furthermore, the section $s_h$ maps surjectively to the connected component of the $SL(n,\R)$-Higgs bundle moduli space which naturally contains an embedded copy of Teichm\"uller space (example \ref{teichex}).  Let
\[(E,Q)=S^{n-1}\left(K^\haf\oplus K^{-\haf},\mtrx{0&1\\1&0}\right)=\left(K\up{\frac{n-1}{2}}\oplus K\up{\frac{n-3}{2}}\oplus\dots\oplus K^{-\frac{n-3}{2}}\oplus K\up{-\frac{n-1}{2}},\mtrx{&&1\\ &\iddots&\\ 1&&}\right)\] 
be the $(n-1)$'st symmetric power of example \ref{teichex},
and $(q_2,q_3,\dots, q_n)\in\bigoplus\limits_{j=2}^nH^0(\Sigma, K^j).$ The Hitchin section is defined by
\[s_h(q_2,q_3,\dots,q_n)=\left [E,\mtrx{&&1\\&\iddots&\\1&&},\mtrx{
0&\haf(n-1)q_2 &q_3 & q_4& \dots&q_{n-1}&q_{n}\\
1&0 &\haf(n-3)q_2 &q_3&\dots&q_{n-2}&q_{n-1}\\ 
&\ddots&\ddots&\ddots&&&\\
&&&& 0&\haf(n-3)q_2 &q_3\\
&&&&1 &0 &\haf(n-1)q_2\\
&&&&&1 &0}\right].\]

The embedded copy of Teichm\"uller space results from setting $q_3=\dots=q_n=0$; it arises as the $(n-1)$st symmetric power of example \ref{teichex}. Through Kostant's work \cite{ptds} on the principal three-dimensional subalgebra, there exists a homogeneous basis $\{p_2,\dots,p_n\}$ of the invariant polynomials so that $p_j(\phi)=q_j,$ verifying that $s_h$ is a section. Because of its link with the principal three-dimensional subalgebra, we will denote the Higgs field associated to $s_h(q_2,q_3,\dots,q_n)$ by 
\[\phi=\tilde e_1+q_2e_1+q_3e_2+\dots +q_ne_{n-1}.\]
The constants on the $q_2$'s are necessary to make $\langle\tilde e_1,e_1,[e_1,\tilde e_1]\rangle$ a Lie subalgebra isomorphic to $\fsl(2,\C).$ 

We now prove the metric splitting theorem which generalizes results of Baraglia \cite{g2geometry}. 
\begin{Theorem}\label{MetricSplitting} Let $(E,\phi)$ be a Higgs bundle in the Hitchin component $Hit_n(S)$ with \[\phi=\tilde e_1+\sum\limits_{j=0\ mod\ k}q_je_{j-1}.\] Then the metric solving the Hitchin equation (\ref{Hit}) splits as a direct sum metric on $E=E_1\oplus\cdots\oplus E_k$, where
	\[E_j=K^\frac{n+1-2j}{2}\oplus K^{\frac{n+1-2j}{2}-k}\oplus K^{\frac{n+1-2j}{2}-2k}\oplus\cdots .  \] 
\end{Theorem}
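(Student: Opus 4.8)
The plan is to exploit the finite symmetry of the Higgs field under the group $\langle\zeta_k\rangle$ of $k$-th roots of unity, combined with the uniqueness of the solution metric, to conclude that the solution metric is fixed by an induced $\langle\zeta_k\rangle$-action on the bundle and hence must respect the corresponding eigenbundle decomposition. Concretely: write $\phi=\tilde e_1+\sum_{j\equiv 0\ (k)}q_je_{j-1}$, where $\tilde e_1$ has degree $1$ in the natural grading and each $e_{j-1}$ has degree $j-1$ (here I am using the principal grading on $E=\bigoplus K^{(n+1-2i)/2}$ coming from the principal $\mathfrak{sl}_2$, so that the $i$-th summand has weight, say, $\tfrac{n+1-2i}{2}$, and $e_{m}$ raises weight by $m$). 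First I would check that because every differential appearing in $\phi$ has degree divisible by $k$, the only nonzero off-diagonal entries connect summands whose indices differ by $1$ (for $\tilde e_1$) or by a multiple of $k$ minus something — more precisely, I would verify that $\phi$ is homogeneous of degree $1$ with respect to the $\Z/k$-grading of $E$ in which the $i$-th line bundle $K^{(n+1-2i)/2}$ is placed in degree $i \bmod k$. The subentry $1$ on the subdiagonal shifts $i\mapsto i+1$, i.e. shifts $\Z/k$-degree by $1$; the entry $q_j$ in the appropriate slot of $e_{j-1}$ shifts $i$ by $-(j-1)$, and since $j\equiv 0\ (k)$ this is $\equiv 1 \ (k)$. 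So indeed $\phi$ has pure $\Z/k$-degree $1$.

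Next I would produce the automorphism. Let $g_\zeta\colon E\to E$ act on the $i$-th summand $K^{(n+1-2i)/2}$ by the scalar $\zeta^{i}$, where $\zeta=\zeta_k=e^{2\pi i/k}$. Then $g_\zeta$ is an $SL(n,\C)$-gauge transformation (up to an overall scalar which can be absorbed, since we may instead normalize so $\det g_\zeta=1$; the overall scalar does not affect the argument), and the computation above shows $\mathrm{Ad}_{g_\zeta}\phi = \zeta\,\phi$. Thus $(E,\phi)$ and $(E,\zeta\phi)$ are gauge-equivalent via $g_\zeta$. By Remark \ref{U1Remark}, the solution metric $h$ for $(E,\phi)$ is also the solution metric for $(E,\zeta\phi)$; and by Remark \ref{MetricRemark} (uniqueness of the solution metric and its behaviour under gauge transformations), the solution metric for $(E,\zeta\phi)=(g_\zeta^{-1}\bar\partial_E g_\zeta, g_\zeta^{-1}\phi g_\zeta)$ equals $h\, g_\zeta^{*_h} g_\zeta$. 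Combining, $h = h\, g_\zeta^{*_h}g_\zeta$, i.e. $g_\zeta^{*_h}g_\zeta=\mathrm{Id}$, so $g_\zeta$ is $h$-unitary. Since $g_\zeta$ is also diagonal in the given holomorphic splitting, being $h$-unitary forces $h$ to have no cross terms between distinct $\zeta$-eigenspaces: if $v,w$ lie in eigenspaces with eigenvalues $\zeta^{a}\neq\zeta^{b}$, then $h(v,w)=h(g_\zeta v, g_\zeta w)=\zeta^{a}\overline{\zeta^{b}}\,h(v,w)=\zeta^{a-b}h(v,w)$, hence $h(v,w)=0$. Therefore $h$ splits as a direct sum over the $\Z/k$-eigenbundles, which are exactly the $E_j$ in the statement: $E_j$ collects those summands $K^{(n+1-2i)/2}$ with $i\equiv j\ (k)$, and starting from $i=j$ these are $K^{\frac{n+1-2j}{2}}\oplus K^{\frac{n+1-2j}{2}-k}\oplus\cdots$.

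Finally I would note the (mild) technical points that need care: first, that $g_\zeta$ can genuinely be taken in the $SL$-gauge group — one either absorbs the determinant by a global constant or observes that the relevant statements (uniqueness of the Hermitian–Einstein metric and its transformation law under gauge) are insensitive to constant rescalings; second, that one must invoke the stability of the Higgs bundles in the Hitchin component so that the solution metric exists and is unique, which is Hitchin's/Simpson's theorem already quoted. I expect the main obstacle to be purely bookkeeping: carefully verifying the $\Z/k$-homogeneity of $\phi$ for the given normalization of the Hitchin section (tracking the constants on the $q_2$-type entries and the exact placement of each $q_j$ in the companion-type matrix), and making sure the induced grading on $E$ is the one for which each $E_j$ has the stated form. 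Once that homogeneity is in hand, the uniqueness-of-solution argument via Remarks \ref{MetricRemark} and \ref{U1Remark} is immediate.
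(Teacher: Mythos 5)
Your proposal is correct and follows essentially the same route as the paper: exhibit a diagonal gauge transformation $g$ with $\mathrm{Ad}_g\phi=\zeta_k\phi$, use uniqueness of the solution metric together with the $U(1)$-invariance (Remarks \ref{MetricRemark} and \ref{U1Remark}) to conclude $g^{*_h}g=\mathrm{Id}$, and deduce that $h$ is block-diagonal with respect to the $\langle\zeta_k\rangle$-eigenbundles. The only difference is normalization: the paper takes $g_k=\mathrm{diag}(\zeta_{2k}^{1-n},\zeta_{2k}^{3-n},\dots,\zeta_{2k}^{n-1})$, which differs from your $\mathrm{diag}(\zeta^i)$ by a unimodular scalar and is chosen to lie in the $SO(n,\C)$-gauge group with determinant $1$, exactly the bookkeeping point you flagged.
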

\begin{proof}
Let $\zeta\dwn{k}=e^\frac{2\pi i}{k}$ and consider the gauge transformation of $E=K^\frac{n-1}{2}\oplus\dots\oplus K^{-\frac{n-1}{2}}$ given by 
\begin{equation}\label{gaugetrans}g\dwn{k}=\mtrx{
\zeta\dwn{2k}^{1-n}&&&\\
&\zeta^{3-n}\dwn{2k}&& \\
&& \ddots&\\
&&&\zeta^{n-1}\dwn{2k}}:E\lra E\end{equation}
Note that $g\dwn{k}^TQg\dwn{k}=Q,$ so $g\dwn{k}$ is indeed an $SO(n,\C)$-gauge transformation. 
The action of $g\dwn{k}$ on the Higgs field is  
\[Ad_{g\dwn{k}}(\tilde e_1+\sum\limits_{j=2}^nq_je_{j-1})=\zeta\dwn{k}\tilde e_1+\sum\limits_{j=2}^n\zeta\dwn{k}^{1-j}q_je_{j-1}.\]
Thus, for \[\phi=\tilde e_1+\sum\limits_{j=0\ mod\ k}q_je_{j-1},\] we have $Ad_{g\dwn{k}}\phi=\zeta\dwn{k}\phi,$ so 
$(E,Q,\phi)$ is a fixed point of the $k^{th}$ roots of unity action on $\Mm_{Higgs}(SL(n,\R)).$ 

One checks that, with respect to $g\dwn{k},$ the eigenbundle decomposition $E=E_1\oplus\cdots E_k$ is given by
\[E_j=K^{\frac{n+1-2j}{2}}\oplus K^{\frac{n+1-2j}{2}-k}\oplus K^{\frac{n+1-2j}{2}-2k}\oplus\cdots .\]
To see that the metric $h$ splits, we will show the gauge transformation $g\dwn{k}$ is unitary, that is $g\dwn{k}^*g\dwn{k}=Id.$ Since the triple $(\bar\p_E,\phi,h)$ solves the Hitchin equations (\ref{Hit}), by remark \ref{MetricRemark}, the triple $(g\dwn{k}^{-1}\bar \p_E g\dwn{k},g\dwn{k}^{-1}\phi g\dwn{k},hg\dwn{k}^*g\dwn{k})$ also solves (\ref{Hit}). We have computed 
\[(g\dwn{k}^{-1}\bar \p_E g\dwn{k},g\dwn{k}^{-1}\phi g\dwn{k})=(\bar \p_E,\zeta\dwn{k}\phi),\]
thus $(\bar \p_E,\zeta\dwn{k}\phi,hg\dwn{k}^*g\dwn{k})$ solves (\ref{Hit}) as well. Now, using the $U(1)$ action and remark \ref{U1Remark}, the triple $(\bar \p_E,\phi,hg\dwn{k}^*g\dwn{k})$ solves (\ref{Hit}). By uniqueness of the metric, 
\[h=hg\dwn{k}^*g\dwn{k}\]
proving that $g\dwn{k}$ is unitary. Since $g\dwn{k}$ is both unitary and preserves the eigenbundle splitting $E_1\oplus\cdots\oplus E_k$, the metric $h$ splits as $h_1\oplus\cdots\oplus h_k.$ 
\end{proof}

So far, we have only used $SL(n,\C)$ properties of the Higgs bundles in the Hitchin component, we now use the $SL(n,\R)$ nature of the Hitchin component to further constrain the metric. Since $h$ is a metric on an orthogonal bundle, it gives a reduction of structure from $SO(n,\C)$ to $SO(n,\R).$ As a result, it must be $Q$-orthogonal, that is \[h^TQh=Q.\]
This leads to the following corollary, which is known for $k=n$ \cite{g2geometry}.
\begin{Corollary}\label{CorMetricSplit}
For $k=n$ and $k=n-1,$ the Higgs fields are $\phi=\tilde e_1+q_ne_{n-1}$ and $\phi=\tilde e_1+q_{n-1}e_{n-2}$, and the harmonic metric splits as 
\[h_1\oplus h_2\oplus\dots\oplus h_2^{-1}\oplus h_1^{-1}\] 
on the direct sum of line bundles 
\[K^{\frac{n-1}{2}}\oplus K^{\frac{n-3}{2}}\oplus\dots\oplus K^{-\frac{n-3}{2}}\oplus K^{-\frac{n-1}{2}}.
\]
Here $h_j^{-1}$ denotes the induced metric on the dual bundle. 
	
\end{Corollary}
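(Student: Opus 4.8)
The plan is to combine the metric splitting Theorem~\ref{MetricSplitting}, which is a purely $SL(n,\C)$ statement, with the fact recorded just above that the harmonic metric is $Q$-orthogonal, $h^TQh=Q$ — this last fact is where the $SL(n,\R)$-structure enters. The one preliminary observation is that, in the standard holomorphic frame of $E=K^{\frac{n-1}{2}}\oplus\cdots\oplus K^{-\frac{n-1}{2}}$, the form $Q=\mtrx{&&1\\&\iddots&\\1&&}$ pairs the $j$-th summand $K^{\frac{n+1-2j}{2}}$ with the $(n+1-j)$-th summand $K^{-\frac{n+1-2j}{2}}$.

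For $k=n$ (so $\phi=\tilde e_1+q_ne_{n-1}$), the telescope $K^{\frac{n+1-2j}{2}},\,K^{\frac{n+1-2j}{2}-n},\,\dots$ defining the eigenbundle $E_j$ in Theorem~\ref{MetricSplitting} terminates after one term, since every later exponent leaves the set $\{\tfrac{n-1}{2},\tfrac{n-3}{2},\dots,-\tfrac{n-1}{2}\}$ of $K$-powers occurring in $E$. Hence $E_j=K^{\frac{n+1-2j}{2}}$ and the metric is already diagonal, $h=h_1\oplus\cdots\oplus h_n$ with $h_j$ a metric on $K^{\frac{n+1-2j}{2}}$. Feeding this diagonal $h$ into $h^TQh=Q$ and reading off the $(j,n+1-j)$ entry gives $h_jh_{n+1-j}=1$, i.e.\ $h_{n+1-j}=h_j^{-1}$ is the induced dual metric; thus $h=h_1\oplus h_2\oplus\cdots\oplus h_2^{-1}\oplus h_1^{-1}$ (and when $n$ is odd the middle factor is forced to be the trivial metric on $\Oo$).

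For $k=n-1$ (so $\phi=\tilde e_1+q_{n-1}e_{n-2}$), the same telescope terminates after one term for $j=2,\dots,n-1$, giving line bundles $E_j=K^{\frac{n+1-2j}{2}}$, while for $j=1$ it has two terms, $E_1=K^{\frac{n-1}{2}}\oplus K^{-\frac{n-1}{2}}$. So Theorem~\ref{MetricSplitting} gives $h=h_{E_1}\oplus h_2\oplus\cdots\oplus h_{n-1}$. Here $Q$ preserves the rank-two block $E_1$ (it pairs the first and $n$th summands, both lying in $E_1$) and interchanges $E_j\leftrightarrow E_{n+1-j}$ for $2\le j\le n-1$, so $h^TQh=Q$ holds block by block: on the line-bundle pairs the $k=n$ computation again yields $h_{n+1-j}=h_j^{-1}$, and it remains only to diagonalize $h_{E_1}$. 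Writing $h_{E_1}=\mtrx{a&b\\ \bar b&c}$ with $a,c>0$ in the frame $\bigl(K^{\frac{n-1}{2}},K^{-\frac{n-1}{2}}\bigr)$, in which $Q|_{E_1}=\mtrx{0&1\\1&0}$, the equation $h_{E_1}^{T}\,Q|_{E_1}\,h_{E_1}=Q|_{E_1}$ forces $a\bar b=0$ and $ac+|b|^2=1$, hence $b=0$ and $c=a^{-1}$; setting $h_1:=a$ gives $h_{E_1}=h_1\oplus h_1^{-1}$, and altogether $h=h_1\oplus h_2\oplus\cdots\oplus h_2^{-1}\oplus h_1^{-1}$ on $K^{\frac{n-1}{2}}\oplus\cdots\oplus K^{-\frac{n-1}{2}}$.

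The only input beyond Theorem~\ref{MetricSplitting} is the $Q$-orthogonality of $h$, and the only place it does more than bookkeeping is the diagonalization of the rank-two block $E_1$ in the $(n-1)$-cyclic case; I do not anticipate a genuine obstacle. The two points requiring care are matching the eigenbundle indexing of Theorem~\ref{MetricSplitting} with the action of $Q$ on the summands, and verifying that the telescope defining $E_j$ terminates exactly where claimed — after one term when $k=n$, and after one term except for the single rank-two block $E_1$ when $k=n-1$.
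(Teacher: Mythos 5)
Your proposal is correct and follows essentially the same route as the paper: apply Theorem~\ref{MetricSplitting} to get the eigenbundle splitting (all line bundles for $k=n$; one rank-two block $E_1=K^{\frac{n-1}{2}}\oplus K^{-\frac{n-1}{2}}$ for $k=n-1$), then impose $h^TQh=Q$ to pair the line-bundle metrics as $h_jh_{n+1-j}=1$ and to diagonalize the rank-two block. The only cosmetic difference is that you kill the off-diagonal entry of $h_{E_1}$ using hermiticity and positivity ($2a\bar b=0$ with $a>0$), whereas the paper invokes $\det(h)=1$ together with the same $Q$-orthogonality identity; both arguments are valid.
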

\begin{proof}
When $k=n$, $\phi=\tilde e_1+q_ne_{n-1}$ is a fixed point of the $n^{th}$ roots of unity action. By Theorem \ref{MetricSplitting}, the original holomorphic decomposition is the eigenbundle decomposition of (\ref{gaugetrans}), and $h=h_1\oplus\dots\oplus h_n.$  The constraint coming from $Q$ is 
\[\mtrx{h_1&&&\\&h_2&&\\&&\ddots&\\&&&h_n}\mtrx{&&&1\\&&1&\\&\iddots&&\\1&&&}\mtrx{h_1&&&\\&h_2&&\\&&\ddots&\\&&&h_n}=\mtrx{&&&1\\&&1&\\&\iddots&&\\1&&&},\]
thus $h_1h_n=h_2h_{n-1}=\dots=1$ and the metric splits as $h=h_1\oplus h_2\oplus\dots\oplus h_2^{-1}\oplus h_1^{-1}.$
When $k=n-1$, $\phi=\tilde e_1+q_{n-1}e_{n-2}$ is a fixed point of the $(n-1)^{st}$ roots of unity action, and the eigenbundle splitting of theorem \ref{MetricSplitting} is
\[(K^\frac{n-1}{2}\oplus K^{-\frac{n-1}{2}})\oplus K^\frac{n-3}{2}\oplus K^\frac{n-5}{2}\oplus\dots \oplus K^{-\frac{n-5}{2}}\oplus K^{-\frac{n-3}{2}}.\]
The condition $h^TQh=Q$ is 
\[\mtrx{h_1^{11}&h_1^{21}&&&\\h_1^{12}&h_1^{22}&&&\\&&h_2&&\\&&&\ddots&\\&&&&h_{n-1}}
\mtrx{0&1&&&\\1&0&&&\\&&&&1\\&&&\iddots&\\&&1&&}\mtrx{h_1^{11}&h_1^{12}&&&\\h_1^{21}
&h_1^{22}&&&\\&&h_2&&\\&&&\ddots&\\&&&&h_{n-1}}=\mtrx{0&1&&&\\1&0&&&\\&&&&1\\&&&\iddots&\\&&1&&}\]
thus $h_2h_{n-1}=h_3h_{n-2}=\dots=1.$ The constraint $det(h)=1$ together with 
\[\mtrx{h_1^{11}&h_1^{21}\\h_1^{12}
&h_1^{22}}\mtrx{0&1\\1&0}\mtrx{h_1^{11}
&h_1^{12}\\h_1^{21}
&h_1^{22}}=\mtrx{0&1\\1&0}\]
implies \[\mtrx{h_1^{11}&h_1^{12}\\h_1^{21}&h_1^{22}}=\mtrx{h_1^{11}&0\\0&(h_1^{11})^{-1}}\]
So, in the original splitting $E=K^\frac{n-1}{2}\oplus K^\frac{n-3}{2}\oplus \dots \oplus K^{-\frac{n-3}{2}}\oplus K^{-\frac{n-1}{2}},$ the metric splits as 
\[h=h_1\oplus h_2\oplus \dots \oplus h_2^{-1}\oplus h_1^{-1}.\]
\end{proof}

\begin{Remark}
For $\phi=\tilde e_1+q_{n-2}e_{n-3},$ the eigenbundle splitting from Theorem \ref{MetricSplitting} is 
\[(K^\frac{n-1}{2}\oplus K^{-\frac{n-3}{2}})\oplus(K^\frac{n-3}{2}\oplus K^{-\frac{n-1}{2}})\oplus K^\frac{n-5}{2}\oplus \dots \oplus K^{-\frac{n-5}{2}}\]
the metric splits as $h=h_1\oplus h_2\oplus h_3\oplus\dots\oplus h_{n-2}$  where $h_1$ and $h_2$ are metrics on rank 2 bundles. The condition $h^TQh=Q$ tells us $h_3h_{n-2}=\dots=1$ and 
$h_1^T\mtrx{0&1\\1&0}h_2=\mtrx{0&1\\1&0},$ which does not imply the metric splits on line bundles. Thus, looking at fixed points for smaller $k$ gives less information about the metric.
\end{Remark}

Since $\phi^*=h^{-1}\overline{\phi}^Th$, one checks that with respect to the eigenbundle splitting of Theorem \ref{MetricSplitting}, fixed points of $\langle\zeta\dwn{k}\rangle$ in the Hitchin component have the following form
\[\phi=\mtrx{0&&&\phi_k\\\phi_1&0&&\\&\ddots&\ddots&\\&&\phi_{k-1}&0}\ \ \ \ \ \ \ \ \text{and} \ \ \ \ \ \ \ \phi^{*_h}=\mtrx{0&\phi_1^*&&\\&&\ddots&\\&&&\phi_{k-1}^*\\\phi_k^*&&&0}\] 
with $\phi_j:E_j\ra E_{j+1}\otimes K$ and $\phi_j^*:E_{j+1}\ra E_j\otimes \overline{K},$ where $j+1$ is taken $mod\ k.$ The adjoint $\phi_j^*$ is defined by $\phi_j^*=h_{j}^{-1}\overline{\phi_j}^Th_{j+1}.$   

Thus, for fixed points of $\langle\zeta_k\rangle,$ the equation (\ref{Hit})
simplifies to the following system of coupled equations
\begin{equation}\label{fixedeq}
F_{A_j}+\phi_{j-1}\wedge\phi_{j-1}^*+\phi_j^*\wedge\phi_j=0.\end{equation}
These equations are a special case of the twisted quiver bundle equations considered in \cite{KHCquiversvortices}. 

The above results have generalizations to fixed points of the $k^{th}$ roots of unity actions on the moduli space of $G$-Higgs bundles, which we call $k$-cyclic Higgs bundles. 
This is the topic of the first author's thesis \cite{mythesis}.

\section{Equations, flat connections and metric asymptotics}\label{MetricAsympSection}
In this section, Corollary \ref{CorMetricSplit} will be used to write the Hitchin equations as a system of $\lfloor \frac{n}{2}\rfloor$ fully coupled nonlinear elliptic equations, and to give an explicit description of the corresponding flat connections. After this, we prove the main theorem concerning the asymptotics of the metric solving the Hitchin equations. The proof is quite long and can be skipped on first reading. Finally, an important bound on the metric's first derivative is proved.

There are slight differences when $n$ is even compared to when $n$ is odd. We will always work in the even case and mention what the differences are for the odd case. One obvious difference in the odd case is the middle line bundle of $E$ is a trivial bundle; for both $\phi=\tilde e_1+q_ne_{n-1}$ and $\phi=\tilde e_1+q_{n-1}e_{n-2},$ the metric on the trivial line bundle is the standard one on $\C$. 

\subsection{Equations}
Since the metric splits as $h=h_1\oplus h_2\oplus\dots\oplus h_2^{-1}\oplus h_1^{-1},$ the adjoints of the Higgs fields $\phi=\tilde e_1+q_ne_{n-1}$ and $\phi=\tilde e_1+q_{n-1}e_{n-2}$ are respectively
\[\phi^*=\mtrx{0&h\dwn{1}^{-1}h\dwn{2}&0&&&\\
&&h\dwn{2}^{-1}h\dwn{3}&&&\\
&&&&\ddots&
\\
0&&&&0&h\dwn{1}^{-1}h\dwn{2}\\
h\dwn{1}^{2}\bar q_n&0&&&&0}\ \ \ \ \ \ 
\phi^*=\mtrx{0&h\dwn{1}^{-1}h\dwn{2}&0&&&\\
&&h\dwn{2}^{-1}h\dwn{3}&&&\\
&&&&\ddots&
\\
h\dwn{1}h_2\bar q_{n-1}&&&&0&h\dwn{1}^{-1}h\dwn{2}\\
0&h\dwn{1}h\dwn{2}\bar q_{n-1}&&&&0}\]

We are interested in the corresponding family of flat connections as the differentials $q_n$ and $q_{n-1}$ are scaled by a real parameter $t$. Using (\ref{fixedeq}), the Hitchin equations for $n$-cyclic Higgs field $\phi=\tilde e_1+tq_ne_{n-1}$ become:
\eqtns{
F\dwn{A_1}+t^2h\dwn{1}^2q_n\wedge \bar q_n - h\dwn{1}^{-1}h\dwn{2}=0\\
F\dwn{A_j} + h_{j-1}^{-1}h\dwn{j} - h\dwn{j}^{-1}h\dwn{j+1}=0 & 1<j<\frac{n}{2}\\
F\dwn{A_{\frac{n}{2}}}+h\dwn{\frac{n}{2}-1}^{-1}h\dwn{\frac{n}{2}}-h_{\frac{n}{2}}^{-2}=0
}{}
Here all the metrics, and hence, all the curvature forms depend on $t.$ We will suppress the $t$ dependence from the notation.
When $n$ is odd, the last equation is changed to $F\dwn{A_{\frac{n-1}{2}}}+h\dwn{\frac{n-1}{2}-1}^{-1}h\dwn{\frac{n-1}{2}}-h_{\frac{n-1}{2}}^{-1}=0.$

To understand the flat connection we choose a local coordinate $z$ on $\Sigma.$ Such a choice gives a local holomorphic frame $(s_1,s_2,\dots,s_2^*,s_1^*)$ for $E,$
where $s_j=dz^{\frac{n+1-2j}{2}}$ is the local frame of $K^\frac{n+1-2j}{2}$ induced by the coordinate $z.$ With respect to this choice of coordinates, the Higgs field is locally given by
\[\phi=\mtrx{0&&&f\dwn{n}
\\
1&&&\\
&\ddots&&\\
&&1
&0
}\]
where $q_n=f\dwn{n}dz^n,$ for some function $f\dwn{n}.$

With respect to this frame, locally represent the metric $h_{j}$ by $e^{-\lambda\up{j}},$ here the $j$ is a superscript and \textbf{not} an exponent. 
Recall that in a holomorphic frame, the Chern connection has connection 1-form $A=H^{-1}\p H$ and curvature 2-form given by $F_A=\bar\p (H^{-1}\p H)$. Since $h_j$ is a metric on  a line bundle, the expressions simplify to   
\[A_j=-\lambda\up{j}\dwn{z}dz\ \ \ \ \ \ \text{and}\ \ \ \ \ \ F\dwn{A_j}=\lambda\up{j}\dwn{z\bz}dz\wedge d\bz.\]
The equations may be rewritten as:
\eqtns{
\lambda\up{1}\dwn{z\bz}+t^2e^{-2\lambda\up{1}}|q_n|^2 - e^{\lambda\up{1}-\lambda\up{2}}=0\\
\lambda\up{j}\dwn{z\bz} + e^{\lambda\up{j-1}-\lambda\up{j}} - e^{\lambda\up{j}-\lambda\up{j+1}}=0 & 1<j<\frac{n}{2}\\
\lambda\up{\frac{n}{2}}\dwn{z\bz} + e^{\lambda\up{\frac{n}{2}-1}-\lambda\up{\frac{n}{2}}} - e^{2\lambda\up{\frac{n}{2}}}=0
}{}
Similarly for $(n-1)$-cylcic Higgs field $\phi=\tilde e_1+tq_{n-1} e_{n-2},$ we may rewrite the Hitchin equations as
\eqtns{
\lambda\up{1}\dwn{z\bz}+t^2e^{-\lambda\up{1}-\lambda\up{2}}|q_{n-1}|^2 - e^{\lambda\up{1}-\lambda\up{2}}=0\\
\lambda\up{2}\dwn{z\bz}+t^2e^{-\lambda\up{1}-\lambda\up{2}}|q_{n-1}|^2 +e^{\lambda\up{1}-\lambda\up{2}}-e^{\lambda\up{2}-\lambda\up{3}}=0\\
\lambda\up{j}\dwn{z\bz} + e^{\lambda\up{j-1}-\lambda\up{j}} - e^{\lambda\up{j}-\lambda\up{j+1}}=0 & 2<j<\frac{n}{2}\\
\lambda\up{\frac{n}{2}}\dwn{z\bz} + e^{\lambda\up{\frac{n}{2}-1}-\lambda\up{\frac{n}{2}}} - e^{2\lambda\up{\frac{n}{2}}}=0
}{}
Again, in the odd case, the last equation is changed to $\lambda\up{\frac{n-1}{2}}\dwn{z\bz} + e^{\lambda\up{\frac{n-1}{2}-1}-\lambda\up{\frac{n-1}{2}}} - e^{\lambda\up{\frac{n-1}{2}}}=0.$

\subsection{Flat connections}
The flat connection is given by $D=A_h+\phi+\phi^*.$ If, in the holomorphic frame $(s_1,\dots,s_{\frac{n}{2}},s_{\frac{n}{2}}^*,\dots,s_1^*),$ we have 
\begin{equation}\label{locqb}
	q_n=f\dwn{n}dz^n\ \ \ \text{and}\ \ \ \ q_{n-1}=f\dwn{n-1}dz^{n-1},
\end{equation}
 then  the flat connection for the $n$-cyclic $\phi=\tilde e_1+tq_ne_{n-1}$ is given by 
\begin{equation}\label{flat1}D=\mtrx{
-\lambda\up{1}_zdz&&&&tf\dwn{n}\\
1&-\lambda\up{2}_zdz&&&\\
&\ddots&\ddots&&\\
&&1&\lambda_z\up{2}dz&0\\
&&&1&\lambda_z\up{1}dz
}
+\mtrx{0&e^{\lambda\up{1}-\lambda\up{2}}&&&\\
0&0&e^{\lambda\up{2}-\lambda\up{3}}&&\\
&&&\ddots&\\
&&&&e^{\lambda\up{1}-\lambda\up{2}}\\
te^{-2\lambda\up{1}}\bar f\dwn{n}&&&&0}
,\end{equation}
and the flat connection for the $(n-1)$-cyclic $\phi=\tilde e_1+tq_{n-1}e_{n-2},$ is
\begin{equation}\label{flat2}D=\mtrx{
-\lambda\up{1}_zdz&&&tf\dwn{n-1}&0\\
1&-\lambda\up{2}_zdz&&&tf\dwn{n-1}\\
0&1&-\lambda\up{3}_zdz&&\\
&&&\ddots&\\
&&&1&\lambda_z\up{1}dz
}
+\mtrx{0&e^{\lambda\up{1}-\lambda\up{2}}&&&\\
0&0&e^{\lambda\up{2}-\lambda\up{3}}&&\\
&&&\ddots&\\
te^{-\lambda\up{1}-\lambda\up{2}}\bar f\dwn{n-1}&&&&e^{\lambda\up{1}-\lambda\up{2}}\\
0&te^{-\lambda\up{1}-\lambda\up{2}}\bar f\dwn{n-1}&&&0}
.\end{equation}

We want to calculate the behavior of the flat connection in the limit $t\ra\infty.$ To do so, we need to understand the asymptotics of the $\lambda\up{j}$'s and the  asymptotics of their first derivatives $\lambda\up{j}_z$. In order to use the maximum principle, we will make a change of variables.
Let $\Omega_n\subset\Sigma$ be a compact set away from the zeros of $q_n$ and fix a background metric $g\dwn{n}$ on $\Sigma$ with the following properties:
\eqtns{g\dwn{n}=|q_n|^{\frac{2}{n}} & \text{on}\ \ \Omega_n\\
\dfrac{|q_n|^2}{(g\dwn{n})^n}\leq 1 & \text{on}\ \ \Sigma}{}
Using this metric, we make the following change of variables:
\[u\up{j}=\lambda\up{j}-\frac{n+1-2j}{2}\ln(g\dwn{n}).\]

For $\phi=\tilde e_1+q_{n-1}e_{n-2},$ we define the analogous compact set $\Omega_{n-1}$ and background metric $g\dwn{n-1}$ with the property 
\eqtns{g\dwn{n-1}=|q_{n-1}|^{\frac{2}{n-1}} & \text{on}\ \ \Omega_{n-1}\\
\dfrac{|q_{n-1}|^2}{(g\dwn{n-1})^{n-1}}\leq 1 & \text{on}\ \ \Sigma}{}
Using $g\dwn{n-1},$ we make the change of variables
\[v\up{j}=\lambda\up{j}-\frac{n+1-2j}{2}\ln(g\dwn{n-1}).\]

Recall that the Laplace-Beltrami operator of a conformal metric $g$ on a Riemann surface is given by $\Delta_g=\frac{4}{g}\p_{z\bz}$ and the scalar curvature is 
\[K_g=-\haf \Delta_g\ln(g)=-\frac{2}{g}\p_{z\bz}\ln(g).\] 
Because $q_n$ and $q_{n-1}$ are holomorphic, $K_{g\dwn{n}}=0=K_{g\dwn{n-1}}$ on $\Omega_n$ and $\Omega_{n-1}.$ 

With respect to $u\up{j},$ the equations for $\phi=\tilde e_1+tq_ne_{n-1}$ become
\eqtns{
(u\up{1}+\frac{n-1}{2}\ln(g\dwn{n}))\dwn{z\bz}+t^2e^{-2u\up{1}-(n-1)\ln(g\dwn{n})}|q_n|^2- e^{u\up{1}-u\up{2}+\ln(g\dwn{n})}=0\\
(u\up{j}+\frac{n+1-2j}{2}\ln(g\dwn{n}))\dwn{z\bz} + e^{u\up{j-1}-u\up{j}+\ln(g\dwn{n})} - e^{u\up{j}-u\up{j+1}+\ln(g\dwn{n})}=0 & 1<j<\frac{n}{2}\\
(u\up{\frac{n}{2}}+\haf \ln(g\dwn{n}))\dwn{z\bz} + e^{u\up{\frac{n}{2}-1}-u\up{\frac{n}{2}}+\ln(g\dwn{n})} - e^{2u\up{\frac{n}{2}}+\ln(g\dwn{n})}=0
}{}
Using our knowledge of $K_{g\dwn{n}}$ and $\Delta_{g\dwn{n}},$ we rewrite the equations as
\eqtns{
-\frac{1}{4}\Delta_{g\dwn{n}}u\up{1}=-\frac{n-1}{4}K_{g\dwn{n}}+\dfrac{t^2|q_n|^2}{g\dwn{n}^n}e^{-2u\up{1}}- e^{u\up{1}-u\up{2}}\\
-\frac{1}{4}\Delta_{g\dwn{n}}u\up{j}=-\frac{n+1-2j}{4}K_{g\dwn{n}} + e^{u\up{j-1}-u\up{j}} - e^{u\up{j}-u\up{j+1}} & 1<j<\frac{n}{2}\\
-\frac{1}{4}\Delta_{g\dwn{n}}u\up{\frac{n}{2}}=-\frac{1}{4}K_{g\dwn{n}} + e^{u\up{\frac{n}{2}-1}-u\up{\frac{n}{2}}} - e^{2u\up{\frac{n}{2}}}
}
{neq}

We will show 
\[\lim\limits_{t\ra\infty}e^{u\up{j}}= t^\frac{n+1-2j}{n}\ \ \ \  \  1\leq j\leq\frac{n}{2}.\]
Similarly, in terms of the $v\up{j}$'s, the equations for $\phi=\tilde e_1+tq_{n-1}e_{n-2}$ become 
\eqtns{ 
-\frac{1}{4}\Delta_{g\dwn{n-1}}v\up{1}=-\frac{n-1}{4}K_{g\dwn{n-1}}+\dfrac{t^2|q_{n-1}|^2}{g\dwn{n-1}^{n-1}}e^{-v\up{1}-v\up{2}}- e^{v\up{1}-v\up{2}}\\
-\frac{1}{4}\Delta_{g\dwn{n-1}}v\up{2}=-\frac{n-1}{4}K_{g\dwn{n-1}}+\dfrac{t^2|q_{n-1}|^2}{g\dwn{n-1}^{n-1}}e^{-v\up{1}-v\up{2}}+ e^{v\up{1}-v\up{2}}-e^{v\up{2}-v\up{3}}\\
-\frac{1}{4}\Delta_{g\dwn{n-1}}v\up{j}=-\frac{n+1-2j}{4}K_{g\dwn{n-1}} + e^{v\up{j-1}-v\up{j}} - e^{v\up{j}-v\up{j+1}} & 2<j<\frac{n}{2}\\
-\frac{1}{4}\Delta_{g\dwn{n-1}}v\up{\frac{n}{2}}=-\frac{1}{4}K_{g\dwn{n-1}} + e^{v\up{\frac{n}{2}-1}-v\up{\frac{n}{2}}} - e^{2v\up{\frac{n}{2}}}
}{n-1eq}
 
In this case, it will be shown that
\[\lim\limits_{t\ra\infty}e^{v\up{1}} = t\]
\[\lim\limits_{t\ra\infty}e^{v\up{j}} = (2t)^\frac{n+1-2j}{n-1}\ \ \ \ \ 1<j\leq\frac{n}{2}\]

\subsection{Estimates on asymptotics  of $\lambda\up{j}$ and $\lambda\up{j}_z$}
In order to understand the asymptotics of the family of flat connections above, we need to understand the asymptotics of the metric and its first derivative. For the metric, we have the following theorem. 

\begin{Theorem}\label{metrictheorem}
	For every point $p\in\Sigma$ away from the zeros of $q_n$ or $q_{n-1},$ as $t\ra\infty$
\begin{enumerate}[1.]
		\item For $(\Sigma,\ \tilde e_1+tq_ne_{n-1})\in Hit_n(S),$ the metric $h_j(t)$ on $K^\frac{n+1-2j}{2}$ admits the expansion
		\begin{equation*}
		h_j(t)=(t|q_n|)^{-\frac{n+1-2j}{n}}\left(1+O\left(t^{-\frac{2}{n}}\right)\right) \ \ \ \  \text{for\ all\  } j
		\end{equation*}
		\item For $(\Sigma,\ \tilde e_1 +tq_{n-1}e_{n-2})\in Hit_n(S),$ the metric $h_j(t)$ on $K^\frac{n+1-2j}{2}$ admits the expansion
		\begin{equation*}
			h_j(t)=\begin{dcases}
				(t|q_{n-1}|)^{-\frac{n+1-2j}{n-1}}\left(1+O\left(t^{-\frac{2}{n-1}}\right)\right) & \text{for\ } j=1\ \ \text{and}\ \ j=n\\
				(2t|q_{n-1}|)^{-\frac{n+1-2j}{n-1}}\left(1+
				O\left(t^{-\frac{2}{n-1}}\right)\right)
				& \text{for\ } 1< j <n
			\end{dcases}
		\end{equation*}
		\end{enumerate}
\end{Theorem}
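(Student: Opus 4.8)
The plan is to work on the compact set $\Omega_n$ (resp. $\Omega_{n-1}$) where the background metric equals $|q_n|^{2/n}$ (resp. $|q_{n-1}|^{2/(n-1)}$), so that the curvature terms $K_{g_n}$, $K_{g_{n-1}}$ vanish and the equations \eqref{neq}, \eqref{n-1eq} become
$-\tfrac14\Delta u^{(1)} = \tfrac{t^2|q_n|^2}{g_n^n}e^{-2u^{(1)}}-e^{u^{(1)}-u^{(2)}}$ and its companions. I would first guess the leading-order behaviour by balancing terms: setting the right-hand sides to zero and demanding $e^{u^{(1)}-u^{(2)}}=e^{u^{(2)}-u^{(3)}}=\cdots$ forces $e^{u^{(j)}}\sim t^{(n+1-2j)/n}$ in the $n$-cyclic case (and the $2t$ correction in the $(n-1)$-cyclic case comes from the fact that the first \emph{two} equations contain the $t^2$ term, so the decoupled system has a doubled constant between $v^{(1)}$ and $v^{(2)}$). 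Then I would pass to the error variables $\tilde u^{(j)} = u^{(j)} - \ln|tq_n|^{(n+1-2j)/n}$ (as already introduced in the excerpt, this converts the system to \eqref{ErrHitn}), and the whole problem reduces to proving $\tilde u^{(j)} = O(t^{-2/n})$ pointwise away from the zeros.

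The core of the argument is a maximum-principle estimate for the system \eqref{ErrHitn}. The standard trick here is the ``telescope'': to control $\tilde u^{(1)}$ from above, add suitable multiples of the equations for $\tilde u^{(1)},\dots,\tilde u^{(m)}$ so that the exponential terms telescope and one is left with an inequality of the form $\Delta(\text{combination}) \geq c\, t^{2/n}(\text{combination}) - (\text{error supported near zeros})$, to which the maximum principle applies on a slightly shrunk domain; running the same scheme with the opposite signs and with the reversed ordering of indices gives the two-sided bound. Concretely I would proceed in steps: \textbf{(1)} establish rough a priori bounds (e.g.\ each $\tilde u^{(j)}$ is bounded above/below by $\pm C$ on the compact set, using that the $h_j$ are genuine metrics and properness of the Hitchin map, or by a crude barrier argument); \textbf{(2)} feed these into a telescoped combination to get $|\tilde u^{(j)}| = O(t^{-2/n})$ on $\Omega_n$ minus a fixed-size neighbourhood of the zeros of $q_n$; \textbf{(3)} localize — because the error terms in the differential inequality are exponentially suppressed in $t^{1/n}\cdot(\text{distance to the zeros})$, the bound on any fixed compact set away from the zeros follows once $t$ is large; \textbf{(4)} translate back: $h_j(t) = e^{-\lambda^{(j)}} = e^{-u^{(j)}}g_n^{(n+1-2j)/2} = (t|q_n|)^{-(n+1-2j)/n}e^{-\tilde u^{(j)}} = (t|q_n|)^{-(n+1-2j)/n}(1+O(t^{-2/n}))$, and identically (with the $2t$ modification) for $q_{n-1}$. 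The odd-$n$ case differs only in the last equation of the system and is handled by the same telescoping with the trivial middle line bundle contributing the standard metric.

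The main obstacle I anticipate is step (1)–(2): getting the error bound to come out as exactly $O(t^{-2/n})$ rather than merely $o(1)$. The naive maximum principle on $\tilde u^{(1)}-\tilde u^{(2)}$ alone will not close because all the equations are coupled; one genuinely needs to choose the right linear combination (weights roughly proportional to $j$, $n+1-2j$, or similar) so that the ``good'' exponential term dominates with a definite sign and the ``bad'' terms either telescope away or are absorbed. Keeping careful track of the constants — so that the final exponent is $2/n$ and not something weaker — and making sure the barrier functions one subtracts off (to kill the $K_g$-type and boundary contributions) are themselves $O(t^{-2/n})$, is where the real work lies; the rest is bookkeeping between the $\lambda^{(j)}$, $u^{(j)}$, and $\tilde u^{(j)}$ normalizations.
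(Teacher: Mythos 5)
Your reduction to showing $\tilde u^{(j)}=O(t^{-2/n})$ and your identification of the leading-order constants (including the factor $2t$ for $1<j$ in the $(n-1)$-cyclic case) match the paper, but the core of your argument diverges from the paper's proof and, as written, has a gap exactly where you anticipate one. The paper never works on a shrunk domain with boundary and never needs the exponential suppression you invoke in step (3). Instead it argues globally on the closed surface $\Sigma$: the maximum principle is applied at the \emph{global} maxima of $e^{u^{(j)}}$ and of the differences $e^{u^{(j)}-u^{(j+1)}}$ (inequalities (\ref{maxuj})), and the ``telescope'' is a telescoping of the resulting \emph{scalar} inequalities among the numbers $A_j=\max_\Sigma e^{u^{(j)}}$ and $B_j=\max_\Sigma e^{u^{(j)}-u^{(j+1)}}$ (Lemmas \ref{upperboundlemma} and \ref{Ajbounds}), not of linear combinations of the PDEs. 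Because $\Sigma$ is closed there are no boundary contributions to control, and the curvature of the background metric enters only through the global constant $M_n$; the final error $O(t^{-2/n})$ is simply $C/t^{2/n}$ relative to the leading term $A_{n/2}^2\sim t^{2/n}$.

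The concrete missing ingredient in your step (1) is the paper's Lemma \ref{technicallemma}: applying the maximum principle to the auxiliary function $f_n=\ln\bigl(|q_n|^2/(g_n^{\,n}e^{2u^{(1)}})\bigr)$ yields $t^2e^{f_n}\le B_1+2M_n$ on all of $\Sigma$, which is what couples the scale $t^2$ to the metric and ultimately supplies the lower bound on $e^{u^{(1)}}$; neither ``the $h_j$ are genuine metrics'' nor properness of the Hitchin map gives a $t$-uniform pointwise bound. Moreover your step (3) is circular as proposed: the exponential decay in $t^{1/n}\cdot(\text{distance to the zeros})$ is the content of Theorem \ref{ErrorEstimateTheorem}, whose proof (via Lemma \ref{TodaTechnicalLemma2} and Remark \ref{w_ksmall}) takes the bound $|\tilde u^{(j)}|\le Ct^{-2/n}$ of the present theorem as a hypothesis. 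If you want to salvage your linear-combination scheme you must first produce the two-sided global bounds of Lemmas \ref{technicallemma}--\ref{Ajbounds} by other means; once you have them, the localization step is unnecessary.
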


Unfortunately, the proof is long and technical, and so may be skipped on first reading. 
\begin{proof}
While the statement of the theorem for our two cases is similar, due to differences in the equations, the details of the proofs are different. 
We start with a series of lemmas. 
The following notation will be used:
\[M_n=\frac{1}{4}\underset{\Sigma}{max}\ |K_{g\dwn{n}}|\ \ \ \ \ \ \ A_j=\underset{\Sigma}{max}\ e^{u\up{j}}\ \ \ \ \ \ \ B_j=\underset{\Sigma}{max}\ e^{u\up{j}-u\up{j+1}}\]
\[M_{n-1}=\frac{1}{4}\underset{\Sigma}{max}\ |K_{g\dwn{n-1}}|\ \ \ \ \ \ \ D_j=\underset{\Sigma}{max}\ e^{v\up{j}}\ \ \ \ \ \ \ E_j=\underset{\Sigma}{max}\ e^{v\up{j}-v\up{j+1}},\]
note that $\dfrac{A_j}{A_{j+1}}\leq B_j$ and $\dfrac{D_j}{D_{j+1}}\leq E_j.$ 

The signs in our equations are set up for applications of the maximum principle.
At the maxima of $e^{u\up{j}},$ we have $0\leq -\frac{1}{4}\Delta_gu\up{j}.$ After rearranging the equations (\ref{neq}) we get the following estimates at the maxima of $e^{u\up{j}}$ respectively
\eqtns{
0\leq -\frac{1}{4}\Delta_{g\dwn{n}}u\up{1}\leq (n-1)M_n+t^2A_1^{-2}-\dfrac{A_1}{A_2}
\\
0\leq -\frac{1}{4}\Delta_{g\dwn{n}}u\up{j}\leq(n+1-2j)M_n+\dfrac{A_{j-1}}{A_j}-\dfrac{A_j}{A_{j+1}} & 1<j<\frac{n}{2}
\\
0\leq -\frac{1}{4}\Delta_{g\dwn{n}}u\up{\frac{n}{2}}\leq M_n+\dfrac{A_{\frac{n-1}{2}}}{A_\frac{n}{2}}-A_{\frac{n}{2}}^2.
}{maxuj}
Similarly, using equations (\ref{n-1eq}), at the maxima of $e^{v\up{j}}$ respectively
\eqtns{
0\leq -\frac{1}{4}\Delta_{g\dwn{n-1}}v\up{j}\leq(n+1-2j)M_{n-1}+\dfrac{D_{j-1}}{D_j}-\dfrac{D_j}{D_{j+1}} & 2<j<\frac{n}{2}
\\
0\leq -\frac{1}{4}\Delta_{g\dwn{n-1}}v\up{\frac{n}{2}}\leq(n+1-2j)M_{n-1}+\dfrac{D_{\frac{n}{2-1}}}{D_\frac{n}{2}}-D_{\frac{n}{2}}^2.}{maxvj}

\begin{Remark} We \textit{cannot} write 
\[0\leq -\frac{1}{4}\Delta_{g\dwn{n-1}}v\up{1}\leq (n-1)M_{n-1}+\dfrac{t^2}{D_1D_2}-\dfrac{D_1}{D_2},\]
 nor the corresponding inequality for $\Delta_{g\dwn{n-1}}v\up{2}$ because at the maximum of $e^{v\up{1}},$ it may be the case that $e^{-v\up{1}-v\up{2}}$ is larger than $(D_1D_2)^{-1}.$
\end{Remark}
In what follows, $C$ will be a constant, and $C$'s on different lines should not be assumed to be related. 
\begin{Lemma}\label{technicallemma} Set $f_n=\ln\left(\dfrac{|q_n|^2}{g\dwn{n}^ne^{2u\up{1}}}\right)$ and $f_{n-1}=\ln\left(\dfrac{|q_{n-1}|^2}{g\dwn{n-1}^{n-1}e^{2v\up{1}}}\right)$ then
\[\text{max}(\ t^2e^{f_n})\leq B_1+2M_n\ \ \ \ \ \text{and} \ \ \ \ \ \ e^{f_{n-1}}\leq1\]
\end{Lemma}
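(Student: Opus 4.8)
The plan is to apply the maximum principle directly to the scalar functions $f_n$ and $f_{n-1}$, using only the first equations of (\ref{neq}) and (\ref{n-1eq}) respectively. The first thing to record is that $f_n=\ln|q_n|^2-n\ln g\dwn{n}-2u\up{1}$ is a genuine globally defined function on $\Sigma$ (the local coordinate weights of $|q_n|^2$, of $g\dwn{n}^n$ and of $e^{2u\up{1}}$ cancel), smooth away from the zeros of $q_n$, with $f_n\to-\infty$ at each zero of $q_n$ (near a zero $\ln|q_n|^2$ tends to $-\infty$ while $n\ln g\dwn{n}$ and $2u\up{1}$ stay bounded, the harmonic metric being smooth). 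Consequently $f_n$ attains its maximum over the compact surface $\Sigma$ at an interior point $p_0$ with $q_n(p_0)\neq 0$, where the complex Hessian is negative semidefinite and hence $\Delta_{g\dwn{n}}f_n(p_0)\le 0$. The analogous facts hold for $f_{n-1}$ with $q_{n-1}$ in place of $q_n$.

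Next I would compute $\Delta_{g\dwn{n}}f_n$ on the complement of the zeros of $q_n$. There $\ln|q_n|^2$ is locally the logarithm of the squared modulus of a holomorphic function, hence pluriharmonic, so $\Delta_{g\dwn{n}}\ln|q_n|^2=0$; also $\Delta_{g\dwn{n}}\ln g\dwn{n}=-2K_{g\dwn{n}}$ from $K_{g}=-\tfrac{2}{g}\p_{z\bz}\ln g$. Substituting the first equation of (\ref{neq}) for $\Delta_{g\dwn{n}}u\up{1}$ and using $\tfrac{t^2|q_n|^2}{g\dwn{n}^n}e^{-2u\up{1}}=t^2e^{f_n}$, the scalar-curvature contributions combine ($2n-2(n-1)=2$) to give
\[\Delta_{g\dwn{n}}f_n=2K_{g\dwn{n}}+8t^2e^{f_n}-8e^{u\up{1}-u\up{2}}.\]
Evaluating at $p_0$, where the left side is $\le 0$, and bounding $e^{u\up{1}-u\up{2}}\le B_1$ and $|K_{g\dwn{n}}|\le 4M_n$ everywhere on $\Sigma$, gives $8t^2e^{f_n(p_0)}\le 8B_1+8M_n$. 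Hence $\max_\Sigma\bigl(t^2e^{f_n}\bigr)=t^2e^{f_n(p_0)}\le B_1+M_n\le B_1+2M_n$, which is the first assertion.

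For the second assertion I would run the same scheme on $f_{n-1}$ with the first equation of (\ref{n-1eq}). The new feature is that the scalar-curvature terms now cancel exactly: in $f_{n-1}=\ln|q_{n-1}|^2-(n-1)\ln g\dwn{n-1}-2v\up{1}$ the term $-(n-1)\Delta_{g\dwn{n-1}}\ln g\dwn{n-1}$ contributes $+2(n-1)K_{g\dwn{n-1}}$, while the $v\up{1}$-equation makes $-2\Delta_{g\dwn{n-1}}v\up{1}$ contribute $-2(n-1)K_{g\dwn{n-1}}$, so the two annihilate. Using also $\tfrac{t^2|q_{n-1}|^2}{g\dwn{n-1}^{n-1}}e^{-v\up{1}-v\up{2}}=t^2e^{f_{n-1}}e^{v\up{1}-v\up{2}}$, one obtains
\[\Delta_{g\dwn{n-1}}f_{n-1}=8e^{v\up{1}-v\up{2}}\bigl(t^2e^{f_{n-1}}-1\bigr).\]
At the maximum point $p_1$ of $f_{n-1}$ the left side is $\le 0$ while $e^{v\up{1}-v\up{2}}(p_1)>0$, forcing $t^2e^{f_{n-1}(p_1)}\le 1$; therefore $e^{f_{n-1}}\le t^{-2}$ on all of $\Sigma$, which is $\le 1$ in the range $t\ge 1$ relevant for the asymptotic analysis.

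I expect the only delicate points to be: (i) checking that $f_n$ and $f_{n-1}$ descend to functions on $\Sigma$ and blow down to $-\infty$ exactly at the zeros of $q_n$, resp.\ $q_{n-1}$, so that the interior maximum principle applies legitimately; and (ii) the exact bookkeeping of the scalar-curvature coefficients in $\Delta_g f$, which is precisely what separates the two cases — for $f_{n-1}$ the $K_g$-terms cancel, producing the clean bound $t^2e^{f_{n-1}}\le 1$, whereas for $f_n$ a residual $2K_{g\dwn{n}}$ survives and is absorbed into the $2M_n$. No deeper obstacle is anticipated; the genuine idea is to work with $f_{n-1}$ rather than with $v\up{1}$ directly, which sidesteps exactly the obstruction pointed out in the remark preceding the lemma.
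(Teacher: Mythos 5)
Your proof is correct and follows essentially the same route as the paper: apply the maximum principle to $f_n$ and $f_{n-1}$, computing their Laplacians from the holomorphicity of the differential, the curvature identity for $\ln g$, and the first equation of (\ref{neq}) resp.\ (\ref{n-1eq}). Your derivation in fact lands on the bound $t^2e^{f_{n-1}}\le 1$, which is the form the paper actually invokes later, and your explicit check that the maxima are attained away from the zeros of the differentials is a point the paper leaves implicit.
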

\begin{proof}
For the first inequality, we compute
\[\frac{1}{4}\Delta_{g\dwn{n}}(f_n)=\frac{1}{4}\Delta_{g\dwn{n}}\ln\left(\dfrac{|q_n|^2}{g\dwn{n}^ne^{2u\up{1}}}\right)=\frac{1}{g\dwn{n}}\p_z\p_{\bz} \ln|q_n|^2-\frac{1}{g\dwn{n}}\p_z\p_{\bz} \ln\left(g\dwn{n}^n\right)-\frac{1}{g\dwn{n}}\p_z\p_{\bz} \ln\left(e^{2u\up{1}}\right).\]
The first term vanishes since $q_n$ is holomorphic, the second term involves the curvature of $g\dwn{n}$ and the third term involves the Laplacian of $u\up{1}.$ We have
\[\frac{1}{4}\Delta_{g\dwn{n}}(f_n)=\frac{1}{4}\Delta_{g\dwn{n}}\left(\dfrac{|q_n|^2}{g^ne^{2u\up{1}}}\right)=\frac{n}{2}K_{g\dwn{n}}-\frac{1}{4}\Delta_{g\dwn{n}}(2u\up{1}).\] 
The equation for $\Delta_{g\dwn{n}}(u\up{1})$ yields
\[\frac{1}{4}\Delta_{g\dwn{n}}(f_n)=2t^2\dfrac{|q_n|^2}{g\dwn{n}^ne^{2u\up{1}}}-2e^{u\up{1}-u\up{2}}+\haf K_{g\dwn{n}}.\]
By the maximum principle, at the maximum of $f_n$ we have
\[0\geq\frac{1}{4}\Delta_{g\dwn{n}}(f_n)=2t^2e^{f_n}-2e^{u\up{1}-u\up{2}}\]
which gives
\[t^2e^{f_n}\leq e^{u\up{1}-u\up{2}}\leq B_1+2M_{n}.\]
Similarly, for the second inequality we compute
\[\Delta_{g\dwn{n-1}}f_{n-1}=\Delta_{g\dwn{n-1}}\ln\left(t^2|q_{n-1}|\right)-(n-1)\Delta_{g\dwn{n-1}}\ln\left(g\dwn{n-1}\right)-2\Delta_{g\dwn{n-1}}v\up{1}\]
\[=2(n-1)K_{g\dwn{n-1}}-2\Delta_{g\dwn{n-1}}v\up{1}\]
since $q_{n-1}$ is holomorphic. Now using the first equation
\[=2(n-1)K_{g\dwn{n-1}}+(e^{-v\up{1}-v\up{2}}\dfrac{|q_{n-1}|t^2}{g\dwn{n-1}^{n-1}}-e^{v\up{1}-v\up{2}}-\frac{n-1}{4}K_{g\dwn{n-1}})\]
\[=8e^{v\up{1}-v\up{2}}(e^{f_{n-1}}-1).\]
At the maximal point of $f_{n-1}$ we get
\[0\geq\Delta_{g\dwn{n-1}}f_{n-1}=8e^{v\up{1}-v\up{2}}(e^{f_{n-1}}-1),\]
thus $e^{f_{n-1}}\leq1$ on $\Sigma.$
\end{proof}

\begin{Lemma} \label{upperboundlemma} We have the following upper bound on $A^2_{\frac{n}{2}}$ and $D^2_\frac{n}{2},$
\eqtns{A^2_{\frac{n}{2}}\leq C+t^\frac{2}{n}\\
D_\frac{n}{2}^2\leq \dfrac{D_j}{D_{j+1}}+C\leq2\dfrac{D_1}{D_2} +C & 1<j<\frac{n}{2}.}{}
\end{Lemma}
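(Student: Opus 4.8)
The plan is to run the maximum principle on each $e^{u\up{j}}$ (for the $n$-cyclic case) and on each $e^{v\up{j}}$ (for the $(n-1)$-cyclic case), and then collapse the resulting chains by the telescope trick. The pointwise inequalities at the maxima, namely (\ref{maxuj}) and (\ref{maxvj}), have already been read off from the systems (\ref{neq}) and (\ref{n-1eq}); what remains is the telescope bookkeeping together with one genuine lower bound. I will write everything in the even case, the odd case being identical after the obvious change in the last line.

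For the $n$-cyclic bound, start from the bottom line of (\ref{maxuj}), $A_{\frac{n}{2}}^2\le M_n+\frac{A_{\frac{n}{2}-1}}{A_{\frac{n}{2}}}$, and successively substitute the middle-line inequalities $\frac{A_j}{A_{j+1}}\le(n+1-2j)M_n+\frac{A_{j-1}}{A_j}$ for $j=\frac{n}{2}-1,\dots,2$, finishing with the top line $\frac{A_1}{A_2}\le(n-1)M_n+t^2A_1^{-2}$. The ratios cancel pairwise, leaving
\[
A_{\frac{n}{2}}^2\le\Big(\textstyle\sum_{j=1}^{n/2}(n+1-2j)\Big)M_n+t^2A_1^{-2}=\tfrac{n^2}{4}M_n+t^2A_1^{-2}.
\]
Thus everything reduces to showing $t^2A_1^{-2}\le C+t^{2/n}$, i.e.\ a lower bound $A_1\gtrsim t^{(n-1)/n}$ for the largest value of $e^{u\up{1}}$. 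Here Lemma \ref{technicallemma} does most of the work: it gives $t^2e^{f_n}\le B_1+2M_n$ on all of $\Sigma$, and on $\Omega_n$ we have $|q_n|^2/(g\dwn{n})^n=1$, hence $e^{f_n}=e^{-2u\up{1}}$ there; so $t^2e^{-2u\up{1}}\le B_1+2M_n$ on $\Omega_n$, which forces $A_1\ge\max_{\Omega_n}e^{u\up{1}}\ge t/\sqrt{B_1+2M_n}$, i.e.\ $t^2A_1^{-2}\le B_1+2M_n$ and hence $A_{\frac{n}{2}}^2\le C+B_1$. The proof is then complete once $B_1$ — and, by the same reasoning, each $B_j=\max e^{u\up{j}-u\up{j+1}}$ — is shown to be $O(t^{2/n})$; I would obtain this from a parallel maximum-principle argument on the difference functions $u\up{j}-u\up{j+1}$, whose equations (subtract consecutive lines of (\ref{neq})) are again of Toda type, using Lemma \ref{technicallemma} once more to absorb the term $t^2e^{f_n}$ that appears in the $j=1$ difference.

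For the $(n-1)$-cyclic bound the same telescope applied to (\ref{maxvj}) can only be run from $j=\frac{n}{2}$ down to the line indexed by $j+1$: as the Remark before Lemma \ref{technicallemma} points out, the nonlinear term now sits in two equations and there is no clean maximum-principle inequality for $v\up{1}$ or $v\up{2}$. This already gives $D_{\frac{n}{2}}^2\le\frac{D_j}{D_{j+1}}+C$ for every $1<j<\frac{n}{2}$. To upgrade to $\frac{D_j}{D_{j+1}}\le2\frac{D_1}{D_2}+C$ I would exploit that $v\up{1}$ and $v\up{2}$ carry the \emph{same} nonlinear and curvature terms, so subtracting their equations in (\ref{n-1eq}) cancels both and leaves $-\tfrac14\Delta(v\up{1}-v\up{2})=-2e^{v\up{1}-v\up{2}}+e^{v\up{2}-v\up{3}}$; evaluating at the minimum of $v\up{1}-v\up{2}$ gives $e^{v\up{2}-v\up{3}}\le2e^{v\up{1}-v\up{2}}$ at that point, and feeding this into the Toda-type maximum principle for the remaining differences produces both the factor $2$ and the dependence on $\frac{D_1}{D_2}$ rather than on $t$.

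The genuinely delicate step is the lower bound $A_1\gtrsim t^{(n-1)/n}$, equivalently the $O(t^{2/n})$ control of $B_1$: the maximum-principle inequalities for the $A_j$'s, for the $B_j$'s, and the Gauss--Bonnet identity obtained by integrating the $u\up{1}$ equation are all of "subharmonic in $j$" type and merely reinforce one another, so extracting an explicit power of $t$ requires using the geometric normalization — $g\dwn{n}=|q_n|^{2/n}$ on $\Omega_n$ together with $|q_n|^2/(g\dwn{n})^n\le1$ globally — to pin the forcing term to a definite size on $\Omega_n$ and thereby break the circularity. Once $B_1=O(t^{2/n})$ is in hand, the rest is routine telescoping.
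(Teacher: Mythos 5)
There are two genuine gaps, one in each half of your argument.

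For the $n$-cyclic bound, your reduction $A_{\frac{n}{2}}^2\le \frac{n^2}{4}M_n+t^2A_1^{-2}$ is exactly the paper's second telescoping sum, but the way you then propose to control $t^2A_1^{-2}$ is circular. Lemma \ref{technicallemma} does give $t^2A_1^{-2}\le B_1+2M_n$, so you arrive at $A_{\frac{n}{2}}^2\le B_1+C$; however, the only maximum-principle information available on the differences $u\up{j}-u\up{j+1}$ is the chain $B_1-B_2\le C$, $B_j-B_{j+1}\le B_{j-1}-B_j+C$, $B_{\frac{n}{2}-1}-A_{\frac{n}{2}}^2\le B_{\frac{n}{2}-2}-B_{\frac{n}{2}-1}+C$, which bottoms out at $B_j\le A_{\frac{n}{2}}^2+C$ --- precisely the quantity you are trying to bound. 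Combining the two gives $A_{\frac{n}{2}}^2\le A_{\frac{n}{2}}^2+C$, i.e.\ nothing. You flag this circularity yourself but do not supply the idea that breaks it, and the ``geometric normalization'' has already been spent in obtaining $t^2e^{-2u\up{1}}\le B_1+2M_n$. The missing idea is a \emph{second} telescope run in the opposite direction: reading the inequalities (\ref{maxuj}) as lower bounds on consecutive ratios gives $\frac{A_j}{A_{j+1}}\ge A_{\frac{n}{2}}^2-C$ for every $j$, hence $A_1=\prod_j\frac{A_j}{A_{j+1}}\cdot A_{\frac{n}{2}}\ge(A_{\frac{n}{2}}^2-C)^{\frac{n-1}{2}}$. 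Multiplying this lower bound for $A_1^2$ against the lower bound $\frac{t^2}{A_1^2}\ge A_{\frac{n}{2}}^2-C$ makes $A_1$ cancel and yields $t^2\ge(A_{\frac{n}{2}}^2-C)^n$, which is the claim; no control of $B_1$ is needed at this stage (it is derived afterwards, in Lemma \ref{Ajbounds}, \emph{from} the bound on $A_{\frac{n}{2}}$).

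For the $(n-1)$-cyclic bound, the telescope down the chain (\ref{maxvj}) to $D_{\frac{n}{2}}^2\le\frac{D_j}{D_{j+1}}+C$ is fine, but your mechanism for the last link $\frac{D_2}{D_3}\le2\frac{D_1}{D_2}+C$ does not work as stated. Evaluating $\Delta(v\up{1}-v\up{2})$ at the \emph{minimum} of $v\up{1}-v\up{2}$ yields $e^{v\up{2}-v\up{3}}\le2e^{v\up{1}-v\up{2}}+C$ only at that single point, where $e^{v\up{1}-v\up{2}}$ is as small as possible; this controls neither $\max e^{v\up{2}-v\up{3}}$ nor $\frac{D_2}{D_3}$, and a one-point inequality cannot be ``fed into'' the maximum principle for the remaining differences. (The paper does use the difference $v\up{1}-v\up{2}$, but at its \emph{maximum} and in Lemma \ref{Ajbounds}, to get $2E_1\le E_2+C$ --- an inequality pointing the opposite way from what you need here.) The correct move is to apply the maximum principle to $e^{v\up{2}}$ itself: at its maximum the two positive source terms are $e^{-v\up{1}-v\up{2}}\tfrac{t^2|q_{n-1}|^2}{g\dwn{n-1}^{n-1}}=e^{v\up{1}-v\up{2}}e^{f_{n-1}}$ and $e^{v\up{1}-v\up{2}}$, and Lemma \ref{technicallemma} ($e^{f_{n-1}}\le1$) bounds their sum by $2\frac{D_1}{D_2}$, while the negative term is $\le-\frac{D_2}{D_3}$; this is where both the factor $2$ and the dependence on $\frac{D_1}{D_2}$ actually come from.
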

\begin{proof}
For the first inequality we use two telescoping sums, the first is
\[\dfrac{A_j}{A_{j+1}}=(\dfrac{A_j}{A_{j+1}}-\dfrac{A_{j+1}}{A_{j+2}})+\dots+(\dfrac{A_{\frac{n}{2}-1}}{A_{\frac{n}{2}}}-A_{\frac{n}{2}}^2)+A_{\frac{n}{2}}^2\geq-C(M_n)+A_{\frac{n}{2}}^2\]
by equations (\ref{maxuj}). Thus
\[A_1=\frac{A_1}{A_2}\frac{A_2}{A_3}\cdots\frac{A_{\frac{n}{2}-1}}{A_{\frac{n}{2}}}A_{\frac{n}{2}}\geq(-C+A^2_\frac{n}{2})^{\frac{n}{2}-1}(A_\frac{n}{2}^2)^{\haf}\geq (-C+A_\frac{n}{2}^2)^\frac{n-1}{2}.\]
The second telescoping sum is
\[\dfrac{t^2}{A_1^2}=(\dfrac{t^2}{A_1^2}-\dfrac{A_1}{A_2})+(\dfrac{A_1}{A_2}-\dfrac{A_2}{A_3})+\dots+(\dfrac{A_{\frac{n}{2}-1}}{A_{\frac{n}{2}}}-A_{\frac{n}{2}}^2)+A_{\frac{n}{2}}^2\geq-C(M_n)+A_{\frac{n}{2}}^2,\]
putting them together we obtain $t^2\geq(-C+A_\frac{n}{2}^2)^n,$ giving the desired upper bound.

Now for the second inequality, at the maximum of $e^{v\up{2}}$ 
\[0\leq-\frac{1}{4}\Delta_{g\dwn{n-1}}v\up{2}=\frac{1}{D_2e^{v\up{1}}}\frac{|q_{n-1}|^2t^2}{g^{n-1}\dwn{n-1}}+\frac{e^{v\up{1}}}{D_2}-\frac{D_2}{e^{v\up{3}}}-\frac{n-3}{4}K_{g\dwn{n-1}}\]
\[\leq e^{v\up{1}-v\up{2}}\left(\frac{|q_{n-1}|^2t^2}{g^{n-1}\dwn{n-1}e^{2v\up{1}}}+1\right)-e^{v\up{2}-v\up{3}}-(n-3)M_{n-1}.\]
By Lemma \ref{technicallemma}
\[0\leq2\dfrac{D_1}{D_2}-\frac{D_2}{D_3}+(n-3)M_{n-1},\]
thus $\dfrac{D_2}{D_3}\leq2\dfrac{D_1}{D_2}+(n-3)M_{n-1}.$
Using the inequalities (\ref{maxvj}) we have 
\eqtns{\frac{D_j}{D_{j+1}}\leq\frac{D_{j-1}}{D_j}+C & 2<j<\frac{n}{2}\\
 D_\frac{n}{2}^2\leq \dfrac{D_{\frac{n}{2}-1}}{D_\frac{n}{2}}+C.}{}
Putting the inequalities together gives
\[D_{\frac{n}{2}}^2\leq\frac{D_j}{D_{j+1}}+C\leq 2\frac{D_1}{D_2}+C\ \ \ \ \ \ \ for\ 1<j<\frac{n}{2}\]
as desired.
\end{proof}
\begin{Lemma}\label{Ajbounds}
The following upper bound on $A_j$ and $D_j$ hold,
\eqtns{A_j\leq (A_{\frac{n}{2}}^2+C)^{\frac{n+1-2j}{2}} & \text{for all } j\\
D_1\leq \haf (D^2_{\frac{n}{2}}+C)^\frac{n-1}{2} \\
D_j\leq (D_\frac{n}{2}^2+C)^\frac{n+1-2j}{2} & j>1}{}
\end{Lemma}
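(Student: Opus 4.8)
The plan is to prove the upper bounds on $A_j$ (resp.\ $D_j$) by first controlling the ratios through the quantities $B_j=\max e^{u\up j-u\up{j+1}}$ (resp.\ $E_j=\max e^{v\up j-v\up{j+1}}$), showing $B_j\le A_\frac n2^2+C$ for all $j$ (resp.\ $E_j\le D_\frac n2^2+C$ for $j>1$ and $E_1\le\tfrac12 E_2$), and then multiplying telescopically, using $\frac{A_j}{A_{j+1}}\le B_j$ and $\frac{D_j}{D_{j+1}}\le E_j$. Here $C$ always denotes a constant depending only on the fixed geometric data $M_n$ or $M_{n-1}$, never on $t$; as in the rest of the section I treat the even case, the odd one being identical after the obvious change in the last equation.

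For the $n$-cyclic case the point is to run a discrete maximum principle on the finite sequence $B_1,\dots,B_{\frac n2-1}$ together with the convention $B_{\frac n2}:=A_\frac n2^2$. Applying $-\tfrac14\Delta_{g\dwn n}$ to $u\up j-u\up{j+1}$ and evaluating at the maximum of $e^{u\up j-u\up{j+1}}$, the equations (\ref{neq}) give (the $K_{g\dwn n}$-terms contributing only a bounded constant) the convexity-type inequalities $2B_j\le C+B_{j-1}+B_{j+1}$ for $2\le j\le\frac n2-1$; at $j=1$ the extra term $\tfrac{t^2|q_n|^2}{g\dwn n^n}e^{-2u\up1}=t^2e^{f_n}$ appears, but Lemma \ref{technicallemma} bounds it by $B_1+2M_n$, which after rearranging yields $B_1\le B_2+C$. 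Writing $\delta_j:=B_{j+1}-B_j$, these say $\delta_j\ge\delta_{j-1}-C$ and $\delta_1\ge-C$, hence $\delta_j\ge-C$ for all $j$; summing gives $B_j\le B_{\frac n2}+C=A_\frac n2^2+C$. Therefore
\[A_j=\frac{A_j}{A_{j+1}}\cdot\frac{A_{j+1}}{A_{j+2}}\cdots\frac{A_{\frac n2-1}}{A_\frac n2}\cdot A_\frac n2\le(A_\frac n2^2+C)^{\frac n2-j}\cdot(A_\frac n2^2+C)^{1/2}=(A_\frac n2^2+C)^{\frac{n+1-2j}{2}},\]
using $A_\frac n2\le(A_\frac n2^2+C)^{1/2}$.

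For the $(n-1)$-cyclic case the scheme is the same, but the first two equations of (\ref{n-1eq}) require two preliminary estimates. Applying the maximum principle to $v\up1-v\up2$, both the $K$-term and the $t^2$-term cancel identically, leaving $-\tfrac14\Delta(v\up1-v\up2)=-2e^{v\up1-v\up2}+e^{v\up2-v\up3}$, so that $E_1\le\tfrac12E_2$. Next, computing $\Delta\ln\!\big(\tfrac{t^2|q_{n-1}|^2}{g\dwn{n-1}^{n-1}}e^{-v\up1-v\up2}\big)=8\,\tfrac{t^2|q_{n-1}|^2}{g\dwn{n-1}^{n-1}}e^{-v\up1-v\up2}-4e^{v\up2-v\up3}$ and evaluating at the maximum of this quantity gives $\max\big(\tfrac{t^2|q_{n-1}|^2}{g\dwn{n-1}^{n-1}}e^{-v\up1-v\up2}\big)\le\tfrac12E_2$. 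Feeding these into the maximum principle for $v\up2-v\up3$ produces $E_2\le E_3+C$, while for $3\le j\le\frac n2-1$ one has the ordinary convexity $2E_j\le C+E_{j-1}+E_{j+1}$ (with $E_{\frac n2}:=D_\frac n2^2$). The slope argument then gives $E_j\le D_\frac n2^2+C$ for $2\le j\le\frac n2-1$ and $E_1\le\tfrac12(D_\frac n2^2+C)$; telescoping yields $D_j\le(D_\frac n2^2+C)^{\frac{n+1-2j}{2}}$ for $j>1$, and the single factor $\tfrac12$ coming from $E_1$ gives $D_1\le\tfrac12(D_\frac n2^2+C)^{\frac{n-1}{2}}$.

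The main obstacle is precisely the $t^2$-term in the first (resp.\ first two) equations: a naïve maximum-principle bound on $B_1$ (resp.\ $E_1,E_2$) would carry a factor of $t$ and be worthless, so the crux is the small package of auxiliary maximum-principle identities — Lemma \ref{technicallemma} in the $n$-cyclic case, and the two displayed identities above in the $(n-1)$-cyclic case — which recycle those $t^2$-terms back into the $B_j$'s (resp.\ $E_j$'s) themselves. Once these are established, the discrete-convexity bookkeeping and the telescoping product are routine, and all constants are manifestly uniform in $t$.
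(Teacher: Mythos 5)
Your proof is correct and follows essentially the same route as the paper's: bound the ratio maxima $B_j$, $E_j$ by $A_{\frac n2}^2+C$, resp.\ $D_{\frac n2}^2+C$, by applying the maximum principle to the differences $u\up{j}-u\up{j+1}$, $v\up{j}-v\up{j+1}$ (with Lemma~\ref{technicallemma} recycling the $t^2$-term in the first equation into $B_1+2M_n$), and then telescope the ratios. The one genuine divergence is the auxiliary estimate in the $(n-1)$-cyclic case: where you run a separate maximum principle on $\ln\bigl(t^2|q_{n-1}|^2 g\dwn{n-1}^{-(n-1)}e^{-v\up{1}-v\up{2}}\bigr)$ to get $\max T\le\tfrac12E_2+C$ and hence $E_2\le E_3+C$ directly, the paper instead invokes its bound $e^{f_{n-1}}\le1$ from Lemma~\ref{technicallemma} (so $T\le e^{v\up{1}-v\up{2}}\le E_1$) and carries the combined quantity $2E_1-E_2\le C$ through the whole chain of inequalities before closing it against $D_{\frac n2}^2$; both deliver $E_j\le D_{\frac n2}^2+C$ for $j\ge2$ and the crucial factor $\tfrac12$ on $E_1$, hence identical conclusions. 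Two cosmetic points: the curvature terms do not cancel identically in $-\tfrac14\Delta(v\up{1}-v\up{2})$ (only the $t^2$-terms do), so your $E_1\le\tfrac12E_2$ should read $2E_1\le E_2+C$, and your identity for $\Delta\ln T$ picks up a $2K_{g\dwn{n-1}}$ term away from $\Omega_{n-1}$; both leftovers are bounded by fixed multiples of $M_{n-1}$ and are absorbed into $C$ everywhere downstream, so neither affects the result.
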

\begin{proof}
In both cases, we first prove an inequality on the $B_j$'s and the $E_j$'s, and then use Lemma \ref{upperboundlemma}. To do this we subtract the equation $j+1$ from equations $j.$ 

For the first inequality, when $j=1$ at the maximum of $u\up{1}-u\up{2}$ we have 
\[0\leq-\frac{1}{4}\Delta_{g\dwn{n}}(u\up{1}-u\up{2})=-2e^{u\up{1}-u\up{2}}+e^{u\up{2}-u\up{3}}+e^{-2u\up{1}}\frac{t^2|q_n|^2}{g\dwn{n}^n}-\haf K_{g\dwn{n}}\]
\[\leq -2B_1+B_2+e^{-2u\up{1}}\frac{t^2|q_n|^2}{g\dwn{n}^n}+2M_n\leq -B_1+B_2+4M_n\]
by Lemma \ref{technicallemma}. 
For $1<j<\frac{n}{2}-1$ at the maximum of $u\up{j}-u\up{j+1},$ 
\[0\leq-\frac{1}{4}\Delta_{g\dwn{n}}(u\up{j}-u\up{j+1})=-\haf K_{g\dwn{n}}+e^{u\up{j-1}-u\up{j}}-2e^{u\up{j}-u\up{j+1}}+e^{u\up{j+1}-u\up{j+2}}\]
\[\leq 2M_n+B_{j-1}-2B_j+B_{j+1}.\]
At the maximum of $u\up{\frac{n}{2}-1}-u\up{\frac{n}{2}},$ 
\[0\leq-\frac{1}{4}\Delta_{g\dwn{n}}(u\up{\frac{n}{2}-1}-u\up{\frac{n}{2}})=-\haf K_{g\dwn{g}}+e^{u\up{\frac{n}{2}-2}-u\up{\frac{n}{2}-1}}-2e^{u\up{\frac{n}{2}-1}-u\up{\frac{n}{2}}}+e^{2u\up{\frac{n}{2}}}\]
\[\leq 2M_n+B_{\frac{n}{2}-2}-2B_{\frac{n}{2}-1}+A_{\frac{n}{2}}^2.\]
Rewriting the $B_j$ inequalities slightly gives:
\eqtns{B_1-B_2\leq 4M_n\\
B_j-B_{j+1}\leq 2M_n+B_{j-1}-B_j & 1<j<\frac{n}{2}-1\\
B_{\frac{n}{2}-1}-A_{\frac{n}{2}}^2\leq2M_n+B_{\frac{n}{2}-2}-B_{\frac{n}{2}-1}.}{}
The first two inequalities give $B_j-B_{j+1}\leq C$ for $1\leq j<\frac{n}{2}-1,$ and the third gives $B_{\frac{n}{2}-1}-A_\frac{n}{2}^2\leq C.$ 
Putting these together we have
\begin{equation}\label{Bjbound}B_j\leq A_\frac{n}{2}^2+C\ \ \ \ \ \ \ \text{for all}\ j.\end{equation}
Since $A_j=\dfrac{A_j}{A_{j+1}}\cdots\dfrac{A_{\frac{n}{2}-1}}{A_\frac{n}{2}}\cdot A_\frac{n}{2}\leq B_jB_{j+1}\cdots B_{\frac{n}{2}-1}\cdot A_\frac{n}{2}$ we have the desired inequality
\[A_j\leq (C+A_\frac{n}{2}^2)^{\frac{n}{2}-j}A_\frac{n}{2}\leq (C+A_\frac{n}{2}^2)^{\frac{n+1-2j}{2}}.\]

The second inequality involves the $v\up{j}$'s; at the maximum of $v\up{1}-v\up{2},$
\[0\leq-\frac{1}{4}\Delta_{g\dwn{n-1}}(v\up{1}-v\up{2})=-2e^{v\up{1}-v\up{2}}+e^{v\up{2}-v\up{3}}-\haf K_{g\dwn{n-1}}\leq-2E_1+E_2+2M_{n-1}\]
thus $2E_1-E_2\leq C.$ At the maximum of $v\up{2}-v\up{3}$  
\[0\leq-\frac{1}{4}\Delta_{g\dwn{n-1}}(v\up{2}-v\up{3})=e^{-v\up{1}-v\up{2}}\frac{|q_{n-1}|^2t^2}{g\dwn{n-1}^{n-1}}+e^{v\up{1}-v\up{2}}-2e^{v\up{2}-v\up{3}}+e^{v\up{3}-v\up{4}}-\haf K_{g\dwn{n-1}}.\]
By Lemma \ref{technicallemma} and the definition of $E_j,$ 
\[0\leq e^{v\up{1}-v\up{2}}+E_1-2E_2+E_3+M_{n-1}\leq 2E_1-2E_2+E_3+2M_{n-1}\]
thus $E_2-E_3\leq C+2E_1-E_2.$

For $2<j<\frac{n}{2}-1,$ at the maximum of $v\up{j}-v\up{j+1},$ we have
\[0\leq-\frac{1}{4}\Delta_{g\dwn{n-1}}(v\up{j}-v\dwn{j+1})=e^{v\up{j-1}-v\up{j}}-2e^{v\up{j}-v\up{j+1}}+e^{v\up{j+1}-v\up{j+2}}-\haf K_{g\dwn{n-1}}\]
thus $E_j-E_{j+1}\leq E_{j-1}-E_j+C.$ 

Finally, at the maximum of $v\up{\frac{n}{2}-1}-v\up{\frac{n}{2}},$ we obtain
\[0\leq-\frac{1}{4}\Delta_{g\dwn{n-1}}(v\up{\frac{n}{2}-1}-v\up{\frac{n}{2}})=e^{v\up{\frac{n}{2}-2}-v\up{\frac{n}{2}-1}}-2e^{v\up{\frac{n}{2}-1} -v\up{\frac{n}{2}}}+e^{2v\up{\frac{n}{2}}}-\haf K_{g\dwn{n-1}}\]
thus $E_{\frac{n}{2}-1}-D^2_\frac{n}{2}\leq E_{\frac{n}{2}-2}-E_{\frac{n}{2}-1}+C.$

Combining the inequalities for the $E_j$'s yields $E_{\frac{n}{2}-1}-D^2_\frac{n}{2}\leq C+E_{j-1}-E_j\leq 2E_1-E_2+C,$ and hence
\eqtns{2E_1\leq D_\frac{n}{2}^2+C\\
E_j\leq D_\frac{n}{2}^2+C.}{Ejs}

Since $D_j=\dfrac{D_j}{D_{j+1}}\cdots\dfrac{D_{\frac{n}{2}-1}}{D_\frac{n}{2}}\cdot D_\frac{n}{2}\leq E_jE_{j+1}\cdots E_{\frac{n}{2}-1}\cdot D_\frac{n}{2},$ the desired inequalities are proved
\eqtns{D_1\leq\haf(D_\frac{n}{2}^2+C)^\frac{n-1}{2}\\
D_j\leq (D_\frac{n}{2}^2+C)^\frac{n+1-2j}{2}&j>1.}{}
\end{proof}
\noindent We are now ready to prove the theorem. 
\begin{proof}(\emph{Of  Theorem \ref{metrictheorem}})
Recall that we are proving that on $\Omega_n$ and $\Omega_{n-1}$
\eqtns{e^{u\up{j}}=t^{\frac{n+1-2j}{n}}\left(1+O\left(t^{-\frac{2}{n}}\right)\right)&\text{for all}\ j\\
e^{v\up{j}}=(2t)^{\frac{n+1-2j}{n-1}}\left(1+O\left(t^{-\frac{2}{n-1}}\right)\right) & 1<j\leq\frac{n}{2}\\
e^{v\up{1}}=t\left(1+O\left(t^{-\frac{2}{n-1}}\right)\right)}{}
First the $u\up{j}$'s, by Lemma \ref{technicallemma} we have $\dfrac{t^2|q_n|^2}{g\dwn{n}^ne^{2u\up{1
}}}\leq B_1.$ Thus on $\Omega_{n},$ the choice of metric $g\dwn{n}$ yields $t^2e^{-2u\up{1}}\leq B_1.$ Using (\ref{Bjbound}), on $\Omega_n$ we have
\[t^2e^{-2u\up{1}}\leq B_1\leq C+A_{\frac{n}{2}}^2\] which, together with Lemma \ref{Ajbounds}, gives the inequality
\[\left (\frac{t^2}{A_\frac{n}{2}^2+C}\right )^{\haf}\leq e^{u\up{1}}\leq A_1\leq (C+A_\frac{n}{2}^2)^\frac{n-1}{2}.\]
We may rewrite this inequality as $t^\frac{2}{n}\leq A_\frac{n}{2}^2+C.$

Since $e^{u\up{1}}=e^{u\up{1}-u\up{2}}e^{u\up{2}-u\up{3}}\cdots e^{u\up{j-1}-u\up{j}}e^{u\up{j}}$, by (\ref{Bjbound}) it follows that
\[\left (\frac{t^2}{A_\frac{n}{2}^2+C}\right)^{\haf}\leq e^{u\up{1}}\leq B_1\cdots B_{j-1}e^{u\up{j}}.\]
Lemma \ref{upperboundlemma} gives $B_j\leq A_\frac{n}{2}^2+C\leq t^{\frac{2}{n}}+C,$ and by Lemma \ref{Ajbounds} 
\[e^{u\up{j}}\leq A_j\leq(t^\frac{2}{n}+C)^\frac{n+1-2j}{2}.\]
Hence on $\Omega_n,$
\[\frac{t}{(t^{\frac{2}{n}}+C)^\frac{2j-1}{2}}\leq e^{u\up{j}}\leq (t^\frac{2}{n}+C)^\frac{n+1-2j}{2}.\]
Thus, on $\Omega_n,$ we have the desired
\[e^{u\up{j}}=t^\frac{n+1-2j}{n}\left(1+O\left(t^{-\frac{2}{n}}\right)\right).\]

Now for the second two equations, recall that Lemma \ref{technicallemma} says $\dfrac{t^2|q_{n-1}|^2}{g\dwn{n-1}^{n-1}e^{2v\up{1}}}\leq 1.$ By definition of $g\dwn{n-1},$ on $\Omega_{n-1},$ 
\[\frac{t^2}{e^{2v\up{1}}}\leq 1.\]
So on $\Omega_{n-1},$ $t\leq e^{v\up{1}}\leq D_1.$ By Lemma \ref{Ajbounds} 
\[(2t)^{\frac{2}{n-1}}-C\leq D_\frac{n}{2}^2\] 
on $\Omega_{n-1},$ thus by Lemma \ref{upperboundlemma}  
\[2^\frac{3-n}{n-1}t^\frac{2}{n-1}-C\leq\haf D_\frac{n}{2}^2-C\leq \frac{D_1}{D_2}.\] 

Rearranging yields
\[\frac{D_2}{D_1}\leq\frac{1}{2^\frac{3-n}{n-1}t^\frac{2}{n-1}-C}\] on $\Omega_{n-1}.$ Using the first equation and the definition of $g\dwn{n-1},$ at the maximum of $e^{v\up{1}}$ we have 
\[0\leq -\frac{1}{4}\Delta_{g\dwn{n-1}}v\up{1}=e^{-v\up{1}-v\up{2}}\frac{|q_{n-1}|^2t^2}{g^{n-1}\dwn{n-1}}-e^{v\up{1}-v\up{2}}-C\]
\[\leq \frac{t^2}{D_1e^{v\up{2}}}-\frac{D_1}{e^{v\up{2}}}+C\leq \frac{t^2-D_1^2+CD_2D_1}{D_1e^{v\up{2}}}.\] 
 After rearranging the inequality it becomes
\[1\leq \frac{t^2}{D_1^2}+C\frac{D_2}{D_1}.\]
Thus on $\Omega_{n-1},$ we have $1\leq \dfrac{t^2}{D_1^2}+Ct^{-\frac{2}{n-1}},$ simplifying yields
\[D_1\leq \frac{t}{1-Ct^{-\frac{2}{n-1}}}\leq t(1+Ct^{-\frac{2}{n-1}}).\] 
By Lemma \ref{Ajbounds}, $D_\frac{n}{2}^2\leq(2D_1)^\frac{2}{n-1}+C,$ thus on $\Omega_{n-1}$
\[D_\frac{n}{2}^2\leq 2^\frac{2}{n-1}(t(1+Ct^{-\frac{2}{n-1}}))^\frac{2}{n-1}\]
Using Lemma \ref{Ajbounds} once more, for $1<j<\frac{n}{2}$ 
\[D_j\leq 2^\frac{2}{n+1-2j}(t(1+Ct^{-\frac{2}{n-1}}))^\frac{2}{n+1-2j}.\]
Using (\ref{Ejs}) we have
\eqtns{E_j\leq 2^{\frac{2}{n-1}}(t(1+Ct^{-\frac{2}{n-1}}))^\frac{2}{n-1} & 1<j\\
2E_1\leq 2^\frac{2}{n-1}(t(1+Ct^{-\frac{2}{n-1}}))^\frac{2}{n-1}
.}{Ejbound}
As in the proof of the first equation, 
\[e^{v\up{1}}=e^{v\up{1}-v\up{2}}e^{v\up{2}-v\up{3}}\cdots e^{v\up{j-1}-v\up{j}}\cdot e^{v\up{j}}\leq E_1E_2\cdots E_{j-1}\cdot e^{v\up{j}},\] thus restricting to $\Omega_{n-1}$
\[\frac{t}{E_1\cdots E_{j-1}}\leq e^{v\up{j}}.\]
Applying (\ref{Ejbound}) yields
\[\frac{2t}{(t(1+Ct^{-\frac{2}{n-1}}))^\frac{2(j-1)}{n-1}}\leq e^{v\up{j}}.\]
Finally, since $e^{v\up{j}}\leq D_j$ we have 
\eqtns{t\leq e^{v\up{1}}\leq t(1+Ct^{-\frac{2}{n-1}})\\
\frac{2t}{(t(1+Ct^{-\frac{2}{n-1}}))^\frac{2(j-1)}{n-1}}\leq e^{v\up{j}}\leq 2^\frac{2}{n+1-2j}(t(1+Ct^{-\frac{2}{n-1}}))^\frac{2}{n+1-2j}.}{}
Hence we have shown that on $\Omega_{n-1},$ 
\[e^{v\up{1}}=t\left(1+O\left(t^{-\frac{2}{n-1}}\right)\right)\] 
\[e^{v\up{j}}=(2t)^\frac{n+1-2j}{n-1}\left(1+O\left(t^{-\frac{2}{n-1}}\right)\right)\ \ \ \ \ \ for\  1<j\leq\frac{n}{2}.\]
\end{proof} 
\end{proof}

In terms of the $u\up{j}$'s and $v\up{j}$'s, Theorem \ref{metrictheorem} says the asymptotics of the metric solving the Hitchin equations on $\Omega_{n}$ are 
\[e^{u\up{j}}=t^{\frac{n+1-2j}{n}}\left(1+O\left(t^{-\frac{2}{n}}\right)\right)\ \ \ \ \  1\leq j\leq\frac{n}{2},\]
and for $\phi=\tilde e_1+tq_{n-1}e_{n-2},$ the asymptotics of the metric solving the Hitchin equations on $\Omega_{n-1}$ are 
\[e^{v\up{j}}=(2t)^{\frac{n+1-2j}{n-1}}\left(1+O\left(t^{-\frac{2}{n-1}}\right)\right)\ \ \ \ \ 1<j\leq\frac{n}{2}\]
\[e^{v\up{1}}=t\left(1+O\left(t^{-\frac{2}{n-1}}\right)\right).\]
Using our understanding of the $u\up{j}$'s, the $v\up{j}$'s, and their Laplacians, we gain control of their first derivatives.

\begin{Proposition}\label{derivprop} Let $z$ be a local coordinate so that $q_n=dz^n,$ then there is a constant $C_n=C_n(\Sigma,q_n,\Omega_{n})$ so that 
\[|u_z\up{j}|\leq C_nt^{-\frac{1}{n}}\ .\]
Similarly, let $z$ be a local coordinate so that $q_{n-1}=dz^{n-1},$ then there is a constant $C_{n-1}=C_{n-1}(\Sigma,q_{n-1},\Omega_{n-1})$ so that
\[|v_z\up{j}|\leq C_{n-1}t^{-\frac{1}{n-1}}\ .\] 
\end{Proposition}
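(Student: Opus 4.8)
The plan is to pass to the intrinsic length scale of the equations, namely $t^{-1/n}$ for $\phi=\tilde e_1+tq_ne_{n-1}$ (and $t^{-1/(n-1)}$ for the other case). First I would set
\[
\tilde u\up{j}:=u\up{j}-\tfrac{n+1-2j}{n}\ln t,\qquad 1\le j\le\tfrac n2 .
\]
By Theorem~\ref{metrictheorem} (in the reformulation recorded immediately after its proof) we have $\tilde u\up{j}=O(t^{-2/n})$ in $C^0$-norm, uniformly on $\Omega_n$ --- indeed on a fixed, slightly larger compact set $\Omega_n'$ avoiding the zeros of $q_n$, since those estimates hold on any compact set away from the zeros. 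On $\Omega_n'$ one has $K_{g\dwn{n}}=0$ and $|q_n|^2/g\dwn{n}^{\,n}=1$, so in a local coordinate with $q_n=dz^n$ (where $g\dwn{n}=1$ and $\Delta_{g\dwn{n}}=4\p_{z\bz}$) the system (\ref{neq}) reads
\[
-\p_{z\bz}\tilde u\up{j}=t^{2/n}\bigl(e^{\tilde u\up{j-1}-\tilde u\up{j}}-e^{\tilde u\up{j}-\tilde u\up{j+1}}\bigr)
\]
for $1<j<\tfrac n2$, with the evident modifications at $j=1$ (where the first exponential is replaced by $e^{-2\tilde u\up{1}}$) and at $j=\tfrac n2$ (and likewise in the odd case): each exponential picks up exactly the factor $t^{2/n}$ because consecutive differences of the shifts $\tfrac{n+1-2j}{n}\ln t$ all equal $\tfrac2n\ln t$.

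Next I would rescale by $w=t^{1/n}z$, so that $\p_{z\bz}=t^{2/n}\p_{w\bar w}$ and the prefactor cancels: in the $w$-coordinate the $\tilde u\up{j}$ satisfy a semilinear elliptic system with no $t$-dependent coefficients, whose right-hand sides are $O(t^{-2/n})$ since each exponent is $O(t^{-2/n})$ and the exponentials occur in differences. Applying interior elliptic estimates ($W^{2,p}$ plus Sobolev embedding in real dimension two, or Schauder) on a $w$-ball of radius one gives
\[
\bigl|\p_w\tilde u\up{j}\bigr|\ \le\ C\Bigl(\sup\bigl|\p_{w\bar w}\tilde u\up{j}\bigr|+\sup\bigl|\tilde u\up{j}\bigr|\Bigr)\ =\ O\bigl(t^{-2/n}\bigr),
\]
with $C$ absolute. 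Such a unit $w$-ball about a point of $\Omega_n$ is a $z$-ball of radius $\sim t^{-1/n}$, which lies inside $\Omega_n'$ once $t$ is large; hence the estimate is uniform with constant depending only on $\Sigma$, $q_n$ and $\Omega_n$. Scaling derivatives back, $u_z\up{j}=\tilde u_z\up{j}=t^{1/n}\p_w\tilde u\up{j}=O(t^{1/n}t^{-2/n})=O(t^{-1/n})$, as claimed; note $u_z\up{j}$ is independent of the choice of coordinate with $q_n=dz^n$, as any two differ by $z\mapsto\zeta z+c$ with $\zeta^n=1$.

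For $\phi=\tilde e_1+tq_{n-1}e_{n-2}$ the argument is the same with $n$ replaced by $n-1$ in the rescaling, $w=t^{1/(n-1)}z$, using the shifts $\tilde v\up{1}:=v\up{1}-\ln t$ and $\tilde v\up{j}:=v\up{j}-\tfrac{n+1-2j}{n-1}\ln(2t)$ for $j>1$, for which Theorem~\ref{metrictheorem} gives $\tilde v\up{j}=O(t^{-2/(n-1)})$. The only additional point is that on $\Omega_{n-1}$ the two ``anomalous'' terms $\tfrac{t^2|q_{n-1}|^2}{g\dwn{n-1}^{\,n-1}}e^{-v\up{1}-v\up{2}}$ and $e^{v\up{1}-v\up{2}}$ both equal $2^{-\frac{n-3}{n-1}}t^{2/(n-1)}\bigl(1+O(t^{-2/(n-1)})\bigr)$, so that after rescaling (\ref{n-1eq}) is again a system with no $t$-dependent coefficients and $O(t^{-2/(n-1)})$ right-hand sides; the same elliptic estimate then gives $|v_z\up{j}|=t^{1/(n-1)}O(t^{-2/(n-1)})=O(t^{-1/(n-1)})$.

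The reason a direct appeal to elliptic regularity in the $z$-coordinate fails --- and the main point requiring care --- is that at the original scale $\p_{z\bz}\tilde u\up{j}$ is only $O(1)$, yielding nothing better than $|u_z\up{j}|=O(1)$; the $t^{-1/n}$ gain is produced purely by rescaling to the length $t^{-1/n}$, at which the right-hand side is genuinely small and outweighs the $t^{1/n}$ incurred when $\p_w$ is rewritten as $\p_z$. So the step to get right is the bookkeeping of the shifts $\tilde u\up{j}$ and $\tilde v\up{j}$, chosen precisely so that the leading logarithmic terms cancel and the differences of exponentials are $O(t^{-2/n})$ (resp.\ $O(t^{-2/(n-1)})$), together with the check that the rescaled balls stay in the region where Theorem~\ref{metrictheorem} applies; everything else is standard.
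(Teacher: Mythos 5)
Your argument is correct and is essentially the paper's own proof: the functions $\alpha\up{j}(z)=u\up{j}(t^{-1/n}z)-\tfrac{n+1-2j}{n}\ln t$ used there are exactly your shift composed with the rescaling $w=t^{1/n}z$, and both arguments then bound $|\alpha\up{j}|$ and $|\alpha\up{j}_{z\bz}|$ by $Ct^{-2/n}$ via Theorem \ref{metrictheorem} and apply Schauder/interior elliptic estimates on a unit ball before scaling the derivative back by $t^{1/n}$. The $(n-1)$-cyclic case is handled identically in both.
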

\begin{proof}
Recall that the $u\up{j}$'s are functions of $t$, we continue to suppress this from the notation. Theorem \ref{metrictheorem} implies 
\eqtns{
e^{u\up{j}-u\up{j+1}}=t^{\frac{2}{n}}\left(1+O\left(t^{-\frac{2}{n}}\right)\right)
\\\dfrac{t^2}{e^{2u\up{1}}}=t^{\frac{2}{n}}\left(1+O\left(t^{-\frac{2}{n}}\right)\right)}{}
For $p\in\Omega_{n}$, choose a local coordinate $z$ centered at $p$ with $q_n=dz^n.$ Consider the functions $\alpha\up{j}$ defined by 
\[\alpha\up{j}(z)=u\up{j}(t^{-\frac{1}{n}}z)-\frac{n+1-2j}{n}\ln(t)\ \ \ for\ 1\leq j\leq \frac{n}{2},\]
where $u\up{j}(t^{-\frac{1}{n}}z)$ is just a rescaling of $u\up{j}.$
The $\alpha\up{j}$'s then satisfy the following two properties
\eqtns{|\alpha\up{j}|=|u\up{j}-\frac{n+1-2j}{n}\ln(t)|\leq Ct^{-\frac{2}{n}}\\
\alpha\up{j}_{z\bz}=t^{-\frac{2}{n}}u\up{j}_{z\bz}.}{alphaprop}
By choice of the background metric $g\dwn{n}$ and our coordinate system, on $\Omega_{n}$ we have $q_n=dz^n$, $K_{g\dwn{n}}=0$, and $\frac{|q_n|^2}{(g\dwn{n})^n}=1.$ Hence for $j=1,$ using the equations we have
\[|\alpha\up{1}_{z\bz}|=|t^{-\frac{2}{n}}u\up{1}_{z\bz}|=|t^{-\frac{2}{n}}(\dfrac{t^2}{e^{2u\up{1}}}-e^{u\up{1}-u\up{2}})|\]
\[=|t^{-\frac{2}{n}}\left(t^{\frac{2}{n}}O\left(t^{-\frac{2}{n}}\right)\right)|\leq Ct^{-\frac{2}{n}}.\]
For $1<j<\frac{n}{2},$ using the equations we have
\[|\alpha\up{j}_{z\bz}|=|t^{-\frac{2}{n}}u\up{j}_{z\bz}|=|t^{-\frac{2}{n}}(e^{u\up{j-1}-u\up{j}}-e^{u\up{j}-u\up{j+1}})|\]
\[=|t^{-\frac{2}{n}}\left(t^{\frac{2}{n}}O\left(t^{-\frac{2}{n}}\right)\right)|\leq Ct^{-\frac{2}{n}}.\]
For $j=\frac{n}{2},$
\[|\alpha\up{\frac{n}{2}}_{z\bz}|=|t^{-\frac{2}{n}}u\up{\frac{n}{2}}_{z\bz}|=|t^{-\frac{2}{n}}(e^{u\up{\frac{n}{2}-1}-u\up{\frac{n}{2}}}-e^{2u\up{\frac{n}{2}}})|\]
\[=|t^{-\frac{2}{n}}\left(t^{\frac{2}{n}}O\left(t^{-\frac{2}{n}}\right)\right)|\leq Ct^{-\frac{2}{n}}.\]
Hence for all $j,$ we have both $|\alpha\up{j}|\leq Ct^{-\frac{2}{n}}$ and $|\alpha\up{j}_{z\bz}|\leq Ct^{-\frac{2}{n}}.$

Applying Schauder's estimate gives
\[|\alpha\up{j}|_{C^1}\leq C|\alpha\up{j}|_{W^{2,p}}\leq Ct^{-\frac{2}{n}}.\]
Thus $|\alpha\up{j}_z|\leq Ct^{-\frac{2}{n}},$ proving \[|u\up{j}_z|=t^{\frac{1}{n}}|\alpha\up{j}_z|\leq Ct^{-\frac{1}{n}}.\]

The $\phi=\tilde e_1+q_{n-1}e_{n-2}$ case works similarly.
For $p\in \Omega_{n-1}$, choose the local coordinate $z$ centered at $p,$ and consider the functions $\beta\up{j}$ defined by 
\[\beta\up{j}(z)=v\up{j}(t^{-\frac{1}{n-1}}z)-\frac{n+1-2j}{n}\ln(t)\ \ \ \text{for}\ \ \ 1\leq j\leq \frac{n}{2}.\]
The $\beta\up{j}$'s also satisfy (\ref{alphaprop}), thus as above, our equations simplify to \[|\beta_{z\bz}\up{j}|=|t^{-\frac{2}{n-1}}v_{z\bz}\up{j}|=\begin{dcases}
|t^{-\frac{2}{n-1}}(t^2e^{-v\up{1}-v\up{2}}-e^{v\up{1}-v\up{2}})|\leq Ct^{-\frac{2}{n-1}} & j=1 \\
|t^{-\frac{2}{n-1}}(t^2e^{-v\up{1}-v\up{2}}+e^{v\up{1}-v\up{2}}-e^{v\up{2}-v\up{3}})|\leq Ct^{-\frac{2}{n-1}} & j=2 \\ 
|t^{-\frac{2}{n-1}}(e^{v\up{j-1}-v\up{j}}-e^{v\up{j}-v\up{j+1}})|\leq Ct^{-\frac{2}{n-1}} & 2<j<\frac{n}{2}\\
|t^{-\frac{2}{n-1}}(e^{v\up{\frac{n}{2}-1}-v\up{\frac{n}{2}}}-e^{2v\up{\frac{n}{2}}})|\leq Ct^{-\frac{2}{n-1}} & j=\frac{n}{2}
\end{dcases}
\]
Thus, with the same argument as the $\phi=\tilde e_1+q_ne_{n-1}$ case, we get 
\[|v_z\up{j}|=t^{\frac{1}{n-1}}|\beta_z\up{j}|\leq Ct^{-\frac{1}{n-1}}\]
%
%
%
\end{proof}

\section{Parallel transport asymptotics}\label{ODESetup}
In this section, the parallel transport ODE we wish to integrate is setup. To avoid some redundancy, we will sometimes use a subscript or superscript $b$ will be used to denote objects corresponding to the $b$-cyclic Higgs field $\phi_b=\tilde e_1 +tq_{b}e_{b-1}$ for $b=n,n-1.$ We will also work in the universal cover $\widetilde\Sigma$ of $\Sigma,$ all objects should be pulled back to the universal cover.

Let $P\in\widetilde{\Sigma}$ be away from the zeros of the differential $q_b,$ and choose a neighborhood $\Uu\dwn{P}$ centered at $P,$ with coordinate $z,$ so that $q_b=dz^b.$ Note that for this to make sense, $\Uu_P$ must be disjoint from the zero set of $q_b.$ In this neighborhood, $u\up{j}=\lambda\up{j}$ for $b=n$ and $v\up{j}=\lambda\up{j}$ for $b=n-1.$

As before, the choice of local coordinate $z$ defines a local holomorphic frame $(s_1,\dots,s_{\frac{n}{2}},s_{\frac{n}{2}}^*,\dots,s_1^*)$ for
\[E=K^{\frac{n-1}{2}}\oplus K^\frac{n-3}{2}\oplus\cdots\oplus K^{-\frac{n-1}{2}},\]
where $s_j=dz^{\frac{n+1-2j}{2}}.$
In this frame, the connection $1$-form of the corresponding flat connection is given by (\ref{flat1}) and (\ref{flat2}).
By our choice of coordinates, the $f\dwn{b}$ in (\ref{locqb}) is identically 1.

Using our estimates from Theorem \ref{metrictheorem} and Proposition \ref{derivprop}, we will solve for the transport matrix $T_{P,P'}(t)$ along paths starting at P and ending at a point $P'$ in the neighborhood $\Uu\dwn{P}$. In fact, $T_{P,P'}(t)$ will be calculated along geodesics of the background metric $g_b=|dz|^\frac{2}{b}$ which start at $P$ and end at $P'.$ Since the connection is flat, the value of $T_{P,P'}(t)$ is path independent in $\Uu\dwn{P}$.

We rescale the holomorphic frame $(s_1,\cdots,s_1^*)$ so that it stays bounded away from $0$ and $\infty$ as $t\ra\infty.$
For $\phi=\tilde e_1+tq_ne_{n-1}$, the rescaled frame is given by $F_n=(\sigma_1,\dots,\sigma_1^*)$ where
\begin{equation}\label{RescaledFrame}\sigma_j=t^{\frac{n+1-2j}{2n}}s_j\ \ \ \ \ \ \ \ \ \ \sigma^*_j=t^{-\frac{n+1-2j}{2n}}s^*_j.\end{equation}
\begin{Remark}\label{FmetricRemark}
By Theorem \ref{metrictheorem}, in the rescaled frame, the metric $h=Id\left(1+O\left(t^{-\frac{2}{n}}\right)\right).$ To see this, consider
\[h(s_i,s_j)=\delta_{ij}t^\frac{i+j-n-1}{n}\left(1+O\left(t^{-\frac{2}{n}}\right)\right)\]
thus
\[h(\sigma_i,\sigma_j)=h(t^{\frac{n+1-2i}{2n}}s_i,t^{\frac{n+1-2j}{2n}}s_j)\]
\[=t^{\frac{n+1-(i+j)}{n}}h(s_i,s_j)=\delta_{ij}\left(1+O\left(t^{-\frac{2}{n}}\right)\right).\]
\end{Remark}
For $\phi=\tilde e_1+tq_{n-1}e_{n-2},$ the rescaled frame is denote by $F_{n-1}=(\sigma_1,\dots,\sigma_1^*),$ it is given by
\[\sigma_1=t^{\haf}s_1\ \ \ \ \ \ \sigma^*_1=t^{-\frac{1}{2}}s^*_1\]\[\sigma_j=(2t)^{\frac{n+1-2j}{2(n-1)}}s_j\ \ \ \ \ \ \sigma^*_j=(2t)^{-\frac{n+1-2j}{2(n-1)}}s_j^*\ \ \ \ \ \  j=2,\dots,\frac{n}{2}.\]
As in the previous case, the harmonic metric in this frame is $h=Id\left(1+O\left(t^{-\frac{2}{n-1}}\right)\right).$

If we denote the flat connection by $D_b=U_bdz+V_b d\bz,$ then, by the estimates from Theorem \ref{metrictheorem} and Proposition \ref{derivprop}, the matrices in the connection $1$-form are given by:
\begin{enumerate}
\item For $\phi=\tilde e_1+q_n e_{n-1}$,
\[U_n=\mtrx{
-u\up{1}_z&&&t^{\frac{1}{n}}\\
t^{\frac{1}{n}}&-u\up{2}_z&&\\
&\ddots&\ddots&\\
&&t^{\frac{1}{n}}&u_z\up{1}}=
t^\frac{1}{n} \mtrx{
&&&1\\
1&&&\\
&\ddots&&\\
&&1&
}+O\left(t^{-\frac{1}{n}}\right)
\]

\[V_n=\mtrx{&t^{-\frac{1}{n}}e^{u\up{1}-u\up{2}}&&\\
&&\ddots&\\
&&&t^{-\frac{1}{n}}e^{u\up{1}-u\up{2}}\\
t^{-\frac{2n-1}{n}}e^{-2u\up{1}}&&&}=
t^{\frac{1}{n}}\mtrx{&1&&\\
&&\ddots&\\
&&&1\\
1&&&}+O\left(t^{-\frac{1}{n}}\right)
\]
\noindent where $O\left(t^{-\frac{1}{n}}\right)$ is uniform as $t\ra\infty$ for all points in $\Omega_n.$
\item For $\phi=\tilde e_1+q_{n-1} e_{n-2}$,
\[U_{n-1}=\mtrx{-v\up{1}_z&&&2^{-\frac{n-3}{2(n-1)}}t^\frac{1}{n-1}&\\
2^{-\frac{n-3}{2(n-1)}}t^\frac{1}{n-1}&-v_z\up{2}&&&2^{-\frac{n-3}{2(n-1)}}t^{\frac{1}{n-1}}\\
&2^{\frac{1}{n-1}}t^{\frac{1}{n-1}}&&&\\
&&\ddots&&\\
&&&2^{-\frac{n-3}{2(n-1)}}t^\frac{1}{n-1}&v\up{1}_z}\]

\[=
(2t)^\frac{1}{n-1}\mtrx{&&&\frac{1}{\sqrt{2}}&\\
\frac{1}{\sqrt{2}}&&&&\frac{1}{\sqrt{2}}\\
&1&&&\\
&&\ddots&&\\
&&&\frac{1}{\sqrt{2}}&}+O\left(t^{-\frac{1}{n-1}}\right)
\]

\smallskip
\[V_{n-1}=
\mtrx{&e^{v\up{1}-v\up{2}}2^{\frac{n-3}{2(n-1)}}t^{-\frac{1}{n-1}}&&\\
&&&\\
&&\ddots&\\
e^{-v\up{1}-v\up{2}}2^{\frac{n-3}{2(n-1)}}t^\frac{2n-3}{n-1}&&&e^{v\up{1}-v\up{2}}2^{\frac{n-3}{2(n-1)}}t^{-\frac{1}{n-1}}\\
&e^{-v\up{1}-v\up{2}}2^{\frac{n-3}{2(n-1)}}t^\frac{2n-3}{n-1}&&}\]
\[=
(2t)^{\frac{1}{n-1}}\mtrx{&\frac{1}{\sqrt{2}}&&&\\
&&1&&\\
&&&\ddots&\\
\frac{1}{\sqrt{2}}&&&&\frac{1}{\sqrt{2}}\\
&\frac{1}{\sqrt{2}}&&&}+O\left(t^{-\frac{1}{n-1}}\right)\]
\noindent
where $O\left(t^{-\frac{1}{n-1}}\right)$ is uniform as $t\ra\infty$ for all points in $\Omega_{n-1}.$\end{enumerate}
\noindent

As noted above, we will integrate the initial value problem along geodesics of the metric $|q_b|^{\frac{2}{b}}$ which avoid the zeros of $q_b.$ Any $P'\in\Uu\dwn{P},$ can be expressed in polar coordinates $P'=Le^{i\theta};$ the geodesic $\gamma$ of the metric $|q_b|^{\frac{2}{b}}$ which starts at $P$ and ends at $P'$ is the straight line
\[\gamma(s)=se^{i\theta}\ \ \ \ \ \ \ \text{for}\  s\in[0,L].\]

To avoid an overload of notation, when there is no confusion, the $b$ will be dropped from the notation. We start at $P$ with the initial rescaled holomorphic frame $F(P).$ For a fixed $t, $ parallel transportation along the geodesic $\gamma(s):[0,L]\rightarrow \widetilde{\Sigma}$ with respect to the flat connection yields a family of frames $G(\gamma(s))(t)$ along $\gamma$ given by
\[G(\gamma(s))(t)=T_{P,\gamma(s)}(t)(F(P))\ \ \ \ \ \ \text{with}\ \ \ \  T_{P,\gamma(0)}(t)=Id.\]

For each $t,$ consider the family of matrices $\Psi_t(s)$ satisfying  \[\Psi_t(0)=Id\ \ \ \ \text{and}\ \ \ \ \  \Psi_t(s)G(\gamma(s))(t)=F(\gamma(s)).\]
Since $G(\gamma(s))(t)$ is parallel along $\gamma,$ rewriting $\nabla_{\frac{\p}{\p s}} F(\gamma(s))$ in terms of $G(\gamma(s))(t)$ yields
\[\nabla_{\frac{\p}{\p s}} F(\gamma(s))=\frac{d\Psi_t}{ds}G(\gamma(s))(t).\]
Also,
\begin{equation*} \nabla_{\frac{\p}{\p s}} F(\gamma(s))=(e^{i\theta}U+e^{-i\theta}V)F(\gamma(s))= (e^{i\theta}U+e^{-i\theta}V)\Psi_t G(\gamma(s))(t),\end{equation*}
hence,
\[\frac{d\Psi_t}{ds}=\left(e^{i\theta}U+e^{-i\theta}V\right)\Psi_t.\]
Rewriting $T_{P,\gamma(s)}(t)$ in terms of $\Psi_t$ gives
\begin{equation}\label{TvPsi} T_{P,\gamma(s)}(t)(F(P))=G(\gamma(s))(t)=\Psi_t(\gamma(s))^{-1}F(\gamma(s)).
\end{equation}
Thus $T_{P,\gamma(s)}(t)=\Psi_t(\gamma(s))^{-1},$ and we obtain the following proposition.
%
%
%
%
\begin{Proposition}\label{FmetricProp} With respect to the frame $(\sigma_1,\dots,\sigma_{\frac{n}{2}},\sigma_{\frac{n}{2}}^*,\dots,\sigma_1^*),$ parallel transport along the geodesic from $P$ to $P'$ for the flat connection is given by $\Psi_t(L)^{-1}\left(1+O\left(t^{-\frac{2}{b}}\right)\right),$ where $\Psi_t$ solves the initial value problem
\[\Psi_t(0)=I\ \ \ \ \ \ \ \ \frac{d\Psi_t}{ds}=\left(e^{i\theta}U+e^{-i\theta}V\right)\Psi_t\]
\end{Proposition}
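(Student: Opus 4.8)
The proposition is a repackaging of the computation carried out in the paragraphs preceding it, together with one observation about the choice of frame, so the plan is short.

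First I would record the linear ODE governing parallel transport along $\gamma$. Along $\gamma(s)=se^{i\theta}$ one has $dz(\gamma'(s))=e^{i\theta}$ and $d\bar z(\gamma'(s))=e^{-i\theta}$, so the pull-back to $\gamma$ of the flat connection $1$-form $D_b=U\,dz+V\,d\bar z$ is $(e^{i\theta}U+e^{-i\theta}V)\,ds$. Comparing the parallel frame $G(\gamma(s))$ to the coordinate frame $F=(\sigma_1,\dots,\sigma_1^*)$ exactly as in the derivation of (\ref{TvPsi}), the change-of-basis matrix $\Psi_t$ determined by $\Psi_t(s)\,G(\gamma(s))=F(\gamma(s))$ satisfies $\Psi_t(0)=I$ and $\frac{d\Psi_t}{ds}=(e^{i\theta}U+e^{-i\theta}V)\Psi_t$; existence and uniqueness of $\Psi_t$ on $[0,L]$ is the standard theorem for linear systems with continuous coefficients on a compact interval. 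By (\ref{TvPsi}) at $s=L$, the parallel transport operator from $P$ to $P'$, written in the frame $(\sigma_j)$ at both ends, is exactly $\Psi_t(L)^{-1}$.

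The only genuinely new ingredient to supply is the factor $1+O(t^{-2/b})$, which converts this to the parallel transport written in a frame orthonormal for the harmonic metric $h$, the latter being the frame relative to which the unitarity assertions in Theorem \ref{TransportAsymp} are made. By Remark \ref{FmetricRemark} and its $(n-1)$-cyclic counterpart, in the frame $(\sigma_j)$ the metric $h$ has Gram matrix $\delta_{ij}\bigl(1+O(t^{-2/b})\bigr)$ uniformly on $\Omega_b$; since this Gram matrix is diagonal (the summands $K^{(n+1-2j)/2}$ are $h$-orthogonal, by Corollary \ref{CorMetricSplit}), the diagonal gauge transformation $g_t$ with $j$-th entry $h(\sigma_j,\sigma_j)^{-1/2}$ carries $(\sigma_j)$ to an $h$-orthonormal frame and satisfies $g_t=I+O(t^{-2/b})$ at every point of $\Uu_P$. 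Parallel transport transforms under this change of frame by pre- and post-composition with $g_t(P)$ and $g_t(P')$, both of which are $I+O(t^{-2/b})$; hence the orthonormal-frame transport equals $\Psi_t(L)^{-1}$ up to multiplication on the left and right by matrices $I+O(t^{-2/b})$, which is what $\Psi_t(L)^{-1}\bigl(1+O(t^{-2/b})\bigr)$ abbreviates.

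The content here is entirely bookkeeping; the one thing to be careful about is uniformity -- that the $O$-constants in Remark \ref{FmetricRemark} and in the displayed expansions of $U$ and $V$ do not depend on the point of $\Uu_P$. This is inherited from the uniformity on $\Omega_b$ already asserted in Theorem \ref{metrictheorem} and Proposition \ref{derivprop}, once $\Uu_P$ is taken to be a relatively compact subset of $\Omega_b$ disjoint from the zeros of $q_b$. A secondary point worth flagging: $\Psi_t(L)^{-1}$ is in general exponentially large in $t^{1/b}$, so the correction must be read multiplicatively (each factor close to $I$) rather than additively; this causes no trouble in the later extraction of eigenvalue asymptotics, where $\Psi_t(L)^{-1}$ is only ever multiplied by factors that are $I$ up to a negative power of $t$.
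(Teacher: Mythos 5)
Your proposal is correct and follows essentially the same route as the paper: the ODE for $\Psi_t$ is obtained by pulling back $D_b=U\,dz+V\,d\bar z$ along $\gamma(s)=se^{i\theta}$ and comparing the parallel frame $G$ with the rescaled holomorphic frame $F$, giving $T_{P,P'}=\Psi_t(L)^{-1}$ exactly in the frame $(\sigma_j)$. Your explanation of the $\left(1+O\left(t^{-2/b}\right)\right)$ factor as the (two-sided, multiplicative) correction from passing to an $h$-orthonormal frame via Remark \ref{FmetricRemark} is precisely what the paper leaves implicit and uses later in Section \ref{HarmonicSection}.
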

\smallskip

\noindent Explicitly, we have \begin{enumerate}[1.]
	\item  For $\phi=\tilde e_1+tq_n e_{n-1}$,
\[\frac{d\Psi_t}{ds}=\left[ t^{\frac{1}{n}}\mtrx{
0&e^{-i\theta}&&&0&e^{i\theta}\\
e^{i\theta}&0&e^{-i\theta}&&&\\
&\ddots&&\ddots&&\\
&&&&&\\
0&&&e^{i\theta}&0&e^{-i\theta}\\
e^{-i\theta}&&&&e^{i\theta}&0}
+O\left(t^{-\frac{1}{n}}\right)\right]\Psi_t\]
\item For $\phi=\tilde e_1+tq_{n-1} e_{n-2}$,
\[\dfrac{d\Psi_t}{ds}=\left[ (2t)^{\frac{1}{n-1}}
\mtrx{
0&\frac{1}{\sqrt{2}}e^{-i\theta}&&&\frac{1}{\sqrt{2}}e^{i\theta}&0\\
\frac{1}{\sqrt{2}}e^{i\theta}&0&e^{-i\theta}&&&\frac{1}{\sqrt{2}}e^{i\theta}\\
&\ddots&&\ddots&&\\
&&&&&\\
\frac{1}{\sqrt{2}}e^{-i\theta}&&&e^{i\theta}&0&\frac{1}{\sqrt{2}}e^{-i\theta}\\
0&\frac{1}{\sqrt{2}}e^{-i\theta}&&&\frac{1}{\sqrt{2}}e^{i\theta}&0}
+O\left(t^{-\frac{1}{n-1}}\right)\right]\Psi_t\]
\end{enumerate}

In the above expressions, the matrix inside the bracket may be diagonalized by a constant unitary matrix $S$, and thus can be written as
\[S \mtrx{\mu_1&&\\
&\ddots&\\
&&\mu_n}S^{-1}\]
where the set $\{\mu_j\}$ is the set of roots of the characteristic polynomial $det(\mu I-(e^{i\theta}U+e^{-i\theta}V)).$ More precisely,
\begin{enumerate}[1.]
	\item For the case $\phi=\tilde e_1+q_n e_{n-1}$, $\mu_j=2\cos(\theta+\frac{2\pi j}{n})$.
	\item For the case $\phi=\tilde e_1+q_{n-1} e_{n-2}$, $\mu_1=0$, and for $j\geq 2$, $\mu_j=2\cos(\theta+\frac{2\pi j}{n-1})$.
\end{enumerate} 

To compute $\Psi(L)$, we compute $\Phi=S^{-1}\Psi S$
\begin{equation}\label{Phieq}
\Phi(0)=I, \ \ \ \ \ \ \ \ \
\dfrac{d\Phi}{ds}=\left[t^{\frac{1}{b}} M(\theta)
+R\right]\Phi
\end{equation}
where $M(\theta)=\mtrx{\mu_1&&\\&\ddots&\\&&\mu_n},$ and $S^{-1}RS$ is the error term in Proposition \ref{FmetricProp}.

To integrate this initial value problem, we employ the following strategy:

\noindent Consider the solution $\Phi_0$ to the initial value problem 
\[\xymatrix{\Phi_0(0)=I,& \dfrac{d\Phi_0}{ds}=t^{\frac{1}{n}} M(\theta)\Phi_0.}\]
Hence $\Phi_0(s)=\mtrx{
        e^{st^{\frac{1}{b}}\mu_1}&&&\\
        &e^{st^{\frac{1}{b}}\mu_2}&&\\
        &&\ddots&\\
        &&&e^{st^{\frac{1}{b}}\mu_n}}.$ 
Instead of solving for $\Phi$ asymptotically, we solve for $(\Phi_0)^{-1}\Phi$. Note that $(\Phi_0)^{-1}\Phi$ solves the initial value problem
\begin{equation}
	\label{star}
	(\Phi_0)^{-1}\Phi(0)=I,\ \ \ \ \ \ \ \ \ \dfrac{d((\Phi_0)^{-1}\Phi)}{ds}=(\Phi_0)^{-1}R\Phi_0 \cdot(\Phi_0)^{-1}\Phi.
\end{equation}
This can be seen by using the product rule
\begin{eqnarray*}
\dfrac{d((\Phi_0)^{-1}\Phi)}{ds}&=&
\dfrac{d\Phi_0}{ds}\Phi+(\Phi_0)^{-1}\dfrac{d\Phi}{ds}\\
&=&-(\Phi_0)^{-1}\dfrac{d\Phi_0}{ds}(\Phi_0)^{-1}\Phi+(\Phi_0)^{-1}\dfrac{d\Phi}{ds}\\
&=&-(\Phi_0)^{-1}t^{\frac{1}{b}}M(\theta)\Phi+(\Phi_0)^{-1}(t^{\frac{1}{b}}M(\theta)+R)\Phi\\
&=&(\Phi_0)^{-1}R\Phi\\
&=&(\Phi_0)^{-1}R\Phi_0\cdot(\Phi_0)^{-1}\Phi.
\end{eqnarray*}
For the initial value problem (\ref{star}), we will show $(\Phi_0)^{-1}R\Phi_0$ is $o(1),$ and that $(\Phi_0)^{-1}\Phi$ is $Id+o(1);$ hence
\[\Phi=\Phi_0(Id+o(1)).\]
Before doing this, we need a more in-depth understanding of the error term.  

The estimate of the error term for the ODE relies mainly on the error estimate of the $u\up{j}$'s and $v\up{j}$'s. For the $n$-cyclic case, we introduce the following notation for the error term for $u\up{j}$ coming from Theorem \ref{metrictheorem}
\[\tilde u\up{j}=u\up{j}-ln\left|tq_n\right|^\frac{n+1-2j}{n}.\]
Similarly for the $(n-1)$-cyclic case set 
\[\tilde v\up{j}=\begin{dcases}
v\up{j}-ln\left|tq_{n-1}\right|&j=1\\
v\up{j}-ln\left|2tq_{n-1}\right|^\frac{n+1-2j}{n-1}&otherwise
\end{dcases}\]
For the $n$-cyclic case, writing the error term $R$ for the ODE (\ref{Phieq}) in terms of $\tilde u\up{j}$ gives 
\begin{equation}
S^{-1}\left(e^{i\theta}\mtrx{
\tilde u\up{1}_z&&&\\
&\tilde u\up{2}_z&&\\
&&\ddots&\\
&&&-\tilde u\up{1}_z}+
t^\frac{1}{n}e^{-i\theta}\mtrx{
	0&e^{\tilde u\up{1}-\tilde u\up{2}}-1&&\\
	&\ddots&\ddots&\\
	&&0&e^{ \tilde u\up{1}-\tilde u\up{2}}-1\\
	e^{-2\tilde u\up{1}}-1&&&0
}
\right)S\label{nError}
\end{equation}
which we will write as $R=B^1\dwn{n}+t^\frac{1}{n}B^2\dwn{n}.$
In a similar fashion, the error term for the $(n-1)$-cyclic case is 
\begin{equation*}S^{-1}\left(e^{i\theta}\scalemath{.85}{\mtrx{
\tilde v\up{1}_z&&&&\\
&\tilde v\up{2}_z&&&\\
&&\ddots&&\\
&&&-\tilde v\up{2}_z&\\
&&&&-\tilde v\up{1}_z}}\right.
\end{equation*}
\begin{equation}\left.+(2t)^{\frac{1}{n-1}}e^{-i\theta}\scalemath{.85}{
\mtrx{
	&\frac{1}{\sqrt{2}}(e^{\tilde v\up{1}-\tilde v\up{2}}-1)&&&&\\
	&&e^{\tilde v\up{2}-\tilde v\up{3}}-1&&&\\
	&&&\ddots&&\\
	&&&&e^{\tilde v\up{2}-\tilde v\up{3}}-1&\\
	\frac{1}{\sqrt{2}}(e^{-\tilde v\up{1}-\tilde v\up{2}}-1)&&&&&\frac{1}{\sqrt{2}}(e^{\tilde v\up{1}-\tilde v\up{2}}-1)\\
	&\frac{1}{\sqrt{2}}(e^{-\tilde v\up{1}-\tilde v\up{2}}-1)&&&&
}} \right) S\label{(n-1)Error}
\end{equation}
\subsection{The $n$-cyclic case}
The following theorem concerning estimates of the errors will be crucial.
\begin{Theorem}\label{error} Let $d(p)$ be the minimum distance from a point $p$ to the zeros of $q_n.$ Then for any $d<d(p),$ as $t\rightarrow +\infty$, the $(k,l)$-entry of $R$ satisfies
\[R_{kl}(p)= O\left(t^{-\frac{1}{2n}}e^{-2|1-\zeta\dwn{n}^{k-l}|t^\frac{1}{n}d}\right).\] 
\end{Theorem}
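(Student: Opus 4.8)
The plan is to bootstrap from the crude error estimate of Theorem \ref{metrictheorem}, which only gives $\tilde u\up{j}=O(t^{-2/n})$ pointwise, to the much sharper exponentially-decaying bound claimed here. The key observation, following the discussion after (\ref{ErrHitn}), is that the error functions $\tilde u\up{1},\dots,-\tilde u\up{1}$ form (after subtracting the leading term) an almost-solution of the cyclic Toda lattice, and the ``eigensolutions'' $w_k=\frac{1}{\sqrt n}\sum_{i\in\Z_n}\zeta\dwn{n}^{ik}(d\up{i}-d\up{i+1})$ diagonalize the linearized system. So first I would work on a disk $D$ of radius $r<d(p)$ centered at $p$ where $q_n=dz^n$, $K_{g\dwn{n}}=0$, and rewrite the Hitchin equations (\ref{ErrHitn}) in terms of the $\tilde u\up{j}$; then combine them into the recursive formula of Proposition \ref{w_kProposition}, $\Delta w_k=|1-\zeta\dwn{n}^k|^2 w_k + (\text{higher order in the }w's)$. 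The linearization is $\Delta w_k \approx 4t^{2/n}|1-\zeta\dwn{n}^k|^2 w_k$ after restoring the $t$-scaling, whose fundamental solutions decay like $e^{\pm 2|1-\zeta\dwn{n}^k|t^{1/n}\cdot(\text{dist})}$ — this is the source of the exponential rate.

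The main analytic engine is the maximum principle applied to comparison functions. On the disk $D$ of radius $r$, for the quantity $e^{u\up{1}-u\up{2}}$-type ratios one already controls the right-hand sides; the new step is to build a barrier: a function of the form $C t^{-3/(2n)}\big(e^{-2|1-\zeta\dwn{n}^k|t^{1/n}(r-|z|)} + e^{-2|1-\zeta\dwn{n}^k|t^{1/n}(r+|z|)}\big)$ or a Bessel-type radial function that dominates $|w_k|$ on the boundary $|z|=r$ (using the $O(t^{-2/n})$ bound there together with Proposition \ref{derivprop} for the derivative terms) and is a supersolution of the $\Delta w_k = 4t^{2/n}|1-\zeta\dwn{n}^k|^2 w_k + \dots$ equation in the interior. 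Because the recursive formula (Proposition \ref{w_kProposition}) expresses $\Delta w_k$ as a sum of products $w_{r_1}\cdots w_{r_s}$ with $r_1+\cdots+r_s\equiv k$, one runs an induction on the "level" $|1-\zeta\dwn{n}^k|$ (equivalently, on $k$ ordered by how close $\zeta\dwn{n}^k$ is to $1$): the lowest modes $w_k$ decay slowest, and the quadratic-and-higher terms feeding into a given $w_k$ involve modes whose combined decay rate is at least that of $w_k$ itself, so the nonlinearity is absorbed as a lower-order perturbation. This is Theorem \ref{ErrorEstimateTheorem}, giving $w_k = O(t^{-3/(2n)} e^{-2|1-\zeta\dwn{n}^k|t^{1/n}(r-|z|)})$; applying $\p_z$ and $\Delta$ via interior Schauder estimates on a slightly smaller disk then yields the same-order bounds for $(w_k)_z$ and $\Delta w_k$ up to a harmless $t^{\pm 1/n}$ factor, as recorded in the discussion around (\ref{errortermcyclic_w_k}).

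Finally I would translate back to the entries $R_{kl}$. By the computation in Section \ref{ErrorToda}, the $(k,l)$-entry of the error matrix $R$ (i.e. of $S^{-1}(B^1\dwn{n}+t^{1/n}B^2\dwn{n})S$ in (\ref{nError})) is, up to a universal constant, $R_{kl}=C(w_{k-l})_z + C t^{-1/n}\Delta w_{k-l}$. Plugging in the bounds for $(w_{k-l})_z$ and $\Delta w_{k-l}$ at the center $z=0$ of a disk of radius $r=d$ (any $d<d(p)$) gives $R_{kl}(p)=O\big(t^{-3/(2n)}\cdot t^{1/n}\cdot e^{-2|1-\zeta\dwn{n}^{k-l}|t^{1/n}d}\big)=O\big(t^{-1/(2n)}e^{-2|1-\zeta\dwn{n}^{k-l}|t^{1/n}d}\big)$, which is exactly the claimed estimate. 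The main obstacle I anticipate is the induction in the second paragraph: one must choose the ordering of the modes $k$ carefully and check that every product $w_{r_1}\cdots w_{r_s}$ appearing in Proposition \ref{w_kProposition}'s formula for $\Delta w_k$ genuinely decays at least as fast as the barrier for $w_k$ — this requires the subadditivity-type inequality $|1-\zeta\dwn{n}^{r_1}|+\cdots+|1-\zeta\dwn{n}^{r_s}| \ge |1-\zeta\dwn{n}^{k}|$ whenever $r_1+\cdots+r_s\equiv k \bmod n$, together with bookkeeping of the powers of $t$ so the $s!\,n^{(s-1)/2}$ and $t^{-3(s-1)/(2n)}$ factors close the induction rather than spoil it.
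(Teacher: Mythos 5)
Your proposal follows essentially the same route as the paper: reduce $R_{kl}$ to $C(w_{k-l})_z + Ct^{-1/n}\Delta w_{k-l}$ via the Toda-lattice eigensolutions, prove the decay $w_k=O\bigl(t^{-3/(2n)}e^{-2|1-\zeta\dwn{n}^k|t^{1/n}(r-|z|)}\bigr)$ by maximum-principle comparison with Bessel-type barriers and an induction on the modes using the subadditivity of $|1-\zeta\dwn{n}^{r}|$, then recover the derivative and Laplacian bounds by the recursive formula and rescaled Schauder estimates. The obstacle you flag — closing the induction when products $w_{r_1}\cdots w_{r_s}$ involve not-yet-controlled middle modes — is exactly what the paper's proof spends most of its effort on (the coarse $\sqrt{2}$-rate claim and the iterated two-case improvement), but your outline is correct.
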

Due to the length and level of technicality of the proof, we will spend the next two sections proving Theorem \ref{error}. In section \ref{ErrorToda} we relate the error term with solutions to the $\fsl(n,\C)$-cyclic Toda lattice, and prove a recursive formula on the Toda lattice. Then in section \ref{ErrorEstimate}, the recursive formula will be applied to prove the desired estimate. 

Assuming Theorem \ref{error}, we can now prove the main theorem concerning the asymptotics of the parallel transport operator with an extra condition on the path.

\begin{Theorem} \label{TransportAsymp} Suppose $P,\ P'$ and the path $\gamma(s)$ are as above. If $P'$ has the property that for every $s$, 
\[ s<d(\gamma(s)):=min\{d(\gamma(s), z_0)| \  \textit{for all zeros $z_0$ of $q_n$}\},\]
then there exists a constant unitary matrix $S$, not depending on the pair $(P, P')$, so that as $t\ra\infty$,
    \[T_{P,P'}(t)=\left( Id+O\left(t^{-\frac{1}{2n}}\right) \right) S \mtrx{
        e^{-Lt^{\frac{1}{n}}\mu_1}&&&\\
        &e^{-Lt^{\frac{1}{n}}\mu_2}&&\\
        &&\ddots&\\
        &&&e^{-Lt^{\frac{1}{n}}\mu_n}}S^{-1}\]
where $\mu_j=2cos\left(\theta+\frac{2\pi {(j-1)}}{n}\right)$.    
\end{Theorem}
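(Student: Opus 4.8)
The plan is to integrate the initial value problem (\ref{Phieq}) for $\Phi$ by the variation-of-constants scheme already set up above, taking Theorem \ref{error} as given (its proof occupies the next two sections). By Proposition \ref{FmetricProp} the parallel transport from $P$ to $P'$ along $\gamma$ equals $S\,\Phi(L)^{-1}S^{-1}$ up to a factor $Id+O(t^{-\frac{2}{n}})$, where $\Phi$ solves (\ref{Phieq}) and $S$ diagonalizes the leading circulant matrix $e^{i\theta}U+e^{-i\theta}V$. First I would record that this $S$ may be taken to be the discrete Fourier matrix $S_{jk}=\tfrac{1}{\sqrt n}\zeta_n^{jk}$: it is unitary and, crucially, independent of $\theta$ — only the eigenvalues $\mu_j=2\cos(\theta+\tfrac{2\pi(j-1)}{n})$ carry the $\theta$-dependence — hence independent of the pair $(P,P')$. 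So it suffices to prove $\Phi(L)=\Phi_0(L)\bigl(Id+O(t^{-\frac{1}{2n}})\bigr)$, where $\Phi_0(s)=\mtrx{e^{st^{1/n}\mu_1}&&\\&\ddots&\\&&e^{st^{1/n}\mu_n}}$; conjugating by the fixed unitary $S$ (which preserves $O$-bounds) and inverting then gives the stated formula, the $Id+O(t^{-\frac{2}{n}})$ factor being of strictly lower order.

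Following (\ref{star}), I would study $\Phi_0^{-1}\Phi$, which solves $\tfrac{d}{ds}(\Phi_0^{-1}\Phi)=(\Phi_0^{-1}R\Phi_0)(\Phi_0^{-1}\Phi)$ with initial value $Id$. The heart of the argument is the uniform bound $\norm{\Phi_0(s)^{-1}R(\gamma(s))\Phi_0(s)}=O(t^{-\frac{1}{2n}})$ for $s\in[0,L]$. Its $(k,l)$-entry is $R_{kl}(\gamma(s))\,e^{st^{1/n}(\mu_l-\mu_k)}$. Since $s<d(\gamma(s))$ by hypothesis, applying Theorem \ref{error} at the point $\gamma(s)$ with $d=s$ gives $\abs{R_{kl}(\gamma(s))}\le C\,t^{-\frac{1}{2n}}e^{-2\abs{1-\zeta_n^{k-l}}t^{1/n}s}$, with $C$ uniform over the compact path. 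On the other hand, the identities $2\cos A-2\cos B=-4\sin\tfrac{A+B}{2}\sin\tfrac{A-B}{2}$ and $\abs{1-\zeta_n^{m}}=2\abs{\sin\tfrac{\pi m}{n}}$ yield the pointwise comparison $\abs{\mu_l-\mu_k}\le 2\abs{1-\zeta_n^{k-l}}$ for all $k,l$. Hence $e^{st^{1/n}(\mu_l-\mu_k)}e^{-2\abs{1-\zeta_n^{k-l}}t^{1/n}s}\le 1$, so every entry of $\Phi_0^{-1}R\Phi_0$ along $\gamma$ is $O(t^{-\frac{1}{2n}})$: the exponential decay of the error $R$ exactly cancels the exponential growth introduced by conjugation with $\Phi_0$.

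With the uniform bound in hand, Gr\"onwall's inequality applied to the equation for $\Phi_0^{-1}\Phi$ gives $\norm{\Phi_0^{-1}\Phi(s)}\le e^{CLt^{-1/2n}}$, which stays bounded as $t\to\infty$, and then $\norm{\Phi_0(L)^{-1}\Phi(L)-Id}\le\int_0^L\norm{\Phi_0^{-1}R\Phi_0}\,\norm{\Phi_0^{-1}\Phi}\,ds=O(t^{-\frac{1}{2n}})$. Therefore $\Phi(L)=\Phi_0(L)\bigl(Id+O(t^{-\frac{1}{2n}})\bigr)$, hence for $t$ large $\Phi(L)^{-1}=\bigl(Id+O(t^{-\frac{1}{2n}})\bigr)\Phi_0(L)^{-1}$, and
\[T_{P,P'}(t)=S\,\Phi(L)^{-1}\,S^{-1}\bigl(Id+O(t^{-\tfrac{2}{n}})\bigr)=\bigl(Id+O(t^{-\tfrac{1}{2n}})\bigr)\,S\,\mtrx{e^{-Lt^{1/n}\mu_1}&&\\&\ddots&\\&&e^{-Lt^{1/n}\mu_n}}\,S^{-1},\]
as claimed, using that the $\Phi_0(L)^{\pm1}$ factors cancel when the error is conjugated through $S$, and that the trailing $Id+O(t^{-2/n})$ is of strictly lower order than $t^{-\frac{1}{2n}}$.

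The step I expect to be the main obstacle — and the reason the sharp Theorem \ref{error}, rather than the cruder $O(t^{-2/n})$ metric estimate of Theorem \ref{metrictheorem}, is indispensable — is the competition in the second paragraph. The free transport operator $\Phi_0$ is exponentially large in $t^{1/n}$ (of size $e^{Lt^{1/n}\max_j\abs{\mu_j}}$), so only an equally sharp \emph{exponential} decay estimate for the matrix entries of $R$ can force $\Phi_0^{-1}R\Phi_0$ to vanish in the limit; and the hypothesis $s<d(\gamma(s))$ is precisely what guarantees that the decay exponent of $R$ at $\gamma(s)$ outweighs the growth exponent $\abs{\mu_l-\mu_k}\,s\le 2\abs{1-\zeta_n^{k-l}}\,s$ of $\Phi_0^{-1}(\cdot)\Phi_0$ at that point. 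Establishing Theorem \ref{error} itself — via the reduction of the entries of $R$ to eigensolutions of a cyclic Toda lattice and the recursive maximum-principle estimates — is the genuinely hard analytic work, but it has been quarantined into the following two sections.
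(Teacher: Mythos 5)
Your proposal is correct and follows essentially the same route as the paper's own proof: take Theorem \ref{error} as given, bound the entries of $\Phi_0^{-1}R\Phi_0$ by combining the exponential decay of $R_{kl}$ with the inequality $|\mu_k-\mu_l|\le 2|1-\zeta\dwn{n}^{k-l}|$ and the hypothesis $s<d(\gamma(s))$, and then integrate the perturbed ODE (the paper invokes Lemma \ref{ODE} where you use Gr\"onwall, which is the same content). Your explicit identification of $S$ as the $\theta$-independent discrete Fourier matrix and your application of Theorem \ref{error} with $d=s$ are harmless refinements of what the paper does.
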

 \begin{Remark}\label{smallnbhd}The extra condition on the path is necessary for our method of proof, as the distance from the zeros of the holomorphic differential $q_n$ controls the decay rate of the error terms. However, for sufficiently short paths, the extra condition is automatically satisfied. Thus, for each point $z$ away from the zeros of $q_n$, there is a neighborhood $U$ for which all $|q_n|^\frac{2}{n}$-geodesics in $U$ satisfy the extra condition. Furthermore, if, for all zeros $z_0$ of $q_{n},$ the angle $<_{z_0}(P,P')$ is less than $\pi/3,$ then the $|q_{n}|^{\frac{2}{n}}$-geodesic from $P$ to $P'$ satisfies the condition.
\end{Remark}
When $P$ and $P'$ both project to the same point in $\Sigma,$ the projected path is a loop. In this case, the above asymptotics correspond to the values of the associated family of representations on the homotopy class of the loop.

\begin{proof}
By Theorem \ref{error}, the $(k,l)$-entry of the error term $(\Phi^b_0)^{-1}R\Phi^b_0$ is \[R_{k,l}(\gamma(s)) e^{(\mu_k-\mu_l)st^{\frac{1}{n}}}=O\left(t^{-\frac{1}{2n}}e^{-2|1-\zeta\dwn{n}^{k-l}|t^\frac{1}{n}d(\gamma(s))}e^{(\mu_k-\mu_l)st^{\frac{1}{n}}}\right).\]
Observe that
 \begin{eqnarray*}|\mu_k-\mu_l|&=&\left|2\cos\left(\theta+\frac{2\pi(k-1)}{n}\right)-2\cos\left(\theta+\frac{2\pi(l-1)}{n}\right)\right|\\
&=&\left|4\sin\left(\frac{\pi(k-l)}{n}\right)\sin\left(\theta+\frac{\pi(k+l-2)}{n}\right)\right|\\
&\leq&\left|4\sin\left(\frac{\pi(k-l)}{n}\right)\right|\\&=&2|1-\zeta\dwn{n}^{k-l}|.\end{eqnarray*}
Hence, the $(k,l)$-entry of $(\Phi^b_0)^{-1}R\Phi^b_0$ is $O\left(t^{-\frac{1}{2n}}e^{2|1-\zeta\dwn{n}^{k-l}|t^\frac{1}{n}(s-d(\gamma(s)))}\right)$.
Since $\gamma(s)$ satisfies the condition that for every $s$, $ s<d(\gamma(s)),$ 
we obtain $(\Phi^b_0)^{-1}R\Phi^b_0=O\left(t^{-\frac{1}{2n}}\right)$.

We make use of the following classical theorem in ODE theory, for a nice proof, see appendix B of \cite{DM}. 
\begin{Lemma} \label{ODE}
Let $A:[a,b]\ra\fgl_n(\R)$ be a continuous function. For the equation $F'(s)=F(s)A(s)$ on an interval $[a,b]\subset \R,$ there exists $C,\delta_0>0$ such that if $\|A(t)\|<\delta<\delta_0$ for all $s\in[a,b]$, then the solution $F$ with $F(a)=I$ satisfies $|F(s)-I|<C\delta$ for all $s\in[a,b]$.
\end{Lemma}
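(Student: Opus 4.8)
The plan is to rewrite the linear system as an integral equation and run a Gronwall (equivalently, Picard-iteration) estimate. Since $A$ is continuous on the compact interval $[a,b]$, the map $F\mapsto FA$ is Lipschitz in $F$, so the Picard--Lindel\"of theorem provides a unique solution $F:[a,b]\ra\fgl_n(\R)$ with $F(a)=I$, and this solution satisfies
\[
F(s)=I+\int_a^s F(\tau)A(\tau)\,d\tau\qquad\text{for all }s\in[a,b].
\]
Writing $G(s)=F(s)-I$ and using $\|A(\tau)\|<\delta$ together with submultiplicativity of the norm, one gets
\[
\|G(s)\|\le \delta(b-a)+\delta\int_a^s\|G(\tau)\|\,d\tau .
\]

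The next step is to extract a linear-in-$\delta$ bound. Applying Gronwall's inequality to the last display (with constant forcing term $\delta(b-a)$) yields $\|G(s)\|\le \delta(b-a)\,e^{\delta(s-a)}\le \delta(b-a)\,e^{\delta(b-a)}$ on $[a,b]$. Alternatively, and this is essentially the argument in appendix B of \cite{DM}, one can bound the Picard iterates $F_0=I$, $F_{k+1}(s)=I+\int_a^s F_k(\tau)A(\tau)\,d\tau$ directly: induction on $k$ gives $\|F_{k+1}(s)-F_k(s)\|\le (\delta(s-a))^{k+1}/(k+1)!$, so $\|F-I\|_{\infty}\le \sum_{k\ge 0}(\delta(b-a))^{k+1}/(k+1)! = e^{\delta(b-a)}-1$ on $[a,b]$. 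Either way, one then fixes $\delta_0>0$ (any value, e.g. $\delta_0=1/(b-a)$) and sets $C=(b-a)\,e^{\delta_0(b-a)}$; since $e^{x}-1\le x\,e^{x_0}$ for $0\le x\le x_0$, every $\delta<\delta_0$ gives $\|F(s)-I\|<C\delta$ for all $s\in[a,b]$, which is the assertion. It is worth noting explicitly that $C$ depends only on the length $b-a$ (through $\delta_0$), not on the particular matrix function $A$ — this uniformity is exactly what is needed in the application to $(\Phi_0)^{-1}R\Phi_0$ in the proof of Theorem \ref{TransportAsymp}.

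This result is genuinely routine, so there is no real obstacle; the only points deserving a moment of care are (i) invoking existence and uniqueness so that the phrase ``the solution $F$'' is meaningful and the integral reformulation is legitimate, and (ii) choosing $\delta_0$ \emph{before} $\delta$ so that the constant $C$ comes out independent of $A$. Since the cited reference supplies a clean proof, in the paper it suffices to state the lemma and cite \cite{DM}, as is done here.
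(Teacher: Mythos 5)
Your proof is correct, and it is essentially the argument of the reference the paper cites: the paper itself gives no proof of Lemma \ref{ODE}, deferring to appendix B of \cite{DM}, which runs the same integral-equation/Gronwall (Picard-iteration) estimate you give. Your explicit choice of $\delta_0$ before $\delta$, making $C$ depend only on $b-a$, correctly supplies the uniformity needed in the application to $(\Phi_0)^{-1}R\Phi_0$.
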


Applying Lemma \ref{ODE} and $(\Phi^b_0)^{-1}R\Phi^b_0=O\left(t^{-\frac{1}{2n}}\right)$ to the ODE 
\begin{equation*}
	(\Phi^b_0)^{-1}\Phi^b(0)=I,\ \ \ \ \ \ \ \ \ \dfrac{d((\Phi_0^b)^{-1}\Phi^b)}{ds}=(\Phi_0^b)^{-1}R\Phi_0^b \cdot(\Phi_0^b)^{-1}\Phi^b,
\end{equation*} 
we obtain \[(\Phi^b_0)^{-1}\Phi^b=Id+O\left(t^{-\frac{1}{2n}}\right).\]
Therefore $\Phi^b=\Phi^b_0\left(Id+O\left(t^{-\frac{1}{2n}}\right)\right)$.
\end{proof}

\subsection{The $(n-1)$-cyclic case}
For the $(n-1)$-cyclic case, the crucial error estimate theorem is the following.
\begin{Theorem}\label{suberror} Let $d(p)$ be the minimum distance from a point $p$ to the zeros of $q_{n-1}.$ Then for any $d<d(p),$ as $t\rightarrow +\infty$, the $(k,l)$-entry of $R$ satisfies
\[R_{kl}(p)=\begin{dcases} O\left(t^{-\frac{1}{2(n-1)}}e^{-2|1-\zeta\dwn{n-1}^{k-l}|(2t)^{\frac{1}{n-1}}d}\right)& k,l\geq 2\\
							0&         k=l=1\\
							O\left(t^{-\frac{1}{2(n-1)}}e^{-2(2t)^{\frac{1}{n-1}}d}\right)& otherwise
\end{dcases}\]

\end{Theorem}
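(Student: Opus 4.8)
The plan is to run the $n$-cyclic argument---Theorem~\ref{error}, proved through Sections~\ref{ErrorToda}--\ref{ErrorEstimate}---with $n$ replaced throughout by $n-1$, adapting it to the presence of the rank-two node $E_1=K^{\frac{n-1}{2}}\oplus K^{-\frac{n-1}{2}}$ occurring in the $\langle\zeta\dwn{n-1}\rangle$-fixed-point splitting of Corollary~\ref{CorMetricSplit}. Concretely, I would set up three ingredients: (i) the $(n-1)$-periodic cyclic Toda lattice governing the differences of the line-bundle functions $\tilde v\up{2},\dots,\tilde v\up{n-1}$ (the analog of (\ref{ErrHitn}), but with the Hitchin normalization factor $(2t)^{2/(n-1)}$); (ii) a single scalar equation for the distinguished function $\tilde v\up{1}$ coming from $E_1$; and (iii) the precise form of the error matrix $R$ of (\ref{Phieq}) in the eigenbasis, for which one needs the $\mu_1=0$ eigenvector of the leading matrix $e^{i\theta}U+e^{-i\theta}V$ of Proposition~\ref{FmetricProp} in the $(n-1)$-cyclic case.

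\textbf{The distinguished eigenvector and $R_{11}=0$.} First I would note that in that leading matrix columns $1$ and $n$ coincide (as do rows $1$ and $n$); hence the eigenvalue $\mu_1=0$ has, for every $\theta$, the eigenvector $s_1$ proportional to the difference of the first and last vectors of the rescaled holomorphic frame. Pairing the explicit error matrix (\ref{(n-1)Error}) against $s_1$ on both sides then gives $R_{11}=0$ by inspection: the diagonal error block contributes $\tfrac12(\tilde v\up{1}_z-\tilde v\up{1}_z)=0$, and rows $1$ and $n$ of the off-diagonal error meet columns $1$ and $n$ only in entries that vanish. For $l\ge 2$ the same pairing collapses $R_{1l}$ and $R_{l1}$ to combinations of $\tilde v\up{1}_z$ and $e^{-\tilde v\up{2}}\sinh\tilde v\up{1}$ with coefficients built from the entries of $S$; crucially every surviving term carries a factor of $\tilde v\up{1}$ or of $\tilde v\up{1}_z$. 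Finally, for $k,l\ge 2$ the entry $R_{kl}$ is expressed, exactly as in Section~\ref{ErrorToda}, through the eigensolutions $w_{k-l}$ of the $(n-1)$-periodic Toda lattice of (i), in the form $R_{kl}=C(w_{k-l})_z+Ct^{-\frac{1}{n-1}}\Delta w_{k-l}$, with indices read mod $n-1$.

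\textbf{Estimates.} For the line-bundle sector, the recursive formula of Proposition~\ref{w_kProposition} holds verbatim for any cyclic Toda lattice, so with $n\mapsto n-1$ it feeds the same maximum-principle and induction scheme as Section~\ref{ErrorEstimate}; as in the $n$-cyclic case the triangle inequality $|1-\zeta\dwn{n-1}^{\,a}|+|1-\zeta\dwn{n-1}^{\,b}|\ge|1-\zeta\dwn{n-1}^{\,a+b}|$ makes the nonlinear terms in the recursion subdominant, and one obtains on a disk $D$ of radius $r$ in the complement of the zeros of $q_{n-1}$
\[
w_k(z)=O\!\Bigl(t^{-\frac{3}{2(n-1)}}e^{-2|1-\zeta\dwn{n-1}^{\,k}|(2t)^{\frac{1}{n-1}}(r-|z|)}\Bigr),
\]
together with the analogous bounds for $\p_z w_k$ and $\Delta w_k$ (an extra factor $(2t)^{1/(n-1)}$ for each $z$- or $\bz$-derivative). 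For $\tilde v\up{1}$, linearizing the first equation of (\ref{n-1eq}) in the coordinate with $q_{n-1}=dz^{n-1}$ gives
\[
\p_{z\bz}\tilde v\up{1}=(2t)^{\frac{2}{n-1}}\,e^{-\tilde v\up{2}}\sinh\tilde v\up{1},
\]
whose right-hand side is $(2t)^{\frac{2}{n-1}}\tilde v\up{1}$ plus terms each carrying a further factor of $\tilde v\up{1}$; comparing with the model $\p_{z\bz}\psi=(2t)^{\frac{2}{n-1}}\psi$ by the maximum principle (starting from the bound $\tilde v\up{1}=O(t^{-2/(n-1)})$ of Theorem~\ref{metrictheorem}) yields $\tilde v\up{1}(z)=O(t^{-\frac{3}{2(n-1)}}e^{-2(2t)^{\frac{1}{n-1}}(r-|z|)})$, and likewise for $\tilde v\up{1}_z$ and $\Delta\tilde v\up{1}$. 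Substituting all of this into the formulas of the previous paragraph and using $d<d(p)\le r-|z|$ gives the three cases of Theorem~\ref{suberror}, the $O(t^{-1/(2(n-1))})$ prefactor coming, just as in Theorem~\ref{error}, from balancing $t^{-3/(2(n-1))}$ against one derivative $(2t)^{1/(n-1)}$.

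\textbf{Main obstacle.} I expect the crux to be step (i): correctly isolating, inside the fully coupled system (\ref{n-1eq}), the $(n-1)$-periodic cyclic Toda lattice formed by the differences of the line-bundle functions together with the single extra scalar equation for $\tilde v\up{1}$, and verifying that the recursive formula and the maximum-principle/induction argument still run uniformly in $t$ in the presence of the rank-two node. The two delicate features there are that the coupling term $t^{2}e^{-v\up{1}-v\up{2}}|q_{n-1}|^{2}$ now appears in \emph{both} of the first two equations of (\ref{n-1eq}), so the linearized source structure differs from the $n$-cyclic case, and that the zero eigenvalue $\mu_1=0$ must not create an undamped mode in the transport ODE---which it does not, precisely because $R_{11}\equiv 0$ while $R_{1l}$ and $R_{l1}$ inherit the strictly positive decay rate $2(2t)^{1/(n-1)}$ of $\tilde v\up{1}$.
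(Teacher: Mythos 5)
Your proposal follows essentially the same route as the paper: the same decomposition into a rank-$(n-1)$ perturbed cyclic Toda lattice for the line-bundle differences plus a separate scalar maximum-principle estimate for $\tilde v^{(1)}$, the same identification of $R_{11}\equiv 0$ and of the remaining entries via $\tilde v^{(1)}_z$, $\sinh\tilde v^{(1)}$, and the eigensolutions $w_{k-l}$, and the same observation that the perturbation, being quadratic in $\tilde v^{(1)}$, decays at rate $4(2t)^{1/(n-1)}\geq 2|1-\zeta_{n-1}^{k-l}|(2t)^{1/(n-1)}$ and so never interferes with the induction. The only cosmetic discrepancy is that the paper's explicit formula for $R_{kl}$ with $k,l\geq 2$ carries an additional $cft^{1/(n-1)}$ term, which your own bound on the perturbation shows is subdominant, so nothing is lost.
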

As with the $n$-cyclic case, we will assume Theorem \ref{suberror} for now and prove the main theorem concerning the asymptotic of the parallel transport operator with an extra condition on the path.
\begin{Theorem}\label{subTransportAsymp}Suppose $P,\ P'$ and the path $\gamma(s)$ are as above. If $P'$ has the property that for every $s$
 \[ s<d(\gamma(s)):=min\{d(\gamma(s), z_0)| \  \textit{for all zeros $z_0$ of $q_{n-1}$}\},\]
then there exists a constant unitary matrix $S$, not depending on the pair $P$ and $P'$, so that as $t\ra\infty$,
    $$T_{P,P'}(t)=\left(Id+O\left(t^{-\frac{1}{2(n-1)}}\right)\right)S\begin{pmatrix}
        e^{-Lt^{\frac{1}{n-1}}\mu_1}&&&\\
        &e^{-Lt^{\frac{1}{n-1}}\mu_2}&&\\
        &&\ddots&\\
        &&&e^{-Lt^{\frac{1}{n-1}}\mu_n}\end{pmatrix}S^{-1}$$
where $\mu_1=0,$ for $j\geq 2, \mu_j=2cos\left(\theta+\frac{2\pi {(j-2)}}{n-1}\right)$.    
\end{Theorem}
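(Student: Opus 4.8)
The plan is to run the proof of Theorem \ref{TransportAsymp} essentially verbatim, making only the obvious substitutions: $q_{n-1}$ in place of $q_n$, the exponent $\frac1{n-1}$ in place of $\frac1n$, $\zeta\dwn{n-1}$ in place of $\zeta\dwn{n}$, and Theorem \ref{suberror} in place of Theorem \ref{error}. Write $b=n-1$. By the set-up of Section \ref{ODESetup}, computing $T_{P,P'}(t)=S\,\Phi^b(L)^{-1}S^{-1}$ (up to the $O(t^{-2/(n-1)})$ factor of Proposition \ref{FmetricProp}) amounts to solving the initial value problem (\ref{star}) for $(\Phi^b_0)^{-1}\Phi^b$, where $\Phi^b_0$ is the diagonal exponential solution of the leading-order equation $\frac{d\Phi}{ds}=t^{1/(n-1)}M(\theta)\Phi$. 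So everything reduces to showing that the coefficient $(\Phi^b_0)^{-1}R\,\Phi^b_0$ in (\ref{star}) is $O(t^{-1/(2(n-1))})$ uniformly for $s\in[0,L]$. Granting that, Lemma \ref{ODE} gives $(\Phi^b_0)^{-1}\Phi^b=Id+O(t^{-1/(2(n-1))})$, hence $\Phi^b=\Phi^b_0\big(Id+O(t^{-1/(2(n-1))})\big)$, and substituting back into $T_{P,P'}$ gives the asserted formula once the $O(t^{-2/(n-1)})$ error is absorbed.

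The one step where the $(n-1)$-cyclic case is not a literal transcription of the $n$-cyclic one is this entrywise bound. The $(k,l)$-entry of $(\Phi^b_0)^{-1}R\,\Phi^b_0$ equals $R_{kl}(\gamma(s))\,e^{(\mu_k-\mu_l)st^{1/(n-1)}}$, and I would split along the three regimes of Theorem \ref{suberror}. For $k=l=1$ there is nothing to prove, since $R_{11}\equiv 0$. For $k,l\ge 2$ we have $\mu_j=2\cos\big(\theta+\frac{2\pi(j-2)}{n-1}\big)$, and the identity $\cos A-\cos B=-2\sin\frac{A+B}{2}\sin\frac{A-B}{2}$ gives
\[ |\mu_k-\mu_l|=\left|4\sin\frac{\pi(k-l)}{n-1}\,\sin\left(\theta+\frac{\pi(k+l-4)}{n-1}\right)\right|\le 4\left|\sin\frac{\pi(k-l)}{n-1}\right|=2\left|1-\zeta\dwn{n-1}^{k-l}\right|, \]
which is precisely the rate appearing in the exponential decay of $R_{kl}$ supplied by Theorem \ref{suberror} in that range. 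In the remaining (``otherwise'') regime exactly one of $k,l$ equals $1$, so $\mu_1=0$ and $|\mu_k-\mu_l|=|\mu_k|\le 2$, matching the decay rate $2$ of Theorem \ref{suberror}. Hence in every case the exponential growth from conjugating by $\Phi^b_0$ is dominated by the exponential decay of $R_{kl}$; the hypothesis $s<d(\gamma(s))$ for all $s\in[0,L]$ forces the combined exponent to be nonpositive, so $(\Phi^b_0)^{-1}R\,\Phi^b_0=O(t^{-1/(2(n-1))})$ and the proof is complete.

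Given Theorem \ref{suberror}, the argument above is almost purely bookkeeping, so there is no real obstacle at the level of Theorem \ref{subTransportAsymp} itself; the genuine work lies in Theorem \ref{suberror}. That I would prove in parallel with Theorem \ref{error}: express the entries of $R$ through eigensolutions of the cyclic Toda-type lattice attached to the $\tilde v\up{j}$'s (using the explicit form of the $(n-1)$-cyclic error term), and run the maximum-principle induction of Section \ref{ErrorEstimate}. The feature specific to this case is the extra $|q_{n-1}|^2$-coupling between the first two equations of (\ref{n-1eq}): it produces a genuinely $2$-dimensional ``index $1$'' block in which $\mu_1=0$ and the relevant decay rate degenerates to the bare constant $2$ rather than $2|1-\zeta\dwn{n-1}^{k-l}|$. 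The point to verify — and it holds — is that this weaker rate still beats the growth coming from $\Phi^b_0$, simply because $|\mu_k|\le 2$ for every $k$.
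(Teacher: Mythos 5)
Your proposal is correct and follows essentially the same route as the paper: the paper's proof of Theorem \ref{subTransportAsymp} likewise conjugates $R$ by $\Phi^b_0$, splits into the three regimes of Theorem \ref{suberror}, bounds $|\mu_k-\mu_l|$ by $2|1-\zeta\dwn{n-1}^{k-l}|$ for $k,l\ge 2$ and by $2$ when exactly one index is $1$, and then invokes Lemma \ref{ODE}. The only (harmless) difference is that you note $R_{11}\equiv 0$ directly, whereas the paper only uses $\mu_1=0$ there.
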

\begin{Remark}The extra condition on the path is necessary for our method of proof, as the distance from the zeros of the holomorphic differential $q_n$ controls the decay rate of error terms. However, for sufficiently short paths, the extra condition is automatically satisfied. Thus, for each point $z$ away from the zeros of $q_n$, there is a neighborhood $U$ for which all $|q_n|^\frac{2}{n}$-geodesics in $U$ satisfy the extra condition. Furthermore, if the angle $<_{z_0}(P,P')$ satisfies $<_{z_0}(P,P')<\pi/3$ for all zeros $z_0$ of $q_{n-1}$, then the $|q_{n-1}|^{\frac{2}{n-1}}$-geodesic from $P$ to $P'$ satisfies the extra condition in Theorem \ref{subTransportAsymp}. \end{Remark}

When $P$ and $P'$ both project to the same point in $\Sigma,$ the projected path is a loop. In this case, the above asymptotics correspond to the values of the associated family of representations on the homotopy class of the loop.

\begin{proof}By Theorem \ref{suberror}, we have the $(k,l)$-entry of the error term $(\Phi^b_0)^{-1}R\Phi^b_0$ is \[R_{k,l}(\gamma(s)) e^{(\mu_k-\mu_l)st^{\frac{1}{n}}}.\]
For $k,l\geq 2$, similar to the proof of Theorem \ref{TransportAsymp}, $|\mu_k-\mu_l|\leq 2|1-\zeta\dwn{n-1}^{k-l}|.$ 
Hence for $k,l\geq 2$, the $(k,l)$-entry of $(\Phi^b_0)^{-1}R\Phi^b_0$ is $O\left(t^{-\frac{1}{2(n-1)}}e^{2|1-\zeta\dwn{n}^{k-l}|(2t)^{\frac{1}{n-1}}(s-d(\gamma(s))}\right)$.

For $k=l=1$, we have $\mu_1=0$, hence the $(1,1)$-entry of $(\Phi^b_0)^{-1}R\Phi^b_0$ is $O\left(t^{-\frac{1}{2(n-1)}}\right)$.
If $k=1$ and $ l\neq 1$, then \[|\mu_k-\mu_l|=|0-2\cos(\theta+\frac{2\pi(l-1)}{n-1})|\leq 2.\]
Also, if $l=1$ and $ k\neq 1$, we have $|\mu_k-\mu_l|=|2\cos(\theta+\frac{2\pi(k-1)}{n-1})-0|\leq 2.$ 
Thus for $k=1, l\neq 1$ or $l=1, k\neq 1$, the $(k,l)$-entry of $(\Phi^b_0)^{-1}R\Phi^b_0$ is 
\[O\left(t^{-\frac{1}{2(n-1)}}e^{2(2t)^{\frac{1}{n-1}}(s-d(\gamma(s)))}\right).\]
Since $\gamma(s)$ satisfies the condition that for every $s$, $ s<d(\gamma(s)),$ 
we obtain that $(\Phi^b_0)^{-1}R\Phi^b_0=O\left(t^{-\frac{1}{2(n-1)}}\right)$.
As in the $n$-cyclic case, we apply Lemma \ref{ODE} and obtain $(\Phi^b_0)^{-1}\Phi^b_0=Id+O\left(t^{-\frac{1}{2(n-1)}}\right),$ and
thus \[\Phi^b=\Phi^b_0\left(Id+O\left(t^{-\frac{1}{2(n-1)}}\right)\right).\]
\end{proof}

\section{The error terms and the Toda lattice}\label{ErrorToda}
To understand the asymptotics of the error terms we reinterpret them in terms of the cyclic Toda Lattice. For a good reference for the Toda lattice, see \cite{GuestIntSystem}.  
\subsection{The Toda lattice}
The cyclic $\fsl(n,\C)$ Toda lattice, or the affine $\fsl(n,\C)$ Toda equations solve the following system,\eqtns{
\Delta d\up{1}=e^{d\up{1}-d\up{2}}-e^{d\up{n}-d\up{1}}\\
\Delta d\up{2}=e^{d\up{2}-d\up{3}}-e^{d\up{1}-d\up{2}}\\
 \ \ \ \ \ \vdots\\
\Delta d\up{n}=e^{d\up{n}-d\up{1}}-e^{d\up{n-1}-d\up{n}}
}{Toda}
where $\Delta={\frac{{\partial}^2}{\partial x^2}}+{\frac{\partial^2}{\partial y^2}}=\frac{1}{4}\frac{\partial^2}{\partial z\partial\overline{z}}$.

Define 
\begin{equation}\label{wkdef}w_k=\frac{1}{\sqrt{n}}\sum\limits_{i\in\Z_n}\zeta\dwn{n}^{ik}(d\up{i}-d\up{i+1})\end{equation}
and note that since the sum is over $\Z_n,$ we have $w_0=0.$ 
\begin{Proposition}\label{w_kProposition}
	The term $w_k$ satisfies the follow properties. 
	\begin{enumerate}[1.]
		\item $\displaystyle w_k=\dfrac{1-\zeta\dwn{n}^{-k}}{\sqrt{n}}\sum\limits_i \zeta\dwn{n}^{ki}d\up{i}$

		\item $\displaystyle\Delta w_k=\dfrac{|1-\zeta\dwn{n}^k|^2}{\sqrt{n}}\sum\limits_i \zeta\dwn{n}^{(i-1)k}e^{d\up{i}-d\up{i+1}}$

		\item $\displaystyle\Delta w_k=|1-\zeta\dwn{n}^k|^2 \sum\limits_{\substack{r\equiv k\\ \text{mod}\ n}}\  \sum\limits_{r_1+\cdots+r_s=r} \frac{1}{s!n^{\frac{s-1}{2}}}\binom{r}{r_1,\cdots,r_s} w_{r_1}w_{r_2}\cdots w_{r_s}$
	\end{enumerate}
\end{Proposition}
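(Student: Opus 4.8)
The plan is to prove parts (1)--(3) in order, since (2) is obtained by differentiating (1) and (3) is obtained by Fourier--inverting (1) and substituting into (2); only (3) requires anything beyond reindexing. For (1), I would split $w_k=\frac{1}{\sqrt n}\bigl(\sum_{i\in\Z_n}\zeta\dwn{n}^{ik}d\up{i}-\sum_{i\in\Z_n}\zeta\dwn{n}^{ik}d\up{i+1}\bigr)$ and reindex the second sum by $i\mapsto i-1$; since the index set is $\Z_n$ this is legitimate, it extracts a factor $\zeta\dwn{n}^{-k}$, and the two sums merge into $(1-\zeta\dwn{n}^{-k})\sum_i\zeta\dwn{n}^{ik}d\up{i}$, which is exactly (1) (and makes $w_0=0$ transparent).

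For (2), apply the Laplacian to formula (1) and insert the cyclic Toda system \eqref{Toda}, $\Delta d\up{i}=e^{d\up{i}-d\up{i+1}}-e^{d\up{i-1}-d\up{i}}$. The sum $\sum_i\zeta\dwn{n}^{ik}\Delta d\up{i}$ then splits into two pieces; reindexing the piece carrying $e^{d\up{i-1}-d\up{i}}$ by $i\mapsto i+1$ exposes a common exponential $e^{d\up{i}-d\up{i+1}}$, and the two pieces collect into $(1-\zeta\dwn{n}^{k})$ times the sum over $i$ of $\zeta\dwn{n}^{(i-1)k}e^{d\up{i}-d\up{i+1}}$. Finally, $1-\zeta\dwn{n}^{-k}=\overline{1-\zeta\dwn{n}^{k}}$, so the scalar prefactor is $(1-\zeta\dwn{n}^{-k})(1-\zeta\dwn{n}^{k})=|1-\zeta\dwn{n}^{k}|^2$, giving (2).

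For (3), the new ingredient is that, by \eqref{wkdef}, the tuple $(w_m)_{m\in\Z_n}$ is (up to the unitary normalization) the discrete Fourier transform of $(d\up{i}-d\up{i+1})_{i\in\Z_n}$; orthogonality of $n$-th roots of unity then gives the inversion formula
\[d\up{i}-d\up{i+1}=\frac{1}{\sqrt n}\sum_{m\in\Z_n}\zeta\dwn{n}^{-im}w_m,\]
with the $m=0$ term absent since $w_0=0$. I would substitute this into the exponential in (2), expand $\exp$ as its absolutely convergent power series, apply the multinomial theorem to the powers of $\sum_m\zeta\dwn{n}^{-im}w_m$, and regroup the monomials $w_{r_1}\cdots w_{r_s}$ by total weight $r=r_1+\cdots+r_s$, rewriting $e^{d\up{i}-d\up{i+1}}$ as a series $\sum_r\zeta\dwn{n}^{-ir}(\cdots)$ whose coefficients are polynomials in the $w$'s. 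Feeding this back into (2) and carrying out the remaining sum over $i$, which by orthogonality contributes $n$ exactly when $r\equiv k\pmod n$ and $0$ otherwise, collapses everything to the stated recursion: the scalars $\tfrac{1}{\sqrt n}$, $n$, and $n^{-s/2}$ combine to $n^{-(s-1)/2}$, and the factor $\tfrac{1}{s!}\binom{r}{r_1,\dots,r_s}$ falls out of the multinomial expansion of the exponential.

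The main obstacle is precisely the bookkeeping in (3): one must keep the exponential-series index $s$, the weight grading $r=r_1+\cdots+r_s$, and the two separate invocations of root-of-unity orthogonality straight, and read off the combinatorial coefficient $\tfrac{1}{s!\,n^{(s-1)/2}}\binom{r}{r_1,\dots,r_s}$ correctly. Convergence is not a genuine issue; it is inherited from that of the exponential series and is locally uniform on any region where the $d\up{i}$ are bounded, which suffices for the later applications.
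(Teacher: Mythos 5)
Your strategy for all three parts coincides with the paper's. Parts (1) and (2) are the reindexing the paper dismisses as ``reordering the summation,'' and for part (3) the paper's proof is exactly your argument written in matrix form: the unitary DFT matrix $S$ encodes the inversion $d\up{i}-d\up{i+1}=\frac{1}{\sqrt n}\sum_m\zeta\dwn{n}^{-im}w_m$, the Hadamard powers $(\cdot)^{*l}$ encode the componentwise exponential series, and the identities $\bar S_i^T*\bar S_j^T=\frac{1}{\sqrt n}\bar S_{i+j}^T$ and $S_i\bar S_j^T=\delta_{ij}$ are your two invocations of root-of-unity orthogonality. So there is no difference in route; your scalar presentation is if anything cleaner.

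Two cautions on the bookkeeping, the second of which is a genuine gap (shared with the paper's own proof). First, the reindexing in (2) actually produces $\frac{|1-\zeta\dwn{n}^k|^2}{\sqrt n}\sum_i\zeta\dwn{n}^{ik}e^{d\up{i}-d\up{i+1}}$, not the stated $\sum_i\zeta\dwn{n}^{(i-1)k}e^{d\up{i}-d\up{i+1}}$; the two differ by the phase $\zeta\dwn{n}^{-k}$, and only the former is consistent with the linear term $|1-\zeta\dwn{n}^k|^2w_k$ that (3) yields at $s=1$, $r_1=k$. Second, your final claim that ``the factor $\frac{1}{s!}\binom{r}{r_1,\dots,r_s}$ falls out of the multinomial expansion of the exponential'' is not correct: expanding $\frac{1}{s!}\big(\frac{1}{\sqrt n}\sum_m\zeta\dwn{n}^{-im}w_m\big)^s$ assigns each \emph{ordered} tuple $(r_1,\dots,r_s)$ the coefficient $\frac{1}{s!\,n^{s/2}}$, and no reindexing produces the extra factor $\frac{r!}{r_1!\cdots r_s!}$. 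What your computation actually establishes is $\Delta w_k=|1-\zeta\dwn{n}^k|^2\sum_{s\ge1}\frac{1}{s!\,n^{(s-1)/2}}\sum w_{r_1}\cdots w_{r_s}$, summed over ordered tuples with $r_1+\cdots+r_s\equiv k\pmod n$, with no multinomial factor. The paper's proof asserts the same coefficient at the analogous step without derivation, so the discrepancy lies in the statement rather than in your method; but as written your argument does not (and cannot) produce part (3) verbatim, so you should either prove the corrected identity --- which carries all the information used downstream, namely the exact linear term plus higher-order monomials supported on $\sum r_j\equiv k$ with bounded coefficients --- or explicitly flag the coefficient discrepancy.
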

\begin{proof}
	Items $1$ and $2$ follow immediately from reordering the summation. Proving $3$ takes a little more work, we start by linearizing the cyclic Toda system (\ref{Toda}) at zero as
	\eqtns{
	\Delta d\up{1}=-d\up{n}+2d\up{1}-d\up{2}\\
	\Delta d\up{2}= -d\up{1}+2d\up{2}-d\up{3}\\
	\ \ \ \ \ \vdots\\
	\Delta d\up{n}=-d\up{n-1}+2d\up{n}-d\up{1}
	}{LinToda}
Subtracting the $(i+1)^{th}$ equations from the $i^{th}$ equations in the linearization gives
\eqtns{
	\Delta(d\up{n}-d\up{1})=-(d\up{n-1}-d\up{n})+2(d\up{n}-d\up{1})-(d\up{1}-d\up{2})\\
	\Delta(d\up{1}-d\up{2})=-(d\up{n}-d\up{1})+2(d\up{1}-d\up{2})-(d\up{2}-d\up{3})\\
	\ \ \ \ \ \vdots\\
	\Delta(d\up{n-1}-d\up{n})=-(d\up{n-2}-d\up{n-1})+2(d\up{n-1}-d\up{n})-(d\up{n}-d\up{1})
}{LinDifToda}
Note that $w_k=\dfrac{1}{\sqrt{n}}\sum\limits_{i\in\Z_n}\zeta\dwn{n}^{ik}(d\up{i}-d\up{i+1})$ is an eigenfunction of equations (\ref{LinDifToda}),
\[\mtrx{w_0\\w_1\\\vdots\\w_{n-1}}=
\frac{1}{\sqrt{n}}\mtrx{1&1&1&\cdots&1\\
	  1&\zeta\dwn{n}&\zeta\dwn{n}^{2}&\cdots&\zeta\dwn{n}^{n-1}\\
	  	  \vdots&&\vdots&&\\
	  1&\zeta\dwn{n}^{n-1}&\zeta\dwn{n}^{2(n-1)}&\cdots&\zeta\dwn{n}^{(n-1)^2}}
\mtrx{d\up{n}-d\up{1}\\d\up{1}-d\up{2}\\\vdots\\d\up{n-1}-d\up{n}}
\]
Denote the above matrix by $S$ and note that it is unitary; thus
\begin{equation}\label{relation1}\mtrx{d\up{n}-d\up{1}\\\vdots\\d\up{n-1}-d\up{n}}=\bar S^T\mtrx{w_0\\\vdots\\w_{n-1}}.\end{equation}

Write $S$ in terms of \underline{row} vectors $S=\mtrx{S_0\\\vdots\\S_{n-1}},$ then
$S^{-1}=\mtrx{\bar S^T_0&\bar S^T_1&\cdots&\bar S^T_{n-1}}$
with $\bar S^T_i=\dfrac{1}{\sqrt{n}}\mtrx{1\\\zeta\dwn{n}^{-i}\\\vdots\\\zeta\dwn{n}^{(n-1)i}}.$ Exploiting the cyclicity of the cyclic Toda lattice, we have
\begin{equation}\label{relation2}\mtrx{d\up{1}-d\up{2}\\d\up{2}-d\up{3}\\\vdots\\d\up{n}-d\up{1
}}=\mtrx{\bar S^T_0&\zeta\dwn{n}^{-1}\bar S^T_1&\zeta\dwn{n}^{-2}\bar S^T_2&\cdots&\zeta\dwn{n}^{-(n-1)}\bar S^T_{n-1}}\mtrx{w_0\\w_1\\\vdots\\w_{n-1}}\end{equation}
and
\begin{equation}\label{relation3}\mtrx{d\up{n-1}-d\up{n}\\d\up{n}-d\up{1}\\\vdots\\d\up{n-2}-d\up{n-1}}=\mtrx{\bar S^T_0&\zeta\dwn{n}^{1}\bar S^T_1&\zeta\dwn{n}^{2}\bar S^T_2&\cdots&\zeta\dwn{n}^{(n-1)}\bar S^T_{n-1}}\mtrx{w_0\\w_1\\\vdots\\w_{n-1}}.\end{equation}
Since $\Delta w_k=S_k \Delta\mtrx{{d\up{n}-d\up{1}}\\{d\up{1}-d\up{2}}\\\vdots\\{d\up{n-1}-d\up{n}}},$ by the definition of the cyclic Toda equations (\ref{Toda}),
\[\Delta w_k=S_k\left(2\mtrx{e^{d\up{n}-d\up{1}}\\  e^{d\up{1}-d\up{2}}\\ \vdots\\ e^{d\up{n-1}-d\up{n}}}
-\mtrx{e^{d\up{1}-d\up{2}}\\e^{d\up{2}-d\up{3}}\\ \vdots\\ e^{d\up{n}-d\up{1}}}
-\mtrx{e^{d\up{n-1}-d\up{n}}\\e^{d\up{n}-d\up{1}}\\ \vdots\\ e^{d\up{n-2}-d\up{n-1}}}\right)
\]
The next step is to expand all the exponentials, we make use of the Hadamard product  $*$ on vectors, which is defined as $\mtrx{x_1\\ \vdots\\x_n}*\mtrx{y_1\\\vdots\\y_n}=\mtrx{x_1y_1\\\vdots\\x_ny_n}.$
\[\Delta w_k= S_k\sum\limits_{l\geq0}\frac{1}{l!}\left(2\mtrx{d\up{n}-d\up{1}\\d\up{1}-d\up{2}\\ \vdots\\d\up{n-1}-d\up{n}}^{*l}- 
\mtrx{d\up{1}-d\up{2}\\d\up{2}-d\up{3}\\\vdots\\d\up{n}-d\up{1}}^{*l}-\mtrx{d\up{n-1}-d\up{n}\\d\up{n}-d\up{1}\\ \vdots\\d\up{n-2}-d\up{n-1}}^{*l}
\right).\]
By equations (\ref{relation1}),(\ref{relation2}), and (\ref{relation3}), $\Delta w_k$ can be rewritten as
\[\Delta w_k=S_k\left(\sum\limits_{l\geq0}\frac{1}{l!}\left(2\left(\sum\limits_{i\in\Z_n} \bar S^T_iw_i\right)^{*l}-\left(\sum\limits_{i\in\Z_n} \bar S^T_iw_i\zeta\dwn{n}^i\right)^{*l}-\left(\sum\limits_{i\in\Z_n} \bar S^T_iw_i\zeta\dwn{n}^{-i}\right)^{*l}\right)\right).\]
Reindexing the sum by the index of the $w_j$'s we have
\[
\Delta w_k=S_k\sum\limits_{r\geq 0}\left( 2\sum\limits_{i_1+\cdots+i_s=r}\frac{1}{s!}\binom{r}{i_1,\cdots,i_s}\bar S^T_{i_1}*\cdots*\bar S^T_{i_s}w_{i_1}w_{i_2}\cdots w_{i_s}\right.\]
\[-\sum\limits_{i_1+\cdots+i_s=r}\frac{1}{s!}\zeta\dwn{n}^r\binom{r}{i_1,\cdots,i_s}\bar S^T_{i_1}*\cdots*\bar S^T_{i_s}w_{i_1}\cdots w_{i_s}
\left. -\sum\limits_{i_1+\cdots+i_s=r}\frac{1}{s!}\zeta\dwn{n}^r\binom{r}{i_1,\cdots,i_s}\bar S^T_{i_1}*\cdots*\bar S^T_{i_s}w_{i_1}\cdots w_{i_s}\right )
\]
\[=S_k\left(\sum\limits_{r\geq 0}(2-\zeta\dwn{n}^r+\zeta\dwn{n}^{-r})\sum\limits_{i_1+\cdots+i_s=r}\frac{1}{s!}\binom{r}{i_1,\cdots,i_s}\bar S^T_{i_1}*\dots*\bar S^T_{i_s}w_{i_1}\cdots w_{i_s}\right ).\]
Observe that $\bar S^T_i*\bar S_j^T=\frac{1}{\sqrt{n}}\bar S^T_a$ where $a=i+j\ \text{mod}\ n,$ so we may rewrite the above equation as
\[\Delta w_k=S_k\left (\sum\limits_{r\geq 0}|1-\zeta\dwn{n}^r|^2\sum\limits_{i_1+\cdots+i_s=r}\frac{1}{s!\sqrt{n^{s-1}}}\binom{r}{i_1,\cdots,i_s}\bar S^T_{r(\text{mod}\ n)}w_{i_1}\cdots w_{i_s}\right).\]
Since $S$ is unitary, $S_i\bar S^T_j=\delta_{ij},$ and 
thus
\[\Delta w_k=|1-\zeta\dwn{n}^k|^2\sum\limits_{\substack{r\equiv k\\ \text{mod}\ n}}\sum\limits_{i_1+\cdots+i_s=r}\frac{1}{s!\sqrt{n^{s-1}}}\binom{r}{i_1,\cdots,i_s}w_{i_1}\cdots w_{i_s}\]
as desired.
\end{proof}

For applications to the error term, we will need the following perturbed version of the Toda system
\eqtns{
\Delta d\up{1}=a(e^{d\up{1}-d\up{2}}-e^{d\up{n}-d\up{1}}+f_1)\\
\Delta d\up{2}=a(e^{d\up{2}-d\up{3}}-e^{d\up{1}-d\up{2}}+f_2)\\
\ \ \ \ \ \vdots\\
\Delta d\up{n}=a(e^{d\up{n}-d\up{1}}-e^{d\up{n-1}-d\up{n}}+f_n)
}{PerturbedToda}
where $a$ is a constant and $f_j=f_j(d\up{1},\dots,d\up{n})$ is a function. Again we define
\[w_k=\frac{1}{\sqrt{n}}\sum\limits_{i\in\Z_n}\zeta\dwn{n}^{ik}(d\up{i}-d\up{i+1}),\]
and have the following proposition analogous to Proposition \ref{w_kProposition}.
\begin{Proposition}\label{Perturbedw_kProposition}
	If $w_k$ is as above, then
	\begin{enumerate}[1.]
		\item $w_k=\dfrac{1-\zeta\dwn{n}^{-k}}{\sqrt{n}}\sum\limits_i \zeta\dwn{n}^{ki}d\up{i}$\vspace{.2cm}

		\item $\Delta w_k=\dfrac{a|1-\zeta\dwn{n}^k|^2}{\sqrt{n}}\sum\limits_i \zeta\dwn{n}^{(i-1)k}e^{d\up{i}-d\up{i+1}}+\frac{a}{\sqrt{n}}\sum\limits_i \zeta\dwn{n}^{ik}f_i$\vspace{.2cm}

		\item $\Delta w_k=a \sum\limits_{\substack{r\equiv k\\ \text{mod}\ n}}\  \sum\limits_{r_1+\cdots+r_s=r} \frac{1}{s!n^{\frac{s-1}{2}}}\binom{r}{r_1,\cdots,r_s}|1-\zeta\dwn{n}^k|^2 w_{r_1}w_{r_2}\cdots w_{r_s}+\frac{a}{\sqrt{n}}\sum\limits_i \zeta\dwn{n}^{ik}f_i$
	\end{enumerate}
\end{Proposition}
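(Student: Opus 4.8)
All three assertions are the exact perturbed analogues of the corresponding items of Proposition~\ref{w_kProposition}, and the plan is to repeat that proof essentially verbatim, carrying along the scalar $a$ and the extra perturbation terms $f_i$. The point to keep in mind throughout is that the $f_i=f_i(d\up{1},\dots,d\up{n})$ enter only \emph{after} a single application of $\Delta$ and are never themselves differentiated or expanded, so they simply ride along as an additive correction.

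\emph{Item 1.} This involves no differential equation at all: $w_k$ is defined by the same linear combination (\ref{wkdef}) of the consecutive differences $d\up{i}-d\up{i+1}$ as in the unperturbed case. Splitting the sum into $\sum_i\zeta\dwn{n}^{ik}d\up{i}$ and $\sum_i\zeta\dwn{n}^{ik}d\up{i+1}$ and reindexing $i\mapsto i+1$ in the second piece gives $w_k=\frac{1-\zeta\dwn{n}^{-k}}{\sqrt n}\sum_i\zeta\dwn{n}^{ik}d\up{i}$, word for word as in Proposition~\ref{w_kProposition}.

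\emph{Item 2.} Apply $\Delta$ to the formula of item~1 and substitute the perturbed Toda equations (\ref{PerturbedToda}), i.e. $\Delta d\up{i}=a\bigl(e^{d\up{i}-d\up{i+1}}-e^{d\up{i-1}-d\up{i}}+f_i\bigr)$. Reindexing the sum arising from the $e^{d\up{i-1}-d\up{i}}$ terms produces the scalar factor $(1-\zeta\dwn{n}^{-k})(1-\zeta\dwn{n}^{k})=|1-\zeta\dwn{n}^{k}|^2$ in front of the exponential part, exactly as in the unperturbed computation, while the $f_i$ terms are carried through unchanged and contribute the single extra summand $\tfrac{a}{\sqrt n}\sum_i\zeta\dwn{n}^{ik}f_i$. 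No property of the $f_i$ is used here beyond their being functions of the $d\up{j}$'s.

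\emph{Item 3.} Starting from item~2, expand each $e^{d\up{i}-d\up{i+1}}$ as its power series and rewrite $d\up{i}-d\up{i+1}$ in terms of the $w_j$'s by inverting (\ref{wkdef}), that is, via the relations (\ref{relation1})--(\ref{relation3}); then collect terms using the Hadamard-product identity $\bar S^T_i*\bar S^T_j=\tfrac{1}{\sqrt n}\bar S^T_{i+j}$ (indices mod $n$) and the unitarity relation $S_i\bar S^T_j=\delta_{ij}$, exactly as in the proof of Proposition~\ref{w_kProposition}. The only changes are that the constant $a$ factors out of the entire exponential contribution and that the perturbation term $\tfrac{a}{\sqrt n}\sum_i\zeta\dwn{n}^{ik}f_i$ is simply appended, untouched. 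I expect no genuinely new obstacle: the combinatorial core — recognizing the expanded exponentials as the claimed multinomial sum in the $w_j$'s — is already carried out in Proposition~\ref{w_kProposition} and applies here verbatim to the homogeneous part; the only subtlety is the conceptual one noted above, namely that allowing $f_i$ to depend on $d\up{1},\dots,d\up{n}$ does not interfere with the argument since the perturbation appears only after the sole application of $\Delta$.
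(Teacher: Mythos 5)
Your proposal is correct and is exactly what the paper does: the paper's entire proof of Proposition~\ref{Perturbedw_kProposition} is the one-line remark that it is a straightforward generalization of the proof of Proposition~\ref{w_kProposition}, carrying the constant $a$ and appending the $\frac{a}{\sqrt{n}}\sum_i\zeta\dwn{n}^{ik}f_i$ term, which is precisely the elaboration you give. (The only quibble, inherited from the paper's own statement, is that careful bookkeeping of the reindexing actually attaches a harmless bounded factor $1-\zeta\dwn{n}^{-k}$ to the perturbation summand; this does not affect any of the subsequent estimates.)
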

The proof of Proposition \ref{Perturbedw_kProposition} is a straightforward generalization of the proof of Proposition \ref{w_kProposition}.

\subsection{The $n$-cyclic case}
We now examine the error term and relate it to the $w_k$'s from the Toda lattice. Using the results in the previous section, we show that the matrix elements have the desired decay. 
Rewriting the Hitchin equations in terms of the $\tilde u\up{j}$ yields,
\eqtns{\Delta \tilde u\up{1}=4t^{\frac{2}{n}}(e^{\tilde u\up{1}-\tilde u\up{2}}-e^{-2\tilde u\up{1}})\\
\Delta \tilde u\up{2}=4t^{\frac{2}{n}}(e^{\tilde u\up{2}-\tilde u\up{3}}-e^{\tilde u\up{1}-\tilde u\up{2}})\\
\ \ \ \ \ \vdots\\
\Delta (-\tilde u\up{1})=4t^{\frac{2}{n}}(e^{-2\tilde u\up{1}}-e^{\tilde u\up{1}-\tilde u\up{2}})
}{ErrHitn}
This system is a special case of the cyclic Toda lattice, in fact, it is a real form of the $\fsl(n,\C)$ cyclic Toda lattice. Our techniques do not rely on this extra symmetry and we will think of $(\tilde u\up{1},\cdots,-\tilde u\up{1})$ as $(d^1,\cdots,d^n)$ satisfying $d\up{n+1-i}=-d\up{i}.$ Recall from (\ref{nError}), the error term $R$ is written as $B\dwn{n}^1+t^\frac{1}{n}B^2\dwn{n}.$ 

For ease of notation, write the diagonalizing matrix in terms of column vectors, $S^{-1}=\mtrx{\bar S^T_0&\bar S^T_1&\cdots&\bar S^T_{n-1}}.$
Thus, $B\dwn{n}^1$ is given by 
\[ \mtrx{\bar S^T_0&\bar S^T_1&\cdots&\bar S^T_{n-1}}\mtrx{d^1_z&&&\\&d^2_z&&\\&&\ddots&\\&&&d^n_z}\mtrx{S_0\\S_1\\\vdots\\S_{n-1}}
=\sum\limits_i\bar S_i^TS_id^{i+1}_z\]
Using the definition of $S_i,$ 
\[\bar S_i^TS_i=\frac{1}{n}\mtrx{1\\\zeta\dwn{n}^i\\\vdots\\\zeta\dwn{n}^{(n-1)i}}\mtrx{1&\zeta\dwn{n}^{-i}&\zeta\dwn{n}^{-2i}&\cdots&\zeta\dwn{n}^{-(n-1)i} }\]
so $(\bar S_i^TS_i)_{kl}=\frac{1}{n}\zeta\dwn{n}^{(k-1)i}\zeta\dwn{n}^{-(l-1)i}=\frac{1}{n}\zeta\dwn{n}^{(k-l)i}.$ Thus,
\[(B\dwn{n}^1)_{kl}=\frac{1}{n}\sum\limits_i \zeta\dwn{n}^{(k-l)i}d^{i+1}_z.\]
By Proposition \ref{w_kProposition} $w_k=\dfrac{1-\zeta\dwn{n}^{-k}}{\sqrt{n}}\sum\limits_i\zeta\dwn{n}^{ik}d^i,$ so we may rewrite $B\dwn{n}^1$ as
\[ (B\dwn{n}^1)_{kl}=\dfrac{\zeta\dwn{n}^{-(k-l)}}{(1-\zeta\dwn{n}^{-(k-l)})\sqrt{n}}(w_{k-l})_z\]
For $B\dwn{n}^2,$ we have
\[ \mtrx{\bar S^T_0&\cdots&\bar S^T_{n-1}}\mtrx{0&e^{d^1-d^2}-1&&&\\&0&e^{d^2-d^3}-1&&\\&&\ddots&\ddots&\\&&&0&e^{d^{n-1}-d^n}-1\\e^{d^n-d^1}-1&&&&0}\mtrx{S_0\\ S_1\\\vdots\\ S_{n-1}}\]
\[=\frac{1}{n}\sum\limits_i \bar S_i^T S_i (e^{d^i-d^{i+1}}-1).\]
Since 
\[\bar S_{i}^TS_{i+1} =\frac{1}{n}\mtrx{1\\\zeta\dwn{n}^{i}\\\vdots\\\zeta\dwn{n}^{(n-1)i}}\mtrx{1& \zeta\dwn{n}^{-i}&\zeta\dwn{n}^{-2i}&\cdots&\zeta\dwn{n}^{-(n-1)i}}\]
 we have 
 \[(\bar S_{i}^TS_{i+1})_{kl}=\frac{1}{n}\zeta\dwn{n}^{(k-1)i}\zeta\dwn{n}^{-(l-1)(i+1)}=\zeta\dwn{n}^{(k-l)i-(l-1)}.\] 
 This yields
 \[(B\dwn{n}^2)_{kl}=\frac{1}{n}\sum\limits_i \zeta\dwn{n}^{(k-l)i-(l-1)}(e^{d^i-d^{i+1}}-1)\]
Since the sum is over $\Z_n$, the constant terms sum to 0; thus
 \[(B\dwn{n}^2)_{kl}=\frac{\zeta\dwn{n}^{1-l}}{n}\sum\limits_i \zeta\dwn{n}^{(k-l)i}e^{d^i-d^{i+1}}.\]
 Using second part of Proposition \ref{Perturbedw_kProposition}, we have
  \[(B\dwn{n}^2)_{kl}=\frac{\zeta\dwn{n}^{1-l}}{n}\sum\limits_i \zeta\dwn{n}^{(k-l)i}e^{d^i-d^{i+1}}=\frac{C}{t^\frac{2}{n}}\Delta w_{k-l}.\]
In conclusion, we have the $(k,l)$-entry of the error term \begin{equation}\label{errortermcyclic_w_k}R_{kl}=(B\dwn{n}^1)_{kl}+t^\frac{1}{n}(B\dwn{n}^2)_{kl}=C(w_{k-l})_z+Ct^{-\frac{1}{n}}\Delta w_{k-l}.\end{equation}
\begin{Lemma}\label{PropertyEigensolutions}
For the $n$-cyclic case, the eigensolutions $w_k$'s satisfy 
\begin{enumerate}[1.]
	\item $w_k=w_{n-k},$
	\item $w_k$ is real.
\end{enumerate}

\end{Lemma}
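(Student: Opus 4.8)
The plan is to deduce both properties directly from the explicit formula $w_k = \frac{1-\zeta\dwn{n}^{-k}}{\sqrt{n}}\sum_i \zeta\dwn{n}^{ki} d\up{i}$ (Proposition \ref{w_kProposition}, item 1), combined with the extra symmetry $d\up{n+1-i} = -d\up{i}$ that the $n$-cyclic Hitchin equations (\ref{ErrHitn}) impose on $(d\up{1},\dots,d\up{n}) = (\tilde u\up{1},\dots,-\tilde u\up{1})$. No new estimates are needed; this is a bookkeeping argument with roots of unity.

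For item 2, I would compute $\overline{w_k}$. Since each $d\up{i}$ is a real-valued function and $\overline{\zeta\dwn{n}} = \zeta\dwn{n}^{-1}$, we get $\overline{w_k} = \frac{1-\zeta\dwn{n}^{k}}{\sqrt{n}}\sum_i \zeta\dwn{n}^{-ki} d\up{i}$. Now reindex the sum by $i \mapsto n+1-i$ (equivalently $i \mapsto 1-i$ in $\Z_n$) and use $d\up{n+1-i} = -d\up{i}$: the sum becomes $\sum_i \zeta\dwn{n}^{-k(1-i)}(-d\up{i}) = -\zeta\dwn{n}^{-k}\sum_i \zeta\dwn{n}^{ki} d\up{i}$. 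Hence $\overline{w_k} = -\frac{\zeta\dwn{n}^{-k}(1-\zeta\dwn{n}^{k})}{\sqrt{n}}\sum_i \zeta\dwn{n}^{ki} d\up{i} = \frac{1-\zeta\dwn{n}^{-k}}{\sqrt{n}}\sum_i \zeta\dwn{n}^{ki} d\up{i} = w_k$, using $-\zeta\dwn{n}^{-k}(1-\zeta\dwn{n}^{k}) = \zeta\dwn{n}^{-k}\zeta\dwn{n}^{k} - \zeta\dwn{n}^{-k} = 1 - \zeta\dwn{n}^{-k}$. So $w_k$ is real.

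For item 1, I would similarly relate $w_{n-k} = w_{-k}$ to $w_k$. From the formula, $w_{-k} = \frac{1-\zeta\dwn{n}^{k}}{\sqrt{n}}\sum_i \zeta\dwn{n}^{-ki} d\up{i}$. Applying the same reindexing $i \mapsto n+1-i$ together with $d\up{n+1-i} = -d\up{i}$ converts $\sum_i \zeta\dwn{n}^{-ki} d\up{i}$ into $-\zeta\dwn{n}^{-k}\sum_i \zeta\dwn{n}^{ki} d\up{i}$, exactly as above, giving $w_{-k} = \frac{1-\zeta\dwn{n}^{-k}}{\sqrt{n}}\sum_i \zeta\dwn{n}^{ki} d\up{i} = w_k$. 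Thus $w_k = w_{n-k}$. Alternatively, once item 2 is known, item 1 follows instantly because $w_{n-k} = w_{-k} = \overline{w_k}$ by the reality of each $d\up{i}$, and $\overline{w_k} = w_k$ by item 2. I would present it this second way to keep things short: establish reality first, then observe $w_{n-k} = \overline{w_k} = w_k$.

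There is no serious obstacle here; the only thing to be careful about is the reindexing step — making sure the substitution $i \mapsto n+1-i$ is applied consistently as a bijection of $\Z_n$ and that the exponent arithmetic $\zeta\dwn{n}^{-k(n+1-i)} = \zeta\dwn{n}^{-k(1-i)} = \zeta\dwn{n}^{-k}\zeta\dwn{n}^{ki}$ is tracked correctly modulo $n$. Everything else is immediate from Proposition \ref{w_kProposition} and the symmetry of the $n$-cyclic system noted just before the lemma.
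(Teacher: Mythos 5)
Your proof is correct and is essentially the same argument as the paper's: both exploit the symmetry $d\up{n+1-i}=-d\up{i}$ together with a reindexing of the sum over $\Z_n$, the only (cosmetic) differences being that you work from the formula $w_k=\frac{1-\zeta\dwn{n}^{-k}}{\sqrt{n}}\sum_i\zeta\dwn{n}^{ki}d\up{i}$ rather than the defining sum over the differences $d\up{i}-d\up{i+1}$, and that you derive item 1 from item 2 via $w_{n-k}=\overline{w_k}$ instead of proving it independently. No gaps.
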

\begin{proof}Both statements follow from the calculations below.
\begin{enumerate} [1.]
 	\item \begin{eqnarray*}w_k&=&\frac{1}{\sqrt{n}}\sum\limits_{i\in\Z_n}\zeta\dwn{n}^{ik}(d\up{i}-d\up{i+1})\\&=&\frac{1}{\sqrt{n}}\sum\limits_{i\in\Z_n}\zeta\dwn{n}^{(n-i)(n-k)}(d\up{i}-d\up{i+1})\end{eqnarray*}
 	 Since the $d^i$\ 's satisfy that $d\up{n+1-i}=-d\up{i}$,
 	\begin{eqnarray*} &=&\frac{1}{\sqrt{n}}\sum\limits_{i\in\Z_n}\zeta\dwn{n}^{(n-i)(n-k)}(d\up{n-i}-d\up{n-i+1})\\&=&\frac{1}{\sqrt{n}}\sum\limits_{i\in\Z_n}\zeta\dwn{n}^{i(n-k)}(d\up{i}-d\up{i+1})=w_{n-k}.\end{eqnarray*}
 	\item 
 	 \begin{eqnarray*}\overline{w_k}&=&\frac{1}{\sqrt{n}}\sum\limits_{i\in\Z_n}\zeta\dwn{n}^{-ik}(d\up{i}-d\up{i+1})\\&=&\frac{1}{\sqrt{n}}\sum\limits_{i\in\Z_n}\zeta\dwn{n}^{(n-i)k}(d\up{i}-d\up{i+1})
 	 \end{eqnarray*}
 	  Since the $d^i$'s satisfy that $d\up{n+1-i}=-d\up{i}$
 	  \begin{eqnarray*}&=&\frac{1}{\sqrt{n}}\sum\limits_{i\in\Z_n}\zeta\dwn{n}^{(n-i)k}(d\up{n-i}-d\up{n-i+1})\\&=&\frac{1}{\sqrt{n}}\sum\limits_{i\in\Z_n}\zeta\dwn{n}^{ik}(d\up{i}-d\up{i+1})=w_k.\end{eqnarray*}
 \end{enumerate} 

\end{proof}
 \begin{Remark}\label{w_ksmall}
 It is important to remember that the system we are interested in comes from the error terms $\tilde u\up{j},$ and $w_k=\frac{1}{\sqrt{n}}\sum\limits_i\zeta\dwn{n}^{ki}(\tilde u\up{i}-\tilde u\up{i+1}).$ By Theorem \ref{metrictheorem}, 
$\tilde u\up{j}\sim ln\left(1+O\left(t^{-\frac{2}{n}}\right)\right)\sim O\left(t^{-\frac{2}{n}}\right),$
 thus the $w_k$ are also small for large t. This will be important in the proof of Theorem \ref{TransportAsymp}.
 \end{Remark}
\subsection{The $(n-1)$-cyclic case}
Recall from (\ref{(n-1)Error}) the error term for the $(n-1)$-cyclic case is written as $B\dwn{n-1}^1+(2t)^\frac{1}{n-1}B\dwn{n-1}^2.$
 In this case, the system will be reduced to a perturbed version of a cyclic Toda lattice of rank  $(n-1)$. 
Writing the Hitchin equations in terms of the $\tilde v\up{j}$'s gives
\eqtns{\Delta \tilde v\up{1}=4(2t)^\frac{2}{n-1}(\haf e^{\tilde v\up{1}-\tilde v\up{2}}-\haf e^{-\tilde v\up{1}-\tilde v\up{2}})\\
\Delta \tilde v\up{2}=4(2t)^\frac{2}{n-1}(e^{\tilde v\up{2}-\tilde v\up{3}}-\haf e^{\tilde v\up{1}-\tilde v\up{2}}-\haf e^{-\tilde v\up{1}-\tilde v\up{2}})\\
\Delta \tilde v\up{3}=4(2t)^\frac{2}{n-1}( e^{\tilde v\up{3}-\tilde v\up{4}}- e^{\tilde v\up{2}-\tilde v\up{3}})\\
\ \ \ \ \ \vdots\\
\Delta(-\tilde v\up{1})=-\Delta \tilde v\up{1}=4(2t)^\frac{2}{n-1}(-\haf e^{\tilde v\up{1}-\tilde v\up{2}}+\haf e^{-\tilde v\up{1}-\tilde v\up{2}})
}{HitEq(n-1)}
If we set $f=\frac{1}{2}(e^{-\tilde v\up{1}}+e^{\tilde v\up{1}}-2)e^{-\tilde v\up{2}}$, then equations $2$ through $n-1$ are
\eqtns{
\Delta \tilde v\up{2}=4(2t)^{\frac{2}{n-1}}(e^{\tilde v\up{2}-\tilde v\up{3}}-e^{-\tilde v\up{2}})-f\\
\Delta \tilde v\up{3}=4(2t)^{\frac{2}{n-1}}(e^{\tilde v\up{3}-\tilde v\up{4}}-e^{\tilde v\up{2}-\tilde v\up{3}})\\
\ \ \ \ \ \vdots\\
\Delta(- \tilde v\up{2})=4(2t)^{\frac{2}{n-1}}(-e^{\tilde v\up{2}-\tilde v\up{3}}+e^{-\tilde v\up{2}})+f
}{Todan-1}
If we add the equation $
\Delta 0=4(2t)^\frac{2}{n-1}(e^{-\tilde v\up{2}}-e^{-\tilde v\up{2}}),$
 this system is a special case of the perturbed cyclic Toda lattice (\ref{PerturbedToda}) of rank $n-1$. 
As in the $n$-cyclic case, set $(\tilde v\up{2},\tilde v\up{3},\cdots,-\tilde v\up{2},0)=(d^1,d^2,\cdots,d^{n-1}).$ 
If $w_k=\frac{1}{\sqrt{n-1}}\sum\limits_i \zeta\dwn{n-1}^{ik}(d^i-d^{i+1})$ then 
\[\Delta w_k=4(2t)^{\frac{2}{n-1}}\frac{|1-\zeta\dwn{n-1}^k|^2}{\sqrt{n-1}}\sum\limits_{i\in\Z_{n-1}} \zeta\dwn{n-1}^{(i-1)k}e^{d^i-d^{i+1}}+C(2t)^{\frac{2}{n-1}}f.\] 
For the term $B\dwn{n}^1,$ recall that $S^{-1}=\bar S^T.$ A simple calculations shows that 
\[\bar S^T=\mtrx{\frac{1}{\sqrt{2}}&0&\cdots&0&-\frac{1}{\sqrt{2}}\\
				 \frac{1}{\sqrt{2}} \bar S^T_0&\bar S^T_1&\cdots&\bar S^T_{n-2}&\frac{1}{\sqrt{2}}\bar S^T_0}\]
where, as in the $(n-1)$-cyclic case for $SL(n-1,\R)$, the column vector $\bar S^T_j=\frac{1}{\sqrt{n-1}}\mtrx{1\\\zeta\dwn{n-1}^j\\\vdots\\\zeta\dwn{n-1}^{(n-2)j}}.$ By definition of $B\dwn{n-1}^1$,

\[B\dwn{n-1}^1=\mtrx{\frac{1}{\sqrt{2}}&0&\cdots&0&-\frac{1}{\sqrt{2}}\\
				 \frac{1}{\sqrt{2}} \bar S^T_0&\bar S^T_1&\cdots&\bar S^T_{n-2}&\frac{1}{\sqrt{2}}\bar S^T_0	
					}
\mtrx{
\tilde v\up{1}_z&&&&\\
&d^{1}_z&&&\\
&&\ddots&&\\
&&&d^{n-2}_z&\\
&&&&-\tilde v\up{1}_z}
\mtrx{\frac{1}{\sqrt{2}}&\frac{1}{\sqrt{2}} S_0\\
				0&S_1\\
				\vdots&\vdots\\
				0&S_{n-2}\\
				 -\frac{1}{\sqrt{2}}&\frac{1}{\sqrt{2}}S_0}
					\]
\[=\mtrx{\frac{1}{\sqrt{2}}\tilde v\up{1}_z&0&\cdots&0&\frac{1}{\sqrt{2}}\tilde v\up{1}_z\\
 \frac{1}{\sqrt{2}} \bar S^T_0\tilde v\up{1}_z&\bar S^T_1d^1_z&\cdots&\bar S^T_{n-2}d^{n-2}_z&-\frac{1}{\sqrt{2}}\bar S^T_0\tilde v\up{1}_z}
\mtrx{\frac{1}{\sqrt{2}}&\frac{1}{\sqrt{2}} S_0\\
				0&S_1\\
				\vdots&\vdots\\
				0&S_{n-2}\\
				 -\frac{1}{\sqrt{2}}&\frac{1}{\sqrt{2}}S_F0}\]
\[=\mtrx{0&\tilde v\up{1}_zS_0\\
		 \tilde v\up{1}_z\bar S_0^T&\bar S_1^T S_1d^1_z+\cdots+\bar S^T_{n-2}S_{n-2}d^{n-2}_z}=\mtrx{0&\tilde v\up{1}_zS_0\\
		 \tilde v\up{1}_z\bar S_0^T& \sum\limits_{i\in\Z_{n-1}}\bar S_i^TS_id^i_z} \]
In the matrix above, the $(1,1)$ entry is a $1\times 1$ matrix, the $(1,2)$ entry is a row vector of length $(n-1)$, the $(2,1)$-entry is a column vector of length $(n-1)$ and the $(2,2)$-entry is a $(n-1)\times(n-1)$-matrix.
Thus 
\[(B\dwn{n-1}^1)_{kl}=\begin{dcases}C(w_{k-l})_z& k\geq 2\ \text{and}\ l\geq 2\\
				0& k=l=1\\
				c\tilde v\up{1}_z& otherwise\end{dcases}\]
Now for $B\dwn{n-1}^2,$ as above, we have
\[B\dwn{n-1}^2=\bar S^T
\mtrx{&\frac{1}{\sqrt{2}}(e^{\tilde v\up{1}-d^1}-1)&&&\\
	&&e^{d^1-d^2}-1&&\\
	&&&\ddots&\\
	\frac{1}{\sqrt{2}}(e^{-\tilde v\up{1}-d^1}-1)&&&&\frac{1}{\sqrt{2}}(e^{\tilde v\up{1}-d^1}-1)\\
	&\frac{1}{\sqrt{2}}(e^{-\tilde v\up{1}-d^1}-1)&&&}S\]
\[=\scalemath{.9}{\mtrx{
			0&\haf(e^{\tilde v\up{1}-d^1}-e^{-\tilde v\up{1}-d^1})&0&\dots&0\\
			\frac{1}{\sqrt{2}}\bar S^T_{n-2}(e^{-\tilde v\up{1}-d^1}-1)& \haf \bar S_0^T(e^{\tilde v\up{1}-d^1}+e^{-\tilde v\up{1}-d^1}-2)&\bar S_1(e^{d^1-d^2}-1)&\cdots&\frac{1}{\sqrt{2}}\bar S^T_{n-2}(e^{\tilde v\up{1}-d^1}-1)
}\mtrx{\frac{1}{\sqrt{2}}&\frac{1}{\sqrt{2}} S_0\\
				0&S_1\\
				\vdots&\vdots\\
				0&S_{n-2}\\
				 -\frac{1}{\sqrt{2}}&\frac{1}{\sqrt{2}}S_0}}\]
\[=\mtrx{0&\haf S_1 (e^{\tilde v\up{1}-d^1}-e^{-\tilde v\up{1}-d^1})\\
	\haf \bar S^T_{n-2}(-e^{\tilde v\up{1}-d^1}+e^{-\tilde v\up{1}-d^1})&\star}\]
where $\star$ is the $(n-1)\times(n-1)$-matrix given by
\[\haf \bar S^T_{n-2}S_0(e^{-\tilde v\up{1}-d^1}-1)+\haf\bar S^T_0S_1(e^{\tilde v\up{1}-d^1}+e^{\tilde v\up{1}-d^1}-2)+\haf \bar S^T_{n-2}S_0(e^{\tilde v\up{1}-d^1}-1)+\sum\limits_{i=1}^{n-3}\bar S_i^TS_{i+1}(e^{d^{i}-d^{i+1}}-1).\]
Rewrite $\star$ as $I-II$ where $I$ contains all exponential terms and $II$ contains all constant terms. Then
\[II=2\times\haf \bar S^T_{n-2}S_0+\haf \bar S_0^TS_1\cdot 2+\bar S_1^TS_2+\cdots+\bar S_{n-3}^TS_{n-2}\]
\[=\sum\limits_{i\in\Z_{n-1}} \bar S_i^TS_{i+1}=0.\]
Writing $\haf\bar S^T_{n-2} S_0(e^{-\tilde v\up{1}-d^1}+e^{\tilde v\up{1}-d^1})= \haf \bar S^T_{n-2} S_0(2f+2e^{-d^1})$ as above, we have
\[\star= \bar S^T_{n-2} S_0(f+e^{-d^1})+\bar S_0^TS_1(f+e^{-d^1})+\sum\limits_{i=1}^{n-3}\bar S^T_iS_{i+1}e^{d^i-d^{i+1}} \]
\[=\sum \bar S_i^TS_{i+1}e^{d^i-d^{i+1}}+(\bar S^T_{n-2} S_0+\bar S_0^TS_1)f.\]
Thus, similar to the $n$-cyclic case,
\[(B\dwn{n-1}^2)_{kl}=\begin{dcases} ct^{-\frac{2}{n-1}}\Delta w_{k-l}+Cf & k,l\geq 2\\
							0& k=l=1\\
							c(e^{\tilde v\up{1}}-e^{-\tilde v\up{1}})e^{-d^{1}}&otherwise
\end{dcases}\]
In conclusion, we have the $(k,l)$-entry of the error term
\[R_{kl}=(B\dwn{n-1}^1)_{kl}+(2t)^\frac{1}{n-1}(B\dwn{n-1}^2)_{kl}=\begin{dcases}c(w_{k-l})_z+ct^{-\frac{1}{n-1}}\Delta w_{k-l}+cft^{\frac{1}{n-1}} & k,l\geq 2\\
							0& k=l=1\\
							c\tilde v\up{1}_z+ct^{\frac{1}{n-1}}(e^{\tilde v\up{1}}-e^{-\tilde v\up{1}})e^{-d^{1}}& otherwise
\end{dcases}\]
As in $n$-cyclic case, the $w_k$ satisfy extra symmetries.
\begin{Lemma}\label{PropertyEigensolutionssub}
For the $(n-1)$-cyclic case, the eigensolutions $w_k$'s satisfy 
\begin{enumerate}[1.]
	\item $w_{n-1-k}=\zeta_{n-1}\up{k}w_k$;
\item $\dfrac{w_k}{i(1-\zeta_{n-1}\up{-k})}$ is real.
\end{enumerate}
 
\end{Lemma}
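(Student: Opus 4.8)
\emph{Proof proposal.} The plan is to follow the proof of Lemma~\ref{PropertyEigensolutions}, paying attention to the one structural difference between the two settings: the index reflection that makes the eigensolutions ``symmetric.'' The only input needed beyond the definition $w_k=\frac{1}{\sqrt{n-1}}\sum_{i\in\Z_{n-1}}\zeta\dwn{n-1}^{ik}(d\up i-d\up{i+1})$ is a symmetry of the tuple $(d\up1,\dots,d\up{n-1})=(\tilde v\up2,\tilde v\up3,\dots,-\tilde v\up2,0)$. By Corollary~\ref{CorMetricSplit} the harmonic metric splits as $h_1\oplus h_2\oplus\cdots\oplus h_2^{-1}\oplus h_1^{-1}$, so $\lambda\up{n+1-j}=-\lambda\up j$, and after the change of variables defining $\tilde v\up j$ this gives $\tilde v\up{n+1-j}=-\tilde v\up j$ for $2\le j\le n-1$. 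Since $d\up i=\tilde v\up{i+1}$ for $1\le i\le n-2$ and $d\up{n-1}=0$, this translates into the identity
\[ d\up{n-1-i}=-d\up{i}\qquad\text{for all }i\in\Z_{n-1}, \]
equivalently $d\up{-i}=-d\up i$, with the padding entry $d\up{n-1}$ sitting at the unique fixed point of $i\mapsto -i$ in $\Z_{n-1}$ (here $n$ is even, so $(n-1)/2\notin\Z$). Note that in the $n$-cyclic case the corresponding symmetry is $d\up{1-i}=-d\up i$; the absence of the extra shift by $1$ is exactly what produces the twisting factor $\zeta\dwn{n-1}$ in part~1. I will also use that each $\tilde v\up j$, hence each $d\up i$, is real-valued.

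For part~1, I would start from $w_{n-1-k}=\frac{1}{\sqrt{n-1}}\sum_i\zeta\dwn{n-1}^{(n-1-k)i}(d\up i-d\up{i+1})=\frac{1}{\sqrt{n-1}}\sum_i\zeta\dwn{n-1}^{-ki}(d\up i-d\up{i+1})$, substitute $i\mapsto-i$, use $d\up{-i}=-d\up i$ and $d\up{-i+1}=-d\up{i-1}$ to obtain $w_{n-1-k}=\frac{1}{\sqrt{n-1}}\sum_i\zeta\dwn{n-1}^{ki}(d\up{i-1}-d\up i)$, and finally shift $i\mapsto i+1$ to pull out a factor $\zeta\dwn{n-1}^{k}$ and recover $w_k$. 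This gives $w_{n-1-k}=\zeta\dwn{n-1}^{k}w_k$ (for $k\equiv0$ both sides vanish). For part~2, first note that because the $d\up i$ are real the same definition gives $w_{n-1-k}=\overline{w_k}$ directly (conjugation replaces $\zeta\dwn{n-1}^{ki}$ by $\zeta\dwn{n-1}^{-ki}=\zeta\dwn{n-1}^{(n-1-k)i}$), so combining with part~1 yields $\overline{w_k}=\zeta\dwn{n-1}^{k}w_k$. Writing $\nu=i(1-\zeta\dwn{n-1}^{-k})$ with $k\not\equiv0$, the elementary identity $1-\zeta\dwn{n-1}^{k}=-\zeta\dwn{n-1}^{k}(1-\zeta\dwn{n-1}^{-k})$ gives $\overline\nu=-i(1-\zeta\dwn{n-1}^{k})=\zeta\dwn{n-1}^{k}\nu$; therefore $\overline{w_k/\nu}=\overline{w_k}/\overline\nu=(\zeta\dwn{n-1}^{k}w_k)/(\zeta\dwn{n-1}^{k}\nu)=w_k/\nu$, so $w_k/\nu$ is real. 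Equivalently, Proposition~\ref{Perturbedw_kProposition}(1) applied with $n$ replaced by $n-1$ writes $w_k=\frac{1-\zeta\dwn{n-1}^{-k}}{\sqrt{n-1}}\sum_i\zeta\dwn{n-1}^{ki}d\up i$, and the substitution $i\mapsto-i$ with $d\up{-i}=-d\up i$ shows $\sum_i\zeta\dwn{n-1}^{ki}d\up i$ is purely imaginary, so $w_k$ and $\nu$ have the same argument up to a real scalar.

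There is no analytic difficulty in this lemma: everything reduces to reindexing sums over $\Z_{n-1}$ and to one-line root-of-unity identities, once the symmetry $d\up{-i}=-d\up i$ has been extracted from Corollary~\ref{CorMetricSplit}. The one place to be careful is precisely that extraction — i.e.\ verifying that the reflection on indices is $i\mapsto -i$ rather than $i\mapsto 1-i$, and that the trivial padding equation $\Delta 0=\cdots$ is placed at the fixed index $n-1$ — since an off-by-one error here would spuriously turn ``$w_k/\nu$ real'' into ``$w_k/\nu$ purely imaginary.''
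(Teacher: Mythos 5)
Your proof is correct, and it is exactly the intended argument: the paper states Lemma \ref{PropertyEigensolutionssub} without proof, and your computation is the direct analogue of the paper's proof of Lemma \ref{PropertyEigensolutions}, with the one genuine subtlety — that the reflection symmetry here is $d\up{-i}=-d\up{i}$ in $\Z_{n-1}$ (padding $d\up{n-1}=0$ at the fixed point) rather than $d\up{1-i}=-d\up{i}$, which is what produces the twist $\zeta\dwn{n-1}^{k}$ and the factor $i(1-\zeta\dwn{n-1}^{-k})$ — correctly identified and handled.
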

 \begin{Remark}
 Again, it is important to remember that the system we are interested in comes from the error terms $\tilde v\up{j}.$ By Theorem \ref{metrictheorem}, $\tilde v\up{1}$ and $w_k$ are also small for large t. This will be important in the proof of Theorem \ref{TransportAsymp}.
 \end{Remark}

 \section{Error estimate}\label{ErrorEstimate}
In this section we prove the main error estimates, Theorem \ref{error} and Theorem \ref{suberror}. 
Let $P$ be a point away from the zeros of $q_b.$ Choose a local coordinate centered at $P$ so that $q_b=dz^b,$ and let $D$ be the disk of radius $R,$ in this coordinate chart, centered at $P.$ 

\subsection{The $n$-cyclic case}
The following proposition will be key.
\begin{Theorem}\label{ErrorEstimateTheorem}
 Let $d\up{i}$ be the error functions $\tilde u\up{i}$ in the Hitchin equations for the $n$-cyclic case, and define $w_k=\frac{1}{\sqrt{n}}\sum\limits_{i\in\Z_n}\zeta\dwn{n}^{ik}(d\up{i}-d\up{i+1})$ on the disk $D$, then
	\[w_k(z)= O\left(t^{-\frac{3}{2n}}e^{-2|1-\zeta\dwn{n}^k|t^\frac{1}{n}(R-|z|)}\right).\] 
\end{Theorem}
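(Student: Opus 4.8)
The plan is to read the estimate off the recursion of Proposition~\ref{w_kProposition}, applied to the error equations (\ref{ErrHitn}) (equivalently Proposition~\ref{Perturbedw_kProposition} with $a=4t^{2/n}$, $f_i\equiv 0$), by a maximum-principle bootstrap. Set $\nu_k:=2|1-\zeta\dwn{n}^k|t^{1/n}$, so that $\nu_k^2=4t^{2/n}|1-\zeta\dwn{n}^k|^2$ is exactly the coefficient of the $s=1$ term in the recursion; isolating it gives
\[
\Delta w_k=\nu_k^2\,w_k+\nu_k^2\,N_k,\qquad N_k:=\sum_{s\ge 2}\ \sum_{\substack{r_1+\cdots+r_s\equiv k\,(n)\\ r_i\not\equiv 0}}\frac{1}{s!\,n^{(s-1)/2}}\binom{r_1+\cdots+r_s}{r_1,\ldots,r_s}w_{r_1}\cdots w_{r_s},
\]
a convergent power series in $\{w_j\}_{j\not\equiv 0}$ with no constant or linear term. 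In the $n$-cyclic case the $w_k$ are real (Lemma~\ref{PropertyEigensolutions}), so the maximum principle applies, and Theorem~\ref{metrictheorem} (Remark~\ref{w_ksmall}) provides the seed bound $|w_j|\le C t^{-2/n}$ on $D$ (indeed on a slightly larger concentric disk), which makes $N_k$ convergent with $|N_k|\le C(\max_j|w_j|)^2$.

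The one structural input I would isolate is the elementary inequality $|1-\zeta\dwn{n}^{a+b}|\le |1-\zeta\dwn{n}^{a}|+|1-\zeta\dwn{n}^{b}|$ (from $|1-\zeta^a\zeta^b|\le|1-\zeta^a|+|\zeta^a||1-\zeta^b|$), iterated to $\nu_{r_1}+\cdots+\nu_{r_s}\ge\nu_k$ whenever $r_1+\cdots+r_s\equiv k\pmod n$ with all $r_i\not\equiv 0$. Consequently, if pointwise bounds of the form $|w_j(z)|\le A\,e^{-\nu_j(R-|z|)}$ are available for the whole family, then every monomial of $N_k$ is controlled by $A^{s}e^{-(\nu_{r_1}+\cdots+\nu_{r_s})(R-|z|)}\le A^{s}e^{-\nu_k(R-|z|)}$, and since $s\ge2$ with factorially small coefficients, $|N_k(z)|\le C A^{2}e^{-\nu_k(R-|z|)}$; that is, the ``source'' $\nu_k^2N_k$ in the equation for $w_k$ decays at the \emph{same} rate $\nu_k$ and is quadratically small in amplitude.

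I would then iterate. Each step treats $\Delta w_k-\nu_k^2 w_k=\nu_k^2N_k$ as a linear equation and invokes the maximum principle for $\Delta-\nu_k^2$ on $D$ with the radial comparison function $c\,I_0(\nu_k|z|)$, $I_0$ the modified Bessel function (an honest radial solution of $\Delta\phi=\nu_k^2\phi$, unlike $\cosh(\nu_k|z|)$, which is only a strict subsolution). Matching boundary data forces $c\asymp\big(\max_{\partial D}|w_k|\big)/I_0(\nu_k R)$, and $I_0(x)\asymp e^{x}/\sqrt{x}$ yields $|w_k(z)|\lesssim\big(\max_{\partial D}|w_k|\big)\sqrt{\nu_k R}\;e^{-\nu_k(R-|z|)}$; a companion particular solution (built from $I_0$, or by comparison at a slightly smaller rate $\nu_k-O(1)$) absorbs the exponentially decaying inhomogeneity $\nu_k^2N_k$. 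Feeding the improved family bound back into $N_k$, the non-decaying remainder of each $w_k$ shrinks super-geometrically and the decaying part stabilizes; since $\max_{\partial D}|w_k|\le Ct^{-2/n}$ while $\sqrt{\nu_k R}\asymp t^{1/(2n)}$, this lands on $|w_k(z)|\le C t^{-3/(2n)}e^{-\nu_k(R-|z|)}$, which is the theorem (the extra $t^{1/(2n)}$ over the metric bound $t^{-2/n}$ being precisely the Bessel normalization $\sqrt{\nu_k R}$). Finally, interior Schauder estimates on balls of radius $\sim t^{-1/n}$ (the natural scale) upgrade this to matching bounds on $\p_z w_k$ and, via the equation, on $\Delta w_k$, which are what enter (\ref{errortermcyclic_w_k}) for Theorem~\ref{error}.

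The hard part will be making the iteration genuinely close: constructing at each stage a positive supersolution of $\Delta-\nu_k^2$ that dominates the decaying source $\nu_k^2N_k$ without degrading the rate $\nu_k$ (near $|z|=0$ the naive radial candidate $e^{-\nu_k(R-|z|)}$ fails to be a supersolution, the $\tfrac1{|z|}\p_{|z|}$ term having the wrong sign, so one must lower the rate slightly or localize and track the cumulative cost), and controlling the interaction between the shrinking non-decaying remainders and the lower eigenmodes $\nu_{r_i}<\nu_k$ appearing in $N_k$, so that all constants stay uniform in $t$. The $(n-1)$-cyclic case (Theorem~\ref{suberror}) runs on the same template, now with the genuine perturbation $f$ of (\ref{PerturbedToda}) and the distinguished first index carried along as additional lower-order contributions, consistent with the case split $k,l\ge2$ versus $k=l=1$ versus the mixed case in the statement of Theorem~\ref{suberror}.
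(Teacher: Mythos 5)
Your skeleton matches the paper's: the same recursion from Proposition \ref{Perturbedw_kProposition}(iii), the same triangle inequality $|1-\zeta_n^{r_1}|+\cdots+|1-\zeta_n^{r_s}|\ge|1-\zeta_n^{r_1+\cdots+r_s}|$, the same Bessel comparison functions $y_k=I_0(\sqrt{k}|z|)/I_0(\sqrt{k}R)$ (Lemmas \ref{Bessel}--\ref{ykboundLemma}), and the correct accounting of the exponent $-\tfrac{3}{2n}$ as boundary data $t^{-2/n}$ times the Bessel normalization $t^{1/(2n)}$. But the step you yourself flag as ``the hard part'' is a genuine gap, and the global simultaneous iteration you propose does not close it. Concretely, with $\nu_j=2|1-\zeta_n^j|t^{1/n}$: from the seed $|w_j|\le Ct^{-2/n}$ with no decay, one pass of the maximum principle gives $|w_j|\le Ct^{-3/(2n)}e^{-\nu_j(R-|z|)}+Ct^{-4/n}$. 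On the next pass the source $N_k$ contains cross terms $w_{r_1}w_{r_2}$, $r_1+r_2\equiv k$, in which one factor contributes its decaying part at rate $\nu_{r_1}<\nu_k$ and the other its non-decaying remainder; inverting $\Delta-\nu_k^2$ on such a source produces a contribution to $w_k$ of amplitude $\sim t^{-3/(2n)-4/n}$ decaying only at rate $\nu_{r_1}$. Near the center of $D$ this is \emph{not} $O\bigl(t^{-3/(2n)}e^{-\nu_k(R-|z|)}\bigr)$ --- the ratio is $t^{-4/n}e^{(\nu_k-\nu_{r_1})R}\to\infty$ --- and later iterations never remove it, they only add further (smaller-amplitude) slow-rate debris. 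The obstruction is the slowly-decaying, not the non-decaying, remainder, so ``shrinks super-geometrically'' does not rescue the scheme.

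The paper closes this by replacing the simultaneous iteration with an induction on $|1-\zeta_n^k|$ (equivalently on $\min(k,n-k)$, using $w_k=w_{n-k}$ from Lemma \ref{PropertyEigensolutions}), and --- the device your proposal is missing --- by controlling the entire block of not-yet-estimated middle modes $A=\{k+1,\dots,n-k-1\}$ at once through a differential inequality for $\sum_i w_{k+i}^2$ (Lemma \ref{TodaTechnicalLemma2} in the base case, Claim \ref{claimwk+ibounds} in the inductive step). Since every index in $A$ satisfies $|1-\zeta_n^{k+i}|\ge|1-\zeta_n^{k+1}|$, the sum of squares obeys a coercive inequality that yields a genuine exponential bound at the coarse common rate $\sqrt2\,|1-\zeta_n^{k+1}|t^{1/n}$ for all middle modes simultaneously, with no non-decaying remainder. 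Only then do the mixed terms $F$ (one factor in $A$, the rest already estimated) decay exponentially, at rate $(2|1-\zeta_n|+\sqrt2|1-\zeta_n^{k+1}|)t^{1/n}$, and a finite, terminating case analysis (Cases I/II and their refinements) ratchets the middle-mode rate up to the sharp $2|1-\zeta_n^{k+1}|t^{1/n}$. To make your argument close you would need to add this block (sum-of-squares) estimate and the terminating rate-bootstrap; as written, the fixed-point iteration stalls at a bound polluted by slower eigenmodes.
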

With the above theorem, we can show, as $t\ra\infty,$ the Hitchin equation decouples. This is consistent with the asymptotic studies of \cite{EndsHiggs,TaubesAsymptotics3manifolds}.
 \begin{Corollary}\label{decoupleCor}For $\phi=\tilde e_1+tq_ne_{n-1}$, away from the zeros of $q_n,$ the Hitchin equation $F_{A_t}+[\phi,\phi^{*_{h_t}}]=0$ decouples as $t\ra\infty$
\[\begin{dcases}
	F_{A_t}=0\\
	[\phi,\phi^{*_{h_t}}]=0
\end{dcases}\] 
 \end{Corollary}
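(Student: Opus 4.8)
The plan is to deduce Corollary \ref{decoupleCor} from the error estimate Theorem \ref{ErrorEstimateTheorem} by showing that each individual term in the decomposition $F_{A_t}+[\phi,\phi^{*_{h_t}}]=0$ tends to zero as $t\to\infty$ on any compact set away from the zeros of $q_n$. Concretely, in the eigenbundle splitting of Corollary \ref{CorMetricSplit} the Hitchin equation reads, for each $j$,
\[
\lambda^{(j)}_{z\bz}+e^{\lambda^{(j-1)}-\lambda^{(j)}}-e^{\lambda^{(j)}-\lambda^{(j+1)}}=0
\]
(with the obvious modifications at $j=1$ and $j=\tfrac n2$), so that $F_{A_j}=\lambda^{(j)}_{z\bz}\,dz\wedge d\bz$ and the relevant entries of $[\phi,\phi^{*_h}]$ are exactly $\big(e^{\lambda^{(j-1)}-\lambda^{(j)}}-e^{\lambda^{(j)}-\lambda^{(j+1)}}\big)\,dz\wedge d\bz$ together with $t^2e^{-2\lambda^{(1)}}|q_n|^2$-type terms. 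So it suffices to prove that, in a coordinate with $q_n=dz^n$ and after the change of variables $\tilde u^{(j)}=u^{(j)}-\ln|tq_n|^{\frac{n+1-2j}{n}}$, one has $\tilde u^{(j)}_{z\bz}\to 0$ and each off-diagonal bracket term $t^{2/n}(e^{\tilde u^{(j-1)}-\tilde u^{(j)}}-e^{\tilde u^{(j)}-\tilde u^{(j+1)}})\to 0$ as $t\to\infty$.

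First I would express the quantities $\tilde u^{(j)}-\tilde u^{(j+1)}$ (equivalently the $d^{(i)}-d^{(i+1)}$) in terms of the eigensolutions $w_k$ via the inverse unitary change of basis used in Section \ref{ErrorToda}: $d^{(i)}-d^{(i+1)}=\frac{1}{\sqrt n}\sum_k \zeta_n^{-ik}\zeta_n^{-i}\,w_k$ (formula (\ref{relation2})). By Theorem \ref{ErrorEstimateTheorem}, on a disk $D$ of radius $R$ about $p$ each $w_k$ is $O\big(t^{-3/(2n)}e^{-2|1-\zeta_n^k|t^{1/n}(R-|z|)}\big)$; restricting to a smaller concentric disk where $R-|z|\ge \delta>0$, we get $w_k=O(t^{-3/(2n)})$, hence $d^{(i)}-d^{(i+1)}=O(t^{-3/(2n)})$. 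Therefore
\[
t^{2/n}\big(e^{\tilde u^{(j-1)}-\tilde u^{(j)}}-e^{\tilde u^{(j)}-\tilde u^{(j+1)}}\big)
= t^{2/n}\,O\big(t^{-3/(2n)}\big)\to 0 \qquad (t\to\infty),
\]
using $e^x-1=x+O(x^2)$ and that the constant terms in the two exponentials cancel. This handles the middle equations; the $j=1$ and $j=\tfrac n2$ cases are identical once one also uses Lemma \ref{technicallemma} (so that $t^2e^{-2u^{(1)}}|q_n|^2/g_n^n=e^{-2\tilde u^{(1)}}=1+O(t^{-3/(2n)})$ and again multiplies by $t^{2/n}$... wait, I must be careful: here the right normalization gives $t^{2/n}(e^{-2\tilde u^{(1)}}-e^{\tilde u^{(1)}-\tilde u^{(2)}})\to0$ by the same estimate). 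Finally, since $\tilde u^{(j)}_{z\bz}=\frac14\Delta_{g_n}\tilde u^{(j)}\cdot g_n$ and $\Delta_{g_n}\tilde u^{(j)}$ equals a combination of precisely these off-diagonal bracket terms (the equations (\ref{neq}) with $K_{g_n}=0$ on $\Omega_n$), we conclude $F_{A_j}\to0$ as well. Thus $[\phi,\phi^{*_{h_t}}]\to0$ and $F_{A_t}\to0$ on any compact subset away from the zeros of $q_n$, which is the asserted decoupling.

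The routine parts are the bookkeeping with indices and the Taylor expansion of the exponentials; the genuine input is Theorem \ref{ErrorEstimateTheorem}, so the only real ``obstacle'' is making sure the powers of $t$ bookkeep correctly — one must check that the $w_k$ decay rate $t^{-3/(2n)}$ beats the amplifying factor $t^{2/n}$ coming from the Toda-rescaled equations. Since $\tfrac{3}{2n}>0$ this is immediate once the shrinking-disk argument gives uniform $O(t^{-3/(2n)})$ bounds on a fixed compact neighborhood of $p$, so no difficulty arises; the statement of the corollary is therefore a direct consequence of the error estimate. One should also remark that the convergence is in $C^0_{loc}$ (indeed uniformly on compacta away from the zeros), which is the natural sense in which the limiting system $F_{A_t}=0$, $[\phi,\phi^{*_{h_t}}]=0$ is to be understood, matching the asymptotic pictures of \cite{EndsHiggs,TaubesAsymptotics3manifolds}.
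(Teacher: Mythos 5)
Your overall strategy is the same as the paper's: write the curvature and commutator entries in terms of the Toda eigensolutions $w_k$ and invoke Theorem \ref{ErrorEstimateTheorem}. However, the final power-counting step is wrong. You claim
\[
t^{\frac{2}{n}}\bigl(e^{\tilde u^{(j-1)}-\tilde u^{(j)}}-e^{\tilde u^{(j)}-\tilde u^{(j+1)}}\bigr)=t^{\frac{2}{n}}\,O\bigl(t^{-\frac{3}{2n}}\bigr)\to 0,
\]
but $t^{\frac{2}{n}}\cdot t^{-\frac{3}{2n}}=t^{\frac{1}{2n}}\to\infty$: the issue is not whether $\tfrac{3}{2n}>0$ but whether $\tfrac{3}{2n}>\tfrac{2}{n}$, which is false. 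The polynomial decay of the $w_k$ alone does \emph{not} beat the amplifying factor $t^{2/n}$ in the rescaled equations.

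The decay must come from the exponential factor $e^{-2|1-\zeta\dwn{n}^k|t^{1/n}(R-|z|)}$ in Theorem \ref{ErrorEstimateTheorem}, which you explicitly discard after restricting to a smaller concentric disk. This is exactly what the paper keeps: it concludes $|\Delta u^{(j)}|=O\bigl(t^{\frac{1}{2n}}e^{-2|1-\zeta\dwn{n}|t^{1/n}(R-|z|)}\bigr)$, which tends to zero at every interior point (and uniformly where $R-|z|\ge\delta>0$) because the exponential $e^{-ct^{1/n}\delta}$ beats any power of $t$. If you retain that factor in your estimate of $w_k$ and of $d^{(i)}-d^{(i+1)}$, your argument closes and coincides with the paper's proof; as written, the key inequality fails.
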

\begin{proof}
Recall, in our local holomorphic frame $(\sigma_1,\cdots,\sigma_n)$ (\ref{RescaledFrame}), $F_{A_t}=Diag(\Delta u^1,\Delta u^2,\cdots,-\Delta u^1).$ 
By definition of the error term $\tilde u^j,$  we have (\ref{ErrHitn})
\[	|\Delta u^j|= |\Delta \tilde u^j|=4t^{\frac{2}{n}}\left|e^{\tilde u^{j-1}-\tilde u^j}-e^{\tilde u^j-\tilde u^{j+1}}\right|.\]
Also, with $(d^1,d^2,\cdots,d^n)=(\tilde u^1,\tilde u^2,\cdots,-\tilde u^1),$ we defined $w_k=\frac{1}{\sqrt{n}}\sum\limits_{i\in\Z_n}\zeta\dwn{n}^{ik}(d^{i}-d^{i+1});$ hence $\tilde u^{j}-\tilde u^{j+1}$ is a linear combination of $w_k$'s. By Theorem \ref{ErrorEstimateTheorem}, we obtain 
\[|\tilde u^{j}-\tilde u^{j+1}|= O\left(t^{-\frac{3}{2n}}e^{-2|1-\zeta\dwn{n}|t^\frac{1}{n}(R-|z|)}\right).\]
Thus,
\[|\Delta u^j|=O\left(t^{\frac{1}{2n}}e^{-2|1-\zeta\dwn{n}|t^\frac{1}{n}(R-|z|)}\right),\]
and the corollary follows. 
\end{proof}
Before proving Theorem \ref{ErrorEstimateTheorem} , we prove a series of lemmas. First, consider a radial function $\eta$ satisfying 
\begin{eqnarray}\label{I0}\Delta\eta=k\eta & \text{in $D$}\\
\eta=1 &  \text{on $\partial D$}.\end{eqnarray}
Then $\eta(r)$ satisfies 
\[\begin{dcases}
	\eta''+\frac{1}{r}\eta'-k\eta=0 & r<R\\ \eta(R)=1 &
\end{dcases}.\]
Hence $\eta(r)=\frac{I_0(\sqrt{k}r)}{I_0(\sqrt{k}R)}$, where $I_0$ is the modified Bessel function of second kind. We will need two basic facts about Bessel functions, see \cite{bessel}.
\begin{Lemma}\label{Bessel}
There exists $C>1,M>0$ such that $\forall x>M$, $$\frac{1}{C}\frac{e^x}{\sqrt{x}}\leq I_0(x)\leq C \frac{e^x}{\sqrt{x}}.$$
\end{Lemma}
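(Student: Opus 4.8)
The plan is to derive Lemma~\ref{Bessel} from the classical integral representation
\[
I_0(x)=\frac{1}{\pi}\int_0^\pi e^{x\cos\theta}\,d\theta,
\]
together with two elementary Laplace-type estimates. Alternatively, one may simply quote from \cite{bessel} the asymptotic expansion $I_0(x)=\frac{e^x}{\sqrt{2\pi x}}\bigl(1+O(x^{-1})\bigr)$, which gives the claimed two-sided bound for all sufficiently large $x$ at once; since nothing in the sequel depends on the precise constants, in the write-up this citation alone would in fact suffice. For a self-contained proof I would argue as follows.

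For the upper bound I would invoke the Jordan-type inequality $1-\cos\theta\ge \frac{2}{\pi^2}\theta^2$ for $\theta\in[0,\pi]$, proved by checking that $\theta\mapsto (1-\cos\theta)/\theta^2$ is non-increasing on $(0,\pi]$ and equals $2/\pi^2$ at $\theta=\pi$. Then
\[
I_0(x)\;\le\;\frac{e^x}{\pi}\int_0^\pi e^{-\frac{2x}{\pi^2}\theta^2}\,d\theta
\;\le\;\frac{e^x}{\pi}\int_0^\infty e^{-\frac{2x}{\pi^2}\theta^2}\,d\theta
\;=\;\frac{e^x}{\pi}\cdot\frac{\pi}{2}\sqrt{\frac{\pi}{2x}}
\;=\;\frac{\sqrt{\pi}}{2\sqrt{2}}\,\frac{e^x}{\sqrt{x}},
\]
which is the right-hand inequality for all $x>0$ with, say, $C=2$.

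For the lower bound I would discard all but a fixed subinterval and use the elementary estimate $\cos\theta\ge 1-\tfrac12\theta^2$ (valid for all real $\theta$, since $1-\cos\theta=2\sin^2(\theta/2)\le \theta^2/2$):
\[
I_0(x)\;\ge\;\frac{1}{\pi}\int_0^1 e^{x\cos\theta}\,d\theta
\;\ge\;\frac{e^x}{\pi}\int_0^1 e^{-\frac{x}{2}\theta^2}\,d\theta
\;=\;\frac{e^x}{\pi\sqrt{x}}\int_0^{\sqrt{x}} e^{-u^2/2}\,du
\;\ge\;\frac{e^x}{\pi\sqrt{x}}\int_0^1 e^{-u^2/2}\,du,
\]
after the substitution $u=\sqrt{x}\,\theta$ and using $\sqrt{x}\ge 1$. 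Since $c_0:=\int_0^1 e^{-u^2/2}\,du>0$ is an absolute constant, this is the left-hand inequality for $x\ge 1$ with $C=\pi/c_0$. Taking $M=1$ and $C$ the larger of the two constants produced above finishes the proof.

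There is essentially no obstacle here: the only points requiring care are recalling the integral representation of $I_0$ and the two trigonometric inequalities, both standard. In practice I expect the actual exposition to be the one-line citation to \cite{bessel} rather than the computation above.
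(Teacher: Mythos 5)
Your proof is correct, and your parenthetical remark already identifies exactly what the paper does: it simply quotes the standard asymptotic expansion $I_0(x)\sim e^x/\sqrt{2\pi x}$ as $x\to\infty$ (citing \cite{bessel}), from which the two-sided bound for $x>M$ is immediate. Your self-contained argument via the integral representation $I_0(x)=\frac{1}{\pi}\int_0^\pi e^{x\cos\theta}\,d\theta$ is a genuine alternative and checks out: the Jordan-type inequality $1-\cos\theta\ge\frac{2}{\pi^2}\theta^2$ on $[0,\pi]$ does follow from the monotonicity of $\theta\mapsto\frac{1}{2}\bigl(\sin(\theta/2)/(\theta/2)\bigr)^2$, the Gaussian integral evaluation gives the constant $\sqrt{\pi}/(2\sqrt{2})<2$ valid for all $x>0$, and the lower bound via $\cos\theta\ge1-\theta^2/2$ and the substitution $u=\sqrt{x}\,\theta$ is fine for $x\ge1$. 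What your version buys is explicit constants and independence from the reference; what the paper's version buys is brevity, and since (as you note) nothing downstream depends on the constants, the citation is all that is really needed. Either exposition would be acceptable here.
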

\begin{Lemma}\label{monotone} 
$I_0'>0$, thus $I_0$ is an increasing function.
\end{Lemma}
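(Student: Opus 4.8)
The plan is to deduce this directly from the power-series representation of the modified Bessel function $I_0$. I would first recall that $I_0$ is the entire function
\[ I_0(x) = \sum_{m=0}^{\infty} \frac{1}{(m!)^2}\Big(\frac{x}{2}\Big)^{2m}, \]
which is the unique solution of the modified Bessel equation $x^2 y'' + x y' - x^2 y = 0$ that is regular at $0$ with $y(0)=1$; this is the normalization implicit in Lemma \ref{Bessel} and in the comparison function $\eta(r) = I_0(\sqrt{k}\,r)/I_0(\sqrt{k}\,R)$. The series converges absolutely and locally uniformly on $\C$, so term-by-term differentiation is justified.

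Differentiating, I get
\[ I_0'(x) = \sum_{m=1}^{\infty} \frac{m}{(m!)^2}\Big(\frac{x}{2}\Big)^{2m-1} = \frac{x}{2}\sum_{m=1}^{\infty}\frac{1}{(m-1)!\,m!}\Big(\frac{x}{2}\Big)^{2m-2}, \]
and every term of the last series is strictly positive when $x>0$. Hence $I_0'(x)>0$ for all $x>0$, so $I_0$ is strictly increasing on $(0,\infty)$, which is the assertion. An alternative I could use, if I wanted to avoid series manipulations, is the integral formula $I_0(x) = \frac{1}{\pi}\int_0^\pi e^{x\cos\theta}\,d\theta$: differentiating under the integral and folding the interval $(\pi/2,\pi)$ onto $(0,\pi/2)$ via $\theta\mapsto\pi-\theta$ gives $I_0'(x) = \frac{1}{\pi}\int_0^{\pi/2}\cos\theta\,\big(e^{x\cos\theta}-e^{-x\cos\theta}\big)\,d\theta$, whose integrand is positive for $x>0$.

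I do not expect a genuine obstacle here: the only point requiring care is to pin down the normalization of $I_0$ so that it is consistent with its earlier appearances in the paper, after which positivity of all Taylor coefficients makes monotonicity immediate. What matters downstream is that this monotonicity forces $\eta(r)$ to be increasing in $r$, so that $0<\eta\le 1$ on $D$ with equality exactly on $\partial D$; this is precisely the feature of $\eta$ used in the maximum-principle estimates leading to Theorem \ref{ErrorEstimateTheorem}.
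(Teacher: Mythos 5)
Your proof is correct. The paper itself offers no argument for this lemma --- it simply cites a reference for both Bessel-function facts --- so there is no competing approach to compare against; your term-by-term differentiation of the power series $I_0(x)=\sum_{m\ge 0}\frac{1}{(m!)^2}(x/2)^{2m}$ (or the folded integral representation) is the standard elementary verification. One small point in your favor: the paper calls $I_0$ the ``modified Bessel function of second kind,'' which is a slip --- the solution of $\eta''+\frac1r\eta'-k\eta=0$ that is regular at the origin is the first-kind function $I_0$, exactly the normalization you pin down, and that is the one for which all Taylor coefficients are positive.
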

Lemma \ref{Bessel} follows from the fact that the asymptotic expansion of $I_0(r)$ is $I_0(r)\sim\frac{e^x}{\sqrt{2\pi x}}$ as $x\rightarrow \infty$.
Let $y_k$ be the unique solution to the system (\ref{I0}), by Lemma \ref{monotone} 
\[y_k=\dfrac{I_0(\sqrt{k}|z|)}{I_0(\sqrt{k}R)}\leq \dfrac{I_0\left(max\{M,\sqrt{k}|z|\}\right)}{I_0(\sqrt{k}R)}.\]
For $k>0$, the function $y_k$ satisfies the following important property.
\begin{Lemma}\label{ykboundLemma} There exists a constant $C>0$ not depending on $k$ and $z$ such that
	\[e^{-\sqrt{k}(R-|z|)}\leq y_k(z)\leq C k^{\frac{1}{4}}e^{-\sqrt{k}(R-|z|)}.\]
\end{Lemma}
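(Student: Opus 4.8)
The plan is to use the two-sided asymptotic bound on $I_0$ from Lemma \ref{Bessel} together with monotonicity from Lemma \ref{monotone}, handling separately the region where the Bessel argument is large (so Lemma \ref{Bessel} applies directly) and the region where it is small (so we can just compare with the constant $I_0(M)$ and a crude exponential bound). Write $y_k(z) = I_0(\sqrt{k}\,|z|)/I_0(\sqrt{k}\,R)$. Since $|z| \le R$, Lemma \ref{monotone} gives $y_k(z) \le 1$; this alone is not quite the claimed lower bound, so for the lower bound I would argue as follows: by Lemma \ref{monotone}, $I_0$ is increasing and (from the series) log-convex, hence $s \mapsto \log I_0(s)$ has derivative bounded above by its value at infinity, which by Lemma \ref{Bessel} (or the asymptotic $I_0(s) \sim e^s/\sqrt{2\pi s}$) tends to $1$. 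More simply, from $I_0(s) \le C e^s/\sqrt{s}$ for $s > M$ and $I_0(s) \ge \tfrac1C e^s/\sqrt{s}$ for $s > M$, in the regime $\sqrt{k}R > M$ and $\sqrt{k}|z| > M$ one gets
\[
y_k(z) = \frac{I_0(\sqrt{k}|z|)}{I_0(\sqrt{k}R)} \ge \frac{1}{C^2}\sqrt{\frac{R}{|z|}}\, e^{-\sqrt{k}(R-|z|)} \ge \frac{1}{C^2} e^{-\sqrt{k}(R-|z|)},
\]
and similarly $y_k(z) \le C^2 \sqrt{R/|z|}\, e^{-\sqrt{k}(R-|z|)}$. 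The factor $\sqrt{R/|z|}$ is the place needing care, and this is where the $k^{1/4}$ enters.

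For the upper bound $y_k(z) \le C k^{1/4} e^{-\sqrt{k}(R-|z|)}$, I would split into two cases according to whether $\sqrt{k}\,|z| \ge M$ or $\sqrt{k}\,|z| < M$. In the first case the display above gives $y_k(z) \le C^2 \sqrt{R/|z|}\, e^{-\sqrt{k}(R-|z|)}$, and since $|z| \ge M/\sqrt{k}$ we have $\sqrt{R/|z|} \le \sqrt{R}\,k^{1/4}/\sqrt{M}$, absorbing everything into the constant. In the second case, $|z| < M/\sqrt{k}$, so by monotonicity $I_0(\sqrt{k}|z|) \le I_0(M)$, a fixed constant, while $I_0(\sqrt{k}R) \ge \tfrac1C e^{\sqrt{k}R}/\sqrt{\sqrt{k}R}$ once $\sqrt{k}R > M$; hence
\[
y_k(z) \le C\,I_0(M)\, (\sqrt{k}\,R)^{1/2}\, e^{-\sqrt{k}R} \le C'\, k^{1/4}\, e^{-\sqrt{k}(R-|z|)},
\]
where the last step uses $e^{-\sqrt{k}R} \le e^{-\sqrt{k}(R-|z|)}$ (valid since $|z|\ge 0$) and $k^{1/2} \le \text{const}\cdot k^{1/4}$ is false, so instead I keep $(\sqrt{k}R)^{1/2} = R^{1/2} k^{1/4}$, which is exactly the $k^{1/4}$ claimed (the half-power of $k$ from $\sqrt{\sqrt k}$ matches). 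One must also dispose of the finitely-many small-$k$ values $k \le M^2/R^2$ where the Bessel asymptotics do not yet apply: there the ratio $y_k(z)$ is a continuous positive function of $(k,z)$ on a compact set bounded away from the degeneracy, hence bounded above and below by absolute constants, and enlarging $C$ covers this.

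The main obstacle is bookkeeping the constant's independence of $k$ and $z$ across the case split, in particular making sure the $\sqrt{R/|z|}$ blow-up as $|z| \to 0$ is correctly controlled — it is, precisely because when $|z|$ is that small the Bessel argument is below $M$ and we switch to the crude bound, which trades the $1/\sqrt{|z|}$ for the harmless $I_0(M)$. Everything else is a direct substitution of Lemmas \ref{Bessel} and \ref{monotone}.
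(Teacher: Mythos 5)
Your upper bound is essentially the paper's own argument: the same case split at $\sqrt{k}\,|z|=M$, with Lemma \ref{Bessel} applied to both numerator and denominator when the argument is large (so that $\sqrt{R/|z|}\le \sqrt{R/M}\,k^{1/4}$), and monotonicity plus $I_0(\sqrt{k}|z|)\le I_0(M)$ when it is small. That part is fine.

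The lower bound is where you diverge from the paper, and it is the weak point of the proposal. The lemma asserts $e^{-\sqrt{k}(R-|z|)}\le y_k(z)$ with constant exactly $1$, for all $k>0$ and all $z\in D$. Your ``more simply'' route via Lemma \ref{Bessel} only yields $C^{-2}e^{-\sqrt{k}(R-|z|)}\le y_k(z)$, and only in the regime $\sqrt{k}\,|z|>M$; it says nothing about the lower bound when $|z|<M/\sqrt{k}$, and the closing compactness remark addresses small $k$, not small $|z|$. Your alternative route via log-convexity of $I_0$ would indeed give the constant $1$ everywhere (convexity of $\log I_0$ plus $(\log I_0)'\to 1$ forces $(\log I_0)'\le 1$, hence $I_0(\sqrt{k}R)/I_0(\sqrt{k}|z|)\le e^{\sqrt{k}(R-|z|)}$), but log-convexity of $I_0$ — equivalently, monotonicity of $I_1/I_0$ — is a genuinely nontrivial fact that does not follow ``from the series'' in any obvious way and is not among the two quoted Bessel lemmas, so as written this is an unproven ingredient. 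The paper avoids Bessel functions entirely here: since $y_k$ is characterized by $\Delta y_k=ky_k$ in $D$ with $y_k=1$ on $\partial D$, and the radial function $v=e^{-\sqrt{k}(R-|z|)}$ satisfies $\Delta v=\left(k+\tfrac{\sqrt{k}}{|z|}\right)v\ge kv$ with $v=1$ on $\partial D$, the maximum principle applied to $v-y_k$ gives $v\le y_k$ on all of $D$, with constant $1$ and no case analysis. You should replace your lower-bound argument with this comparison-function argument (or supply a genuine proof that $I_1/I_0$ is increasing).
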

\begin{proof}
Left inequality: By simple calculation, $$\Delta e^{-\sqrt{k}(R-|z|)}\geq k e^{-\sqrt{k}(R-|z|)}$$ and $e^{-\sqrt{k}(R-|z|)}|_{\p D}=1$. By the maximum principle, we have \[e^{-\sqrt{k}(R-|z|)}\leq y_k(z).\]

Right inequality: 
For $|z|\leq \frac{M}{\sqrt{k}}$,  as $k\rightarrow +\infty$, by Lemma \ref{Bessel} 
\begin{eqnarray*}
y_k&\leq& C\frac{I_0(M)}{I_0(\sqrt{k}R)} \\
&\leq& C \frac{e^M}{\sqrt{M}}\frac{k^{\frac{1}{4}}R^{\frac{1}{2}}}{e^{\sqrt{k}R}}\\
&\leq& C k^{\frac{1}{4}}e^{\sqrt{k}(|z|-R)}.\end{eqnarray*}
For $|z|>\frac{M}{\sqrt{k}}$, 
\begin{eqnarray*}
y_k&\leq& C\frac{I_0(\sqrt{k}|z|)}{I_0(\sqrt{k}R)} \\
&\leq& C k^{\frac{1}{4}}e^{\sqrt{k}(|z|-R)}.\end{eqnarray*}
\end{proof}
Our goal is to use such functions $y_k$'s to bound the eigensolutions $w_j$'s by choosing the right $k$. We start by proving a lemma about a more general system.
\begin{Lemma}\label{TodaTechnicalLemma2}
	Suppose $\eta_1,\cdots,\eta_n$ are functions on $D$ satisfying
	\eqtns{\Delta\eta_1=\lambda_1t^{\frac{2}{n}}(\eta_1+f_1)\\
			   \ \ \ \vdots\\
			\Delta\eta_n=\lambda_nt^{\frac{2}{n}}(\eta_n+f_n)}{}
	  with $\lambda_j>0,$ $\lambda_1=min\{\lambda_j\},$ and $f_j=f_j(\eta_1,\cdots,\eta_n).$ Suppose further that there exists $C>0$ such that 
	 \[\begin{dcases}
	 	|f_j|\leq C(\eta_1^2+\cdots+\eta_n^2+y_{k t^{\frac{2}{n}}})& \text{for some}\ k>\lambda_1\\
	 	|\eta_j|\leq Ct^{-\frac{2}{n}}.
	 \end{dcases}\]
	 Then 
	 \[|\eta_1(z)|\leq Ct^{-\frac{3}{2n}}e^{-\sqrt{\lambda_1}t^{\frac{1}{n}}(R-|z|)}.\]
	\end{Lemma}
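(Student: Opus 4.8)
The plan is a maximum-principle (barrier) argument built from the radial comparison functions $y_{\mu}$ introduced above, all barriers being solutions or sub/supersolutions of the operator $\Delta-\lambda_1t^{\frac{2}{n}}$, which obeys the maximum principle since $\lambda_1t^{\frac{2}{n}}>0$. The whole statement reduces to proving the cleaner estimate $|\eta_1(z)|\le Ct^{-\frac{2}{n}}y_{\lambda_1t^{\frac{2}{n}}}(z)$ on $D$: indeed, by Lemma~\ref{ykboundLemma}, $y_{\lambda_1t^{\frac{2}{n}}}(z)\le C(\lambda_1t^{\frac{2}{n}})^{\frac14}e^{-\sqrt{\lambda_1}\,t^{\frac{1}{n}}(R-|z|)}\le C't^{\frac{1}{2n}}e^{-\sqrt{\lambda_1}\,t^{\frac{1}{n}}(R-|z|)}$, so $t^{-\frac{2}{n}}y_{\lambda_1t^{\frac{2}{n}}}\le C't^{-\frac{3}{2n}}e^{-\sqrt{\lambda_1}\,t^{\frac{1}{n}}(R-|z|)}$, which is exactly the assertion. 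The three hypotheses enter as follows: $\lambda_1=\min_j\lambda_j$ makes $y_{\lambda_1t^{\frac{2}{n}}}$ the slowest-decaying of the natural barriers, hence a universal one; $k>\lambda_1$ makes the inhomogeneous source $y_{kt^{\frac{2}{n}}}$ decay strictly faster than that barrier; and $|\eta_j|\le Ct^{-\frac{2}{n}}$ makes the nonlinear terms $\eta_i^2\le Ct^{-\frac{2}{n}}|\eta_i|$ subcritical relative to the linear terms $\lambda_jt^{\frac{2}{n}}\eta_j$.

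First I would control the scalar quantity $v:=\sum_{j=1}^n\eta_j^2$. From $\Delta(\eta_j^2)=2\eta_j\Delta\eta_j+2|\nabla\eta_j|^2\ge 2\lambda_jt^{\frac{2}{n}}\eta_j^2-2\lambda_jt^{\frac{2}{n}}|\eta_j||f_j|$, the bound $|f_j|\le C(v+y_{kt^{\frac{2}{n}}})$, and $|\eta_j|\le Ct^{-\frac{2}{n}}$, summing over $j$ and using $\lambda_j\ge\lambda_1$ together with $v\ge 0$ gives, for $t$ large enough that $\lambda_1t^{\frac{2}{n}}$ exceeds the (constant) coefficient of the error, $\Delta v-\lambda_1t^{\frac{2}{n}}v\ge -Cy_{kt^{\frac{2}{n}}}$ on $D$. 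I would then compare $v$ with the function $w:=\tfrac{C}{(k-\lambda_1)t^{\frac{2}{n}}}\big(y_{\lambda_1t^{\frac{2}{n}}}-y_{kt^{\frac{2}{n}}}\big)+(\sup_{\p D}v)\,y_{\lambda_1t^{\frac{2}{n}}}$, which is nonnegative on $D$ (by monotonicity of $r\mapsto I_0(\sqrt k\,r)/I_0(\sqrt{\lambda_1}\,r)$), equals $\sup_{\p D}v$ on $\p D$, and satisfies $(\Delta-\lambda_1t^{\frac{2}{n}})w=-Cy_{kt^{\frac{2}{n}}}$ exactly — here $k>\lambda_1$ is used, since $(\Delta-\lambda_1t^{\frac{2}{n}})y_{\mu t^{\frac{2}{n}}}=(\mu-\lambda_1)t^{\frac{2}{n}}y_{\mu t^{\frac{2}{n}}}$. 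The maximum principle gives $v\le w$, and since $\sup_{\p D}v\le Ct^{-\frac{4}{n}}$ and $\tfrac{C}{(k-\lambda_1)t^{\frac{2}{n}}}=Ct^{-\frac{2}{n}}/(k-\lambda_1)$, we obtain $v\le Ct^{-\frac{2}{n}}y_{\lambda_1t^{\frac{2}{n}}}$ on $D$, hence $|\eta_j|\le Ct^{-\frac{1}{n}}y_{\lambda_1t^{\frac{2}{n}}}^{1/2}$ for every $j$.

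Finally I would feed these estimates back into the single equation $(\Delta-\lambda_1t^{\frac{2}{n}})\eta_1=\lambda_1t^{\frac{2}{n}}f_1$ and compare $\pm\eta_1$ against barriers of the form $a\,y_{\lambda_1t^{\frac{2}{n}}}+(\text{particular terms built from }y_{\mu t^{\frac{2}{n}}}\text{ and }y_{\lambda_1t^{\frac{2}{n}}}^{\beta})$. With $|f_1|\le C(v+y_{kt^{\frac{2}{n}}})$ and the estimates above, every piece of the effective source decays strictly faster than $y_{\lambda_1t^{\frac{2}{n}}}$ — the $y_{kt^{\frac{2}{n}}}$ part because $k>\lambda_1$, and the quadratic part $\sim v$ because the individual $\eta_j$ already decay, so $\eta_i^2$ decays (roughly) twice as fast — and each such source can be absorbed by a barrier proportional to $t^{-\frac{2}{n}}y_{\lambda_1t^{\frac{2}{n}}}$, using that $(\Delta-\lambda_1t^{\frac{2}{n}})(-y_{\lambda_1t^{\frac{2}{n}}}^{\beta})\le -(1-\beta)\lambda_1t^{\frac{2}{n}}y_{\lambda_1t^{\frac{2}{n}}}^{\beta}$ for $0<\beta<1$ and that $y_{\lambda_1t^{\frac{2}{n}}}^{\beta}\le 1$ on $\p D$. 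Running this comparison, iterated finitely many times so that the prefactor multiplying the nonlinear part of the source is successively driven down, one reaches $|\eta_1|\le Ct^{-\frac{2}{n}}y_{\lambda_1t^{\frac{2}{n}}}$ on $D$, which finishes the proof by the reduction in the first paragraph. The main obstacle is precisely this last step: the estimate is self-referential — the bound sought for $\eta_1$ is needed, via $v$, to control the nonlinear part of $\eta_1$'s own source — and the effective source rate is only borderline faster than the resonant homogeneous rate $\sqrt{\lambda_1}\,t^{\frac{1}{n}}$; the device that makes it go through is to route the coupling entirely through the single scalar $v$, whose control requires only the crude $L^\infty$ bound and $\lambda_1=\min_j\lambda_j$, and then to bootstrap.
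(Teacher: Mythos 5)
Your overall architecture is the paper's: a maximum-principle bound on $v=\sum_j\eta_j^2$, then a barrier comparison for $\eta_1$ alone, then Lemma \ref{ykboundLemma} to convert the $y$-bound into the stated exponential estimate. Your first step is essentially correct (the exact particular solution $w$ is a clean variant of the paper's supersolution $C\,y_{\lambda t^{2/n}}$). The gap is in what you carry forward from that step. By comparing $v$ against $y_{\lambda_1 t^{2/n}}$ you discard the exponential gap on which the second step depends: with $v\le Ct^{-\frac{2}{n}}y_{\lambda_1t^{2/n}}$ the effective source $\lambda_1t^{\frac2n}|f_1|$ contains a piece of size $C\,y_{\lambda_1t^{2/n}}$, which decays at exactly the homogeneous rate $\sqrt{\lambda_1}\,t^{1/n}$ of the operator $\Delta-\lambda_1t^{2/n}$. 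A resonant source of that size forces a secular particular solution of order $(R-|z|)\,t^{-\frac1n}\cdot t^{\frac{1}{2n}}e^{-\sqrt{\lambda_1}t^{1/n}(R-|z|)}\sim t^{-\frac{1}{2n}}e^{-\sqrt{\lambda_1}t^{1/n}(R-|z|)}$, a full factor $t^{1/n}$ short of the claimed $t^{-\frac{3}{2n}}$. Your proposed repair does not close this: the inequality you quote has the wrong sign (it is $(\Delta-\lambda_1t^{2/n})(+y^\beta)\le-(1-\beta)\lambda_1t^{2/n}y^\beta$ that serves an upper barrier); a barrier proportional to $t^{-2/n}y^\beta$ cannot dominate the $t^{2/n}y_{kt^{2/n}}$ part of the source near $\p D$, where both comparison functions are of order one; and the iteration is not actually set up — the resonant part of the source is $v$, which involves all the $\eta_j$, so improving $\eta_1$ alone improves nothing, and what must improve is the exponential rate of the $v$-bound, not its prefactor.

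The device you are missing is the one the paper uses to avoid the resonance entirely: the differential inequality for $v$ has linear rate $2\lambda_1t^{2/n}$ — twice that of the $\eta_1$ equation — so $v$ may be compared against $y_{\lambda t^{2/n}}$ for $\lambda=\min\{k,\tfrac32\lambda_1\}$, which is strictly larger than $\lambda_1$ precisely because $k>\lambda_1$. With that strict gap, every piece of the source for $\eta_1$ decays strictly faster than the homogeneous solution, and a single comparison with the barrier $Ct^{-2/n}y_{\lambda_1t^{2/n}-C}$ (the shift by $-C$ creating the room needed to absorb the source) finishes the proof in one step, with no bootstrapping. If you replace your first-step conclusion by the paper's (keeping $\lambda>\lambda_1$) your second step becomes the paper's and the argument closes; as written, your third step would fail.
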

\begin{proof}
In order to obtain an upper and lower bound for $\eta_1,$ we first prove an upper bound on $\sum\limits_{j=1}^n\eta_j^2,$
\[\Delta\left(\sum\limits_{j=1}^n\eta_j^2\right)=2\sum\limits_j\eta_j\Delta\eta_j+2\sum\limits_j|\nabla \eta_j|^2\geq 2\sum\limits_j\eta_j\Delta\eta_j.\] 
Using our assumptions on $\Delta \eta_j$ and $\lambda_1$ gives
\[\Delta\left(\sum\limits_{j=1}^n\eta_j^2\right)\geq 2t^{\frac{2}{n}}\sum\limits_j\eta_j\lambda_j(\eta_j+f_j)\]
\[\geq 2t^{\frac{2}{n}}\lambda_1\sum\limits_j \eta_j^2-2t^{\frac{2}{n}}\sum\limits_j |\eta_j||f_j|\lambda_j\]
Now with our assumptions on $|f_j|$ and $|\eta_j|,$ we have
\[\Delta\left(\sum\limits_{j=1}^n\eta_j^2\right)\geq (2\lambda_1t^{\frac{2}{n}}-C)\sum\limits_j\eta_j^2-Cy_{k t^{\frac{2}{n}}}\]
Let $\lambda=min\{k, \frac{3}{2}\lambda_1\}$, then
\[\Delta\left(\sum\limits_{j=1}^n\eta_j^2\right)\geq (2\lambda_1t^{\frac{2}{n}}-C)\sum\limits_j\eta_j^2-Cy_{\lambda t^{\frac{2}{n}}}\]
Let $C \geq  \sum\limits_{j=1}^n\eta_j^2|_{\p D}$ and consider $C\cdot y_{\lambda t^{\frac{2}{n}}}$, for $t$ large enough,
\[\Delta C y_{\lambda t^{\frac{2}{n}}}\leq (2\lambda_1t^{\frac{2}{n}}-C)C y_{\lambda t^{\frac{2}{n}}}-Cy_{\lambda t^{\frac{2}{n}}}\]
Hence by the maximum principle, in $D$, 
\[\sum\limits_{j=1}^n\eta_j^2\leq C y_{\lambda t^{\frac{2}{n}}}.\]

To obtain an upper bound for $\eta_1,$ consider
\[\Delta \eta_1=\lambda_1t^{\frac{2}{n}}(\eta_1+f_1)\geq \lambda_1 t^{\frac{2}{n}}\eta_1-C t^{\frac{2}{n}}y_{\lambda t^{\frac{2}{n}}}.\]
Let $b=C t^{-\frac{2}{n}}\geq  \eta_1|_{\p D}$  and consider $by_{\lambda_1 t^{\frac{2}{n}}-C}$, we obtain
\[\Delta by_{\lambda_1 t^{\frac{2}{n}}-C}\leq \lambda_1t^{\frac{2}{n}}by_{\lambda_1 t^{\frac{2}{n}}-C}-Cy_{\lambda t^{\frac{2}{n}}}\]
Similarly, by the maximum principle, on $D$ \[\eta_1\leq by_{\lambda_1 t^{\frac{2}{n}}-C}.\]
For the lower bound, consider $\Delta(-\eta_1)=\lambda_1(-\eta_1)-f_1.$ By the same argument as above, $-\eta_1\leq by_{\lambda_1 t^{\frac{2}{n}}-C}.$
Finally, by the estimate on $y_k$ from lemma \ref{ykboundLemma}, we obtain the desired 
\[|\eta_1(z)|\leq Ct^{-\frac{3}{2n}}e^{-\sqrt{\lambda_1}t^{\frac{1}{n}}(R-|z|)}.\]
\end{proof}
We are now set up to prove Theorem \ref{ErrorEstimateTheorem}.
\begin{proof}(of Theorem \ref{ErrorEstimateTheorem})
Recall from (\ref{wkdef}), $w_k=\frac{1}{\sqrt{n}}\sum\limits_{i\in\Z_n}\zeta\dwn{n}^{ik}(d\up{i}-d\up{i+1});$ 
by Lemma \ref{PropertyEigensolutions}, $w_k$ is real. The proof is by induction on $k.$ 

For the base case, we show that the $w_k$'s satisfy the hypothesis of Lemma \ref{TodaTechnicalLemma2}. 
By part (iii) of Proposition \ref{Perturbedw_kProposition}, 
\[\Delta w_k=4 t^\frac{2}{n}|1-\zeta\dwn{n}^k|^2\sum\limits_{\substack{r\equiv k\\ \text{mod}\ n}}\  \sum\limits_{r_1+\cdots+r_s=r} \frac{1}{s!n^{\frac{s-1}{2}}}\binom{r}{r_1,\cdots,r_s} w_{r_1}w_{r_2}\cdots w_{r_s}.\]
Taking the $r=k$ term out of the sum gives
\[\Delta w_k=4 t^\frac{2}{n}|1-\zeta\dwn{n}^k|^2w_k+t^\frac{2}{n}f_k\]
where $f_k$ is the remainder of the sum. Since the first term has been removed, each term in $f_k$ is a polynomial in the $w_j$'s with no linear or constant term. Recall, by remark \ref{w_ksmall}, each $w_j\sim O\left(t^{-\frac{2}{n}}\right).$ To show $|f_k|\leq C(\sum\limits w_j^2),$ note that  the $s=2$ summand of $f_k$ is  
\[4 t^\frac{2}{n}\sum\limits_{\substack{r\equiv k\\ \text{mod}\ n}}\ \sum\limits_{r_1+r_2=r} \frac{1}{2!\sqrt{n}}\binom{r}{r_1,r_2}|1-\zeta\dwn{n}^k|^2 w_{r_1}w_{r_2}\leq C \sum {w_j}^2.\]
For $s>2$ we use remark \ref{w_ksmall} to conclude
\[|f_k|\leq C\sum\limits_j {w_j}^2(1+O\left(t^{-\frac{2}{n}}\right)+O\left(t^{-\frac{4}{n}}\right)+\cdots)\\\leq C\sum\limits_j {w_j}^2.\]
Furthermore, $|1-\zeta\dwn{n}|^2\leq |1-\zeta\dwn{n}^k|^2$ for all $k<n,$ thus the system of $w_k$'s satisfies the hypothesis of Lemma \ref{TodaTechnicalLemma2} with $\lambda_k=|1-\zeta\dwn{n}^k|^2.$ Thus 
\[w_k(z)= O\left(t^{-\frac{3}{2n}}e^{-2|1-\zeta\dwn{n}|t^\frac{1}{n}(R-|z|)}\right),\] 
proving the base case.

For the induction step, suppose $w_l=O\left(t^{-\frac{3}{2n}}e^{-2|1-\zeta\dwn{n}^l|t^\frac{1}{n}(R-|z|)}\right)$ for all $l\leq k.$ By Lemma \ref{PropertyEigensolutions}, $w_l=w_{n-l},$ and if $l\leq k$ the induction hypothesis implies $w_{n-l}(z)=O\left(t^{-\frac{3}{2n}}e^{-2|1-\zeta\dwn{n}^{n-l}|t^\frac{1}{n}(R-|z|)}\right).$ 

To obtain the estimate for $w_{k+1}$, the assumptions are not enough to directly estimate the terms containing $w_j$'s for $k+1\leq j\leq n-k-1$. Therefore we need use the rest system to do the estimate. Expanding the expression for $\Delta w_{k+i}$, we have
\[\frac{\Delta w_{k+i}}{4t^{\frac{2}{n}}}={|1-\zeta\dwn{n}^{k+i}|}^2 w_{k+i}+\sum\limits_{r_1+\cdots+r_s=k+i}\frac{1}{s!n^{\frac{s-1}{2}}}\binom{k+i}{r_1,\cdots,r_s}w_{r_1}\cdots w_{r_s}\]
\[+\sum\limits_{\substack{r_1+\cdots+r_s\\=k+i+n}}\frac{1}{s!n^{\frac{s-1}{2}}}\binom{k+i+n}{r_1,\cdots,r_s}w_{r_1}\cdots w_{r_s}+\cdots\]

Let $A\subset\{1,\cdots,n-1\}$ be the set $A=\{k+1,\cdots,n-k-1\}$ and $B=\{1,\cdots,n-1\}\setminus A.$
Rewrite the above equation as 
$\dfrac{\Delta w_{k+i}}{4t^\frac{2}{n}}=|1-\zeta\dwn{n}^{k+i}|^2w_{k+i}+E+F+G,
$ where
\eqtns{
	E=\sum\limits_{\substack{r\equiv k+i\\ \text{mod}\ n}}\  \sum\limits_{\substack{r_1+\cdots+r_s=r\\ r_j\in B}} \frac{1}{s!n^{\frac{n-1}{2}}}\binom{r}{r_1,\cdots,r_s}|1-\zeta\dwn{n}^{k+i}|^2 w_{r_1}w_{r_2}\cdots w_{r_s}\\
	F=\sum\limits_{\substack{r\equiv k+i\\ \text{mod}\ n}}\  \sum\limits_{\substack{r_1+\cdots+r_s=r\\ \text{at most one of $r_j$'s in $A$}}} \frac{1}{s!n^{\frac{n-1}{2}}}\binom{r}{r_1,\cdots,r_s}|1-\zeta\dwn{n}^{k+i}|^2 w_{r_1}w_{r_2}\cdots w_{r_s}\\
	G=\sum\limits_{\substack{r\equiv k+i\\ \text{mod}\ n}}\  \sum\limits_{\substack{r_1+\cdots+r_s=r\\  \text{at least two of $r_j$'s in $A$}}} \frac{1}{s!n^{\frac{n-1}{2}}}\binom{r}{r_1,\cdots,r_s}|1-\zeta\dwn{n}^{k+i}|^2 w_{r_1}w_{r_2}\cdots w_{r_s}
}{} 
For $E$, the induction hypothesis applies, for $l\in B$ \[w_{l}=\left(t^{-\frac{3}{2n}}e^{-2|1-\zeta\dwn{n}^{l}|t^\frac{1}{n}(R-|z|)}\right).\] 
Also, by the triangle inequality, for $s\geq 2$,
\[|1-\zeta\dwn{n}^{r_1}|+\cdots+|1-\zeta\dwn{n}^{r_s}|>|1-\zeta\dwn{n}^{r_1+\cdots+r_s}|,\] thus
\[\sum\limits_{r_1+\cdots+r_s=k+1}\frac{1}{s!n^{\frac{n-1}{2}}}\binom{k+1}{r_1,\cdots,r_s}w_{r_1}\cdots w_{r_s}\sim O\left(t^{-\frac{3}{n}}e^{-2|1-\zeta\dwn{n}^{k+1}|t^\frac{1}{n}(R-|z|)}\right).\] 
Hence we have \[E\sim O\left(t^{-\frac{3}{n}}e^{-2|1-\zeta\dwn{n}^{k+1}|t^\frac{1}{n}(R-|z|)}\right).\]
Applying this to the term $F$ yields 
 \[F=\sum\limits_{\substack{r\equiv k+i\\ \text{mod}\ n}}\  \sum\limits_{\substack{r_0+r_1=r\\ \ r_1\in A}} O\left(t^{-\frac{3}{2n}}e^{-2|1-\zeta\dwn{n}^{r_0}|t^\frac{1}{n}(R-|z|)}\right)w_{r_1}.\]
 For $j\in A$, we need the following coarse estimate of the $w_j$'s.
\begin{Claim}\label{claimwk+ibounds}
	\[|w_{k+i}|\leq Ct^{-\frac{3}{2n}}e^{-{\sqrt{2}}|1-\zeta\dwn{n}^{k+1}|t^\frac{1}{n}(R-|z|)}\]
\end{Claim}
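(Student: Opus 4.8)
\textbf{Proof proposal for Claim \ref{claimwk+ibounds}.} The plan is to control all of the ``central'' eigensolutions $w_j$, $j\in A=\{k+1,\dots,n-k-1\}$, simultaneously by running the maximum principle on the single scalar quantity $W:=\sum_{j\in A}w_j^2$ (the $w_j$ being real by Lemma \ref{PropertyEigensolutions}), and then to recover the pointwise bound on each $w_{k+i}$ from $|w_{k+i}|\le\sqrt W$. A subsystem must be treated at once because the equations for the indices in $A$ couple to one another, so no individual $w_j$ with $j\in A$ can be isolated; passing to $W$ turns this coupled system into one scalar elliptic differential inequality to which the Bessel barriers $y_k$ of Lemma \ref{ykboundLemma} apply.

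First I would compute, using $\Delta W=2\sum_{j\in A}w_j\Delta w_j+2\sum_{j\in A}|\nabla w_j|^2\ge 2\sum_{j\in A}w_j\Delta w_j$ together with the Toda recursion (Propositions \ref{w_kProposition} and \ref{Perturbedw_kProposition}) in the normalization $\tfrac{\Delta w_j}{4t^{2/n}}=|1-\zeta\dwn{n}^{j}|^2 w_j+E_j+F_j+G_j$, an inequality of the shape
\[
\Delta W\ \ge\ 8t^{\frac{2}{n}}\Big(\min_{j\in A}|1-\zeta\dwn{n}^{j}|^2\Big)W\ -\ (\text{coupling and forcing terms}).
\]
Here the key elementary fact is that $j\mapsto|1-\zeta\dwn{n}^{j}|=2|\sin(\pi j/n)|$ is minimized on $A$ at its endpoints, so $\min_{j\in A}|1-\zeta\dwn{n}^{j}|=|1-\zeta\dwn{n}^{k+1}|$. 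The coupling/forcing terms are split exactly as in the expansion preceding the claim: the part $E_j$ built only from $w_l$ with $l\in B$ is handled by the induction hypothesis (together with $w_l=w_{n-l}$) and the base case, and decays at rate $2|1-\zeta\dwn{n}^{k+1}|t^{1/n}(R-|z|)$; the part $F_j$, linear in the $A$-variables with coefficients built from the $B$-variables, is bounded by $Ct^{-3/2n}\sqrt W$ after a Cauchy--Schwarz step; and the part $G_j$, at least quadratic in the $A$-variables, is bounded by $Ct^{-2/n}W$ using Remark \ref{w_ksmall}. Combining these with one Young's inequality to absorb the $\sqrt W$ into the leading term gives
\[
\Delta W\ \ge\ \big(8t^{\frac{2}{n}}|1-\zeta\dwn{n}^{k+1}|^2-Ct^{\frac{1}{2n}}\big)W\ -\ Ct^{-\frac{4}{n}}e^{-4|1-\zeta\dwn{n}^{k+1}|t^{\frac1n}(R-|z|)}.
\]

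The third step is the barrier comparison: since $4>2\sqrt2$, the forcing term decays strictly faster than $y_{\lambda t^{2/n}}$ for $\lambda$ just below $8|1-\zeta\dwn{n}^{k+1}|^2$, so comparing $W$ with $Ct^{-4/n}y_{\lambda t^{2/n}}$ (the constant also dominating $W$ on $\partial D$, where every $w_j=O(t^{-2/n})$) and applying the maximum principle yields $W\le Ct^{-4/n}y_{\lambda t^{2/n}}$ on $D$. Lemma \ref{ykboundLemma} then converts this to $W\le Ct^{-\frac{7}{2n}}e^{-2\sqrt2\,|1-\zeta\dwn{n}^{k+1}|t^{\frac1n}(R-|z|)}$, whence $|w_{k+i}|\le\sqrt W\le Ct^{-\frac{3}{2n}}e^{-\sqrt2\,|1-\zeta\dwn{n}^{k+1}|t^{\frac1n}(R-|z|)}$, as claimed.

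The main obstacle is the bookkeeping in the middle step: one must verify that every monomial $w_{r_1}\cdots w_{r_s}$ occurring in $\Delta w_j$ for $j\in A$ either (i) uses no index in $A$, hence inherits a fast exponential decay from the induction hypothesis and the subadditivity $\sum_i|1-\zeta\dwn{n}^{r_i}|\ge|1-\zeta\dwn{n}^{\sum_i r_i}|$, or (ii) uses exactly one index in $A$, so that its coefficient is an already-controlled product carrying enough decay for the term to be of size $t^{-3/2n}\sqrt W$, or (iii) uses at least two indices in $A$, so that two factors of size $O(t^{-2/n})$ make it quadratically small in $W$; and then that the residual forcing genuinely decays faster than the barrier one wishes to use. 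The trigonometric inequality $|1-\zeta\dwn{n}^{j}|\ge|1-\zeta\dwn{n}^{k+1}|$ on $A$ and the above subadditivity are the two structural inputs that make the constants close; the only cosmetic imprecision, that the barrier exponent is any constant strictly below $2\sqrt2\,|1-\zeta\dwn{n}^{k+1}|$ rather than exactly that value, is harmless since only a strict lower bound on the decay rate is used afterwards.
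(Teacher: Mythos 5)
Your proposal matches the paper's own argument for this claim: the paper likewise bounds $\sum_i w_{k+i}^2$ over the middle indices via $\Delta\bigl(\sum_i w_{k+i}^2\bigr)\ge 2\sum_i w_{k+i}\Delta w_{k+i}$, the $E,F,G$ decomposition with the induction hypothesis and $w_j=O(t^{-2/n})$, and a maximum-principle comparison with $b\cdot y_{8|1-\zeta\dwn{n}^{k+1}|^2t^{2/n}-8C}$, then takes square roots. The only micro-differences are that the paper keeps the $\bigl(\sum_i w_{k+i}^2\bigr)^{1/2}$ forcing term in the barrier comparison instead of linearizing it by Young's inequality, and it chooses the barrier parameter $8|1-\zeta\dwn{n}^{k+1}|^2t^{2/n}-8C$ (so the exponent's relative error is $O(t^{-2/n})$ and the constant $\sqrt2$ is obtained exactly) rather than a fixed constant strictly below $8|1-\zeta\dwn{n}^{k+1}|^2$.
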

\begin{proof}$(of\ Claim)$ We will estimate the sum $\sum\limits_{k+1\leq i\leq n-k-1}w_{k+i}^2$. Observe that
\[\frac{\Delta w_{k+i}^2}{4t^\frac{2}{n}}\geq 2w_{k+i}\dfrac{\Delta w_{k+i}}{4t^\frac{2}{n}}\]
\[=2|1-\zeta\dwn{n}^{k+i}|^2w^2_{k+i}+2w_{k+i}E+2w_{k+i}F+2w_{k+i}G\]
\[\geq \scalemath{.9}{2|1-\zeta\dwn{n}^{k+i}|^2w^2_{k+i} -O\left(t^{-\frac{3}{n}}e^{-2|1-\zeta\dwn{n}^{k+i}|t^\frac{1}{n}(R-|z|)}\right)\left(\sum\limits_iw_{k+i}^2\right)^\frac{1}{2}-\left(O\left(t^{-\frac{3}{2n}}e^{-2|1-\zeta\dwn{n}^{k+1}|t^\frac{1}{n}(R-|z|)}\right)+O\left(t^{-\frac{2}{n}}\right)\right)\sum\limits_iw_{k+i}^2} .\]
Thus, summing over $i$ for $1\leq i\leq n-2k-1$ yields
\[\frac{1}{4}t^{-\frac{2}{n}}\Delta(\sum\limits_i w_{k+i}^2)\geq (2|1-\zeta\dwn{n}^{k+1}|^2-Ct^{-\frac{2}{n}})\sum\limits_iw_{k+i}^2-Ct^{-\frac{3}{n}}e^{-2|1-\zeta\dwn{n}^{k+1}|t^\frac{1}{n}(R-|z|)}(\sum\limits_iw_{k+i}^2)^\frac{1}{2}.\]
Since $w_{k+i}\leq Ct^{-\frac{2}{n}},$ if $b=C t^{-\frac{4}{n}}(n-2k-1)$ then
\[(\sum\limits_i w_{k+i}^2)\left.\right|_{\p D} \leq b.\]

Consider $b\cdot y_{8|1-\zeta\dwn{n}^{k+1}|^2t^{\frac{2}{n}}-8C},$ in $D$ 
\[\frac{1}{4}t^{-\frac{2}{n}}\Delta(b\cdot y_{8|1-\zeta\dwn{n}^{k+1}|^2t^{\frac{2}{n}}-8C})=(2|1-\zeta\dwn{n}^{k+1}|^2-2Ct^{-\frac{2}{n}})b\cdot y_{8|1-\zeta\dwn{n}^{k+1}|^2t^{\frac{2}{n}}-8C}\] 
Using Lemma \ref{ykboundLemma}, for $t$ sufficiently large,
\[\leq (2|1-\zeta\dwn{n}^{k+1}|^2-Ct^{-\frac{2}{n}})b\cdot y_{8|1-\zeta\dwn{n}^{k+1}|^2t^{\frac{2}{n}}-8C}-C t^{-\frac{3}{n}}e^{-|1-\zeta\dwn{n}^{k+1}|t^\frac{1}{n}(R-|z|)}\sqrt{b}(y_{8|1-\zeta\dwn{n}^{k+1}|^2t^{\frac{2}{n}}-8C})^\haf.\]
By the maximum principle,
\[\sum\limits_i w_{k+i}^2\leq b\cdot y_{8|1-\zeta\dwn{n}^{k+1}|^2t^{\frac{2}{n}}-8C}\]
thus 
\[|w_{k+i}(z)|\leq C\sqrt{b}t^{\frac{1}{2n}}e^{-\sqrt{2}|1-\zeta\dwn{n}^{k+1}|t^\frac{1}{n}(R-|z|)},\] and by definition of $b,$
\[|w_{k+i}(z)|\leq Ct^{-\frac{3}{2n}}e^{-\sqrt{2}|1-\zeta\dwn{n}^{k+1}|t^\frac{1}{n}(R-|z|)}.\] \end{proof}
Recall that 
$\frac{1}{4}t^{-\frac{2}{n}}\Delta w_{k+1}=|1-\zeta\dwn{n}^{k+1}|^2w_{k+1}+E+F+G,$ 
and $E=O\left(t^{-\frac{3}{n}}e^{-2|1-\zeta\dwn{n}^{k+1}|t^\frac{1}{n}(R-|z|)}\right),$ thus 
\[\frac{1}{4}t^{-\frac{2}{n}}\Delta w_{k+1}\geq |1-\zeta\dwn{n}^{k+1}|^2w_{k+1} - O\left(t^{-\frac{3}{n}}e^{-2|1-\zeta\dwn{n}^{k+1}|)^\frac{1}{n}(R-|z|)}\right)\]
\[-O\left( t^{-\frac{3}{2n}}e^{-2|1-\zeta\dwn{n}|t^\frac{1}{n}(R-|z|)}\right)(\sum\limits_i w_{k+i}^2)^\haf-C \sum\limits_i w_{k+i}^2.\]
Applying claim \ref{claimwk+ibounds} yields
\[\frac{1}{4}t^{-\frac{2}{n}}\Delta w_{k+1}\geq 
|1-\zeta\dwn{n}^{k+1}|^2w_{k+1}-O\left(t^{-\frac{3}{n}}e^{-2|1-\zeta\dwn{n}^{k+1}|t^\frac{1}{n}(R-|z|)}\right)\]
\[-O\left(t^{-\frac{2}{n}}e^{-(2|1-\zeta\dwn{n}|+{\sqrt{2}}|1-\zeta\dwn{n}^{k+1}|)t^\frac{1}{n}(R-|z|) }\right)-O\left(t^{-\frac{1}{n}}e^{-2\sqrt{2}|1-\zeta\dwn{n}^{k+1}|t^\frac{1}{n}(R-|z|) }\right)\]
On the right hand side of the inequality, the second term dominates the last term, 
\[\frac{1}{4t^{\frac{2}{n}}}\Delta w_{k+1}\geq
|1-\zeta\dwn{n}^{k+1}|^2w_{k+1}-O\left(t^{-\frac{3}{n}}e^{-2|1-\zeta\dwn{n}^{k+1}|t^\frac{1}{n}(R-|z|)}\right)\]
\[-O\left(t^{-\frac{2}{n}}e^{-(2|1-\zeta\dwn{n}|+\sqrt{2}|1-\zeta\dwn{n}^{k+1}|)t^\frac{1}{n}(R-|z|)}\right). \]
For the competition of the second and third term, there are two cases:\\\\
Case I: $|1-\zeta\dwn{n}^{k+1}|\leq |1-\zeta\dwn{n}|+\frac{1}{\sqrt{2}}|1-\zeta\dwn{n}^{k+1}|$\\
	In this case we have 
	\[\frac{1}{4}t^{-\frac{2}{n}}\Delta w_{k+1}\geq 
|1-\zeta\dwn{n}^{k+1}|^2w_{k+1}- O\left(t^{-\frac{3}{n}}e^{-2|1-\zeta\dwn{n}^{k+1}|t^\frac{1}{n}(R-|z|)}\right).\] 
Let $C t^{-\frac{2}{n}} \geq  w_{k+1}|_{\p D}$ and consider $C\cdot y_{\lambda}$, where $\lambda=4t^\frac{2}{n}|1-\zeta\dwn{n}^{k+1}|^2-4C,$ then 
\[\frac{1}{4}t^{-\frac{2}{n}}\Delta(C t^{-\frac{2}{n}}\cdot y_{\lambda})=(|1-\zeta\dwn{n}^{k+1}|^2-Ct^{-\frac{2}{n}})(C t^{-\frac{2}{n}} y_{\lambda})\]
\[\leq |1-\zeta\dwn{n}^{k+1}|^2(C t^{-\frac{2}{n}}y_{\lambda})-O\left(t^{-\frac{3}{n}}e^{-(2|1-\zeta\dwn{n}|+{\sqrt{2}}|1-\zeta\dwn{n}^{k+1}|)t^\frac{1}{n}(R-|z|)}\right).\]
By the maximum principle, in $D$ 
\[w_{k+1}\leq C t^{-\frac{2}{n}}y_{\lambda},\]
thus we have the desired 
\[w_{k+1}= O\left(t^{-\frac{3}{2n}}e^{-2|1-\zeta\dwn{n}^{k+1}|t^\frac{1}{n}(R-|z|)}\right).\]
\\\\
Case II: $|1-\zeta\dwn{n}^{k+1}| > |1-\zeta\dwn{n}|+\frac{1}{\sqrt{2}}|1-\zeta\dwn{n}^{k+1}|,$ then
\[\frac{1}{4}t^{-\frac{2}{n}}\Delta w_{k+1}\geq 
|1-\zeta\dwn{n}^{k+1}|^2w_{k+1}-O\left(t^{-\frac{2}{n}}e^{-(2|1-\zeta\dwn{n}|+\sqrt{2}|1-\zeta\dwn{n}^{k+1}|)t^\frac{1}{n}(R-|z|)}\right).\] 
Let $C t^{-\frac{2}{n}} \geq  w_{k+1}|_{\p D}$ and consider $C\cdot y_k$, where $k={4t^\frac{2}{n}(|1-\zeta\dwn{n}|+\frac{1}{\sqrt{2}}|1-\zeta\dwn{n}^{k+1}|)^2},$ then 
\[\frac{1}{4}t^{-\frac{2}{n}}\Delta(C t^{-\frac{2}{n}}\cdot y_k)=(|1-\zeta\dwn{n}|+\frac{1}{\sqrt{2}}|1-\zeta\dwn{n}^{k+1}|)^2(C t^{-\frac{2}{n}} y_k)\]
\[\leq |1-\zeta\dwn{n}^{k+1}|^2(C t^{-\frac{2}{n}}y_k)-O\left(t^{-\frac{2}{n}}e^{-(2|1-\zeta\dwn{n}|+{\sqrt{2}}|1-\zeta\dwn{n}^{k+1}|)t^\frac{1}{n}(R-|z|)}\right).\]
By the maximum principle, on $D$ 
\[w_{k+1}\leq C t^{-\frac{2}{n}}y_k.\]
Similarly, applying the same process as above, the same bound holds for $-w_{k+i},$ hence 
\[|w_{k+i}|\leq C t^{-\frac{2}{n}}y_k.\] 
Since $|1-\zeta\dwn{n}^{k+i}|\geq |1-\zeta\dwn{n}^{k+1}|,$ we have the bound $|w_{k+i}|\leq C t^{-\frac{2}{n}}y_k.$
Therefore, using Lemma \ref{ykboundLemma} and $k={4t^\frac{2}{n}(|1-\zeta\dwn{n}|+\frac{1}{\sqrt{2}}|1-\zeta\dwn{n}^{k+1}|)^2},$
\begin{equation}\label{precisewki}
	|w_{k+i}(z)|\leq O\left(t^{-\frac{3}{2n}}e^{-(2|1-\zeta\dwn{n}|+\sqrt{2}|1-\zeta\dwn{n}^{k+1}|)t^\frac{1}{n}(R-|z|)}\right).
\end{equation}
Recall that,
\[\frac{1}{4}t^{-\frac{2}{n}}\Delta w_{k+1}=|1-\zeta\dwn{n}^{k+1}|^2w_{k+1}+E+F+G, \]
thus our sharper bounds of $w_{k+i}$ in (\ref{precisewki}) yield
\[\geq |1-\zeta\dwn{n}^{k+1}|^2w_{k+1} - O\left(t^{-\frac{3}{n}}e^{-2|1-\zeta\dwn{n}^{k+1}|t^\frac{1}{n}(R-|z|)}\right)
-O\left(t^{-\frac{2}{n}}e^{-(2|1-\zeta\dwn{n}|+2\sqrt{2}|1-\zeta\dwn{n}^{k+1}|)t^\frac{1}{n}(R-|z|)}\right).
\]
Again, there are two cases to consider:
\begin{enumerate}[\em{\text{Case\ }}\em{}i.]
	\item $|1-\zeta\dwn{n}^{k+1}|\leq |1-\zeta\dwn{n}|+\sqrt{2}|1-\zeta\dwn{n}^{k+1}|$
	\item  $|1-\zeta\dwn{n}^{k+1}|>|1-\zeta\dwn{n}|+\sqrt{2}|1-\zeta\dwn{n}^{k+1}|$
\end{enumerate}
For \em{Case} \em{i. applying an argument similar to Case I. will prove the result. For} \em{Case} \em{ii}, we will obtain a more precise bound on the asymptotics of $w_{k+i},$ with exponent $-(2|1-\zeta\dwn{n}|+2\sqrt{2}|1-\zeta\dwn{n}^{k+1}|)t^\frac{1}{n}(R-|z|).$ 
This will give two more cases, 
\begin{itemize}
	\item $|1-\zeta\dwn{n}^{k+1}|\leq |1-\zeta\dwn{n}|+\frac{3}{2}\sqrt{2}|1-\zeta\dwn{n}^{k+1}|$
	\item  $|1-\zeta\dwn{n}^{k+1}|>|1-\zeta\dwn{n}|+\frac{3}{2}\sqrt{2}|1-\zeta\dwn{n}^{k+1}|$ 
\end{itemize}
Again, in the first case, applying an argument similar to Case I. will prove the result. While the second case will give even more precise bounds, and two more cases. Since this process will terminate,
 we will eventually be able to apply an argument similar to Case I. to obtain the desired,
\[w_{k+1}(z)= O\left(t^{-\frac{3}{2n}}e^{-2|1-\zeta\dwn{n}^{k+1}|t^{\frac{1}{n}}(R-|z|)}\right)\] 

\end{proof}

\begin{Corollary}\label{Derivative}
With the same assumptions Theorem \ref{ErrorEstimateTheorem}, \[
\xymatrix{\Delta w_k(z)= O\left(t^{\frac{1}{2n}}e^{-2|1-\zeta\dwn{n}^k|t^\frac{1}{n}(R-|z|)}\right)&\txt{and}&|w^k_z|= O\left(t^{-\frac{1}{2n}}e^{-2|1-\zeta\dwn{n}^k|t^\frac{1}{n}(R-|z|)}\right)}.\] 
\end{Corollary}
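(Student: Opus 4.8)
The plan is to bootstrap from the pointwise estimate on $w_k$ already established in Theorem \ref{ErrorEstimateTheorem}, using the Toda equation to control $\Delta w_k$ and elliptic (Schauder) estimates to control the gradient $\partial_z w_k$. The idea is that $\Delta w_k$ is, by part (iii) of Proposition \ref{Perturbedw_kProposition}, an explicit algebraic expression in the $w_j$'s times $t^{2/n}$, so the bound on $\Delta w_k$ follows directly by plugging in the estimates for the $w_j$'s; and once both $w_k$ and $\Delta w_k$ are controlled, a rescaling argument identical to the one used in Proposition \ref{derivprop} yields the gradient bound with the loss of exactly one power of $t^{1/n}$, i.e. $t^{1/2n - 1/n} = t^{-1/2n}$ relative to the value of $w_k$.

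First I would prove the Laplacian estimate. By part (iii) of Proposition \ref{Perturbedw_kProposition} applied to the $n$-cyclic Hitchin system (\ref{ErrHitn}), with $a = 4t^{2/n}$,
\[
\Delta w_k = 4t^{\frac{2}{n}}\,|1-\zeta\dwn{n}^k|^2 \sum\limits_{\substack{r\equiv k\\ \text{mod}\ n}}\ \sum\limits_{r_1+\cdots+r_s=r} \frac{1}{s!\,n^{\frac{s-1}{2}}}\binom{r}{r_1,\cdots,r_s} w_{r_1}\cdots w_{r_s}.
\]
The dominant contribution is the linear ($s=1$) term $4t^{2/n}|1-\zeta\dwn{n}^k|^2 w_k$, which by Theorem \ref{ErrorEstimateTheorem} is $O\!\left(t^{2/n}\cdot t^{-3/2n}e^{-2|1-\zeta\dwn{n}^k|t^{1/n}(R-|z|)}\right) = O\!\left(t^{1/2n}e^{-2|1-\zeta\dwn{n}^k|t^{1/n}(R-|z|)}\right)$. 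For the higher order terms ($s\geq 2$), I would use the triangle inequality $|1-\zeta\dwn{n}^{r_1}|+\cdots+|1-\zeta\dwn{n}^{r_s}|\geq |1-\zeta\dwn{n}^{r_1+\cdots+r_s}| = |1-\zeta\dwn{n}^k|$ (mod $n$ periodicity of the exponential factors) together with Theorem \ref{ErrorEstimateTheorem} to see that each such product is $O\!\left(t^{-3s/2n}e^{-2|1-\zeta\dwn{n}^k|t^{1/n}(R-|z|)}\right)$, so after multiplying by $t^{2/n}$ these are lower order in $t$ than the linear term. Summing the finitely many relevant $s$ (the series converges since each $w_j = O(t^{-2/n})$ is small, cf. Remark \ref{w_ksmall}), I get $\Delta w_k = O\!\left(t^{1/2n}e^{-2|1-\zeta\dwn{n}^k|t^{1/n}(R-|z|)}\right)$.

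Next I would prove the gradient estimate by the same rescaling-plus-Schauder trick as in Proposition \ref{derivprop}. Fix an interior point $z_0$ with $|z_0| < R$, pick $\rho$ comparable to $t^{-1/n}(R-|z_0|)$ (so that on the disk $B(z_0,\rho)$ the exponential weight $e^{-2|1-\zeta\dwn{n}^k|t^{1/n}(R-|z|)}$ is comparable to its value at $z_0$), and set $\tilde w_k(\xi) = w_k(z_0 + t^{-1/n}\xi)$. Then $\Delta_\xi \tilde w_k = t^{-2/n}(\Delta w_k)(z_0 + t^{-1/n}\xi)$, so on a fixed-size disk in the $\xi$ variable we have both $\|\tilde w_k\|_{C^0} = O\!\left(t^{-3/2n}W_k\right)$ and $\|\Delta_\xi \tilde w_k\|_{C^0} = O\!\left(t^{-3/2n}W_k\right)$, where $W_k$ abbreviates $e^{-2|1-\zeta\dwn{n}^k|t^{1/n}(R-|z_0|)}$. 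Interior $W^{2,p}$ elliptic estimates followed by Sobolev embedding give $\|\tilde w_k\|_{C^1} = O\!\left(t^{-3/2n}W_k\right)$, hence $|\partial_\xi \tilde w_k| = O\!\left(t^{-3/2n}W_k\right)$, and undoing the rescaling, $|\partial_z w_k(z_0)| = t^{1/n}|\partial_\xi \tilde w_k| = O\!\left(t^{-1/2n}W_k\right)$, which is the claimed bound. Since $\partial_z w_k = 2(w_k)_z$ up to the usual factor, this gives the statement for $w^k_z$.

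The main obstacle is bookkeeping the competition between the exponential weights in the nonlinear terms of $\Delta w_k$: one must check that the triangle inequality for $|1-\zeta\dwn{n}^{r_j}|$ applied termwise really does produce a faster exponential decay (or at worst the same) as in the linear term, for every partition $r_1+\cdots+r_s \equiv k \pmod n$, including those where some $r_j$ lies in the "middle range" and the corresponding $w_{r_j}$ was only controlled by the coarser Claim-type bounds inside the proof of Theorem \ref{ErrorEstimateTheorem}. However, since Theorem \ref{ErrorEstimateTheorem} is already proven and gives the sharp bound for \emph{all} $k$, I can simply invoke the sharp bound uniformly, and the weight comparison on the small disk $B(z_0,\rho)$ is then a routine estimate; the only genuine care needed is choosing $\rho$ so that $t^{1/n}(R - |z|)$ varies by $O(1)$ across $B(z_0,\rho)$, which forces $\rho \sim t^{-1/n}$, exactly the rescaling factor, so everything is consistent.
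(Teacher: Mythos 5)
Your proposal is correct and follows essentially the same route as the paper: the Laplacian bound comes from plugging Theorem \ref{ErrorEstimateTheorem} into part (iii) of Proposition \ref{Perturbedw_kProposition} with the triangle inequality $|1-\zeta\dwn{n}^{r_1}|+\cdots+|1-\zeta\dwn{n}^{r_s}|\geq|1-\zeta\dwn{n}^{k}|$ handling the $s\geq 2$ terms, and the gradient bound comes from the rescaling $w_k(t^{-1/n}z)$ plus Schauder estimates exactly as in Proposition \ref{derivprop}. Your extra care in localizing so that the exponential weight varies by $O(1)$ on the rescaled disk is a detail the paper glosses over, but it does not change the argument.
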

\begin{proof}
Using part (iii) of Proposition \ref{Perturbedw_kProposition}, 
\[\Delta w_k=4|1-\zeta\dwn{n}^k|^2 t^\frac{2}{n}\sum\limits_{\substack{r\equiv k\\ \text{mod}\ n}}\  \sum\limits_{r_1+\cdots+r_s=r} \frac{1}{s!n^{\frac{s-1}{2}}}\binom{r}{r_1,\cdots,r_s} w_{r_1}w_{r_2}\cdots w_{r_s}\]
Applying Theorem \ref{ErrorEstimateTheorem} and the triangle inequality, for $s\geq 2$,
\[|1-\zeta\dwn{n}^{r_1}|+\cdots+|1-\zeta\dwn{n}^{r_s}|>|1-\zeta\dwn{n}^{r_1+\cdots+r_s}|.\] 
Thus, 
\[\Delta w_k(z)= O\left(t^{\frac{1}{2n}}e^{-2|1-\zeta\dwn{n}^k|t^{\frac{1}{n}}(R-|z|)}\right).\]
For $1\leq k\leq,$ consider the functions $\alpha^k$ defined by 
\[\alpha^{k}(z)=w_{k}(t^{-\frac{1}{n}}z),\]
where $w_k(t^{-\frac{1}{n}}z)$ is just a rescaling of $w_k.$
The $\alpha^{k}$'s then satisfy the following two properties
\[\xymatrix{|\alpha^{k}|=|w_k|\leq Ct^{-\frac{3}{2n}}e^{-2|1-\zeta\dwn{n}^k|t^{\frac{1}{n}}(R-|z|)}&\txt{and}&
|\alpha^{k}_{z\bz}|=|t^{-\frac{2}{n}}w^k_{z\bz}|\leq Ct^{-\frac{3}{2n}}e^{-2|1-\zeta\dwn{n}^k|t^{\frac{1}{n}}(R-|z|)}}.\]
Applying Schauder's estimate gives
\[|\alpha^{k}|_{C^1}\leq Ct^{-\frac{3}{2n}}e^{-2|1-\zeta\dwn{n}^k|t^{\frac{1}{n}}(R-|z|)}.\]
Thus $|\alpha^{k}_z|,|\alpha^{k}_{\overline{z}}|\leq Ct^{-\frac{3}{2n}}e^{-2|1-\zeta\dwn{n}^k|t^{\frac{1}{n}}(R-|z|)},$ proving \[|w^k_z|,|w^k_{\overline{z}}|\leq Ct^{-\frac{1}{2n}}e^{-2|1-\zeta\dwn{n}^k|t^{\frac{1}{n}}(R-|z|)}.\] 
\end{proof}
Now we are ready to prove the main error estimate. Recall that it states: If $d(p)$ be the minimum distance from a point $p$ to the zeros of $q_n.$ Then for any $d<d(p),$ as $t\rightarrow +\infty$, the $(k,l)$-entry of $R$ satisfies
\[R_{kl}(p)= O\left(t^{-\frac{1}{2n}}e^{-2|1-\zeta\dwn{n}^{k-l}|t^\frac{1}{n}d}\right).\] 

\begin{proof}(of Theorem \ref{error}) For any $d<d(p)$, choose a disk $D$ of radius $d$ centered at $p$, and denote the $(k,l)$-entry of $R$ by $R_{kl}$. From section \ref{ErrorToda}, we have $R_{kl}=Cw_{k-l}(z)_z+Ct^{-\frac{1}{n}}\Delta w_{k-1}(z)$. By Corollary \ref{Derivative}, as $t\rightarrow +\infty$, the $(k,l)$-entry of R satisfies
\[R_{kl}(p)= O\left(t^{-\frac{1}{2n}}e^{-2|1-\zeta\dwn{n}^{k-l}|t^\frac{1}{n}d}\right).\] 
\end{proof}
\subsection{The $(n-1)$-cyclic case} Recall, from equations (\ref{HitEq(n-1)}) and \ref{Todan-1},
the main difference between the $n$-cyclic case and the $(n-1)$-cyclic case is that the latter contains $\tilde v\up{1}$ which is not part of the perturbed Toda lattice. As a result, $\tilde v\up{1}$ will be estimated separately.
\begin{Lemma} \label{tildev1bounds}On a disc $D$ of radius $R$,
\[|\tilde v\up{1}(z)|= O\left(t^{-\frac{3}{2(n-1)}}e^{-2(2t)^\frac{1}{n-1}(R-|z|)}\right).\] 
\end{Lemma}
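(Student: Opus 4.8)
The idea is to handle $\tilde v\up{1}$ by a direct barrier argument on a single scalar equation, without invoking the Toda-lattice recursion used for the $w_k$'s. Starting from the first line of (\ref{HitEq(n-1)}) and factoring out $e^{-\tilde v\up{2}}$, the equation becomes
\[
\Delta \tilde v\up{1}=4(2t)^{\frac{2}{n-1}}e^{-\tilde v\up{2}}\sinh\bigl(\tilde v\up{1}\bigr)=a(z)\,\tilde v\up{1},
\qquad
a(z):=4(2t)^{\frac{2}{n-1}}e^{-\tilde v\up{2}}\,\frac{\sinh \tilde v\up{1}}{\tilde v\up{1}},
\]
where $a(z)$ is defined by continuity (equal to $4(2t)^{\frac{2}{n-1}}e^{-\tilde v\up{2}}$) at points where $\tilde v\up{1}$ vanishes. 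Since $\frac{\sinh x}{x}\ge 1$ for all real $x$, and since Theorem \ref{metrictheorem} gives $\tilde v\up{2}=O(t^{-\frac{2}{n-1}})$ uniformly on the fixed disc $D$ (which lies away from the zeros of $q_{n-1}$), we get $e^{-\tilde v\up{2}}\ge 1-Ct^{-\frac{2}{n-1}}$, hence for $t$ large the uniform lower bound $a(z)\ge \lambda$ with $\lambda:=4(2t)^{\frac{2}{n-1}}-C_1>0$. The crucial feature is that $a(z)>0$ independently of the sign of $\tilde v\up{1}$, so the same estimate controls both $\tilde v\up{1}$ and $-\tilde v\up{1}$.

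Next I would compare $\tilde v\up{1}$ with the radial barrier $\psi:=b\,y_\lambda$, where $y_\lambda$ is the function introduced in the $n$-cyclic subsection solving $\Delta y_\lambda=\lambda y_\lambda$ in $D$ with $y_\lambda=1$ on $\partial D$, and $b:=Ct^{-\frac{2}{n-1}}$ is chosen (using Theorem \ref{metrictheorem}, which gives $\tilde v\up{1}=O(t^{-\frac{2}{n-1}})$) so that $|\tilde v\up{1}|\le b$ on $\partial D$. If $\tilde v\up{1}-\psi$ attained a positive interior maximum at a point $z_0$, then $\tilde v\up{1}(z_0)>\psi(z_0)>0$ while $\Delta(\tilde v\up{1}-\psi)(z_0)\le 0$; but $\Delta(\tilde v\up{1}-\psi)(z_0)=a(z_0)\tilde v\up{1}(z_0)-\lambda\psi(z_0)\ge\lambda\bigl(\tilde v\up{1}(z_0)-\psi(z_0)\bigr)>0$, a contradiction. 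Since $\tilde v\up{1}-\psi\le 0$ on $\partial D$, the maximum principle yields $\tilde v\up{1}\le b\,y_\lambda$ in $D$; applying the identical argument to $-\tilde v\up{1}$ (which solves $\Delta(-\tilde v\up{1})=a(z)(-\tilde v\up{1})$) gives $|\tilde v\up{1}|\le b\,y_\lambda$ on $D$.

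Finally I would insert the Bessel-function bound from Lemma \ref{ykboundLemma}: $y_\lambda(z)\le C\lambda^{1/4}e^{-\sqrt{\lambda}(R-|z|)}$. Because $\lambda=4(2t)^{\frac{2}{n-1}}-C_1$, one has $\lambda^{1/4}\le Ct^{\frac{1}{2(n-1)}}$ and $\sqrt{\lambda}\ge 2(2t)^{\frac{1}{n-1}}-C_2$, and since $R-|z|$ is bounded on $D$ the factor $e^{C_2(R-|z|)}$ is absorbed into the constant, so that
\[
|\tilde v\up{1}(z)|\le b\,y_\lambda(z)\le C\,t^{-\frac{2}{n-1}}\cdot t^{\frac{1}{2(n-1)}}\cdot e^{-2(2t)^{\frac{1}{n-1}}(R-|z|)}=O\!\left(t^{-\frac{3}{2(n-1)}}e^{-2(2t)^{\frac{1}{n-1}}(R-|z|)}\right),
\]
which is exactly the claimed bound. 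The one place requiring care — and the main (mild) obstacle — is the uniform lower bound $a(z)\ge\lambda$: it hinges on the error estimate $\tilde v\up{2}=O(t^{-\frac{2}{n-1}})$ of Theorem \ref{metrictheorem} holding uniformly over all of $D$, and on the elementary convexity inequality $\sinh x/x\ge 1$, which makes $a$ positive regardless of the sign of $\tilde v\up{1}$. Everything after that is a comparison-principle computation mirroring the proof of Lemma \ref{TodaTechnicalLemma2}.
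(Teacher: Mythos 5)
Your proposal is correct and follows essentially the same route as the paper: both rewrite the first equation of (\ref{HitEq(n-1)}) as a scalar inequality of the form $\Delta \tilde v\up{1}\geq \lambda\,\tilde v\up{1}$ with $\lambda = 4(2t)^{\frac{2}{n-1}}(1-Ct^{-\frac{2}{n-1}})$, compare with the barrier $Ct^{-\frac{2}{n-1}}y_\lambda$ via the maximum principle (separately for $\pm\tilde v\up{1}$), and conclude with the Bessel estimate of Lemma \ref{ykboundLemma}. Your explicit use of $\sinh x/x\geq 1$ to get a sign-independent coefficient $a(z)\geq\lambda$ is a slightly cleaner packaging of the same step.
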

\begin{proof}From equation (\ref{HitEq(n-1)}), and $\tilde |v\up{j}|\leq C t^{-\frac{2}{n-1}},$
\[ \Delta \tilde v\up{1}=4(2t)^\frac{2}{n-1}(\haf e^{\tilde v\up{1}-\tilde v\up{2}}-\haf e^{-\tilde v\up{1}-\tilde v\up{2}})=2(2t)^\frac{2}{n-1}(e^{\tilde v\up{1}}-e^{-\tilde v\up{1}})e^{-\tilde v\up{2}}\]
\[\geq 2(2t)^{\frac{2}{n-1}}(2\tilde v\up{1})(1-Ct^{-\frac{2}{n-1}}).\]
By the maximum principle, in $D$, we have
\[\tilde v\up{1}(z)\leq Ct^{-\frac{2}{n-1}}y_{4(2t)^{\frac{2}{n-1}}(1-Ct^{-\frac{2}{n-1}})}\leq C t^{-\frac{3}{2(n-1)}}e^{-2(2t)^\frac{1}{n-1}(R-|z|)}.\]
Similarly, for  $-\tilde v\up{1}$ we obtain
\[-\tilde v\up{1}(z)\leq Ct^{-\frac{2}{n-1}}y_{4(2t)^{\frac{2}{n-1}}(1-Ct^{-\frac{1}{n-1}})}\leq C t^{-\frac{3}{2(n-1)}}e^{-2(2t)^\frac{1}{n-1}(R-|z|)}.\]
\end{proof}
Analogous to Theorem \ref{ErrorEstimateTheorem} for the $n$-cyclic case, the $w_k$'s have the following asymptotics.
\begin{Theorem}
\[|w_{k}(z)|= O\left(t^{-\frac{3}{2(n-1)}}e^{-2|1-\zeta\dwn{n-1}^{k}|(2t)^\frac{1}{n-1}(R-|z|)}\right)\]
\end{Theorem}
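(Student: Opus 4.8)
The plan is to mimic, essentially line by line, the proof of Theorem \ref{ErrorEstimateTheorem} for the $n$-cyclic case, now applied to the rank $(n-1)$ perturbed Toda lattice obtained in Section \ref{ErrorToda}. Recall that after setting $(d^1,\dots,d^{n-1})=(\tilde v\up{2},\tilde v\up{3},\dots,-\tilde v\up{2},0)$ and adding the trivial equation $\Delta 0 = 4(2t)^{2/(n-1)}(e^{-\tilde v\up{2}}-e^{-\tilde v\up{2}})$, the functions $\tilde v\up{2},\dots$ satisfy a perturbed cyclic Toda lattice of rank $n-1$ with $a = 4(2t)^{2/(n-1)}$ and perturbation $f = \frac12(e^{-\tilde v\up{1}}+e^{\tilde v\up{1}}-2)e^{-\tilde v\up{2}}$. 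Consequently, by part (3) of Proposition \ref{Perturbedw_kProposition}, the eigensolutions $w_k=\frac{1}{\sqrt{n-1}}\sum_i \zeta\dwn{n-1}^{ik}(d^i-d^{i+1})$ satisfy
\[
\Delta w_k = 4(2t)^{\frac{2}{n-1}}|1-\zeta\dwn{n-1}^k|^2 w_k + (2t)^{\frac{2}{n-1}}\tilde f_k + C(2t)^{\frac{2}{n-1}}f,
\]
where $\tilde f_k$ is the same higher-order polynomial remainder in the $w_j$'s (no linear or constant term) that appeared in the $n$-cyclic case, and is therefore bounded by $C\sum_j w_j^2$ using Remark \ref{w_ksmall} (which holds here since $\tilde v\up{j}=O(t^{-2/(n-1)})$ by Theorem \ref{metrictheorem}). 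The new ingredient is the single extra term $C(2t)^{2/(n-1)}f$. But $f$ is quadratic-and-higher in $\tilde v\up{1}$ (since $e^{-\tilde v\up{1}}+e^{\tilde v\up{1}}-2 = (\tilde v\up{1})^2 + O((\tilde v\up{1})^4)$), so by Lemma \ref{tildev1bounds},
\[
|f| \le C (\tilde v\up{1})^2 \le C\, t^{-\frac{3}{n-1}} e^{-4(2t)^{\frac{1}{n-1}}(R-|z|)},
\]
which decays faster than any term $y_{kt^{2/(n-1)}}$ with $k \le 4\cdot 2 = 8$ would in the telescoping argument. In particular $|f|$ is dominated by $C y_{\lambda (2t)^{2/(n-1)}}$ for a suitable $\lambda$, and so the hypotheses of (the obvious $(n-1)$-analogue of) Lemma \ref{TodaTechnicalLemma2} are met with $\lambda_k = 4|1-\zeta\dwn{n-1}^k|^2$ and $\lambda_1 = \min_k \lambda_k = 4|1-\zeta\dwn{n-1}|^2$.

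The proof then proceeds by the same induction on $k$ as in Theorem \ref{ErrorEstimateTheorem}. For the base case one isolates the $r=k$ term, writes $\Delta w_k = 4(2t)^{2/(n-1)}|1-\zeta\dwn{n-1}^k|^2 w_k + (2t)^{2/(n-1)}g_k$ with $|g_k|\le C(\sum_j w_j^2 + y_{\lambda(2t)^{2/(n-1)}})$, applies Lemma \ref{TodaTechnicalLemma2} (rank $n-1$ version) to get the bound governed by $\lambda_1$, i.e. $w_k = O(t^{-3/(2(n-1))} e^{-2|1-\zeta\dwn{n-1}|(2t)^{1/(n-1)}(R-|z|)})$. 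For the induction step, having the estimate for all $w_l$ with $l\le k$ (and, via the symmetry in Lemma \ref{PropertyEigensolutionssub}(1), $w_{n-1-k}=\zeta_{n-1}\up{k}w_k$, hence the estimate for those indices too), one splits the index set $\{1,\dots,n-2\}$ into $B$ (indices for which the inductive estimate is available) and $A$ (the middle block $\{k+1,\dots,n-1-k-1\}$), decomposes $\Delta w_{k+1}$ into $|1-\zeta\dwn{n-1}^{k+1}|^2 w_{k+1} + E + F + G$ exactly as before, proves the coarse Claim bound $|w_{k+i}|\le Ct^{-3/(2(n-1))}e^{-\sqrt2|1-\zeta\dwn{n-1}^{k+1}|(2t)^{1/(n-1)}(R-|z|)}$ by a maximum-principle estimate on $\sum_i w_{k+i}^2$, and then runs the finite "bootstrap" of competing exponents (Case I / Case II, then subcases i/ii, etc.) using the comparison functions $y_\lambda$ and Lemma \ref{ykboundLemma} until one lands in a case where the first (linear) term dominates and the maximum principle yields the sharp exponent $-2|1-\zeta\dwn{n-1}^{k+1}|(2t)^{1/(n-1)}(R-|z|)$. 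Throughout, every occurrence of $t^{1/n}$, $t^{2/n}$, $\zeta\dwn{n}$ in the $n$-cyclic argument is replaced by $(2t)^{1/(n-1)}$, $(2t)^{2/(n-1)}$, $\zeta\dwn{n-1}$ respectively, and the prefactor $t^{-3/(2n)}$ by $t^{-3/(2(n-1))}$.

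The main obstacle — and it is a mild one — is bookkeeping around the perturbation $f$ and around the first two equations of the system (\ref{HitEq(n-1)}), which are not in clean Toda form and where the coefficient $\frac12$ appears. One must check carefully that (i) $f$, which couples $\tilde v\up{1}$ into the $w_k$-system, never degrades the decay rate, because its decay exponent $4(2t)^{1/(n-1)}(R-|z|)$ strictly beats the worst relevant rate $2|1-\zeta\dwn{n-1}^k|(2t)^{1/(n-1)}(R-|z|) \le 4(2t)^{1/(n-1)}(R-|z|)$ appearing in the $w_k$ estimates; (ii) the symmetry $w_{n-1-k}=\zeta\dwn{n-1}^k w_k$ of Lemma \ref{PropertyEigensolutionssub}(1) plays the role that $w_k=w_{n-k}$ played in the $n$-cyclic induction, so that the inductive hypothesis covers the correct pairs of indices; and (iii) the bootstrap terminates after finitely many steps for the same arithmetic reason as in the $n$-cyclic case (each step improves the exponent by a fixed positive amount until the linear term wins). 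Granting these routine verifications, the conclusion
\[
|w_k(z)| = O\!\left(t^{-\frac{3}{2(n-1)}} e^{-2|1-\zeta\dwn{n-1}^k|(2t)^{\frac{1}{n-1}}(R-|z|)}\right)
\]
follows; combined with Lemma \ref{tildev1bounds}, the Schauder argument of Corollary \ref{Derivative} (again with $n$ replaced by $n-1$) gives the matching bounds for $\Delta w_k$ and $(w_k)_z$, and feeding these together with the expression for $R_{kl}$ computed at the end of Section \ref{ErrorToda} yields Theorem \ref{suberror}.
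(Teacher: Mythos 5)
Your proposal is correct and follows essentially the same route as the paper: the paper's own (much terser) proof also reduces to the rank-$(n-1)$ perturbed Toda lattice, bounds the perturbation $f=\haf(e^{-\tilde v\up{1}}+e^{\tilde v\up{1}}-2)e^{-\tilde v\up{2}}\leq C t^{-\frac{3}{n-1}}e^{-4(2t)^{1/(n-1)}(R-|z|)}$ via Lemma \ref{tildev1bounds}, observes that $4\geq 2|1-\zeta\dwn{n-1}^{k}|$ so $f$ never degrades the estimates, and then invokes the induction scheme of Theorem \ref{ErrorEstimateTheorem} verbatim. Your write-up simply makes explicit the bookkeeping (the role of Lemma \ref{PropertyEigensolutionssub}, the $E,F,G$ decomposition, and the bootstrap) that the paper leaves implicit.
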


\begin{proof}
The proof is similar to that of Theorem \ref{ErrorEstimateTheorem}; the difference between the Toda lattices for the $n$-cyclic case and the $(n-1)$-cyclic case is that latter is perturbed by $f=\haf(e^{-\tilde v\up{1}}+e^{\tilde v\up{1}}-2)e^{-\tilde v\up{2}}$. However, we will show that $f$ has a strong enough decay so that it never effects the estimate of the $w_k$'s.
By Lemma \ref{tildev1bounds},  
\[f=\frac{1}{2}(e^{-\tilde v\up{1}}+e^{\tilde v\up{1}}-2)e^{-\tilde v\up{2}}\]
\[\leq \frac{1}{2}{\tilde v\up{1}(z)}^2(1+Ct^{-\frac{2}{n-1}})\leq C t^{-\frac{3}{n-1}}e^{-4(2t)^\frac{1}{n-1}(R-|z|)}.\]
Since $f\geq 0,$ we have $|f|=O\left(t^{-\frac{3}{n-1}}e^{-4(2t)^\frac{1}{n-1}(R-|z|)}\right)$. 
Since $4\geq 2|1-\zeta\dwn{n-1}^{k}|$, the perturbation $f$ has strong enough decay as not to effect the estimate of the $w_k$'s. Thus, we may apply the process of obtaining estimates in Theorem \ref{ErrorEstimateTheorem}.
\end{proof}
We also have the decoupled equation corollary.
\begin{Corollary}
	For $\phi=\tilde e_1+tq_{n-1}e_{n-2}$, away from the zeros of $q_{n-1},$ the Hitchin equation $F_{A_t}+[\phi,\phi^{*_{h_t}}]=0$ decouples as $t\ra\infty$
\[\begin{dcases}
	F_{A_t}=0\\
	[\phi,\phi^{*_{h_t}}]=0
\end{dcases}\] 
\end{Corollary}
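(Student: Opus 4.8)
The plan is to imitate the proof of Corollary \ref{decoupleCor} line for line, replacing the $n$-cyclic error estimates by their $(n-1)$-cyclic counterparts. First I would recall, via Corollary \ref{CorMetricSplit}, that for $\phi=\tilde e_1+tq_{n-1}e_{n-2}$ the harmonic metric splits on the line bundles $K^{\frac{n-1}{2}}\oplus\cdots\oplus K^{-\frac{n-1}{2}}$, so the Chern connection respects this splitting and $F_{A_t}$ is diagonal, with diagonal entries (up to a positive constant and the factor $dz\wedge d\bz$) equal to $\Delta\lambda\up{1},\dots,\Delta\lambda\up{n/2},-\Delta\lambda\up{n/2},\dots,-\Delta\lambda\up{1}$. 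Working on $\Omega_{n-1}$ in the coordinate with $q_{n-1}=dz^{n-1}$, one has $\lambda\up{j}=v\up{j}$, and since $v\up{j}-\tilde v\up{j}$ is a constant multiple of the harmonic function $\ln|q_{n-1}|$ away from the zeros, also $\Delta v\up{j}=\Delta\tilde v\up{j}$ there. Thus the corollary reduces to the claim that $\Delta\tilde v\up{j}(p)\ra 0$ as $t\ra\infty$ for every $j$ and every $p$ away from the zeros of $q_{n-1}$.

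Next I would read off $\Delta\tilde v\up{j}$ from the Hitchin equations in the form (\ref{HitEq(n-1)})--(\ref{Todan-1}): each $\Delta\tilde v\up{j}$ is a fixed linear combination of terms of the shape $(2t)^{\frac{2}{n-1}}\big(e^{\tilde v\up{i}-\tilde v\up{i+1}}-1\big)$, $(2t)^{\frac{2}{n-1}}\big(e^{\pm\tilde v\up{1}-\tilde v\up{2}}-1\big)$, $(2t)^{\frac{2}{n-1}}\big(e^{\pm 2\tilde v\up{\bullet}}-1\big)$, and $(2t)^{\frac{2}{n-1}}f$ with $f=\haf\big(e^{-\tilde v\up{1}}+e^{\tilde v\up{1}}-2\big)e^{-\tilde v\up{2}}$. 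By Theorem \ref{metrictheorem} all the $\tilde v\up{j}$ and all eigensolutions $w_k$ are $O(t^{-\frac{2}{n-1}})$, so $e^x-1=O(x)$ applies, and each exponent $\tilde v\up{i}-\tilde v\up{i+1}$ is a linear combination of the $w_k$'s together with (at most) $\tilde v\up{1}$. Feeding in the $(n-1)$-cyclic analogue of Theorem \ref{ErrorEstimateTheorem} just established, namely $|w_k(z)|=O\big(t^{-\frac{3}{2(n-1)}}e^{-2|1-\zeta\dwn{n-1}^{k}|(2t)^{\frac{1}{n-1}}(R-|z|)}\big)$, together with Lemma \ref{tildev1bounds} for $\tilde v\up{1}$ and the bound $|f|=O\big(t^{-\frac{3}{n-1}}e^{-4(2t)^{\frac{1}{n-1}}(R-|z|)}\big)$ proved there, and choosing the disk $D$ centered at $p$ with radius $R=d$ for some $0<d<d(p)$, I would conclude $|\Delta\tilde v\up{j}(p)|=O\big(t^{\frac{1}{2(n-1)}}e^{-c(2t)^{\frac{1}{n-1}}d}\big)$ for a constant $c>0$, which tends to $0$.

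From this $F_{A_t}\ra 0$ pointwise, in fact uniformly on compact subsets of $\Sigma$ away from the zeros of $q_{n-1}$; and since the exact identity $F_{A_t}+[\phi,\phi^{*_{h_t}}]=0$ holds for every $t$, it follows immediately that $[\phi,\phi^{*_{h_t}}]\ra 0$ as well, which is the assertion of the corollary.

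The only point that needs care is the bookkeeping of powers of $t$: the prefactor $(2t)^{\frac{2}{n-1}}$ from the Hitchin equations, combined with the $t^{-\frac{3}{2(n-1)}}$ decay of $w_k$ (and of $\tilde v\up{1}$ and $f$), leaves a net positive power $t^{\frac{1}{2(n-1)}}$, so one genuinely must invoke the exponential factor $e^{-c(2t)^{\frac{1}{n-1}}d}$ — available precisely because $d>0$ at a point off the zero set — to force the limit to $0$. No analytic input beyond Lemma \ref{tildev1bounds} and the $(n-1)$-cyclic $w_k$ estimate is required; the extra work compared to Corollary \ref{decoupleCor} amounts to tracking the two quantities $\tilde v\up{1}$ and $f$ that lie outside the perturbed Toda lattice, and since these decay even faster they pose no obstacle.
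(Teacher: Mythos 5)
Your argument is correct and is exactly the intended one: it mirrors the paper's proof of Corollary \ref{decoupleCor}, replacing the $n$-cyclic eigensolution estimates by the $(n-1)$-cyclic ones and additionally tracking $\tilde v\up{1}$ (via Lemma \ref{tildev1bounds}) and the perturbation $f$, both of which decay fast enough not to matter. The power counting $(2t)^{\frac{2}{n-1}}\cdot t^{-\frac{3}{2(n-1)}}=t^{\frac{1}{2(n-1)}}$ and the reliance on the exponential factor $e^{-c(2t)^{1/(n-1)}d}$ with $d>0$ are exactly as in the paper.
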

Similar to Corollary \ref{Derivative}, we have 
\begin{Corollary} \label{subDerivative}
With the same condition as above, we have 
 \[\xymatrix{\Delta \tilde v\up{1}(z)= O\left(t^{\frac{1}{2(n-1)}}e^{-2(2t)^\frac{1}{n-1}(R-|z|)}\right)&& \Delta w_k(z)= O\left(t^{\frac{1}{2(n-1)}}e^{-2|1-\zeta\dwn{n-1}^k|(2t)^\frac{1}{n-1}(R-|z|)}\right)}\]and
\[\xymatrix{ |\tilde v\up{1}_z|= O\left(t^{-\frac{1}{2(n-1)}}e^{-2(2t)^\frac{1}{n-1}(R-|z|)}\right)
&&  
|w^k_z|= O\left(t^{-\frac{1}{2(n-1)}}e^{-2|1-\zeta\dwn{n-1}^k|(2t)^\frac{1}{n-1}(R-|z|)}\right)}.\] 
\end{Corollary}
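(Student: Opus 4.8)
To prove Corollary \ref{subDerivative}, the plan is to mirror the proof of Corollary \ref{Derivative} almost verbatim, feeding in the $(n-1)$-cyclic ingredients: the perturbed recursive formula of Proposition \ref{Perturbedw_kProposition}, the $(n-1)$-cyclic eigensolution estimate $|w_k(z)|=O\!\left(t^{-\frac{3}{2(n-1)}}e^{-2|1-\zeta\dwn{n-1}^k|(2t)^{\frac{1}{n-1}}(R-|z|)}\right)$ established above, Lemma \ref{tildev1bounds} for the stray function $\tilde v\up{1}$, and the decay bound $|f|=O\!\left(t^{-\frac{3}{n-1}}e^{-4(2t)^{\frac{1}{n-1}}(R-|z|)}\right)$ for the perturbation $f=\haf(e^{-\tilde v\up{1}}+e^{\tilde v\up{1}}-2)e^{-\tilde v\up{2}}$ obtained in the proof of that estimate.

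For $\Delta w_k$, I would apply part (3) of Proposition \ref{Perturbedw_kProposition} with $a=4(2t)^{\frac{2}{n-1}}$ and split off the $r=k$, $s=1$ term, which equals $4(2t)^{\frac{2}{n-1}}|1-\zeta\dwn{n-1}^k|^2w_k$; by the eigensolution estimate this is $O\!\left(t^{\frac{2}{n-1}}\cdot t^{-\frac{3}{2(n-1)}}e^{-2|1-\zeta\dwn{n-1}^k|(2t)^{\frac{1}{n-1}}(R-|z|)}\right)=O\!\left(t^{\frac{1}{2(n-1)}}e^{-2|1-\zeta\dwn{n-1}^k|(2t)^{\frac{1}{n-1}}(R-|z|)}\right)$. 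For the $s\geq 2$ terms the triangle inequality $|1-\zeta\dwn{n-1}^{r_1}|+\cdots+|1-\zeta\dwn{n-1}^{r_s}|\geq|1-\zeta\dwn{n-1}^{r_1+\cdots+r_s}|$ gives at least the same exponential decay, while the polynomial prefactor $t^{\frac{2}{n-1}}\cdot t^{-\frac{3s}{2(n-1)}}$ is of lower order; and the perturbation contribution, of size $O\!\left(t^{\frac{2}{n-1}}\cdot t^{-\frac{3}{n-1}}e^{-4(2t)^{\frac{1}{n-1}}(R-|z|)}\right)$, is negligible because $4\geq 2|1-\zeta\dwn{n-1}^k|$. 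For $\Delta\tilde v\up{1}$ I would use the first line of (\ref{HitEq(n-1)}) rewritten as $\Delta\tilde v\up{1}=2(2t)^{\frac{2}{n-1}}(e^{\tilde v\up{1}}-e^{-\tilde v\up{1}})e^{-\tilde v\up{2}}$; since $|\tilde v\up{1}|,|\tilde v\up{2}|\leq Ct^{-\frac{2}{n-1}}$ one gets $|\Delta\tilde v\up{1}|\leq C(2t)^{\frac{2}{n-1}}|\tilde v\up{1}|$, and Lemma \ref{tildev1bounds} then yields $|\Delta\tilde v\up{1}|\leq Ct^{\frac{1}{2(n-1)}}e^{-2(2t)^{\frac{1}{n-1}}(R-|z|)}$.

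For the first-derivative bounds I would use the interior Schauder rescaling exactly as in Corollary \ref{Derivative}: set $\alpha^k(z)=w_k(t^{-\frac{1}{n-1}}z)$ and $\beta(z)=\tilde v\up{1}(t^{-\frac{1}{n-1}}z)$. Using the $C^0$ bounds on $w_k$ and $\tilde v\up{1}$ together with the Laplacian bounds just established, $|\alpha^k|$ and $|\alpha^k_{z\bz}|$ are $O\!\left(t^{-\frac{3}{2(n-1)}}e^{-2|1-\zeta\dwn{n-1}^k|(2t)^{\frac{1}{n-1}}(R-|z|)}\right)$ and $|\beta|$, $|\beta_{z\bz}|$ are $O\!\left(t^{-\frac{3}{2(n-1)}}e^{-2(2t)^{\frac{1}{n-1}}(R-|z|)}\right)$; Schauder's interior estimate bounds $|\alpha^k|_{C^1}$ and $|\beta|_{C^1}$ by the same quantities, and undoing the rescaling via $|w^k_z|=t^{\frac{1}{n-1}}|\alpha^k_z|$ and $|\tilde v\up{1}_z|=t^{\frac{1}{n-1}}|\beta_z|$ produces the asserted bounds with prefactor $t^{-\frac{1}{2(n-1)}}$. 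The only feature not already present in the $n$-cyclic argument is the extra function $\tilde v\up{1}$, which does not belong to the Toda lattice; but its equation is diagonal and linearizes immediately, so Lemma \ref{tildev1bounds} disposes of it, and one only needs to double-check — as was done when proving the $(n-1)$-cyclic eigensolution estimate — that the perturbation $f$ never dominates any term. Hence I expect no genuine obstacle beyond this bookkeeping.
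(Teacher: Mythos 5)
Your proposal is correct and follows exactly the route the paper intends: the paper gives no separate argument for Corollary \ref{subDerivative} beyond the remark that it is ``similar to Corollary \ref{Derivative},'' and your write-up is precisely that adaptation, with the perturbation $f$ and the stray function $\tilde v\up{1}$ handled by the decay bounds already established in Lemma \ref{tildev1bounds} and the $(n-1)$-cyclic eigensolution estimate. The exponent bookkeeping ($t^{\frac{2}{n-1}}\cdot t^{-\frac{3}{2(n-1)}}=t^{\frac{1}{2(n-1)}}$ for the Laplacians, and the Schauder rescaling giving the $t^{-\frac{1}{2(n-1)}}$ prefactor for the first derivatives) all checks out.
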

Now we are ready to show Theorem \ref{suberror}.  Recall that it states: If $d(p)$ be the minimum distance from a point $p$ to the zeros of $q_{n-1}.$ Then for any $d<d(p),$ as $t\rightarrow +\infty$, the $(k,l)$-entry of $R$ satisfies
\[R_{kl}(p)=\begin{dcases} O\left(t^{-\frac{1}{2(n-1)}}e^{-2|1-\zeta\dwn{n-1}^{k-l}|(2t)^{\frac{1}{n-1}}d}\right)& k,l\geq 2\\
							0&         k=l=1\\
							O\left(t^{-\frac{1}{2(n-1)}}e^{-2(2t)^{\frac{1}{n-1}}d}\right)& otherwise
\end{dcases}\]
\begin{proof}(of Theorem \ref{suberror}) For any $d<d(p)$, choose a disk $D$ of radius $d$ centered at $p$. From section \ref{ErrorToda}, we have \[R_{kl}=(B\dwn{n-1}^1)_{kl}+(2t)^\frac{1}{n-1}(B\dwn{n-1}^2)_{kl}=\begin{dcases}c(w_{k-l})_z+ct^{-\frac{1}{n-1}}\Delta w_{k-l}+cft^{\frac{1}{n-1}} & k,l\geq 2\\
							0& k=l=1\\
							c\tilde v\up{1}_z+ct^{\frac{1}{n-1}}(e^{\tilde v\up{1}}-e^{-\tilde v\up{1}})e^{-d^{1}}& otherwise
\end{dcases}\]
 Corollary \ref{subDerivative} implies that, as $t\rightarrow +\infty$, $R_{kl}$ has the desired asymptotics.
\[R_{kl}(p)=\begin{dcases} O\left(t^{-\frac{1}{2(n-1)}}e^{-2|1-\zeta\dwn{n-1}^{k-l}|(2t)^\frac{1}{n-1}d}\right)& k,l\geq 2\\
							0& k=l=1\\
							O\left(t^{-\frac{1}{2(n-1)}}e^{-2(2t)^\frac{1}{n-1}d}\right)&otherwise
\end{dcases}\]
\end{proof}

\section{Elementary proof of WKB exponent} \label{WKBsection}
In this section we prove a special case of Theorem \ref{TransportAsymp} which only concerns the highest eigenvalue, or WKB exponent, of the transport operator. We include it here because the proof does not require the precise error estimates obtained in Theorem \ref{ErrorEstimateTheorem}, and hence is more elementary.
\begin{Theorem}\label{WKBAsymp}
	Let $P\in\widetilde{\Sigma}$ be away from the zeros of $q_b,$ choose a neighborhood $\Uu\dwn{p}$ centered at $P$, with coordinate $z,$ so that $q_b=dz^b.$ Any $P'\in\Uu\dwn{P}$ can be written in polar coordinate $P'=Le^{i\theta},$ then as $t\ra\infty$ there exists a constant $K>1$ such that
	\[\frac{1}{K}e^{Lt^\frac{1}{b}\mu}\leq||T_{P,P'}^{-1}(t)||\leq Ke^{Lt^\frac{1}{b}\mu}\]
\begin{enumerate}[1.]
		\item For the $n$-cyclic case $b=n,$ and $\mu=max\{2cos(\theta+\frac{2\pi j}{n})\}$.
		\item For the $(n-1)$-cyclic case $b=n-1$, and $\mu=max\{2cos(\theta+\frac{2\pi j}{n-1})\}$.
	\end{enumerate}
\end{Theorem}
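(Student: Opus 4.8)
The plan is to deduce Theorem \ref{WKBAsymp} from the ODE setup of Section \ref{ODESetup} using only the rough metric asymptotics of Theorem \ref{metrictheorem} and the derivative bound of Proposition \ref{derivprop}, \emph{without} appealing to the refined eigensolution estimates (Theorem \ref{ErrorEstimateTheorem}, Theorem \ref{error}). Recall from Proposition \ref{FmetricProp} and equation (\ref{Phieq}) that, after conjugating by the constant unitary diagonalizing matrix $S$, parallel transport along the geodesic $\gamma(s)=se^{i\theta}$ from $P$ to $P'=Le^{i\theta}$ is computed by $\Phi(L)^{-1}$ up to a factor $\bigl(1+O(t^{-2/b})\bigr)$, where $\Phi$ solves $\Phi(0)=I$, $\Phi' = (t^{1/b}M(\theta)+R)\Phi$ with $M(\theta)=\operatorname{diag}(\mu_1,\dots,\mu_n)$ and $R=S^{-1}(e^{i\theta}B^1 + t^{1/b}e^{i\theta}'\cdots)S$ the error matrix. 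The point of this section is that for the single largest exponent $\mu=\max_j\mu_j$ one does not need to decouple the whole system: a Gr\"onwall-type two-sided estimate on the norm suffices.

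First I would record the crude size of the error term: from Theorem \ref{metrictheorem} one has $\tilde u^{(j)} = O(t^{-2/n})$ (resp.\ $\tilde v^{(j)}=O(t^{-2/(n-1)})$), hence $e^{\tilde u^{(j)}-\tilde u^{(j+1)}}-1 = O(t^{-2/n})$ and $e^{-2\tilde u^{(1)}}-1=O(t^{-2/n})$; combined with the bound $|u^{(j)}_z|\le C t^{-1/n}$ of Proposition \ref{derivprop}, the expressions (\ref{nError}), (\ref{(n-1)Error}) for $R = B^1 + t^{1/b}B^2$ give $\|R\| = O(t^{-1/n})+t^{1/n}\cdot O(t^{-2/n}) = O(t^{-1/n})$ uniformly on the compact neighborhood $\Uu_P$ (resp.\ $O(t^{-1/(n-1)})$). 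So $\|R\|\to 0$, which is all that the elementary argument needs. Next, following Loftin's technique as in \cite{flatmetriccubicdiff,LoftinNotes}, I would extract the top exponent. Let $e$ be the unit coordinate vector in the $\mu$-eigendirection (so $M(\theta)e=\mu e$) and consider $v(s)=\Phi(s)e$. The \emph{upper} bound: writing $v(s) = \Phi_0(s)e + \int_0^s \Phi_0(s)\Phi_0(\tau)^{-1}R(\tau)v(\tau)\,d\tau$ with $\Phi_0(s)=e^{s t^{1/b}M(\theta)}$, and using $\|\Phi_0(s)\Phi_0(\tau)^{-1}\|\le e^{(s-\tau)t^{1/b}\mu}$ (since $\mu$ is the largest eigenvalue and $s\ge\tau$), Gr\"onwall's inequality yields $\|v(s)\| \le e^{s t^{1/b}\mu}\exp\!\bigl(\int_0^s\|R(\tau)\|\,d\tau\bigr) \le e^{s t^{1/b}\mu}\bigl(1+o(1)\bigr)$, hence $\|\Phi(L)^{-1}\|\ge \|\Phi(L)^{-1}e'\|$ where $e'$ is chosen appropriately; more directly $\|\Phi(L)\|\le n\,e^{Lt^{1/b}\mu}(1+o(1))$, which after inverting and passing through the $S$-conjugation and the $(1+O(t^{-2/b}))$ factor of Proposition \ref{FmetricProp} gives the upper bound $\|T^{-1}_{P,P'}(t)\|\le K e^{Lt^{1/b}\mu}$.

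For the \emph{lower} bound $\|T^{-1}_{P,P'}(t)\|\ge \tfrac1K e^{Lt^{1/b}\mu}$, I would test against the same eigendirection: consider the scalar $g(s) = \langle e, \Phi(s)e\rangle$ (or its modulus), differentiate to get $g'(s) = t^{1/b}\mu\, g(s) + \langle e, R(s)\Phi(s)e\rangle$, and estimate the perturbation term by $\|R(s)\|\cdot\|\Phi(s)\|\le C t^{-1/b}\, e^{s t^{1/b}\mu}(1+o(1))$ using the upper bound just proved. Then $g(s) = e^{s t^{1/b}\mu}\bigl(1 + \int_0^s e^{-\tau t^{1/b}\mu}\langle e,R\Phi e\rangle\,d\tau\bigr)$ and the integral is $O(t^{-1/b}\cdot L) = o(1)$, so $|g(L)| \ge e^{Lt^{1/b}\mu}(1-o(1)) \ge \tfrac12 e^{Lt^{1/b}\mu}$ for $t$ large. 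Since $|g(L)| = |\langle e,\Phi(L)e\rangle| \le \|\Phi(L)\| = \|\Phi(L)^{-1}\|^{-1}\cdot|\det\Phi(L)|$... — more cleanly: $|g(L)|\le\|\Phi(L)e\|\le\|\Phi(L)\|$, so $\|\Phi(L)\|\ge\tfrac12 e^{Lt^{1/b}\mu}$, and since $\Phi(L)\in SL(n,\C)$ its inverse has operator norm at least $\|\Phi(L)\|^{1/(n-1)}$... — actually the cleanest route is to note $\|\Phi(L)^{-1}\| = \|T_{P,P'}\|$ is controlled the same way by running the argument for $\Phi(L)^{-1}$, which solves an ODE with the \emph{same} $M(\theta)$ (up to sign) and an error of the same order, so that $\|T_{P,P'}^{-1}\| = \|\Phi(L)\|$ satisfies both bounds. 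I expect the main obstacle to be bookkeeping the direction of the inequalities and the $S$-conjugation so that the exponent that survives is genuinely $\mu=\max_j\mu_j$ and not some other $\mu_j$ — i.e.\ making precise that projecting onto the top eigenspace both gives a lower bound of the right order (no cancellation kills the leading term) and that no faster-growing contribution can appear (guaranteed because $\mu$ is the maximum). Handling the $(n-1)$-cyclic case is identical after replacing $n$ by $n-1$, $t^{1/n}$ by $(2t)^{1/(n-1)}$, and using the corresponding error expressions (\ref{(n-1)Error}); the extra $\tilde v^{(1)}$ term is already $O(t^{-2/(n-1)})$ by Theorem \ref{metrictheorem} and Proposition \ref{derivprop}, so it is absorbed into $\|R\|=o(1)$ just as before.
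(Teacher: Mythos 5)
Your proposal is correct, and it rests on exactly the same inputs as the paper's proof: the only facts used are $\|R\|=O(t^{-1/b})$, which follows from Theorem \ref{metrictheorem} (giving $e^{\tilde u^{(j)}-\tilde u^{(j+1)}}-1=O(t^{-2/b})$, so $t^{1/b}B^2=O(t^{-1/b})$) together with Proposition \ref{derivprop} (giving $B^1=O(t^{-1/b})$), plus the reduction of Proposition \ref{FmetricProp} identifying $\|T_{P,P'}^{-1}\|$ with $\|\Phi(L)\|$ up to a $(1+O(t^{-2/b}))$ factor absorbed into $K$. The implementation, however, differs from the paper's. The paper writes out the integral equations for each column of $\Phi$ explicitly, sets up the map $\Ff$ on the weighted Banach space $\mathcal{B}_t$ with norm $\sup_i\sup_s|f_i(s)|e^{-t^{1/b}\mu s}$, proves $\Ff$ is a contraction on the ball $\mathcal{B}_t(N)$ (Claim \ref{claim}), bootstraps the resulting a priori bound $|\phi_{ij}|\le Ne^{t^{1/b}\mu s}$ to entrywise asymptotics $\phi_{ij}=\delta_{ij}e^{t^{1/b}\mu_i s}+e^{t^{1/b}\mu s}O(t^{-1/b})$, and then pins down the top eigenvalue by comparing $\max_j\xi_j$ with $\tfrac1n\operatorname{Tr}\Phi$. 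You instead get the upper bound by Duhamel plus Gr\"onwall (using that $\|\Phi_0(s)\Phi_0(\tau)^{-1}\|\le e^{(s-\tau)t^{1/b}\mu}$ for $s\ge\tau$ because $\mu$ is the maximal $\mu_j$), and the lower bound by projecting onto the top coordinate direction and solving the resulting scalar first-order equation, which shows the leading term $e^{Lt^{1/b}\mu}$ cannot be cancelled since the perturbation contributes only $O(t^{-1/b})$ to the integral. Your route is shorter and works directly with the operator norm rather than the eigenvalues; the paper's route yields more (full entrywise asymptotics of $\Phi$, not just its norm), which is why it goes through the contraction-mapping machinery. The only cosmetic blemish in your write-up is the visible backtracking in the lower-bound paragraph; the version you settle on --- $\|T_{P,P'}^{-1}\|=\|\Psi_t(L)\|=\|\Phi(L)\|$ since $T_{P,\gamma(s)}=\Psi_t(\gamma(s))^{-1}$ and $S$ is unitary, so both bounds on $\|\Phi(L)\|$ transfer directly --- is the right one and closes the argument.
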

\begin{Remark}
When $P$ and $P'$ both project to the same point in $\Sigma,$ the projected path is a loop. In this case, the above asymptotics correspond to the largest eigenvalue of the associated family of representations on the homotopy class of the loop. Note, considering the inverse path from $P'$ to $P,$ we obtain asymptotics of the smallest eigenvalue of $T_{P,P'}.$
\end{Remark}
The proof is a generalization of arguments of Loftin \cite{flatmetriccubicdiff,LoftinNotes} in which he deals with $n=3$ situation. To remain self-contained, we include the proof for the case $\phi=\tilde e_1+q_n e_{n-1}$ here.
\begin{proof}
	Since we are only proving the $n$-cyclic case, we drop all $b$ subscripts. Using the notation of section \ref{ODESetup}, we need to estimate $\Phi$ solving the initial value problem
	\[\Phi(0)=I\ \ \ \ \ \ \ \dfrac{d\Phi}{ds}=\left[t^{\frac{1}{n}} \mtrx{\mu_1&&\\&\ddots&\\&&\mu_n}+(b_{ij})
\right]\Phi\]
where $|b_{ij}|=O(t^{-\frac{1}{n}}).$ We will assume $\mu_1=\mu.$

Write $\Phi=(\phi_{ij})$, and consider the first column $(\phi_{11},\phi_{21},\dots,\phi_{n1})$, which satisfies the linear system \begin{eqnarray*}
\phi_{11}(0)=1,&  \frac{d}{ds}\phi_{11}=(t^{\frac{1}{n}}\mu_1+b_{11})\phi_{11}+b_{12}\phi_{21}+\dots+b_{1n}\phi_{n1},\\
\phi_{21}(0)=0,&  \frac{d}{ds}\phi_{21}=b_{21}\phi_{11}+(t^{\frac{1}{n}}\mu_2+b_{22})\phi_{21}+\dots+b_{2n}\phi_{n1},\\
\vdots&\\
\phi_{n1}(0)=0,&  \frac{d}{ds}\phi_{n1}=b_{n1}\phi_{11}+b_{n2}\phi_{21}+\dots+(t^{\frac{1}{n}}\mu_n+b_{nn})\phi_{n1}\end{eqnarray*}
Each of the above differential equations is first-order linear, and so we must have 
\begin{eqnarray*}
\phi_{11}&=&e^{t^{\frac{1}{n}}\mu_1 s}e^{\int\limits_0^ s b_{11}}\left[1+\int\limits_0^ s e^{-t^{\frac{1}{n}}\mu_1\tau-\int\limits_0^\tau b_{11}}(b_{12}\phi_{21}+\dots+b_{1n}\phi_{n1})d\tau\right]\\
\phi_{21}&=&e^{t^{\frac{1}{n}}\mu_2 s}e^{\int\limits_0^ s b_{22}}\int\limits_0^ s e^{-t^{\frac{1}{n}}\mu_2\tau-\int\limits_0^\tau b_{22}}(b_{21}\phi_{11}+b_{23}\phi_{31}+\dots+b_{2n}\phi_{n1})d\tau\\
&&\vdots\\
\phi_{n1}&=&e^{t^{\frac{1}{n}}\mu_n s}e^{\int\limits_0^ s b_{nn}}\int\limits_0^ s e^{-t^{\frac{1}{n}}\mu_n\tau-\int\limits_0^\tau b_{nn}}(b_{n1}\phi_{11}+\dots+b_{n,n-1}\phi_{n-1,1})d\tau
\end{eqnarray*}
The above $n$ equations can be see as a map $\Ff$ from the ${R}^n$-valued function $(\phi_{11},\phi_{21},\dots,\phi_{n1})$ to the right-hand side.\\
Now let $N\gg 1$ be a constant independent of $t$, and consider the Banach space $\mathcal{B}_{t}$ of continuous $R^n$-valued functions with norm \[||(f_1,f_2,\dots,f_n)||_{\mathcal{B}_t}=\sup\limits_i \sup\limits_{s\in[0,L]}|f_i(s)|e^{-t^{\frac{1}{n}}\mu_1 s}.\]
 Let $\mathcal{B}_t(N)$ be the closed ball of radius $N$ centered at the origin in $\mathcal{B}_t$.
\begin{Claim}\label{claim} For $t$ large enough, the solution $(\phi_{1j},\phi_{2j},\dots,\phi_{nj})$ to the ODE system, must lie in $\mathcal{B}_t(N)$ for all $1\leq j\leq n$.\end{Claim}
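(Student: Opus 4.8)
\textbf{Proof proposal for Claim~\ref{claim}.} The plan is to run a standard Picard-iteration/fixed-point argument on the Banach space $\mathcal{B}_t$, showing that the map $\Ff$ defined above is a contraction on the ball $\mathcal{B}_t(N)$ for $t$ large, and that the unique solution to the ODE is its fixed point, hence lies in $\mathcal{B}_t(N)$. The key mechanism is the weighting $e^{-t^{1/n}\mu_1 s}$ in the norm, which is chosen precisely so that the off-diagonal exponential integrals are uniformly small. First I would estimate, for a generic entry $\phi_{i1}$ with $i\neq 1$, the integral
\[
\left|\,e^{t^{1/n}\mu_i s}\int_0^s e^{-t^{1/n}\mu_i\tau}\,b_{i\ell}(\tau)\phi_{\ell 1}(\tau)\,d\tau\,\right|
\le |b_{i\ell}|_\infty\,\|\phi_{\ell 1}\|\,e^{-t^{1/n}\mu_i s}\int_0^s e^{t^{1/n}(\mu_i-\mu_1)\tau}e^{t^{1/n}\mu_1\tau}\,d\tau,
\]
and use $\mu_i\le\mu_1$ together with $|b_{i\ell}|_\infty=O(t^{-1/n})$ to bound the integral by $O(t^{-1/n})\cdot\frac{1}{t^{1/n}|\mu_1-\mu_i|}\cdot e^{t^{1/n}\mu_1 s}$ when $\mu_i<\mu_1$, and by $O(t^{-1/n})\cdot L\cdot e^{t^{1/n}\mu_1 s}$ when $\mu_i=\mu_1$; either way the contribution to the $\mathcal{B}_t$-norm is $O(t^{-1/n})$ times $\|\phi_{\ell 1}\|$. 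The factors $e^{\int_0^s b_{ii}}$ are $1+O(t^{-1/n})$ since $|b_{ii}|_\infty L=O(t^{-1/n})$, so they are harmless.

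Next I would package this: on $\mathcal{B}_t(N)$ the map $\Ff$ sends a vector of norm $\le N$ to one of norm $\le (1+O(t^{-1/n}))\cdot 1 + O(t^{-1/n})N$ (the leading $1$ coming from the $\phi_{11}$ row's ``$1+\cdots$'' term, whose bare term has $\mathcal{B}_t$-norm exactly $1$), which is $\le N$ once $t$ is large, provided $N$ was fixed $>1$; and for two vectors $u,v\in\mathcal{B}_t(N)$ one gets $\|\Ff(u)-\Ff(v)\|_{\mathcal{B}_t}\le O(t^{-1/n})\|u-v\|_{\mathcal{B}_t}<\tfrac12\|u-v\|_{\mathcal{B}_t}$ for $t$ large, by the same estimates applied to the differences (note $\Ff$ is affine in its argument, so the Lipschitz constant is literally the norm of its linear part). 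Hence $\Ff:\mathcal{B}_t(N)\to\mathcal{B}_t(N)$ is a well-defined contraction of a complete metric space, so it has a unique fixed point in $\mathcal{B}_t(N)$. Since any $C^1$ solution of the original linear ODE system for $(\phi_{11},\dots,\phi_{n1})$ with the stated initial conditions satisfies the integral equations $\Ff(\phi)=\phi$ (by the integrating-factor formulas displayed above, which are equivalent to the ODE plus initial data), and such a solution exists and is unique on $[0,L]$ by linear ODE theory, that solution must coincide with the fixed point and therefore lies in $\mathcal{B}_t(N)$. The same argument applies verbatim to the $j$-th column for each $j$, using the weight $e^{-t^{1/n}\mu_1 s}$ (with $\mu_1=\mu$ the largest), after noting the $j$-th column's initial vector still has $\mathcal{B}_t$-norm $O(1)$.

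The main obstacle is purely bookkeeping rather than conceptual: one must be careful that the estimate $|b_{ij}|=O(t^{-1/n})$ is uniform in $s\in[0,L]$ along the geodesic $\gamma$ — this is exactly what Proposition~\ref{FmetricProp} and the uniformity statements following it (on $\Omega_n$) provide — and one must treat the $\mu_i=\mu_1$ rows separately since there the exponential integral contributes a factor $L$ rather than $1/(t^{1/n}|\mu_1-\mu_i|)$, but since $L$ is a fixed constant this still gives a net $O(t^{-1/n})$ after multiplying by $|b_{ij}|_\infty$. A minor point to check is that when several $\mu_i$ achieve the maximum (which happens for special $\theta$), the argument is unaffected because we only ever use $\mu_i\le\mu_1$, never strict inequality. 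Once the claim is established, the lower and upper bounds on $\|T_{P,P'}^{-1}(t)\|=\|\Phi(L)\|$ follow by reading off that $\phi_{11}(L)=e^{t^{1/n}\mu_1 L}(1+o(1))$ dominates, giving $\|\Phi(L)\|\asymp e^{Lt^{1/n}\mu}$.
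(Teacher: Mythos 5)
Your proposal is correct and follows essentially the same route as the paper: both set up the integral (integrating-factor) form of the system as an affine map $\Ff$ on $\mathcal{B}_t(N)$, show it is a self-map and a contraction with Lipschitz constant $O(t^{-1/n})$ coming from $|b_{ij}|_\infty=O(t^{-1/n})$ and the weight $e^{-t^{1/n}\mu_1 s}$, and identify the unique fixed point with the ODE solution. The only differences are cosmetic — the paper uses the cruder uniform bound $\int_0^s e^{t^{1/n}(\mu_1-\mu_i)\tau}\,d\tau\le L\,e^{t^{1/n}(\mu_1-\mu_i)s}$ for all rows rather than your case split, and your displayed intermediate inequality has a sign slip in the exponents, though the bounds you conclude from it are right.
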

Thus, for sufficiently large $t,$ $|\phi_{ij}|\leq Ne^{t^\frac{1}{n}\mu_1L}.$  Applying this to the above equation system yields
\[\phi_{11}=e^{t^\frac{1}{n}\mu_1L}(1+O(t^{-\frac{1}{n}})),\ \ \ \phi_{21}=e^{t^\frac{1}{n}\mu_1L}O(t^{-\frac{1}{n}}),\ \ \,\cdots,\ \ \ \phi_{n1}=e^{t^\frac{1}{n}\mu_1L}O(t^{-\frac{1}{n}}).\]
For the second column of $\Phi,$ we have the equations
\begin{eqnarray*}
\phi_{12}&=&e^{t^{\frac{1}{n}}\mu_1 s}e^{\int\limits_0^ s b_{11}}\int\limits_0^s e^{-t^{\frac{1}{n}}\mu_1\tau-\int\limits_0^\tau b_{11}}(b_{12}\phi_{22}+\dots+b_{1n}\phi_{n2})d\tau\\
\phi_{22}&=&e^{t^{\frac{1}{n}}\mu_2 s}e^{\int\limits_0^ s b_{22}}\left[1+\int\limits_0^ s e^{-t^{\frac{1}{n}}\mu_2\tau-\int\limits_0^\tau b_{22}}(b_{21}\phi_{12}+b_{23}\phi_{32}+\dots+b_{2n}\phi_{n2})d\tau\right]\\
&&\vdots\\
\phi_{n2}&=&e^{t^{\frac{1}{n}}\mu_n s}e^{\int\limits_0^ s b_{nn}}\int\limits_0^ s e^{-t^{\frac{1}{n}}\mu_n\tau-\int\limits_0^\tau b_{nn}}(b_{n1}\phi_{12}+\dots+b_{n,n-1}\phi_{n-1,2})d\tau
\end{eqnarray*}
Thus, applying $|\phi_{ij}|\leq Ne^{t^\frac{1}{n}\mu_1s}$ yields
\begin{eqnarray*}
|\phi_{12}|&\leq &e^{t^\frac{1}{n}\mu_1s}O(t^{-\frac{1}{n}})\\
|\phi_{22}-e^{t^\frac{1}{n}\mu_2s}|&\leq &e^{t^\frac{1}{n}\mu_1s}O(t^{-\frac{1}{n}})\\
&\vdots&\\
|\phi_{n2}|&\leq &e^{t^\frac{1}{n}\mu_1s}O(t^{-\frac{1}{n}}),
\end{eqnarray*}
Similarly, $|\phi_{ij}-\delta_{ij}e^{t^\frac{1}{n}\mu_is}|\leq e^{t^\frac{1}{n}\mu_1s}$ for all $i,j.$ To determine the largest eigenvalue of $(\phi_{ij})$ asymptotically, we make use of the trace. Suppose $\xi_1\geq \xi_2\geq\cdots\geq \xi_n$ are the eigenvalues, then since $|\phi_{ij}|\leq N e^{t^\frac{1}{n}\mu_is},$ there is a constant $C$ so that
\[max(\xi_j)\leq C e^{t^\frac{1}{n}\mu_1s}.\]
Also, 
\[\xi_1+\cdots+\xi_n=Tr(\Phi)=e^{t^\frac{1}{n}\mu_1s}(1+O(t^{-\frac{1}{n}}))+e^{t^\frac{1}{n}\mu_2s}+\cdots+e^{t^\frac{1}{n}\mu_ns},\]
thus
\[\xi_1\geq\frac{1}{n}(\xi_1+\cdots+\xi_n)\geq \frac{1}{n}e^{t^\frac{1}{n}\mu_1s}(1+O(t^{-\frac{1}{n}})).\]
Hence
\[\frac{1}{n}e^{t^\frac{1}{n}\mu_1s}(1+O(t^{-\frac{1}{n}}))\leq \xi_1\leq Ce^{t^\frac{1}{n}\mu_1s}(1+O(t^{-\frac{1}{n}})),\]
and for $t$ sufficiently large, there exist constant $K> 0$ so that 
\[\frac{1}{K}e^{t^\frac{1}{n}\mu_1s}\leq \xi_1\leq Ke^{t^\frac{1}{n}\mu_1s}.\]
Since the highest eigenvalue $\xi_1=||T_{PP'}^{-1}(t)||,$ the result follows.
\begin{proof} \textit{(of Claim \ref{claim})} We will show that for $t$ large enough, $\Ff$ is a contraction map from $\mathcal{B}_t(N)$ to itself, and thus the solution $(\phi_{11},\phi_{21},\dots,\phi_{n1})$ to the ODE system, which is the fixed point of $\Ff$, must lie in $\mathcal{B}_t(N)$. For the rest of columns, the proof is identical.
Consider $F=(f_1,f_2,\dots,f_n), G=(g_1,g_2,\dots,g_n)\in \mathcal{B}_t(N)$. Then the first component of $\Ff(F)-\Ff(G)$ is given by 
\[e^{t^{\frac{1}{n}}\mu_1 s}e^{\int\limits_0^ s b_{11}}\int\limits_0^ s e^{-t^{\frac{1}{n}}\mu_1\tau-\int\limits_0^\tau b_{11}}[b_{12}(f_2-g_2)+\dots+b_{1n}(f_n-g_n)d\tau]\] 
Now assume $|b_{ij}|\leq R$ and recall $s\leq L$. 
A straightforward calculation shows that the first component of $\Ff(F)-\Ff(G)$ is pointwise bounded by 
\[e^{t^{\frac{1}{n}}\mu_1 s}e^{2RL}2RL ||F-G||_{\mathcal{B}_t}.\]
For $t$ sufficiently large, since $R\sim t^{-\frac{1}{n}},$ we may assume $e^{2RL}2RL<1$. Essentially the same calculation shows that $\Ff:\mathcal{B}_t(N)\rightarrow \mathcal{B}_t(N)$ for large $t$, since $N\gg 1$. The other $n-1$ components of $\Ff$ behave the same way, thus $\Ff$ is a contraction map.

Since $\Ff$ is a contraction map on the complete metric space $\mathcal{B}_t(N)$, the unique solution $(\phi_{11},\phi_{21},\dots,\phi_{n1})$ to the ODE system is the fixed point, and so must be in $\mathcal{B}_t(N)$ for all $t$ sufficiently large. 

\end{proof}
\end{proof}

\section{Harmonic maps into symmetric spaces}\label{HarmonicSection}
We continue to work in the universal cover $\widetilde\Sigma$ of $\Sigma,$ all objects should be pulled back from the surface. As in previous sections, we will use a subscript $b$ to work with the two cases $\phi=\tilde e_1+q_ne_{n-1}$ and $\phi=\tilde e_1+q_{n-1}e_{n-2}$ simultaneously.

A Hermitian metric $h$ on a flat bundle $E$ gives rise to an equivariant map to the symmetric space $SL(n,\C)/SU(n)$. 
To see this, fix a positively oriented unitary frame $\{x_i(P)\}$ over a base point $P\in \widetilde{\Sigma}$. With respect to the flat connection, parallel transport of the frame $\{x_i(P)\}$ gives a global frame $\{x_i\}.$
Define a $\pi_1(\Sigma)$-equivariant map by, 
\begin{eqnarray*}
f: \widetilde{\Sigma}&\longrightarrow& SL(n,\C)/SU(n)\\
P'&\longmapsto& \{h(x_i(P'),x_j(P'))\}.
\end{eqnarray*}
By Corlette's Theorem \cite{canonicalmetrics}, the family of harmonic metrics $h_t$ considered above, gives a family of $\rho_t$-equivariant harmonic maps 
\[f_t:\widetilde{\Sigma}\rightarrow SL(n,\C)/SU(n).\]
\begin{Remark}
The image of the family $f_t$ all lie in a copy of the real symmetric space \[SL(n,\R)/SO(n,\R)\subset SL(n,\C)/SU(n).\] This is because the family of representations $\rho_t$ has image in the real group $SL(n,\R).$
\end{Remark}

Pick a base point $P\in\widetilde{\Sigma}$ away from zeros of the differential $q_b.$ Recall that $\Uu_P$ is a local coordinate such that $q_b=dz^b,$ and $F_b=F=(\sigma_1,\dots,\sigma_{\frac{n}{2}},\sigma_{\frac{n}{2}}^*,\dots,\sigma_1^*)$ is a rescaled holomorphic frame (\ref{RescaledFrame}). By Remark \ref{FmetricRemark}, we can choose a unitary and orthogonal (with respect to the orthogonal structure $Q$) basis $N(P)$ at $P$ so that  
\[F(P)=N(P)(1+O(t^{-\frac{2}{b}})).\] 

Using the flat connection, parallel transport the unitary basis $N(P)$ to obtain a frame $N.$ 
Note that $N$ is not a unitary frame since the flat connection does not have holonomy in $SU(n);$ however, it retains its $SL(n,\R)$ symmetry. 
As a result, the image of $f_t$ is contained in a copy of $SL(n,\R)/SO(n,\R)\hook SL(n,\C)/SU(n).$ 
The inclusion is determined by the inclusion of $SO(n,\R)\subset SU(n)$
given by $Q$-orthogonal unitary matrices, and the intersection of $Q$-symmetric matrices with determinant 1 Hermitian matrices.

At a point $P',$ denote the $j^{th}$ column of $N$  by $N^j(P').$ 
Recall from equation (\ref{TvPsi}), that the parallel transport of the rescaled holomorphic frame $F$ at $P$ has been denoted by $G,$ and 
\[G(P')=N(P')(1+O(t^{-\frac{2}{b}})).\]
If we denote the $j^{th}$ column of $G(P')$ by $G^j(P'),$ we have
\[f_t(P')=\{ h_t(P')(N^i(P'),N^j(P')) \}\]
\[= \{h_t(P')\left(G^i(P') ,G^j(P')\right) (1+O(t^{-\frac{2}{b}}))\}.\]
By Proposition \ref{FmetricProp}, we understand $h_t(P')$ in the frame $F,$ thus, we change coordinates $\Psi_t(P')G(P')=F(P').$
In terms of columns, we have
\[G^i(P')=\Psi_t^{-1}(P')_{ik}F^{k}(P').\]
Thus $f_t(P')$ is given by 
\[f_t(P')=\{\ h_t(P')\left(\Psi_t^{-1}(P')_{ik}F^{k}(P') \ ,\ \Psi_t^{-1}(P')_{jl}F^{l}(P')\right) (1+O(t^{-\frac{2}{b}}))\ \}.\]
In the frame $F,$ the metric $h_t$ is diagonal, thus 
\[f_t(P')=\Psi_t^{-1}(P')^T\ h_t^F\ \overline{\Psi_t^{-1}}(P')\ (1+O(t^{-\frac{2}{b}}))\]
where $h_t^F$ denotes the metric in the rescaled holomorphic frame $F.$ 
Now, using Theorem \ref{metrictheorem} and Remark \ref{FmetricRemark}, we have 
\[f_t(P')=\  \Psi_t^{-1}(P')^T\cdot(1+O(t^{-\frac{2}{b}}))\cdot \overline{\Psi_t^{-1}}(P')\ .\]


Therefore, by applying estimates for $\Psi(L)$ in Theorems \ref{TransportAsymp} and \ref{subTransportAsymp}, for any $P'=L e^{i\theta}=\gamma(L)$ with the property that, for all $s,$
$s<d(\gamma(s)):=min\{d(\gamma(s), z_0)| \  \text{for all zeros $z_0$ of $q_n$}\},$ as $t\rightarrow \infty$ 
 \begin{eqnarray}\label{asympft}
f_t(P')= \left(Id+O\left(t^{-\frac{1}{2b}}\right)\right)\overline{S} \mtrx{
		e^{-2Lt^{\frac{1}{b}}\mu_1}&&&\\
		&e^{-2Lt^{\frac{1}{b}}\mu_2}&&\\
		&&\ddots&\\
		&&&e^{-2Lt^{\frac{1}{b}}\mu_n}}
S^T\left(Id+O\left(t^{-\frac{1}{2b}}\right)\right). \end{eqnarray}
 Here $S$ and the $\{\mu_j\}$'s satisfy the same conditions as in Theorems \ref{TransportAsymp} and \ref{subTransportAsymp}.
 By Remark \ref{smallnbhd}, the above equation can be interpreted as saying that for all such $P,$ there exists a neighborhood $\Uu_P,$ so that the $\rho_t$-equivariant maps $f_t:\tilde \Sigma\ra SL(n,\R)/SO(n,\R)$ send $\Uu_P$ asymptotically into a \em{flat} \em{}of the symmetric space.

 Given two points $P,P'$ in the symmetric space $SL(n,\R)/SO(n,\R)$, the vector distance between them is defined by $\overset{\ra}{d}(P,P')=P-P',$ where the difference is taken in a \em{flat}\em{} (isometric to $\A^{n-1}$) containing both points. One can show $\overset{\ra}{d}(P,P')$ is independent of the choice of flat. For example, in the standard flat of $SL(n,\R)/SO(n,\R)$ consisting of all diagonal matrices of determinant 1, the vector distance is defined by
\[\overset{\ra}{d}\left(\mtrx{1&&\\&\ddots&\\&&1},\mtrx{e^{\lambda_1}&&\\&\ddots&\\&&e^{\lambda_n}}\right)=(\lambda_1,\cdots,\lambda_n).\]
Since all flats in $SL(n,\R)/SO(n,\R)$ are conjugate to the standard flat, the vector distance can be defined in a similar way. 

The asymptotic expression (\ref{asympft}) for $f_t,$ together with the definition of vector distance, gives the following theorem. 
 \begin{Theorem}\label{harmMapTheorem} With the same assumptions as the parallel transport asymptotitcs Theorem \ref{TransportAsymp}, for a path $\gamma$ satisfying
\[ s<d(\gamma(s)):=min\{d(\gamma(s), z_0)| \  \text{for all zeros $z_0$ of $q_n$}\},\]
we have
\[\lim\limits_{t\ra\infty}\frac{1}{t^\frac{1}{n}}\overset{\ra}{d}(f_t(\gamma(0)),f_t(\gamma(1)))=\left(-2L\cos\left(\theta\right), -2L\cos\left(\theta+\frac{2\pi }{n}\right),\dots,-2L\cos\left(\theta+\frac{2\pi {(n-1)}}{n}\right)\right).\]
\end{Theorem}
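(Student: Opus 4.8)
The plan is to read off the theorem directly from the asymptotic expression \eqref{asympft} for the harmonic map together with the definition of the vector distance. First I would recall that, by \eqref{asympft}, there is a fixed unitary matrix $S$ so that for $P'=\gamma(1)=Le^{i\theta}$ satisfying the path hypothesis,
\[
f_t(\gamma(1))=\left(Id+O(t^{-\frac{1}{2n}})\right)\overline S\, \operatorname{diag}\!\left(e^{-2Lt^{1/n}\mu_1},\dots,e^{-2Lt^{1/n}\mu_n}\right) S^T\left(Id+O(t^{-\frac{1}{2n}})\right),
\]
while $f_t(\gamma(0))=\overline S\, S^T\left(Id+O(t^{-\frac{1}{2n}})\right)$ (the $L=0$ case), since parallel transport to the base point is the identity and the rescaled frame is unitary up to $O(t^{-2/n})$ there.

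Next I would observe that $\overline S\,\operatorname{diag}(e^{-2Lt^{1/n}\mu_j})\,S^T$ is exactly the point of the flat $\overline S(\text{standard flat})S^T$ whose vector distance from the base point $\overline S\,S^T$ is, by definition of $\overset{\ra}{d}$ in the standard flat and conjugation-invariance of the vector distance,
\[
\overset{\ra}{d}\left(\overline S S^T,\ \overline S\operatorname{diag}(e^{-2Lt^{1/n}\mu_j})S^T\right)=\left(-2Lt^{1/n}\mu_1,\dots,-2Lt^{1/n}\mu_n\right).
\]
Then I would divide by $t^{1/n}$, substitute $\mu_j=2\cos(\theta+\tfrac{2\pi(j-1)}{n})$ from Theorem \ref{TransportAsymp}, and obtain the claimed limit vector, provided the $O(t^{-1/2n})$ correction terms do not disturb the vector distance in the limit.

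The one genuine point to check — and the step I expect to be the main (though mild) obstacle — is that the $\left(Id+O(t^{-\frac{1}{2n}})\right)$ factors on both sides of $f_t(\gamma(1))$, and the $O(t^{-2/n})$ discrepancy between $f_t(\gamma(0))$ and $\overline S S^T$, perturb the vector distance by $o(t^{1/n})$, i.e. become negligible after dividing by $t^{1/n}$. Here I would use the fact that the vector distance $\overset{\ra}{d}$ on the symmetric space is $1$-Lipschitz in the ordinary (Riemannian) distance with respect to each coordinate, or more precisely that the ordered eigenvalues of $\log$ of a positive symmetric matrix depend continuously (indeed Lipschitz, by Weyl's inequalities) on the matrix; multiplying a diagonal matrix with entries $e^{-2Lt^{1/n}\mu_j}$ on left and right by $Id+O(t^{-1/2n})$ changes its logarithm's eigenvalues by $O(t^{-1/2n})=o(t^{1/n})$ relative to the dominant scale, uniformly. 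Dividing through by $t^{1/n}$ kills these contributions, and the limit is precisely $\bigl(-2L\cos\theta,\,-2L\cos(\theta+\tfrac{2\pi}{n}),\dots,-2L\cos(\theta+\tfrac{2\pi(n-1)}{n})\bigr)$ as stated; the analogous statement for the $(n-1)$-cyclic case follows identically from Theorem \ref{subTransportAsymp}.
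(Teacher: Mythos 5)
Your proposal is correct and follows essentially the same route as the paper: the paper derives the asymptotic expression (\ref{asympft}) for $f_t$ and then states that the theorem follows from it together with the definition of the vector distance, which is exactly your computation (noting $\overline S S^T=Id$ since $S$ is unitary, so the conjugated diagonal matrix lies in a flat through the base point). Your extra care in checking that the $Id+O(t^{-1/2n})$ factors perturb the $\log$-eigenvalues only by $o(t^{1/n})$ via Weyl-type inequalities is a detail the paper leaves implicit, but it is the right justification.
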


A similar theorem holds in the $(n-1)$-cyclic case. As mentioned in the introduction, the above result answers the conjecture in \cite{harmonicbuildingWKB} on the asymptotics of the `Hitchin WKB problem' in these special cases.
In \cite{harmonicbuildingWKB}, a similar result is proven for the asymptotics of the complex WKB problem, i.e. the family of holomorphic flat connections $\nabla_t=\nabla_0+t\phi$ (\ref{CWKB}).

To close, we briefly discuss the behavior of the `limit map' $f_\infty$ associated to the family $f_t$, studied extensively in \cite{harmonicbuildingWKB}.
To obtain better information about the behavior of the maps $f_t$ as $t\ra \infty,$ we rescale the metric on the symmetric space and consider the family of maps
\[f_t:\widetilde\Sigma\ra \left(SL(n,\R)/SO(n,\R),\frac{1}{t^\frac{1}{b}}d\right). \]
The limit of $\left(SL(n,\R)/SO(n,\R),\frac{1}{t^\frac{1}{b}}d\right)$ as $t\ra\infty$ is not well defined, however, by the work of Kleiner-Leeb \cite{KeinerLeeb} and Parreau \cite{CompactificationHit}, a Gromov limit of $\left(SL(n,\R)/SO(n,\R),\frac{1}{t^\frac{1}{b}}d\right)$ is an affine building modeled on $\A^{n-1}.$ 
The limit construction depends on the choice of ultrafilter $\omega$ on $\R$ with countable support; with this choice, the limit is called the asymptotic cone and is denoted $Cone_{\omega}.$ 
In \cite{CompactificationHit}, Parreau showed that, given a diverging family of representations $\rho_t$, the limit of the vector length spectra of $\rho_t$ arises from the length spectrum of a limit action $\rho_\omega$ on $Cone_\omega.$ 
This gives a harmonic map 
\[f_{\omega}:\widetilde{\Sigma}\rightarrow Cone_{\omega},\] 
which is equivariant for the limiting action $\rho_{\omega}$ of $\pi_1(S)$ on $Cone_{\omega}.$ 

In this language, the asymptotic expression (\ref{asympft}) of $f_t$ implies that for the families of rays 
\[(\Sigma,0,\cdots,0,tq_n),\ (\Sigma,0,\cdots,tq_{n-1},0)\in Hit_n(S)\] 
and for any $P$ away from the zeros of $q_n$ and $q_{n-1},$ there exists a neighborhood $\mathcal{U}_P$ so that the $\rho_{\omega}$-equivariant map 
\[f_{\omega}:\widetilde{\Sigma}\rightarrow Cone_{\omega}\] 
sends $\mathcal{U}_P$ into a single apartment of the building $Cone_{\omega}$. 


\bibliographystyle{amsalpha}
\bibliography{bib}{}

\end{document}